\documentclass[a4paper]{article}
\usepackage{verbatim}                       


\usepackage{graphicx}      
\usepackage{tikz-cd}

\usepackage{listings}
\usepackage[plain]{fancyref}


\usepackage{amsmath, amsthm, stmaryrd}                
\usepackage{thmtools}
\usepackage{amssymb, mathrsfs,bm, bbm}
\usepackage{enumerate}                      

\usepackage{accents, braket,subfig,titlesec,cleveref,lipsum,xcolor-patch, textcomp,float,booktabs,siunitx}
\usepackage[numbers]{natbib}
\usepackage[section]{placeins}
\usepackage{enumitem}
\usepackage[margin=2.16cm]{geometry}
\usepackage[colorinlistoftodos,prependcaption]{todonotes}
\usepackage{tcolorbox}
\usepackage{lmodern}


\newcommand{\olpi}{\overline{\pi\vphantom{*}}}

\newcommand{\N}{\mathbb{N}}
\newcommand{\Z}{\mathbb{Z}}
\newcommand{\R}{\mathbb{R}}
\newcommand{\T}{\mathbb{T}}
\newcommand{\C}{\mathbb{C}}
\DeclareMathOperator{\conv} {conv}
\DeclareMathOperator*{\supp}{supp}
\DeclareMathOperator*{\dom}{dom}
\DeclareMathOperator{\Lie}{Lie}
\DeclareMathOperator{\Spec}{Spec}
\DeclareMathOperator{\Tr}{Tr}
\DeclareMathOperator{\ev}{ev}
\newcommand{\abs}[1]{\lvert #1 \rvert}
\newcommand{\norm}[1]{\lVert #1 \rVert}
\newcommand{\op}{\textrm{op}}

\newcommand{\squotes}[1]{`#1'}
\newcommand{\dquotes}[1]{``#1''}

\makeatletter
\newcommand{\bcdot}{} 
\DeclareRobustCommand\bcdot{%
	\mathord{\mathpalette\bcdot@{0.5}}%
}
\newcommand{\bcdot@}[2]{%
	\vcenter{\hbox{\scalebox{#2}{$\m@th#1\bullet$}}}%
}
\makeatother

\newcommand{\der}{\mathrm{der}}
\newcommand{\Inn}{\mathrm{Inn}}
\newcommand{\mc}{\mathcal}
\newcommand{\mfr}{\mathfrak}
\newcommand{\su}{\mathfrak{su}}
\newcommand{\g}{\mathfrak{g}}
\newcommand{\gl}{\mathfrak{gl}}
\newcommand{\h}{\mathfrak{h}}
\newcommand{\pu}{\mathfrak{pu}}
\newcommand{\n}{\mathfrak{n}}
\newcommand{\z}{\mathfrak{z}}
\newcommand{\M}{\mathcal{M}}
\newcommand{\mcN}{\mathcal{N}}
\newcommand{\A}{\mathcal{A}}
\newcommand{\mcC}{\mathcal{C}}
\newcommand{\B}{\mathcal{B}}
\newcommand{\mcD}{\mathcal{D}}
\newcommand{\F}{\mathcal{F}}
\newcommand{\mcK}{\mathcal{K}}
\newcommand{\X}{\mathcal{X}}
\newcommand{\U}{\mathrm{U}}
\newcommand{\PU}{\mathrm{PU}}
\newcommand{\p}{\mathfrak{p}}

\newcommand{\mcH}{\mathcal{H}}
\newcommand{\mcZ}{\mathcal{Z}}
\newcommand{\KMS}{\textrm{KMS}}
\newcommand{\ct}{\mathrm{ct}}
\newcommand{\St}{\mathrm{St}}
\newcommand{\ad}{\mathrm{ad}}
\newcommand{\Ad}{\mathrm{Ad}}
\newcommand{\restr}[2]{\left. #1\right|_{#2}}
\DeclareMathOperator*{\st}{\, : \,}
\newcommand{\Span}{\mathrm{Span}}
\newcommand{\Mp}{\mathrm{Mp}}
\newcommand{\Sp}{\mathrm{Sp}}
\newcommand{\SU}{\mathrm{SU}}
\newcommand{\SL}{\mathrm{SL}}
\newcommand{\mrm}[1]{\mathrm{#1}}
\newcommand{\Aut}{\mathrm{Aut}}

\newcommand{\id}{\mathrm{id}}
\newcommand{\circled}[1]{\accentset{\circ}{#1}}

\newcommand{\vl}[1]{\bm{v}_{\mrm{l}}({#1})}
\newcommand{\vls}[1]{\bm{v}_{\mrm{l}}({#1})_{\mrm{s}}}
\newcommand{\vln}[1]{\bm{v}_{\mrm{l}}({#1})_{\mrm{n}}}
\newcommand{\vho}[1]{\bm{v}_{\mrm{ho}}({#1})}
\newcommand{\who}[1]{\bm{w}_{\mrm{ho}}({#1})}


\theoremstyle{plain}
\newtheorem{theorem}{Theorem}[section]

\theoremstyle{definition}
\newtheorem{definition}[theorem]{Definition}

\theoremstyle{plain}
\newtheorem{lemma}[theorem]{Lemma}
\newtheorem{proposition}[theorem]{Proposition}
\newtheorem{corollary}[theorem]{Corollary}

\theoremstyle{remark}
\newtheorem{remark}[theorem]{Remark}

\theoremstyle{definition}
\newtheorem{example}[theorem]{Example}

\theoremstyle{problem}


\newcommand*{\fancyrefthmlabelprefix}{thm}
\fancyrefaddcaptions{english}{%
	\providecommand*{\frefthmname}{theorem}%
	\providecommand*{\Frefthmname}{Theorem}%
}
\frefformat{plain}{\fancyrefthmlabelprefix}{\frefthmname\fancyrefdefaultspacing#1}
\Frefformat{plain}{\fancyrefthmlabelprefix}{\Frefthmname\fancyrefdefaultspacing#1}

\newcommand*{\fancyreflemlabelprefix}{lem}
\fancyrefaddcaptions{english}{%
	\providecommand*{\freflemname}{lemma}%
	\providecommand*{\Freflemname}{Lemma}%
}
\frefformat{plain}{\fancyreflemlabelprefix}{\freflemname\fancyrefdefaultspacing#1}
\Frefformat{plain}{\fancyreflemlabelprefix}{\Freflemname\fancyrefdefaultspacing#1}

\newcommand*{\fancyrefproplabelprefix}{prop}
\fancyrefaddcaptions{english}{%
	\providecommand*{\frefpropname}{proposition}%
	\providecommand*{\Frefpropname}{Proposition}%
}
\frefformat{plain}{\fancyrefproplabelprefix}{\frefpropname\fancyrefdefaultspacing#1}
\Frefformat{plain}{\fancyrefproplabelprefix}{\Frefpropname\fancyrefdefaultspacing#1}

\newcommand*{\fancyrefcorlabelprefix}{cor}
\fancyrefaddcaptions{english}{%
	\providecommand*{\frefcorname}{corollary}%
	\providecommand*{\Frefcorname}{Corollary}%
}
\frefformat{plain}{\fancyrefcorlabelprefix}{\frefcorname\fancyrefdefaultspacing#1}
\Frefformat{plain}{\fancyrefcorlabelprefix}{\Frefcorname\fancyrefdefaultspacing#1}

\newcommand*{\fancyrefdeflabelprefix}{def}
\fancyrefaddcaptions{english}{%
	\providecommand*{\frefdefname}{definition}%
	\providecommand*{\Frefdefname}{Definition}%
}
\frefformat{plain}{\fancyrefdeflabelprefix}{\frefdefname\fancyrefdefaultspacing#1}
\Frefformat{plain}{\fancyrefdeflabelprefix}{\Frefdefname\fancyrefdefaultspacing#1}

\newcommand*{\fancyrefasmlabelprefix}{asm}
\fancyrefaddcaptions{english}{%
	\providecommand*{\frefasmname}{assumption}%
	\providecommand*{\Frefasmname}{Assumption}%
}
\frefformat{plain}{\fancyrefasmlabelprefix}{\frefasmname\fancyrefdefaultspacing#1}
\Frefformat{plain}{\fancyrefasmlabelprefix}{\Frefasmname\fancyrefdefaultspacing#1}

\newcommand*{\fancyrefremlabelprefix}{rem}
\fancyrefaddcaptions{english}{%
	\providecommand*{\frefremname}{remark}%
	\providecommand*{\Frefremname}{Remark}%
}
\frefformat{plain}{\fancyrefremlabelprefix}{\frefremname\fancyrefdefaultspacing#1}
\Frefformat{plain}{\fancyrefremlabelprefix}{\Frefremname\fancyrefdefaultspacing#1}

\newcommand*{\fancyrefexlabelprefix}{ex}
\fancyrefaddcaptions{english}{%
	\providecommand*{\frefexname}{example}%
	\providecommand*{\Frefexname}{Example}%
}
\frefformat{plain}{\fancyrefexlabelprefix}{\frefexname\fancyrefdefaultspacing#1}
\Frefformat{plain}{\fancyrefexlabelprefix}{\Frefexname\fancyrefdefaultspacing#1}

\newcommand*{\fancyrefproblabelprefix}{prob}
\fancyrefaddcaptions{english}{%
	\providecommand*{\frefprobname}{problem}%
	\providecommand*{\Frefprobname}{Problem}%
}
\frefformat{plain}{\fancyrefproblabelprefix}{\frefprobname\fancyrefdefaultspacing#1}
\Frefformat{plain}{\fancyrefproblabelprefix}{\Frefprobname\fancyrefdefaultspacing#1}

\newcommand*{\fancyrefpartlabelprefix}{part}
\fancyrefaddcaptions{english}{%
	\providecommand*{\frefpartname}{part}%
	\providecommand*{\Frefpartname}{Part}%
}
\frefformat{plain}{\fancyrefpartlabelprefix}{\frefpartname\fancyrefdefaultspacing#1}
\Frefformat{plain}{\fancyrefpartlabelprefix}{\Frefpartname\fancyrefdefaultspacing#1}

\title{Generalized Positive Energy Representations of Groups of Jets}
\date{\today}
\author{Niestijl, M.}

\begin{document}
	\pagenumbering{roman}
	\maketitle

	\begin{abstract}
		\noindent
		Let $V$ be a finite-dimensional real vector space and $K$ a compact simple Lie group with Lie algebra $\mfr{k}$. Consider the Fr\'echet-Lie group $G := J_0^\infty(V; K)$ of $\infty$-jets at $0 \in V$ of smooth maps $V \to K$, with Lie algebra $\g = J_0^\infty(V; \mfr{k})$. Let $P$ be a Lie group and write $\p := \Lie(P)$. Let $\alpha$ be a smooth $P$-action on $G$. We study smooth projective unitary representations $\overline{\rho}$ of $G \rtimes_\alpha P$ that satisfy a so-called generalized positive energy condition. In particular, this class captures representations that are in a suitable sense compatible with a KMS state on the von Neumann algebra generated by $\overline{\rho}(G)$. We show that this condition imposes severe restrictions on the derived representation $d\overline{\rho}$ of $\g \rtimes \p$, leading in particular to sufficient conditions for $\restr{\overline{\rho}}{G}$ to factor through $J_0^2(V; K)$, or even through $K$. 
	\end{abstract}

	\section{Introduction}~\label{sec: introduction}
	\pagenumbering{arabic}
	
	\noindent
	This paper is concerned with projective representations of groups and Lie algebras of jets. Let $K$ denote a compact simple Lie group with Lie algebra $\mfr{k}$ and let $V$ be a finite-dimensional real vector space. Then we consider the Fr\'echet-Lie group $G := J_0^\infty(V; K)$ with Lie algebra $\g := J^\infty_0(V; \mfr{k})\cong \R\llbracket V^\ast \rrbracket \otimes \mfr{k}$. These consist of $\infty$-jets at $0 \in V$ of smooth $K$- and $\mfr{k}$-valued functions, respectively. We are interested in smooth projective unitary representations of $G$ which satisfy either a so-called positive energy, or a KMS condition.\\
	
	\noindent
	To describe these, let $P$ be a finite-dimensional Lie group with Lie algebra $\p$. Assume that there is a smooth action $\alpha$ of $P$ on $G$. A continuous projective unitary representation $\overline{\rho} : G \rtimes_\alpha P \to \PU(\mcH_\rho)$ is of \textit{positive energy} at the cone $\mcC\subseteq \p$ if for every $p \in \mcC$, there is a strongly continuous homomorphic lift $t\mapsto e^{itH}$ of $t\mapsto \overline{\rho}(e^{tp})$ whose generator $H$ satisfies $\Spec(H) \geq 0$. We say that $\overline{\rho}$ is KMS at $p \in \p$ if there is a normal state $\phi$ on the von Neumann algebra $\M := \overline{\rho}(G)^{\prime \prime}$ generated by $\overline{\rho}(G)$ such that $\phi$ satisfies the KMS condition for the one-parameter group $\R \to \Aut(\M), \; t \mapsto \Ad(\overline{\rho}(e^{tp}))$. As we shall see, these two seemingly very different classes of representations exhibit similar behavior in certain respects. In particular, they both give rise to so-called \textit{generalized positive energy representations}, a notion which relaxes the positive energy condition and is introduced in \Fref{sec: quas_pe} below. We study these generalized positive energy representations and thereby also those which satisfy either the positive energy or the KMS condition.\\

	\noindent
	The motivation for looking at the positive energy and KMS representations of the group $G\rtimes_\alpha P$ originates in prior work by B.\ Janssens and K.-H.\ Neeb, who studied in \cite{BasNeeb_PE_reps_I} a class of projective unitary representations of the group of compactly supported gauge transformations $\mc{G} := \Gamma_{\mrm{c}}(M; \Ad(\mc{K}))$ of a principal $K$-bundle $\mc{K}$ over $M$, where $\Ad(\mc{K})$ denotes the corresponding adjoint bundle. Suppose that the Lie group $P$ acts smoothly on $\mcK$ by automorphisms of the principal bundle $\mc{K}$. This induces a smooth action of $P$ on the infinite-dimensional Lie group $\mc{G}$. Their main result is:
	\begin{theorem}(\cite[Theorem 7.19]{BasNeeb_PE_reps_I}):\label{thm: basneeb}\\
		Let $(\overline{\rho}, \mcH)$ be a projective unitary representation of $\mc{G} \rtimes P$ which has a dense set of smooth rays and is of positive energy at the cone $\mcC \subseteq \p$. If the cone $\mcC$ has no fixed points in $M$, then there exists a $1$-dimensional $P$-equivariantly embedded submanifold $S\subseteq M$ s.t.\ on the connected component $\mc{G}_0$ of the identity, the projective representation $\overline{\rho}$ factors through the restriction map $r : \mc{G}_0 \to \Gamma_{\mrm{c}}(S; \Ad(\mc{K}))$.
	\end{theorem}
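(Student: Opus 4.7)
The plan is to use the positive-energy condition at $\mcC$ to force $d\overline{\rho}$ to be localized along flow lines of the vector fields on $M$ induced by elements of $\mcC$, then intersect these localizations across $\mcC$ to produce a $1$-dimensional $P$-invariant submanifold $S\subseteq M$, and finally integrate the resulting Lie-algebra-level factorization to a group-level factorization on the connected component $\mc{G}_0$. For each $p\in\mcC$, let $X_p$ denote the induced vector field on $M$, let $H_p\geq 0$ be a positive-energy lift of $t\mapsto\overline{\rho}(e^{tp})$, and let $\omega$ be the projective cocycle. Infinitesimal covariance gives
\[
[H_p,\, d\overline{\rho}(\xi)] \;=\; i\, d\overline{\rho}(\mc{L}_{X_p}\xi) \;+\; \omega(p,\xi)\cdot\id,
\]
so that $t\mapsto e^{itH_p}\,d\overline{\rho}(\xi)\,e^{-itH_p}$ agrees with $d\overline{\rho}\bigl(\alpha_{\exp(tp)}\xi\bigr)$ up to a scalar phase.

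The technical heart is a \emph{localization lemma}: since $H_p\geq 0$, for any smooth rays $\psi,\psi'$ the matrix coefficient $t\mapsto\langle\psi,\,d\overline{\rho}(\alpha_{\exp(tp)}\xi)\psi'\rangle$ extends to a bounded holomorphic function on the closed upper half-plane. I would then show that, if $\xi\in\g$ is compactly supported and its support escapes every compact set under $\Phi^{X_p}_t$ as $t\to+\infty$, the boundary decay combined with a Phragm\'en--Lindel\"of argument forces this matrix coefficient to vanish identically. Using the density of smooth rays this yields $d\overline{\rho}(\xi)=0$, localizing the support of $d\overline{\rho}$ to the flow lines of $X_p$.

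Given any $x\in M$, the no-fixed-points hypothesis supplies some $p\in\mcC$ with $X_p(x)\neq 0$; in local coordinates making $X_p=\partial_t$, sections supported in a small transverse slice escape under $\Phi^{X_p}_t$ and hence lie in $\ker d\overline{\rho}$. Intersecting these constraints over $\mcC$ produces a closed $P$-invariant locus $S\subseteq M$ on which the representation is supported. A rigidity step — a higher-dimensional transverse spread would yield a positive-energy representation of an abelian current algebra, which Stone's theorem forces to be trivial — cuts $S$ down to exactly one dimension and equips it with the structure of a $P$-equivariantly embedded submanifold. The Lie-algebra factorization through restriction to $S$ then integrates to the group-level factorization on $\mc{G}_0$ since $\mc{G}_0$ is generated by exponentials of compactly supported Lie-algebra elements.

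I expect the main obstacle to be the localization lemma: converting spectral positivity of $H_p$ into a genuine vanishing statement for $d\overline{\rho}(\xi)$ requires careful handling of unbounded operators on the projective representation, control of the domain of smooth rays, and a clean execution of the Hardy/Phragm\'en--Lindel\"of argument in the presence of the cocycle $\omega$. A secondary subtlety is the geometric upgrade from the closed $P$-invariant locus to an actual $1$-dimensional embedded submanifold, which requires a regularity argument exploiting the smooth $P$-action to rule out pathological support configurations.
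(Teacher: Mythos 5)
This theorem is cited from \cite[Theorem 7.19]{BasNeeb_PE_reps_I} and not proven in the present paper, so the relevant comparison is with the actual Janssens--Neeb argument, whose central mechanism is also echoed in this paper's \Fref{lem: CS-qpe}.

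The main gap is in your localization lemma. You propose extending $t \mapsto \langle\psi, d\overline{\rho}(\alpha_{\exp(tp)}\xi)\psi'\rangle$ to a bounded holomorphic function on the upper half-plane and applying Phragm\'en--Lindel\"of when the support of $\xi$ escapes under the flow of $X_p$. But this matrix coefficient is generically neither bounded nor decaying on the real line. Already in the simplest case (a Heisenberg/free-boson representation on $\R$), translating a test function by the flow does not send the smeared field operator to zero; the matrix coefficient stays of order one. The cocycle term $t\,\omega(p,\xi)$ further produces linear growth, so boundedness fails at the outset and the Phragm\'en--Lindel\"of conclusion is unavailable. The actual localization in Janssens--Neeb is algebraic rather than complex-analytic: from positive energy at $p$ one derives a Cauchy--Schwarz inequality of exactly the type in this paper's \Fref{lem: CS-qpe}, which turns positivity of the energy into a positivity and degeneracy constraint on the quadratic form $\eta \mapsto \omega([p,\eta],\eta)$. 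One then combines this with the classification of continuous $2$-cocycles on gauge algebras (they are given by closed distributional $1$-currents on $M$) and shows that the constraint forces the supporting current to concentrate on a $1$-dimensional, $P$-equivariant submanifold when $\mcC$ has no fixed points.

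Your ``rigidity step'' is also wrong as stated: positive-energy representations of abelian current algebras are not forced to be trivial by Stone's theorem. The free boson is a counterexample; triviality requires the cocycle to vanish on the abelian piece, which is exactly what must be established, not assumed. The one-dimensionality of $S$ comes from the structure of the localized cocycle current under the $\mcC$-flow, not from a general no-go for abelian current algebras.
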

	
	\noindent
	Thus, \textit{if} there are no fixed points in $M$ for $\mcC$, then the problem of classifying the projective unitary positive energy representations of $\mc{G} \rtimes P$ is essentially reduced to the one-dimensional case, which has been extensively studied, see for example \cite{Segal_Loop_Groups, Wassermann_fusion_PE_reps, Tanimoto_ground_state_reps, Kac_book, Kac_bombay, Wallach_loop_gps_diff_circle, Toledano_PE_reps_non_simply_ctd}. Moreover, if there are no one-dimensional $P$-equivariantly embedded submanifolds in $M$, one is essentially reduced to the case where $\overline{\rho}$ factors through the germs at the fixed point set $\Sigma \subseteq M$ of the cone $\mcC \subseteq \p$. In the present paper, we address the setting where fixed points \textit{do} exist and where $\overline{\rho}$ actually factors through the germs at a \textit{single} fixed point.\\
	
	\noindent
	Thus, let $a \in M$ be a fixed point of the $P$-action on $M$ and let $V := T_a(M)$. If a smooth projective unitary representation $\overline{\rho}$ of $\mc{G}$ factors through the germs at $a \in M$, then the continuity of $\overline{\rho}$ implies that it must further factor through the $\infty$-jets $J^\infty_a(\Ad(\mc{K})) \cong J_0^\infty(V; K) = G$ at $a \in M$, as is shown in \Fref{sec: germs_to_jets} of the appendix. This brings us to groups of jets and motivates the study of smooth projective unitary representations of $G \rtimes_\alpha P$. Clearly, any smooth projective unitary representation of $G\rtimes_\alpha P$ defines one of $\mc{G} \rtimes P$ via the jet homomorphism $j^\infty_a : \mc{G} \to J^\infty_a(\Ad(\mc{K}))\cong G$. In this way, the present paper contributes to the understanding of positive energy and KMS-representations of gauge groups. \\

	\noindent
	In \cite{Tobias_typeI_factor_reps}, KMS-representations were very recently studied in the context of finite-dimensional Lie groups, leading to complete characterization of such representations that generate a factor of type $\mrm{I}$. In relation to the unitary representation theory of gauge groups, let us also mention the papers \cite{Gelfand_Graev_energy_repr_1, Albeverio_energy_repr_1, Parthasarathy_Schmidt_en_reps} and \cite{Ismagilov}, in which unitary representations of gauge groups $C^\infty_{\mrm{c}}(M; K)$ are constructed which are non-local in the sense that they do not factor through the restriction map $C^\infty_{\mrm{c}}(M; K) \to C^\infty_{\mrm{c}}(N; K)$ for some proper submanifold $N \subseteq M$. When $\dim(M) \geq 3$, these are irreducible (\cite{Wallach_irred_energy_rep} and \cite{Albeverio_irred_energy_repr}). They are also considered in \cite{Albeverio_Gordina_equivalence_en_brown_reprs} and \cite{Albeverio_book_noncomm_distr}. Unitary representations of groups of jets have also been considered in \cite{Gelfand_Graev_reps_jets} and \cite{Alb_jet_gps_reps}.
	
	\subsubsection*{Structure of the paper}
	
	\noindent
	The paper is divided in two parts. In \Fref{part: reps}, we introduce both the (generalized) positive energy and the KMS condition. We start in \Fref{sec: proj_reprs_ce} by briefly recalling the relation between continuous projective unitary representations, central extensions and the second Lie algebra cohomology $H^2_{\ct}(\g, \R)$. We move on to consider positive energy representations in \Fref{sec: pe_reps} and discuss some of their properties. In \Fref{sec: quas_pe} we relax the positive energy condition and introduce the so-called \textit{generalized positive energy} condition. We show that for a projective unitary representation which of is generalized positive energy, its kernel is related to a particular quadratic form canonically associated to the corresponding class in $H^2_{\mathrm{ct}}(\g, \R)$. This observation will play a key role in \Fref{part: jets}. In \Fref{sec: KMS_reps}, we briefly recall the modular theory of von Neumann algebras and then proceed to define KMS representations. We consider a number of examples and discuss some of their properties, in particular making the important observation that KMS representations give rise to generalized positive energy ones. To some extent, this unifies the positive energy and KMS conditions, allowing their simultaneous study. We remark also that \Fref{part: reps} is formulated in the general context of possibly infinite-dimensional locally convex Lie groups, which is the appropriate context within the larger program that studies the projective unitary representations of gauge groups.\\
	
	\noindent
	In \Fref{part: jets}, we return to the Fr\'echet-Lie group $G := J^\infty_a(V; K)$ of $\infty$-jets at $0 \in V$. After fixing our notation, we discuss in \Fref{sec: normal_form} a normal-form problem for the $\p$-action on $\g = J^\infty_a(V; \mfr{k})$. Using the general observations made in \Fref{part: reps} together with the normal-form results obtained in \Fref{sec: normal_form}, we proceed in \Fref{sec: restr_proj_reps} with the study of (generalized) positive energy representations of the Lie algebra $\g\rtimes_D \p$, where $D : \p \to \der(\g)$ denotes the $\p$-action on $\g$ corresponding to $\alpha$.
	
	\subsubsection*{Overview of main results}
	
	\noindent
	Let us describe the main results of \Fref{sec: restr_proj_reps}. We will first need to introduce some notation. We write $\X_I$ for the Lie algebra of formal vector fields on $V$ vanishing at the origin. The $\p$-action $D$ splits into a horizontal and a vertical part according to $D(p) = -\mc{L}_{\bm{v}(p)} + \ad_{\sigma(p)}$, for some Lie algebra homomorphism $\bm{v} : \p \to \X_I^{\op}$ and a linear map $\sigma : \p \to \g$ satisfying the Maurer-Cartan equation
	\[ - \mc{L}_{\bm{v}(p_1)}\sigma(p_2) + \mc{L}_{\bm{v}(p_2)}\sigma(p_1) - \sigma([p_1, p_2]) + [\sigma(p_1), \sigma(p_2)] = 0, \qquad \forall p_1, p_2 \in \p.  \]
	For any $p \in \p$, the formal vector field $\bm{v}(p)$ splits further as $\bm{v}(p) = \vl{p} + \vho{p}$, into its linearization $\vl{p}$ and its higher order part $\vho{p}$, which is a formal vector field on $V$ vanishing up to first order at the origin. Let $\sigma_0(p)$ be the constant part of the formal power series $\sigma(p)$. Let $\Sigma_p \subseteq \C$ denote the additive subsemigroup of $\C$ generated by $\Spec(\vl{p})$. Let $V_{\mrm{c}}^\C(p)$ denote the span in $V_\C$ of all generalized eigenspaces of $\vl{p}$ corresponding to eigenvalues with zero real part. Set $V_{\mrm{c}}(p) := V_{\mrm{c}}^\C(p) \cap V$. If $\mfr{C} \subseteq \p$ is a subset, define $V_{\mrm{c}}(\mfr{C}) := \bigcap_{p \in \mfr{C}} V_{\mrm{c}}(p)$, which we call the \squotes{center subspace of $V$ associated to $\mfr{C}$}, in analogy with the center manifold of a fixed point of a dynamical system. Let $V_{\mrm{c}}(\mfr{C})^\perp \subseteq V^\ast$ be its annihilator in $V^\ast$. If $\olpi$ is a continuous projective unitary representation of $\g \rtimes_D \p$, let $\mfr{C}(\olpi)$ be the set of all points $p \in \p$ for which $\olpi$ is of generalized positive energy at $p$. Write $R := \R\llbracket V^\ast\rrbracket := \prod_{n=0}^\infty P^n(V)$ for the ring of formal power series on $V$, where $P^n(V)$ denotes the set of degree-$n$ homogeneous polynomials on $V$.\\

	\noindent
	The first main result concerns positive energy representations. It states that unless the spectrum of $\vl{p}$ and $\sigma_0(p)$ happens to intersect non-trivially, any smooth projective unitary representation $\overline{\rho}$ of $G \rtimes_\alpha P$ which is of positive energy at $p \in \p$ factors through the $2$-jets $J_0^2(V; K) \rtimes_\alpha P$:

	\begin{restatable*}{theorem}{drhofactpecase}\label{thm: fact_pe_case}
		Let $\overline{\rho}$ be a smooth projective unitary representation of $G \rtimes_\alpha P$ which is of positive energy at $p \in \p$. Assume that $\Spec(\ad_{\sigma_0(p)}) \cap \Spec(\vl{p}) = \emptyset$. Then $\overline{\rho}$ factors through $J_0^2(V; K) \rtimes_\alpha P$. Moreover the image of $-\mc{L}_{\vl{p}} + \ad_{\sigma_0(p)}$ in $P^2(V) \otimes \mfr{k} \subseteq J_0^2(V; K)$ is contained in $\ker \overline{\rho}$.
	\end{restatable*}

	\noindent
	The second main result determines restrictions imposed by the generalized positive energy condition. If $p \in \mfr{C}(\olpi)$, then unless possibly when the \dquotes{non-resonance condition} $\Spec(\ad_{\sigma_0(p)}) \cap \Sigma_p = \emptyset$ is violated, it suffices to consider the case where all eigenvalues of $\vl{p}$ are purely imaginary. The precise statement is:

	\begin{restatable*}{theorem}{thmfactresultspectralcondition}\label{thm: fact_result_spectral_condition}
		Let $\olpi$ be a continuous projective unitary representation of $\g \rtimes_D \p$. Let $\mfr{C} \subseteq \mfr{C}(\olpi)$. Assume that $\Spec(\ad_{\sigma_0(p)}) \cap \Sigma_p = \emptyset$ for all $p \in \mfr{C}$. Then $RV_{\mrm{c}}(\mfr{C})^\perp\otimes \mfr{k} \subseteq \ker \olpi$, and hence $\restr{\olpi}{\g}$ factors through $\R\llbracket V_{\mrm{c}}(\mfr{C})^\ast \rrbracket \otimes \mfr{k}$. 
	\end{restatable*}

	\noindent
	Since $\R\llbracket V_{\mrm{c}}(\mfr{C})^\ast \rrbracket \otimes \mfr{k} = \mfr{k}$ whenever $V_{\mrm{c}}(\mfr{C}) = \{0\}$, \Fref{thm: fact_result_spectral_condition} in particular gives sufficient conditions for $\olpi$ to factor through $\mfr{k}$, that depend only on the spectrum of $\sigma_0$ and $\vl{p}$. For the third main result, we consider the special case where $\p$ is non-compact and simple:

	\begin{restatable*}{theorem}{thmsemisimplecone}\label{thm: semisimple_cone_must_be_pointed}
		Assume that $\p$ is non-compact and simple. Suppose that $\bm{v}_{\mrm{l}}$ defines a non-trivial irreducible $\p$-representation on $V$. Let $\olpi$ be a continuous projective unitary representation of $\g \rtimes_D \p$. Let $\mfr{C} \subseteq \mfr{C}(\olpi)$ be a $P$-invariant convex cone. Either $\mfr{C}$ is pointed or $\restr{\olpi}{\g}$ factors through $\mfr{k}$.
	\end{restatable*}

	\begin{remark}
		If $\overline{\rho}$ is a smooth projective unitary representation of $G \rtimes_\alpha P$ which is of generalized positive energy at the cone $\mc{C} \subseteq \p$, then its derived representation $d\overline{\rho}$ on the space of smooth vectors $\mcH_\rho^\infty$ is so, too. Moreover, as we shall see in \Fref{lem: exp_restricts_to_diffeo} below, the exponential map of $G = J^\infty_0(V; K)$ restricts to a diffeomorphism from the pro-nilpotent ideal $\ker\big(\ev_0 : J^\infty_0(V; \mfr{k}) \to \mfr{k}\big)$ onto $\ker\big(\ev_0 : J^\infty_0(V; K) \to K\big)$. Thus, the above results all have immediate analogous consequences on the group level.
	\end{remark}

	\subsubsection*{Acknowledgements}
	This research is supported by the NWO grant 639.032.734 \dquotes{Cohomology and representation theory of infinite-dimensional Lie groups}. I am grateful to be given the opportunity to pursue a PhD in mathematics, which was given to me by my supervisor Bas Janssens, whom I would also like to thank for his guidance during the process. I am also grateful for helpful discussions with Karl-Hermann Neeb, Tobias Diez and Lukas Miaskiwskyi. Finally, I am grateful for the anonymous referee, who has provided various helpful suggestions.

	\part{Positive Energy and KMS Representations}\label{part: reps}
	
	\section{Projective Representations and Central Extensions}\label{sec: proj_reprs_ce}
	
	\noindent
	In the following, the category of infinite-dimensional manifolds and smooth maps between them is defined in the Michal-Bastiani sense \cite{bastiani, milnor_inf_lie, neeb_towards_lie}. This also defines the notion of a locally convex Lie group. Throughout the section, let $G$ denote a locally convex Lie group which is \textit{regular} in the sense of \cite[Def.\ II.5.2]{neeb_towards_lie}. Then $G$ in particular admits an exponential map $\exp_G : \g \to G$, see e.g.\ \cite[Rem.\ II.5.3]{neeb_towards_lie}.

	\begin{definition}\label{def: smooth_reps}~
		\begin{itemize}
			\item A (projective) unitary representation of $G$ is said to be continuous if it is so w.r.t. the strong operator topology on $\U(\mcH_\rho)$.	
			\item Let $(\rho, \mcH_\rho)$ be a unitary representation of $G$ on $\mcH_\rho$. A vector $\psi \in \mcH_\rho$ is called \textit{smooth} if the orbit map $G \to \mcH_\rho, \; g \mapsto \rho(g)\psi$ is smooth. Denote by $\mcH_\rho^\infty\subseteq \mcH_\rho$ the subspace of smooth vectors. The representation $\rho$ is called smooth if $\mcH_\rho^\infty$ is dense in $\mcH_\rho$. 
			\item Let $(\overline{\rho}, \mcH_\rho)$ be a projective unitary representation of $G$ on $\mcH_\rho$. A ray $[\psi] \in \mrm{P}(\mcH_\rho)$ is called \textit{smooth} if the orbit map $G \to \mrm{P}(\mcH_\rho), \; g \mapsto \overline{\rho}(g)[\psi]$ is smooth. Denote by $\mrm{P}(\mcH_\rho)^\infty$ the subspace of smooth rays. The projective representation $\overline{\rho}$ is called smooth if $\mrm{P}(\mcH_\rho)^\infty$ is dense in $\mrm{P}(\mcH_\rho)$. 
		\end{itemize}
	\end{definition}

	\begin{definition}
		If $\mcD$ is a complex vector space, denote by $\mc{L}(\mcD)$ the Lie algebra of linear operators on $\mcD$. \\
		If $\mc{D}$ is a pre-Hilbert space with Hilbert space completion $\mcH$, we also write 
		\[\mc{L}^\dagger(\mc{D}) := \set{ X \in \mc{L}(\mc{D}) \st \mcD \subseteq \dom(X^\ast) \text{ and } X^\ast \mc{D} \subseteq \mc{D}}.  \] 
		For $X \in \mc{L}^\dagger(\mcD)$ set $X^\dagger := \restr{X^\ast}{\mc{D}}$. Then $(X^\dagger)^\dagger = X$ and $(\--)^\dagger$ endows $\mc{L}^\dagger(\mc{D})$ with an involution \cite[Ch. 2]{Schmudgen_book}.\\
		If $\mcD$ is a pre-Hilbert space, define the Lie algebra 
		$$\mfr{u}(\mcD) := \set{X \in \mc{L}^\dagger(\mc{D}) \st X^\dagger + X = 0}.$$
	\end{definition}

	\begin{definition}
		Let $\mcD$ be a complex pre-Hilbert space.
		\begin{itemize}
			\item A \textit{unitary representation} of the Lie algebra $\g$ on $\mcD$ is a Lie algebra homomorphism $\pi : \g \to \mfr{u}(\mcD)$. A \textit{projective unitary representation} is a Lie algebra homomorphism $\olpi : \g \to \pu(\mcD) := \mfr{u}(\mcD)/ i\R I$.
			\item A unitary representation $\pi$ of $\g$ is called continuous if $\xi \mapsto \pi(\xi)\psi$ is continuous for any $\psi \in \mcD$. A projective unitary representation $\olpi$ is continuous if $\xi \mapsto \pi(\xi)[\psi]$ is continuous for every $[\psi] \in \mrm{P}(\mcD)$. 
		\end{itemize}
	\end{definition}

	\begin{remark}
		Any unitary $G$-representation on $\mcH_\rho$ defines a unitary $\g$-representation $d\rho : \g \to \mfr{u}(\mcH_\rho^\infty)$ on $\mcH_\rho^\infty$ by $d\rho(\xi)\psi := \restr{d\over dt}{t=0}\rho(e^{t\xi})\psi$. We will always consider elements of $d\rho(\g)$ as unbounded operators defined on the invariant domain $\mcH_\rho^\infty$. Projective unitary $G$-representations similarly define projective unitary $\g$-representations on $\mrm{P}(\mcH_\rho^\infty)$ by differentiation at the identity. If $G$ is finite-dimensional, then $\mcH_\rho^\infty$ is dense in $\mcH_\rho$ for any continuous unitary representation $\rho$ of $G$, by a result of G\r{a}rding \cite[Prop.\ 4.4.1.1]{Warner_book_1}.\\
	\end{remark}

	\noindent
	A continuous projective unitary representation $\overline{\rho} : G \to \PU(\mcH_\rho)$ is equivalently given by a continuous central $\T$-extension $\circled{G}$ together with a unitary representation $\rho : \circled{G} \to \U(\mcH_\rho)$ which satisfies $\rho(z) = zI$ for $z$ in the central $\T$ component. Of course, $\circled{G}$ is the pull-back of the central $\T$-extension $\U(\mcH_\rho) \to \PU(\mcH_\rho)$ along $\overline{\rho}$. We say that $\rho$ \textit{lifts} $\overline{\rho}$. Suppose $\overline{\rho}_1$ and $\overline{\rho}_2$ are two projective unitary representations, inducing by pull-back the lifts $\rho_1 : \circled{G}_1 \to \U(\mcH_{\rho_1})$ and $\rho_2 : \circled{G}_2 \to \U(\mcH_{\rho_1})$ of $\overline{\rho}_1$ and $\overline{\rho}_2$, respectively. Then $\overline{\rho}_1$ and $\overline{\rho}_2$ are unitarily equivalent if and only if there is an isomorphism $\Phi : \circled{G}_1 \to \circled{G}_2$ of central $G$-extensions and a unitary $U : \mcH_{\rho_1} \to \mcH_{\rho_2}$ such that $\rho_2(\Phi(x)) = U\rho_1(x)U^{-1}$ for all $x \in \circled{G}_1$. Analogously, any projective unitary $\g$-representation $\olpi$ with domain $\mcD$ can be lifted to a unitary representation $\pi : \circled{\g} \to \mfr{u}(\mcD)$ of some central $\R$-extension $\circled{\g}$ of $\g$. The continuous central extensions of $\g$ by $\R$ are up to isomorphism classified by $H_{\ct}^2(\g, \R)$, the continuous second Lie algebra cohomology with trivial coefficients \cite[Def.\ 6.2, Prop.\ 6.3]{BasNeeb_ProjReps}. Thus, to study the projective unitary representations of $\g$ up to equivalence, one may first determine $H_{\ct}^2(\g, \R)$, choose for each class $[\omega]\in H_{\ct}^2(\g, \R)$ a representative $\omega$ and then proceed to determine the equivalence classes of unitary representations $\pi$ of the central extension $\R \oplus_\omega \g$ satisfying $\pi(1, 0) = iI$. We will also write $\bm{c} := (1,0) \in \R \oplus_\omega \g$ for the central generator.\\
	
	\begin{remark}
		In the literature, one encounters the notion of the \textit{level} of a unitary representation $\pi$ of $\R \bm{c} \oplus_\omega \g$, which is the number $l \in \R$ such that $\pi(\bm{c}) = ilI$ (see e.g.\ \cite[sec.\ 9.3]{Segal_Loop_Groups}). Let us briefly clarify how such representations are included in the program outlined above, even though $\pi(\bm{c})= iI$ is always assumed. Simply notice that such a representation of level $l$ factors through the map $\R\bm{c} \oplus_\omega \g \to \R\bm{c} \oplus_{l\cdot \omega} \g$ induced by multiplication by $l$ on the central factor. The corresponding representation $\pi_2$ of $\R\bm{c} \oplus_{l\cdot \omega}\g$ satisfies $\pi_2(\bm{c}) = iI$. Notice that $\R\bm{c} \oplus_\omega \g \to \R \oplus_{l\cdot \omega} \g$ is an isomorphism of Lie algebras whenever $l\neq 0$, but \textit{not} as central extensions unless $l=1$, because a morphism of central extensions is required to be the identity on the central component. For $1 \neq l\in \R$, the cocycles $\omega$ and $l\cdot \omega$ are \textit{not} equivalent unless $[\omega] = 0$ in $H_{\ct}^2(\g,\R)$.
	\end{remark}

	\begin{remark}\label{rem: smooth_lift}
		If a projective unitary representation $\overline{\rho}$ of $G$ is smooth, then the corresponding central $\T$-extension $\circled{G}$ is again a locally convex Lie group \cite[Thm.\ 4.3]{BasNeeb_ProjReps}. Moreover, there is a similar correspondence between smooth projective unitary representations $\overline{\rho}$ of $G$ and their lifts $\rho : \circled{G} \to \U(\mcH_\rho)$, which are then again smooth \cite[Cor.\ 4.5, Thm.\ 7.3]{BasNeeb_ProjReps}. We furthermore have $\mrm{P}(\mcH_\rho)^\infty = \mrm{P}(\mcH_\rho^\infty)$ by \cite[Thm.\ 4.3]{BasNeeb_ProjReps}.
	\end{remark}

	\section{Positive Energy Representations}\label{sec: pe_reps}
	\noindent
	Let us introduce the class of positive energy representations. After defining the notion, some immediate consequences are considered that will be relevant in \Fref{part: jets}. Let $G$ be a regular locally convex Lie group with Lie algebra $\g$. If $c\in \R$, $\mc{D}$ is a pre-Hilbert space and $X \in \mc{L}^\dagger(\mc{D})$ satisfies $X^\dagger = X$, we write $X \geq c$ if $\langle \psi, X \psi\rangle \geq c \norm{\psi}^2$ for every $\psi \in \mcD$.
	
	\newpage

	\begin{definition}\label{def: smooth_pos_energy}
		Let $\mcC \subseteq \g$ be a convex cone and $\mcD$ be a pre-Hilbert space.
		\begin{itemize}
			\item A continuous unitary representation $\pi$ of $\g$ on $\mcD$ is said to be of \textit{positive energy} (p.e.) at $\xi\in \g$ if $-i\pi(\xi) \geq 0$. It is of p.e.\ at $\mcC$ if it is of p.e.\ at every $\xi \in \mcC$. Write $\mcC(\pi) = \set{ \xi \in \g \st \pi \text{ is of p.e.\ at }\xi}$.
			\item Let $\olpi$ be a continuous projective unitary representation of $\g$ on $\mcD$ with lift $\pi : \circled{\g} \to \mfr{u}(\mcD)$. Then $\olpi$ is of p.e.\ at $\xi$ if there exists $\circled{\xi} \in \mcC(\pi)\subseteq \circled{\g}$ covering $\xi$. Write $\mcD(\olpi)$ for the set of all such $\xi$. We say that $\olpi$ is of p.e.\ at $\mcC$ if $\mcC \subseteq \mcC(\olpi)$.
			\item A smooth (projective) unitary representation of $G$ on $\mcH_\rho$ is said to be of p.e.\ at $\xi \in \g$ if the corresponding derived (projective) unitary representation of $\g$ on $\mcH_\rho^\infty$ is so. It is said to be of p.e.\ at $\mcC$ if it is so at every $\xi \in \mcC$.
		\end{itemize}
	\end{definition}

	\begin{remark}\label{rem: cone_P-invariant}
		Let $\rho$ be a smooth unitary representation of $G$. Then $\mcC := \mcC(d\rho)$ is always a closed, $G$-invariant convex cone. Consequently, $\mcC \cap -\mcC$ and $\mcC - \mcC$ are ideals in $\g$, called the \textit{edge} and \textit{span} of $\mc{C}$, respectively. If $\xi \in \mcC \cap -\mcC$ then $\xi \in \ker d\rho$, so by passing to the quotient $\g/\ker d\rho$ one may always achieve that $\mcC$ is pointed.\\
	\end{remark}

	\noindent
	Next, we define the notion of a semibounded representation.
	
	\begin{definition}\label{def: mom_semibounded}
		Let $\rho : G \to \U(\mcH)$ be a smooth unitary $G$-representation. Define its \textit{momentum set} by:
		\[I_\rho := \conv\set{\xi \mapsto \langle \psi, -id\rho(\xi)\psi\rangle \st \psi \in \mcH_\rho^\infty, \; \norm{\psi} = 1} \subseteq \g^\ast.\]
		The representation $\rho$ is said to be \textit{semibounded} if $W_\rho$ contains an interior point, where
		\[ W_\rho := \set{ \xi \in \g \st \inf \Spec(-i \overline{d\rho(\xi)}) > - \infty}.\]
	\end{definition}

	\begin{remark}
		For finite-dimensional Lie groups, the class of semibounded representations has been subject to detailed study in \cite{Neeb_book_hol_conv}. In particular, they are highest weight representations \cite[Def.\ X.2.9, Thm.\ X.3.9]{Neeb_book_hol_conv}. For a consideration of semibounded representations in the context of infinite-dimensional Lie groups, we refer to \cite{neeb_semibounded_reps} and \cite{neeb_semibounded_inv_cones}.
	\end{remark}

	\noindent
	In the finite-dimensional context, the semiboundedness condition turns out to be extremely restrictive, which in turn has consequences for arbitrary positive energy representations. The following result, \Fref{thm: pe_semibdd_core}, is based on the results in the monograph \cite{Neeb_book_hol_conv}.
	
	\begin{theorem}\label{thm: pe_semibdd_core}
		Assume that $G$ is connected and locally exponential. Take $\bm{d} \in \g$ and let $\mfr{a} = \langle \bm{d}\rangle \triangleleft \g$ be the closed ideal in $\g$ generated by $\bm{d}$. Assume that $\dim(\mfr{a}) < \infty$ and that $\mfr{a}$ is stable, in the sense that $\Ad_G(\mfr{a}) \subseteq \mfr{a}$. Let $A \triangleleft G$ be a connected normal Lie subgroup integrating $\mfr{a}$. Let $(\rho, \mcH_\rho)$ be a smooth unitary $G$-representation which is of p.e.\ at $\bm{d} \in \g$. Write $\h := \mfr{a} /\ker d\rho$. The following assertions hold:
		\begin{enumerate}
			\item $\mfr{a} = \mcC - \mcC$, where $\mcC \subseteq \g$ is the closed $G$-invariant convex cone in $\g$ generated by $\bm{d}$.
			\item The closure of $\mcC + \ker d\rho$ in $\h$ is a pointed, closed, generating and $G$-invariant convex cone. Thus $\mcC \cap -\mcC \subseteq \ker d\rho$.
			\item $\restr{\rho}{A}$ is semibounded.
			\item Let $\h_n$ denote the maximal nilpotent ideal of $\h$. Then $[\h_n, \h_n] \subseteq \z(\h)$. Moreover, there exists a reductive Lie algebra $\mfr{l}$ such that $\h \cong \h_n \rtimes \mfr{l}$.
			\item Let $\mfr{a}_n$ denote the maximal nilpotent ideal of $\mfr{a}$. Then $[\mfr{a}, [\mfr{a}_n, \mfr{a}_n]] \subseteq \ker d\rho$.
		\end{enumerate}
	\end{theorem}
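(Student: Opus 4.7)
My plan is to reduce all five claims to the finite-dimensional Lie algebra $\h$ and then invoke Neeb's structure theorem for admissible Lie algebras (those possessing a pointed generating invariant convex cone) from Chapter VII of \cite{Neeb_book_hol_conv}.

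For (1), the inclusion $\mcC \subseteq \mfr{a}$ holds because $\mfr{a}$ is itself a closed $\Ad_G$-invariant convex cone containing $\bm{d}$, so $\mcC - \mcC \subseteq \mfr{a}$; conversely $\mcC - \mcC$ is a closed $\Ad_G$-invariant subspace containing $\bm{d}$, hence contains the ideal $\mfr{a}$. For (2), I would first show $\mcC \subseteq \mcC(d\rho)$: the equivariance $d\rho(\Ad(g)\bm{d}) = \rho(g)d\rho(\bm{d})\rho(g)^{-1}$ together with $-id\rho(\bm{d}) \geq 0$ yields $-id\rho(\Ad(g)\bm{d}) \geq 0$, and convex combinations plus a closure argument (using that for fixed smooth $\psi$ the map $\xi \mapsto \langle \psi, -id\rho(\xi)\psi\rangle$ is continuous on $\g$) propagate positivity to all of $\mcC$. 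Let $q : \mfr{a} \to \h$ be the quotient by $\mfr{a} \cap \ker d\rho$ and $\pi_\h$ the induced faithful representation of $\h$. Then $\overline{q(\mcC)}$ is a closed $\Inn(\h)$-invariant convex cone contained in $\mcC(\pi_\h)$; it is generating because $q(\mcC) - q(\mcC) = q(\mfr{a}) = \h$ by (1); and it is pointed because $\xi \in \overline{q(\mcC)} \cap -\overline{q(\mcC)}$ forces both $-i\pi_\h(\xi) \geq 0$ and $\leq 0$, so $\pi_\h(\xi) = 0$, and faithfulness gives $\xi = 0$. The same argument before quotienting yields $\mcC \cap -\mcC \subseteq \ker d\rho$.

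For (3), the cone $\mcC$ is a closed generating convex cone in the finite-dimensional space $\mfr{a}$, so it has non-empty interior there. Any interior point $\xi$ satisfies $-id\rho(\xi) \geq 0$ and in particular $\inf\Spec(-i\overline{d\rho(\xi)}) > -\infty$, so $\mfr{a} \cap W_\rho$ has non-empty interior in $\mfr{a}$, witnessing semiboundedness of $\restr{\rho}{A}$. Claim (4) is the crux and the main obstacle: by (2) the finite-dimensional Lie algebra $\h$ admits a pointed, generating, closed, $\Inn(\h)$-invariant convex cone, i.e.\ $\h$ is \emph{admissible} in the sense of \cite{Neeb_book_hol_conv}. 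The structure theorem for admissible Lie algebras (Chapter VII of loc.\ cit.) directly yields both $[\h_n, \h_n] \subseteq \z(\h)$ and the existence of a reductive Levi complement $\mfr{l}$ with $\h \cong \h_n \rtimes \mfr{l}$. The delicate point will be to locate and cite the correct statement in its sharpest form, and to verify that the invariance of the cone in (2) descends from the $G$-invariance to the intrinsic $\Inn(\h)$-invariance required by the theorem (which uses that $A$ is connected and normal, so that $\Ad_G$ induces $\Inn(\h)$ after quotienting).

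Finally, (5) is a diagram chase using (4). The image $q(\mfr{a}_n)$ is a nilpotent ideal of $\h$, hence contained in the maximal nilpotent ideal $\h_n$, so $q([\mfr{a}_n, \mfr{a}_n]) \subseteq [\h_n, \h_n] \subseteq \z(\h)$ and therefore $q([\mfr{a}, [\mfr{a}_n, \mfr{a}_n]]) \subseteq [\h, \z(\h)] = 0$. This gives $[\mfr{a}, [\mfr{a}_n, \mfr{a}_n]] \subseteq \ker q = \mfr{a} \cap \ker d\rho \subseteq \ker d\rho$, concluding the proof.
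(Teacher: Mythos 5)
Your proposal is correct and follows essentially the same route as the paper: derive the pointed generating invariant cone in $\h$ from the p.e.\ condition, conclude $\h$ is admissible in Neeb's sense, and apply the structure theory of admissible Lie algebras from Chapter VII of \cite{Neeb_book_hol_conv}. The only minor imprecision is that admissibility is technically defined via a pointed generating invariant cone in $\h \oplus \R$ rather than in $\h$ itself (the paper passes through $(\overline{\mcC + \ker d\rho}) \oplus \R_{\geq 0}$ and invokes Lem.\ VII.3.1), but this is an immediate consequence of what you established, and you already flagged the need to pin down the exact citations; the paper's chain is Thm.\ VII.3.10 for $[\h_n,\h_n] \subseteq \z(\h)$ and the compactly embedded Cartan, then Lem.\ VII.2.26(iv) for the reductive complement.
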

	\begin{proof}
		 For the first point, let $\mfr{a}^\prime$ be the closure of $\mcC - \mcC$ in $\g$. As $\mfr{a}^\prime$ is a closed ideal in $\g$ containing $\bm{d}$, we have $\mfr{a} \subseteq \mfr{a}^\prime$. On the other hand, we know that $\Ad_G(\bm{d}) \subseteq \mfr{a}$ because $\mfr{a}$ is stable. Thus $\mcC \subseteq \mfr{a}$ and hence $\mfr{a}^\prime \subseteq \mfr{a}$. So $\mfr{a}^\prime = \mfr{a}$. In particular $\dim(\mfr{a}^\prime) < \infty$ and so $\mcC - \mcC = \mfr{a}^\prime = \mfr{a}$. Next we prove the second statement. Take $\xi \in (\overline{\mcC + \ker d\rho}) \cap -(\overline{\mcC + \ker d\rho})$. Then $d\rho(\xi) \geq 0$ and $d\rho(\xi) \leq 0$, in view of \Fref{rem: cone_P-invariant}, and hence $\Spec(\overline{d\rho(\xi)}) = \{0\}$. As $d\rho(\xi)$ is essentially skew-adjoint, it follows that $\xi \in \ker d\rho$. Thus $\overline{\mcC  + \ker d\rho}$ is pointed in $\h$. As $\mcC$ is $G$-invariant and convex, it is clear that the same holds for the cone $\overline{\mcC+\ker d\rho}$ in $\h$. The latter is also generating in $\h$ because $\mfr{a} = \mcC - \mcC$. Next we show that $\restr{\rho}{A}$ is semibounded. As $\mfr{a}$ is spanned by $\mcC$ and $\dim \mfr{a} < \infty$, it follows that $\mcC\subseteq \mfr{a}$ has interior points. As $\mcC \subseteq W_\rho$, this implies that $W_\rho$ has interior points. Hence $\restr{\rho}{A}$ is semibounded. For the remaining points, we use the results in \cite{Neeb_book_hol_conv}. We first show that $\h$ is \textit{admissible}, in the sense of \cite[Def.\ VII.3.2]{Neeb_book_hol_conv}. Using the second point, the convex cone $(\overline{\mcC + \ker d\rho}) \oplus \R_{\geq 0}$ in $\h \oplus \R$ is closed, pointed, generating and $\Inn(\h)$-invariant. By \cite[Lem.\ VII.3.1, Def.\ VII.3.2]{Neeb_book_hol_conv} this implies that $\h$ is admissible. By \cite[Thm.\ VII.3.10]{Neeb_book_hol_conv}, it follows that $[\h_n, \h_n] \subseteq\z(\h)$ and that $\h$ contains a compactly embedded Cartan subalgebra $\mfr{t}$ (where as in \cite[Def.\ VII.1.1]{Neeb_book_hol_conv}, a subalgebra $\mfr{t} \subseteq \h$ is called \textit{compactly embedded} if $\overline{\langle e^{\ad(\mfr{t})}\rangle}$ is compact in $\Aut(\h)$). Using \cite[Lem.\ VII.2.26(iv)]{Neeb_book_hol_conv}, we obtain that there exists some reductive Lie algebra $\mfr{l}$ with $\h \cong \h_n \rtimes \mfr{l}$. Since $[\h, [\h_n, \h_n]] = 0$ and $\h = \mfr{a}/\ker d\rho$, it follows in particular that $[\mfr{a}, [\mfr{a}_n, \mfr{a}_n]] \subseteq \ker d\rho$.
	\end{proof}

	\noindent
	For projective p.e.\ representations, this leads to:

	\begin{corollary}\label{cor: pe_vanishing_ideal}
		Let $G$, $\bm{d}, \mfr{a}$ and $\mfr{a}_n$ be as \Fref{thm: pe_semibdd_core}. Let $(\overline{\rho}, \mcH_\rho)$ be a smooth projective unitary representation of $G$. Suppose that $\overline{\rho}$ is of p.e.\ at $\bm{d} \in \g$. Then $[\mfr{a}, [\mfr{a}_n, \mfr{a}_n]] \subseteq \ker d\overline{\rho}$. 
	\end{corollary}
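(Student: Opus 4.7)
The plan is to lift $\overline{\rho}$ to a genuine unitary representation of a central extension, apply \Fref{thm: pe_semibdd_core} there, and then push the conclusion back down to $\g$.

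First, I would invoke \Fref{rem: smooth_lift} to obtain a central $\mathbb{T}$-extension $p:\circled{G}\to G$ and a smooth unitary lift $\rho:\circled{G}\to\U(\mcH_\rho)$ of $\overline{\rho}$. Both connectedness (since $G$ and $\mathbb{T}$ are) and local exponentiality pass to the central extension, so $\circled{G}$ satisfies the standing hypotheses of \Fref{thm: pe_semibdd_core}, with Lie algebra $\circled{\g}=\R\bm{c}\oplus_\omega \g$. By the definition of projective positive energy at $\bm{d}$, there is a lift $\circled{\bm{d}}\in\circled{\g}$ of $\bm{d}$ with $-id\rho(\circled{\bm{d}})\geq 0$. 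I would then set $\circled{\mfr{a}}:=\langle\circled{\bm{d}}\rangle$, the closed ideal in $\circled{\g}$ generated by $\circled{\bm{d}}$. Since any closed ideal containing $\circled{\bm{d}}$ must be contained in the finite-dimensional $\Ad_{\circled{G}}$-invariant subspace $p^{-1}(\mfr{a})$, we get $\dim\circled{\mfr{a}}\leq \dim \mfr{a}+1<\infty$. For the required stability $\Ad_{\circled{G}}(\circled{\mfr{a}})\subseteq\circled{\mfr{a}}$, I would restrict $\Ad_{\circled{G}}$ to a smooth homomorphism $\circled{G}\to GL(p^{-1}(\mfr{a}))$ into a finite-dimensional matrix Lie group; its derivative is $\ad$, which preserves the ideal $\circled{\mfr{a}}$, so by connectedness of $\circled{G}$ the matrix exponentials of $\ad(X)$ preserve $\circled{\mfr{a}}$ as well, yielding $\Ad_{\circled{G}}(\circled{\mfr{a}})\subseteq\circled{\mfr{a}}$.

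With the hypotheses of \Fref{thm: pe_semibdd_core} in place for $(\circled{G},\rho,\circled{\bm{d}})$, its fifth conclusion yields $[\circled{\mfr{a}},[\circled{\mfr{a}}_n,\circled{\mfr{a}}_n]]\subseteq\ker d\rho$. To descend, I would check that $p(\circled{\mfr{a}})=\mfr{a}$ (since $p(\circled{\mfr{a}})$ is an ideal of $\g$ containing $\bm{d}$ and contained in $\mfr{a}$) and that $p(\circled{\mfr{a}}_n)=\mfr{a}_n$: on the one hand $p(\circled{\mfr{a}}_n)\subseteq \mfr{a}_n$ as the image of a nilpotent ideal, and on the other hand $p^{-1}(\mfr{a}_n)\cap\circled{\mfr{a}}$ is a central extension of $\mfr{a}_n$ by a subspace of $\R\bm{c}$, hence nilpotent, hence contained in $\circled{\mfr{a}}_n$, forcing equality after applying $p$. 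Applying the surjection $p$ to the inclusion then gives $[\mfr{a},[\mfr{a}_n,\mfr{a}_n]]\subseteq p(\ker d\rho)$. Finally, $p(\ker d\rho)\subseteq \ker d\overline{\rho}$ because for any lift $\circled{x}$ of $x\in\g$ we have $d\overline{\rho}(x)=[d\rho(\circled{x})]\in\mfr{u}(\mcH_\rho^\infty)/i\R I$, which vanishes whenever $d\rho(\circled{x})=0$.

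The main technical hurdle is verifying that $\circled{G}$ retains the hypotheses of \Fref{thm: pe_semibdd_core}, in particular that local exponentiality passes to the central $\mathbb{T}$-extension, and that the closed ideal $\langle\circled{\bm{d}}\rangle$ is $\Ad_{\circled{G}}$-stable. Both boil down to working inside the finite-dimensional $p^{-1}(\mfr{a})$ and invoking matrix-Lie-group arguments there, but each requires a careful appeal to the theory of locally exponential central extensions in the infinite-dimensional setting.
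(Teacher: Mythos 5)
Your proposal is correct and takes exactly the same approach as the paper's proof: lift to the central $\T$-extension, apply the main theorem there, and push the conclusion back down along the quotient $\circled{\g}\to\g$. You simply spell out the details (local exponentiality and $\Ad$-stability of $\circled{\mfr a}$ in $\circled{G}$, the fact that $\circled{\mfr a}$ and $\circled{\mfr a}_n$ project onto $\mfr a$ and $\mfr a_n$) that the paper states without proof.
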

	\begin{proof}
		Let $\rho : \circled{G} \to \U(\mcH_\rho)$ be the lift of $\overline{\rho}$ to a central $\T$-extension $\circled{G}$ of $G$. Let $\circled{\g} := \Lie(\circled{G})$. There exists some $\circled{\bm{d}}\in \circled{\g}$ s.t.\ $d\rho$ is of p.e.\ at $\circled{\bm{d}} \in \circled{\g}$. Let $\circled{\mfr{a}}$ denote the ideal in $\circled{\g}$ generated by $\circled{\bm{d}}$ and let $\circled{\mfr{a}}_n$ denote the maximal nilpotent ideal in $\circled{\mfr{a}}$. Then $d\rho([\circled{\mfr{a}}, [\circled{\mfr{a}}_n, \circled{\mfr{a}}_n]]) = \{0\}$ by \Fref{thm: pe_semibdd_core}. Thus $d\overline{\rho}([\mfr{a}, [\mfr{a}_n, \mfr{a}_n]]) = \{0\}$, where we used that the quotient map $\circled{\g} \to \g$ projects $\circled{\mfr{a}}$ and $\circled{\mfr{a}}_n$ onto $\mfr{a}$ and $\mfr{a}_n$, respectively.
	\end{proof}

	\noindent
	The following simple lemma will also be useful.
	\begin{lemma}\label{lem: full_cone_then_norm_continuous}
		Assume that $\dim(G) < \infty$. Let $\overline{\rho} : G \to \PU(\mcH_\rho)$ be a continuous projective unitary representation of $G$ which is of p.e.\ at every element of $\g$. Then $\overline{\rho}$ is continuous w.r.t. the norm-topology on $\U(\mcH_\rho)$.
	\end{lemma}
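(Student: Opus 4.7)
The plan is to lift $\overline{\rho}$ to a continuous unitary representation of the central $\T$-extension, observe that positive energy at both $\xi$ and $-\xi$ forces the generators to be bounded, and then invoke a standard argument to pass from bounded generators to norm continuity on a finite-dimensional Lie group.

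First, pull back $\U(\mcH_\rho) \to \PU(\mcH_\rho)$ along $\overline{\rho}$ to obtain a continuous unitary lift $\rho : \circled{G} \to \U(\mcH_\rho)$ of $\overline{\rho}$ on a central $\T$-extension $\circled{G}$ of $G$, with central generator $\bm{c}$ satisfying $d\rho(\bm{c}) = iI$. Since $\dim(\circled{G}) < \infty$, G\r{a}rding's theorem ensures that $\mcH_\rho^\infty$ is dense, so $d\rho$ is defined on a dense invariant domain.

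Second, fix $\xi \in \g$ and pick any lift $\tilde\xi \in \circled{\g}$. Since $\mcC(\overline{\rho}) = \g$, both $\xi$ and $-\xi$ lie in $\mcC(\overline{\rho})$, so there exist $a, b \in \R$ with
\[
-i d\rho(\tilde\xi + a\bm{c}) \geq 0 \qquad \text{and} \qquad -i d\rho(-\tilde\xi + b\bm{c}) \geq 0
\]
on $\mcH_\rho^\infty$. Using $d\rho(\bm{c}) = iI$, this rewrites as $-a \leq -i d\rho(\tilde\xi) \leq b$, so the essentially self-adjoint operator $-i d\rho(\tilde\xi)$ is bounded. As $\tilde\xi \in \circled{\g}$ was arbitrary and $d\rho$ is linear, every element of $\circled{\g}$ maps under $d\rho$ to a bounded skew-adjoint operator; in particular $W_\rho = \circled{\g}$.

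Third, since $\circled{\g}$ is finite-dimensional and $d\rho : \circled{\g} \to \B(\mcH_\rho)$ is linear with bounded image, it is automatically continuous for the operator norm. For each $\tilde\zeta \in \circled{\g}$, the one-parameter group $t \mapsto \rho(\exp_{\circled{G}}(t\tilde\zeta)) = e^{t \overline{d\rho(\tilde\zeta)}}$ is norm-continuous in $t$ because its generator is bounded. Expressing elements of $\circled{G}$ near the identity via a chart coming from the BCH series in the bounded operators $d\rho(\xi_i)$ for a basis $\xi_1, \dots, \xi_n$ of $\circled{\g}$, one concludes that $\rho$ is norm-continuous in a neighbourhood of the identity of $\circled{G}$, and hence globally by the group property. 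Projecting down yields norm continuity of $\overline{\rho}$ on $\PU(\mcH_\rho)$.

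The main obstacle is the last step: converting boundedness of all generators into norm continuity of the group representation. However this is a standard fact for finite-dimensional Lie groups (one essentially only uses that a continuous linear map from $\circled{\g}$ into a Banach algebra integrates to a norm-analytic map from a neighbourhood of $1 \in \circled{G}$ via BCH), so the argument is routine rather than delicate.
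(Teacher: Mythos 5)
Your proposal is correct and follows essentially the same route as the paper: use positive energy at $\pm\xi$ together with $d\rho(\bm{c}) = iI$ to bound $-id\rho(\tilde\xi)$ from both sides, deduce that $d\rho$ has bounded image, use finite-dimensionality to get norm-continuity of $d\rho : \circled{\g} \to \B(\mcH_\rho)$, and transfer this to the group via the exponential map. The only real difference is in the last step: the paper avoids BCH entirely by observing that $\overline{\rho}(\exp_G(\xi)) = [e^{d\rho(\xi)}]$, so norm-continuity of $\overline{\rho}$ near the identity follows at once from norm-continuity of $\xi \mapsto d\rho(\xi)$ and of $T \mapsto e^T$, plus the fact that $\exp_G$ is a local diffeomorphism; the BCH formulation you invoke is sound but a touch more machinery than needed.
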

	\begin{proof}
		Let $d\rho : \circled{\g} \to \mfr{u}(\mcH_\rho^\infty)$ be the lift of $d\overline{\rho}$. Identify $\circled{\g} \cong \R \oplus_\omega \g$ for some $2$-cocycle $\omega : \g \times \g \to \R$. The assumptions imply that for every $\xi \in \g$ there exists $E_\xi \in \R$ s.t.\ $-id\rho(\xi) \geq E_\xi$. As this holds in particular for both $\xi$ and $-\xi$, $d\rho(\xi)$ is a bounded operator for any $\xi \in \g$. As $\dim(\g) < \infty$, one finds by choosing a basis $(e_\mu)$ of $\g$ that there exists $C > 0$ s.t.\ $\norm{d\rho(\xi)} \leq C \norm{\xi}$ where $\norm{\xi} := \sup_{\mu}\abs{\xi_\mu}$ if $\xi = \sum_\mu \xi_\mu e_\mu$. Thus $\xi \mapsto d\rho(\xi)$ is norm-continuous. This implies norm-continuity of $\overline{\rho}$ because $\B(\mcH_\rho) \to \B(\mcH_\rho), T \mapsto e^T$ is norm-continuous and $\overline{\rho}(\exp(\xi)) = [e^{d\rho(\xi)}] \in \PU(\mcH_\rho)$ for $\xi \in \g$.
	\end{proof}

	\section{Generalized Positive Energy Representations}\label{sec: quas_pe}
	Let $G$ denote a regular locally convex Lie group with Lie algebra $\g$. The class of p.e.\ representations can be generalized by relaxing the condition $-id\rho(\xi) \geq 0$ in \Fref{def: smooth_pos_energy}. We define a suitable relaxed notion, the \textit{generalized positive energy condition}, and show that it can still be very restrictive. In \Fref{sec: KMS_reps}, we will encounter a class of representations which are not of p.e.\ but are of generalized positive energy. 
	
	\begin{definition}\label{def: qpe}
		Let $\mc{D}$ be a pre-Hilbert space with Hilbert space completion $\mcH$. Let $\h$ be a locally convex topological Lie algebra.
		\begin{itemize}
			\item A continuous unitary representation $\pi : \h \to \mfr{u}(\mcD)$ is of \textit{generalized positive energy} (g.p.e.) at $\xi \in \h$ if there exists a $1$-connected Lie group $H$ with Lie algebra $\h$ and a dense subspace $\mcD_0 \subseteq \mcD$ such that
			\begin{equation}\label{eq: qpe_la}
				\forall \psi \in \mcD_0 \st E_\psi(\pi, \xi) := \inf_{h \in H}\langle \psi, -i\pi(\Ad_h(\xi)) \psi\rangle > - \infty.
			\end{equation}
			We write $\mfr{C}(\pi) := \set{\xi \in \g \st \pi \text{ is of g.p.e.\ at }\xi}$. If $\mfr{C} \subseteq \g$, we say that $\pi$ is of g.p.e.\ at $\mfr{C}$ if $\mfr{C} \subseteq \mfr{C}(\pi)$.
			\item Let $\olpi : \h \to \mfr{pu}(\mcD)$ be a continuous projective unitary representation of $\h$ on $\mcD$ with lift $\pi :\circled{\h} \to \mfr{u}(\mcD)$. Let $\mfr{C}(\olpi)\subseteq \h$ denote the image of $\mfr{C}(\pi)\subseteq \circled{\h}$ under the quotient map $\circled{\h} \to \h$. Then $\olpi$ is said to be of generalized positive energy at $\xi \in \h$ if $\xi \in \mfr{C}(\olpi)$. Similarly, we say it is of g.p.e.\ at $\mfr{C} \subseteq \h$ if $\mfr{C} \subseteq \mfr{C}(\olpi)$.
			\item Let $\rho : G \to \U(\mcH_\rho)$ be a smooth unitary representation of $G$. Then $\rho$ is said to be of g.p.e.\ at $\xi \in \g$ if its derived representation $d\rho$ on $\mcH_\rho^\infty$ is so.
			\item Let $\overline{\rho} : G \to \PU(\mcH_\rho)$ be a smooth projective unitary representation of $G$ with lift $\rho : \circled{G} \to \U(\mcH_\rho)$. Let $\circled{\g}$ be the Lie algebra of $\circled{G}$. Then $\overline{\rho}$ is of g.p.e.\ at $\xi \in \g$ if $\rho$ is of g.p.e.\ at some $\circled{\xi} \in \circled{\g}$ covering $\xi$. 
		\end{itemize}
	\end{definition}

	\begin{remark}\label{rem: qpe_cone_invariant}
		If $\pi$ is a (projective) continuous unitary representation of $\g$, then the set $\mfr{C}(\pi)\subseteq \g$ is always an $\Ad_G$-invariant cone. 
	\end{remark}

	\noindent
	An important observation for the class of g.p.e.\ representations is the following one:
	
	\begin{lemma}\label{lem: CS-qpe}
		Let $\pi : \g \to \mfr{u}(\mc{D})$ be a continuous unitary representation of $\g$ on the pre-Hilbert space $\mc{D}$. Let $\xi \in \mfr{C}(\pi)$. Suppose that $\eta \in \g$ satisfies $[[\xi, \eta], \eta] \in \mfr{Z}(\g)$. Then for every $\psi$ in some dense subspace $\mc{D}_0$ we have:
		\begin{equation}\label{eq: cs-qpe}
			\begin{aligned}
				0 &\leq \langle \psi, -i\pi([[\xi, \eta], \eta])\psi\rangle, \\
				\langle \psi, -i\pi([\xi, \eta]) \psi\rangle^2 &\leq 2 \langle \psi, -i\pi([[\xi, \eta], \eta])\psi\rangle \bigg( \langle \psi, -i\pi(\xi)\psi\rangle - E_\psi(\pi, \xi)) \bigg).
			\end{aligned}
		\end{equation}
		In particular, if $[[\xi, \eta], \eta] = 0$ then $\pi([\xi, \eta]) = 0$.
	\end{lemma}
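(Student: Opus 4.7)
The plan is to probe the generalized positive energy bound along the one-parameter subgroup $t \mapsto \exp_H(t\eta)$ inside the $1$-connected Lie group $H$ witnessing $\xi \in \mfr{C}(\pi)$. Fix $\psi$ in the dense subspace $\mcD_0$ supplied by the definition of $\mfr{C}(\pi)$, and consider the real-valued function
\[ f_\psi(t) := \langle \psi, -i\pi(\Ad_{\exp_H(t\eta)}\xi)\psi\rangle, \qquad t \in \R. \]
Since $\exp_H(t\eta) \in H$, we have $f_\psi(t) \geq E_\psi(\pi, \xi) > -\infty$ for every $t \in \R$.

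The centrality hypothesis $[[\xi, \eta], \eta] \in \mfr{Z}(\g)$ is exactly what is needed to make $f_\psi$ polynomial. By the Jacobi identity $\ad_\eta^2(\xi) = [[\xi, \eta], \eta]$, and therefore $\ad_\eta^3(\xi) = [\eta, [[\xi, \eta], \eta]] = 0$. The series $\Ad_{\exp_H(t\eta)}\xi = \sum_{n \geq 0}\tfrac{t^n}{n!}\ad_\eta^n(\xi)$ terminates at second order, yielding
\[ \Ad_{\exp_H(t\eta)}\xi = \xi - t[\xi, \eta] + \tfrac{t^2}{2}[[\xi, \eta], \eta]. \]
Substituting, $f_\psi(t) = a_\psi - b_\psi t + \tfrac{1}{2} c_\psi t^2$, where $a_\psi, b_\psi, c_\psi$ denote the matrix elements $\langle \psi, -i\pi(\xi)\psi\rangle$, $\langle \psi, -i\pi([\xi, \eta])\psi\rangle$, and $\langle \psi, -i\pi([[\xi, \eta], \eta])\psi\rangle$, respectively.

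Both inequalities in \eqref{eq: cs-qpe} are now elementary consequences of the fact that a real quadratic is bounded below iff its leading coefficient is non-negative and, when that coefficient is positive, its minimum value exceeds the lower bound. The first condition gives $c_\psi \geq 0$, the first inequality; the second, after minimising $f_\psi$ at $t = b_\psi/c_\psi$ (when $c_\psi > 0$), rearranges to $b_\psi^2 \leq 2 c_\psi (a_\psi - E_\psi(\pi, \xi))$, the second inequality. In the degenerate case $c_\psi = 0$, boundedness below forces $b_\psi = 0$, so the second inequality still holds trivially. Taking $\mcD_0$ to be the subspace from the g.p.e.\ hypothesis supplies both inequalities simultaneously.

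For the final assertion, if $[[\xi, \eta], \eta] = 0$ then $c_\psi = 0$ for every $\psi \in \mcD_0$, and the above forces $\langle \psi, -i\pi([\xi, \eta])\psi\rangle = 0$. Polarisation of the symmetric operator $-i\pi([\xi, \eta]) \in \mc{L}^\dagger(\mcD)$ turns this into $\langle \psi, \pi([\xi, \eta])\phi\rangle = 0$ for all $\psi, \phi \in \mcD_0$, whence density of $\mcD_0$ in $\mcH$ first gives $\pi([\xi, \eta])\phi = 0$ for $\phi \in \mcD_0$; skew-adjointness $\pi([\xi, \eta])^\dagger = -\pi([\xi, \eta])$ together with the identity $\langle \psi, \pi([\xi, \eta])\phi\rangle = -\langle \pi([\xi, \eta])\psi, \phi\rangle = 0$ for $\psi \in \mcD_0$ and arbitrary $\phi \in \mcD$ then extends the vanishing to all of $\mcD$. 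The only mildly delicate point is this last extension step; everything else is formal once one observes that the centrality hypothesis truncates the $\ad_\eta$-expansion at second order.
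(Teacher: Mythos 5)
Your proof is correct and takes essentially the same route as the paper: probe the g.p.e.\ lower bound along $t\mapsto\exp_H(t\eta)$, observe that the centrality hypothesis makes $t\mapsto\Ad_{\exp_H(t\eta)}\xi$ a quadratic polynomial in $t$, and then read the two inequalities off the elementary fact that a real quadratic bounded below must have a non-negative leading coefficient and a minimum value not smaller than the bound. The paper phrases the quadratic-polynomial step slightly differently: rather than appealing to the exponential series $\sum_n\frac{t^n}{n!}\ad_\eta^n\xi$ and noting that it terminates, it computes that the third derivative of the smooth curve $\gamma(t)=e^{t\ad_\eta}\xi$ vanishes and invokes Taylor's formula for locally convex spaces (cited as \cite[Prop.\ I.2.3]{neeb_towards_lie}). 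In the infinite-dimensional setting the series identity $\Ad_{\exp(t\eta)}=\sum_n\frac{t^n}{n!}\ad_\eta^n$ is not available in general, so your ``the series terminates'' step really rests on the same ODE/Taylor argument the paper makes explicit; a sentence acknowledging this would tighten the presentation. On the final assertion you are, if anything, more careful than the paper: you spell out the polarization step and the extension of the vanishing from $\mcD_0$ to all of $\mcD$ via $\pi([\xi,\eta])^\dagger=-\pi([\xi,\eta])$ and density, where the paper compresses this into a single sentence. No gap.
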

	\begin{proof}
		Let $\mcD_0 \subseteq \mcD$ be a dense subspace for which \eqref{eq: qpe_la} is valid. Let $\psi \in \mc{D}_0$. Then $\langle \psi, -i\pi(e^{t \ad_\eta}\xi)\psi\rangle \geq E_\psi(\pi, \xi)$ for all $t \in \R$. As $[[\xi, \eta], \eta] \in \mfr{Z}(\g)$, the third derivative $\gamma^{(3)} : \R \to \g$ of the smooth path $\gamma : \R \to \g, \; t \mapsto e^{t\ad_\eta}\xi$ vanishes. From Taylor's formula (which holds for smooth maps between locally convex vector spaces by \cite[Prop.\ I.2.3]{neeb_towards_lie}), it follows that $e^{t \ad_\eta}\xi = \xi + t[\eta, \xi] + {t^2 \over 2}[[\xi, \eta], \eta]$ for all $t \in \R$. Thus
		\[ \big\langle \psi, -i\pi(\xi)\psi \rangle +  t\langle \psi, -i\pi([\eta, \xi]) \psi \rangle +{t^2 \over 2} \langle \psi, -i\pi([[\xi, \eta], \eta]) \psi\rangle \geq E_\psi(\pi, \xi), \qquad \forall t \in \R\]
		The equations \eqref{eq: cs-qpe} follows from the fact that $at^2 + bt + c \geq 0$ for all $t \in \R$ if and only if $a,c \geq 0$ and $b^2 \leq 4ac$. In particular, if $[[\xi, \eta], \eta] = 0$ then $\langle \psi, -i\pi([\xi, \eta]) \psi\rangle = 0$ for all $\psi \in \mc{D}_0$. As $\mcD_0$ is a complex vector space, this implies by the polarization identity that $\pi([\xi, \eta]) = 0$.
	\end{proof}
	
	\noindent
	In the projective context, this sets up a relation between $\ker \olpi$ and the class $[\omega] \in H_{\ct}^2(\g; \R)$ defined by the corresponding central $\R$-extension $\circled{\g}$ of $\g$. This is exploited in \Fref{sec: restr_proj_reps}.
	
	\begin{proposition}\label{prop: qpe_kernel}
		Let $\olpi$ be a continuous projective unitary $\g$-representation on the pre-Hilbert space $\mc{D}$ with lift $\pi : \circled{\g} \to \mfr{u}(\mc{D})$ for some continuous central $\R$-extension $\circled{\g}$ of $\g$. Let $\omega$ represent the corresponding class in $H^2_{\ct}(\g, \R)$. Let $\xi \in \mfr{C}(\olpi)$. Suppose that $\eta \in \g$ satisfies $[[\xi, \eta], \eta] = 0$. Then $\omega([\xi, \eta], \eta)\geq 0$ and
		\begin{align*}
			\omega([\xi, \eta], \eta) = 0 \iff \olpi([\xi, \eta]) = 0.
		\end{align*}
	\end{proposition}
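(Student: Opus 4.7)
\medskip

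The plan is to lift everything to the central extension $\circled{\g} = \R \oplus_\omega \g$ and apply \Fref{lem: CS-qpe} there. Since $\xi \in \mfr{C}(\olpi)$, there exists by definition a lift $\circled{\xi} \in \circled{\g}$ of $\xi$ with $\circled{\xi} \in \mfr{C}(\pi)$. Pick any lift of $\eta$, most conveniently $\circled{\eta} := (0, \eta) \in \R \oplus_\omega \g$. The first key computation is to evaluate the double bracket in $\circled{\g}$: using the formula $[(s_1, x_1), (s_2, x_2)] = (\omega(x_1, x_2), [x_1, x_2])$ together with the hypothesis $[[\xi, \eta], \eta] = 0$ in $\g$, one obtains
\[
[[\circled{\xi}, \circled{\eta}], \circled{\eta}] \;=\; \bigl(\omega([\xi, \eta], \eta),\, 0\bigr) \;\in\; \R\bm{c} \;\subseteq\; \mfr{Z}(\circled{\g}).
\]
This is the central element that makes \Fref{lem: CS-qpe} applicable.

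Next, I would apply the first inequality of \Fref{lem: CS-qpe} to the pair $(\circled{\xi}, \circled{\eta})$. Since $\pi(\bm{c}) = iI$, we have $\langle \psi, -i\pi(\omega([\xi,\eta],\eta), 0)\psi\rangle = \omega([\xi, \eta], \eta)\|\psi\|^2$, and the inequality collapses to $\omega([\xi, \eta], \eta)\|\psi\|^2 \geq 0$ for $\psi$ in a dense subspace. This yields the positivity $\omega([\xi, \eta], \eta) \geq 0$.

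For the equivalence, I would argue the two directions separately. If $\omega([\xi, \eta], \eta) = 0$, then by the computation above $[[\circled{\xi}, \circled{\eta}], \circled{\eta}] = 0$ \emph{as an element of} $\circled{\g}$, so the final clause of \Fref{lem: CS-qpe} gives $\pi([\circled{\xi}, \circled{\eta}]) = 0$. Decomposing $[\circled{\xi}, \circled{\eta}] = (\omega(\xi, \eta), [\xi, \eta])$ and using $\pi(\bm{c}) = iI$ then yields $\pi(0, [\xi, \eta]) = -i\omega(\xi, \eta)I \in i\R I$, so $\olpi([\xi, \eta]) = 0$. Conversely, if $\olpi([\xi, \eta]) = 0$ then $\pi(0, [\xi, \eta]) \in i\R I$ is central in $\pi(\circled{\g})$, so it commutes with $\pi(0, \eta)$; but this commutator equals $\pi([(0,[\xi, \eta]), (0, \eta)]) = \pi(\omega([\xi, \eta], \eta), 0) = i\omega([\xi, \eta], \eta)I$, forcing $\omega([\xi, \eta], \eta) = 0$.

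There is no real obstacle: once the Taylor-expansion argument of \Fref{lem: CS-qpe} is in place, the only conceptual point is to recognize that the nilpotent triple bracket $[[\xi,\eta], \eta] = 0$ in $\g$ becomes a central (not zero) element upstairs in $\circled{\g}$, whose magnitude is precisely $\omega([\xi, \eta], \eta)$. The converse half of the equivalence is the most delicate step to phrase cleanly, since it does not follow from \Fref{lem: CS-qpe} itself but rather from the trivial observation that a scalar operator commutes with everything in $\pi(\circled{\g})$; this identity between a commutator and the cocycle value is the link that closes the argument.
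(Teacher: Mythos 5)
Your proposal is correct and follows essentially the same route as the paper: lift to $\circled{\g} = \R \oplus_\omega \g$, observe that $[[\circled{\xi},\circled{\eta}],\circled{\eta}] = \omega([\xi,\eta],\eta)\bm{c}$ is central, apply \Fref{lem: CS-qpe} for the nonnegativity and the forward implication, and close the converse by noting that a scalar operator commutes with $\pi(\circled{\eta})$ while that commutator equals $i\omega([\xi,\eta],\eta)I$. Your write-up is slightly more explicit about the decomposition of brackets in $\R\oplus_\omega\g$, but no new idea is introduced.
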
	
	\begin{proof}
		Identify $\circled{\g}$ with $\R \oplus_\omega \g$. Let $\circled{\xi} \in \mfr{C}(\pi)$ and $\circled{\eta} \in \circled{\g}$ be lifts of $\xi$ and $\eta$, respectively. We have that $[[\circled{\xi}, \circled{\eta}], \circled{\eta}] = \omega([\xi, \eta], \eta) \in \mfr{Z}(\circled{\g})$, because $[[\xi, \eta], \eta] = 0$. Using \Fref{lem: CS-qpe} it follows that $\omega([\xi, \eta], \eta) \geq 0$. If $\omega([\xi, \eta], \eta) = 0$, then $[[\circled{\xi}, \circled{\eta}], \circled{\eta}] = 0$ and so \Fref{lem: CS-qpe} implies that $\pi([\circled{\xi}, \circled{\eta}]) = 0$. Hence $\olpi([\xi, \eta]) = 0$. Conversely, if $\olpi([\xi, \eta]) = 0$, then $i\omega([\xi, \eta], \eta) = [\pi([\xi, \eta]), \pi(\eta)] - \pi([[\xi, \eta], \eta]) = 0$, because $[[\xi, \eta], \eta] = 0$.
	\end{proof}
	
	\begin{remark}
		Notice in the setting of \Fref{prop: qpe_kernel} that whenever $[[\xi, \eta], \eta] = 0$, the value of $\omega([\xi, \eta], \eta)$ does not depend on the choice of representative $\omega$ of the class $[\omega] \in H^2_{\ct}(\g, \R)$.
	\end{remark}

	\noindent
	In \Fref{part: jets}, a particular special case of \Fref{prop: qpe_kernel} is used extensively:
	
	\begin{corollary}\label{cor: qpe_kernel_special_case}
		Let $\p$ and $\g$ be locally convex Lie algebras. Let $D : \p \to \der(\g)$ be a homomorphism for which the corresponding action $\p \times \g \to \g$ is continuous. Let $\mcD$ be a complex pre-Hilbert space and let $\olpi : \g \rtimes_D \p \to \pu(\mcD)$ be a continuous projective unitary representation of $\g \rtimes_D \p$ on $\mcD$. Let $[\omega] \in H^2_{\ct}(\g \rtimes_D \p; \R)$ be the corresponding class in $H^2_{\ct}(\g \rtimes_D \p; \R)$. Let $\eta \in \g$, $p \in \mfr{C}(\olpi)\cap \p$ and assume that $[D(p)\eta, \eta] = 0$. Then $\omega(D(p)\eta, \eta) \geq 0$ and $\omega(D(p)\eta, \eta) = 0 \iff \olpi(D(p)\eta) = 0$.
	\end{corollary}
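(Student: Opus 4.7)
The plan is to derive this as an immediate specialization of \Fref{prop: qpe_kernel} applied to the semidirect product Lie algebra $\g \rtimes_D \p$. The key translation is that inside $\g \rtimes_D \p$, the bracket between the $\p$-summand and the $\g$-summand is given by the derivation action: for $p \in \p$ and $\eta \in \g$ (both viewed as elements of $\g \rtimes_D \p$ in the obvious way), one has $[p, \eta] = D(p)\eta \in \g$.

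First I would set $\xi := p$ and keep $\eta$ as in the statement, now regarding both as elements of $\g \rtimes_D \p$. The cone assumption $p \in \mfr{C}(\olpi) \cap \p$ gives exactly that $\xi \in \mfr{C}(\olpi)$, which is the first hypothesis of \Fref{prop: qpe_kernel}. Next I would check the centrality hypothesis: since $[p,\eta] = D(p)\eta$ lies in $\g$ and $[D(p)\eta, \eta] = 0$ by assumption, we get $[[\xi, \eta], \eta] = 0$ in $\g \rtimes_D \p$, so in particular this element is central (indeed zero). Thus the hypotheses of \Fref{prop: qpe_kernel} are met.

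Applying \Fref{prop: qpe_kernel} then yields $\omega([\xi,\eta],\eta) \geq 0$ and the equivalence $\omega([\xi,\eta],\eta) = 0 \iff \olpi([\xi,\eta]) = 0$. Re-expressing $[\xi,\eta] = D(p)\eta$ converts these conclusions into the desired statements $\omega(D(p)\eta,\eta) \geq 0$ and $\omega(D(p)\eta,\eta) = 0 \iff \olpi(D(p)\eta) = 0$.

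There is no real obstacle here; the only thing to be careful about is the bracket convention in the semidirect product and the fact that the cohomology class $[\omega] \in H^2_{\ct}(\g \rtimes_D \p;\R)$ already encodes the central extension of the whole semidirect product, so the lift $\pi$ and its cocycle $\omega$ evaluated on elements of $\g$ (or on brackets of $\p$ with $\g$) make sense without further adjustment.
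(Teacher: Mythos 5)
Your proof is correct and is precisely the specialization the paper has in mind: the paper states this as a corollary of \Fref{prop: qpe_kernel} with no further argument, and your reduction — take $\xi := p$, $\eta := \eta$ in $\g \rtimes_D \p$, identify $[p,\eta] = D(p)\eta$, note $[[\xi,\eta],\eta]=[D(p)\eta,\eta]=0$, and apply the proposition — is exactly that.
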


	\section{KMS Representations}\label{sec: KMS_reps}
	
	\noindent
	In the following, we introduce the class of KMS representations. We will see in particular that these give rise to generalized positive energy representations. Consequently, they can be studied using the results of \Fref{sec: quas_pe}. Its definition makes use of the modular theory of von Neumann algebras, which we recall first.
	
	\subsection{Modular Theory of von Neumann Algebras}
	Let us recall the modular condition and the notion of a KMS state on a von Neumann algebra $\M$, whilst fixing our conventions and notation. We refer to \cite[Ch.\ VIII]{Takesaki_II},  \cite[Ch.\ 2.5]{bratelli_robinson_1} and \cite[Ch.\ 5.3]{bratelli_robinson_2} for a detailed consideration of the modular theory of von Neumann algebras and of KMS states. \\
	
	\noindent
	If $\M$ is a von Neumann algebra, write $\M_\ast$ for its pre-dual, equipped with the $\sigma(\M_\ast, \M)$-topology. Write $\mc{S}(\M) \subseteq \M_\ast$ for the set of normal states on $\M$. Further, if $\phi \in \mc{S}(\M)$, write $\pi_\phi : \M \to \B(\mcH_\phi)$ for the GNS-representation of $\M$ relative to $\phi$. Write $\M_\phi := \pi_\phi(\M)^{\prime \prime}$. Let $\Omega_\phi \in \mcH_\phi$ denote the canonical cyclic vector satisfying $\phi(x) = \langle \Omega_\phi, \pi_\phi(x)\Omega_\phi\rangle$ for all $x \in \M$. Whenever $\Omega_\phi$ is separating for $\M_\phi$, let $S_\phi$ denote the unique closed conjugate-linear operator satisfying $S_\phi x\Omega_\phi = x^\ast \Omega_\phi$ for all $x \in \M_\phi$. Let $S_\phi = J_\phi\Delta_\phi^{1\over 2}$ be its polar decomposition, where the operators $\Delta_\phi$ and $J_\phi$ are positive and anti-unitary, respectively.
	
	\begin{definition}\label{def: cts_aut_gps_vna}
		A map $\sigma : \R \to \Aut(\M)$ is said to be $\sigma(\M_\ast, \M)$-continuous if for every $x \in \M$, the map $\R \to \M,\; t \mapsto \sigma_t(x)$ is continuous w.r.t.\ the $\sigma(\M_\ast, \M)$-topology on $\M$.
	\end{definition}

	\begin{definition}\label{def: mod_condition_kms}
		Let $\phi \in \mc{S}(\M)$ be a normal state. Let $\sigma : \R \to \Aut(\M)$ be a one-parameter group of automorphisms of $\M$. Define $\St := \set{ z \st z \in \C, \; 0 < \mrm{Im}(z) < 1}$.
		\begin{itemize}
			\item $\phi$ is said to satisfy the \textit{modular condition} for $\sigma$ if the following two conditions are satisfied:
			\begin{enumerate}
				\item $\phi = \phi \circ \sigma_t$ for all $t \in \R$. 
				\item For every $x,y \in \M$, there exists a bounded continuous function $F_{x,y} : \overline{\St} \to \C$ which is holomorphic on $\St$ and s.t.\ for every $t \in \R$:
				\begin{align*}
					F_{x,y}(t) &= \phi(\sigma_t(x)y), \\
					F_{x,y}(t+i) &= \phi(y\sigma_t(x)). 
				\end{align*}
			\end{enumerate}
			\item $\phi$ is said to be \textit{KMS} w.r.t. $\sigma$ at inverse temperature $\beta > 0$ if it satisfies the modular condition for $t \mapsto\sigma_{-\beta t}$. In that case, we also say that $\phi$ is $\sigma$-KMS at inverse-temperature $\beta$. If $\beta = 1$ we simply say that $\phi$ is a $\sigma$-KMS state.
		\end{itemize}
	\end{definition}

	\begin{remark}\label{rem: kms_states_and_mod_groups_uniqueness}~
		\begin{enumerate}
			\item Suppose that $\phi \in \mc{S}(\M)$ is faithful. Then there exists a unique automorphism group $\sigma^\phi : \R \to \Aut(\M)$ for which $\phi$ satisfies the modular condition \cite[Thm.\ VIII.1.2]{Takesaki_II}, \cite[Thm.\ 2.5.14]{bratelli_robinson_1}. The automorphism group $\sigma^\phi$ is $\sigma(\M_\ast, \M)$-continuous. As $\phi$ is faithful, $\pi_\phi : \M \to \M_\phi$ is injective and hence a $\ast$-isomorphism between $\M$ and $\M_\phi$ \cite[Thm.\ 2.4.24]{bratelli_robinson_1}. Thus one may identify $\M$ with $\M_\phi$ via $\pi_\phi : \M \to \M_\phi \subseteq \B(\mcH_\phi)$. Finally, there is a unique conditional expectation $\mc{E} : \M\to \M^\R$ s.t.\ $\phi = \phi_0 \circ \mc{E}$, where $\M^\R := \set{x \in \M \st \sigma_t^\phi(x) = x \quad \forall t \in \R}$ and $\phi_0 := \restr{\phi}{\M^\R}$ \cite[Thm.\ IX.4.2]{Takesaki_II}. 
			\item If $\phi$ is not necessarily faithful, then $\phi$ satisfies the modular condition for some $\sigma(\M_\ast, \M)$-continuous $1$-parameter group $\sigma: \R \to \Aut(\M)$ of $\ast$-automorphisms of $\M$ if and only if $\Omega_\phi \in \mcH_\phi$ is separating for $\M_\phi = \pi_\phi(\M)^{\prime \prime} \subseteq \B(\mcH_\phi)$. In that case, there is a central projection $p \in \mcZ(\M)$ such that $\phi(1-p) = 0$ and $\phi$ is faithful on $\M p$. Moreover, $\sigma_t(p) = p$ for all $t \in \R$ and $\restr{\sigma}{\M p}$ is uniquely determined by the modular condition for $\phi$ \cite[Thm.\ 5.3.10]{bratelli_robinson_2}.
			\item In particular, if $\M$ is a factor and $\phi$ is KMS w.r.t. $\sigma : \R \to \Aut(\M)$, then necessarily $p=I$ and whence $\phi$ must be faithful. Consequently $\sigma = \sigma_t^\phi$ is necessary.
			\item In the converse direction, given a $\sigma(\M_\ast, \M)$-continuous automorphism group $\sigma : \R \to \Aut(\M)$, there may be no, precisely one, or multiple states in $\mc{S}(\M)$ that are KMS w.r.t. $\sigma$. The set of $\sigma$-KMS states in $\mc{S}(\M)$ is considered in \cite[Ch. 5.3.2]{bratelli_robinson_2}. In particular, if $\phi \in \mc{S}(\M)$ is a faithful $\sigma$-KMS state and $\psi \in \mc{S}(\M)$, then $\psi$ is $\sigma$-KMS if and only if there is a (necessarily unique) positive operator $T$ affiliated to $\mc{Z}(\M)$ such that $\psi(x) = \phi(T^{1\over 2}xT^{1\over 2})$ for all $x \in \M$ \cite[Prop.\ 5.3.29]{bratelli_robinson_2}. In \cite{Bratelli_temp_states_I} and \cite{Bratelli_temp_states_II}, the set $K_\beta$ of normal $\sigma$-KMS states at inverse temperature $\beta$ is studied in the setting of $C^\ast$-dynamical systems.
			\item As a consequence of the previous points, if $\M$ is a factor and $\phi, \psi \in \mc{S}(\M)$ are both $\sigma$-KMS, then $\phi = \psi$, so that two distinct normal states can not share the same modular automorphism group.
		\end{enumerate}
	\end{remark}

	\begin{remark}\label{rem: kms_states_and_mod_groups}~
		Suppose $\phi \in \mc{S}(\M)$ is KMS w.r.t. $\sigma : \R \to \Aut(\M)$. Let $\sigma^\phi : \R \to \Aut(\M_\phi)$ denote the modular automorphism group defined by the faithful state $\langle \Omega_\phi, \bcdot\; \Omega_\phi\rangle$ on $\M_\phi = \pi_\phi(\M)^{\prime \prime}$. It then holds true that $\sigma_t^\phi(\pi_\phi(x)) = \pi_\phi(\sigma_{-t}(x))$ for any $x \in \M$ and $t \in \R$. Indeed, by \cite[Cor.\ 5.3.4]{bratelli_robinson_2}, the state $\langle \Omega_\phi, \bcdot\; \Omega_\phi\rangle$ on $\M_\phi$ is KMS w.r.t. the unique automorphism group $\tau : \R \to \Aut(\M_\phi)$ satisfying $\tau_{t}(\pi_\phi(x))\Omega_\phi = \pi_\phi(\sigma_t(x))\Omega_\phi$ for all $t \in \R$. Then $\sigma_t^\phi = \tau_{-t}$ by uniqueness of the modular automorphism group (and the minus sign in the definition of KMS states). As $\Omega_\phi$ is separating for $\M_\phi$, it follows that $\sigma_t^\phi(\pi_\phi(x)) = \pi_\phi(\sigma_{-t}(x))$.
	\end{remark}

	\begin{example}[Gibbs States]\label{ex: type_I_case}
		Let $\M = \B(\mcH)$ and $\sigma_t(x) = e^{itH}xe^{-itH}$ for some self-adjoint operator $H$ satisfying $Z_\beta := \Tr(e^{-\beta H}) < \infty$ for some $\beta > 0$. Consider the normal state $\phi(x) = {1\over Z_\beta}\Tr(e^{-\beta H}x)$ on $\M$. The modular automorphism group corresponding to $\phi$ is given by $\sigma_t^{\phi}(x) = e^{-i\beta tH}xe^{i\beta t H} = \sigma_{-\beta t}(x)$ \cite[Example 2.5.16]{bratelli_robinson_1}. Thus $\phi$ satisfies the modular condition for $\sigma_{-\beta t}$ and is therefore KMS at inverse-temperature $\beta$ w.r.t. $\sigma_t$.
	\end{example}

	\noindent
	Gibbs states $\phi(x) = {1\over Z_\beta}\Tr(e^{-\beta H}x)$ constitute the simplest class of examples of KMS states. We will encounter a variety of different KMS states in \Fref{sec: examples_kms} below.

	\subsection{KMS Representations}\label{sec: kms_rep}

	\noindent
	In the following, let $G$ be a regular locally convex Lie group with Lie algebra $\g$. Let $N \subseteq G$ be an embedded Lie subgroup. 
	
	\begin{definition}\label{def: KMS_reps}
		Let $(\rho, \mcH_\rho)$ be a continuous unitary $G$-representation. Let $\mc{N}:= \rho(N)^{\prime \prime} \subseteq \B(\mcH_\rho)$ be the von Neumann-algebra generated by $\rho(N)$. For $\phi \in \mc{N}_\ast$, define the function $\widehat{\phi} : N \to \C$ by $\widehat{\phi}(n) := \phi(\rho(n))$. Write $\mc{N}_\ast^\infty := \set{\phi \in \mc{N}_\ast \st \widehat{\phi} \in C^\infty(N; \C)}$ and set $\mc{S}(\mc{N})^\infty := \mc{S}(\mc{N}) \cap \mc{N}_\ast^\infty$.
		\begin{itemize}
			\item Let $\xi \in \g$ and $\phi \in \mc{S}(\mc{N})$. We say that $\phi$ is \textit{KMS-compatible} with $(\rho, \xi, N)$ if $e^{t\xi}Ne^{-t\xi} \subseteq N$ for all $t \in \R$ and $\phi$ is KMS w.r.t. the automorphism group $\R \to \Aut(\mc{N})$ defined by $t \mapsto \Ad(\rho(e^{t\xi}))$.
			\item Define $\KMS(\rho, \xi, N) := \set{\phi \in \mc{S}(\mc{N}) \st \phi \text{ is KMS-compatible with } (\rho, \xi, N)}$.\\
			Similarly, let $\KMS(\rho, \xi, N)^\infty := \KMS(\rho, \xi, N) \cap \mc{S}(\mc{N})^\infty$.
			\item $\rho$ is said to be \textit{KMS at} $\xi \in \g$\textit{ relative to} $N$ if $\KMS(\rho, \xi, N) \neq \emptyset$. \\
			It is called \textit{smoothly-KMS at} $\xi$ \textit{relative to} $N$ if $\KMS(\rho, \xi, N)^\infty \neq \emptyset$.
		\end{itemize}
		If the subgroup $N$ is clear from the context, we drop $N$ from the notation and simply write $\KMS(\rho, \xi)$ and $\KMS(\rho, \xi)^\infty$. We then also say that $\rho$ is KMS at $\xi$ if it is so relative to $N$.\\
	\end{definition}

	\begin{remark}\label{rem: KMS-semidirect_prod}
		For any fixed $\xi \in \g$ satisfying $\Ad(e^{t\xi})N \subseteq N$ for all $t \in \R$, one may as well consider the semidirect product $N \rtimes_\alpha \R$, where $\alpha : \R \to \Aut(N)$ is defined by $\alpha_t := \restr{\Ad(e^{t\xi})}{N}$. \Fref{def: KMS_reps} additionally allows for the situation where $\rho$ is KMS at multiple $\xi_I \in \g$, relative to possibly distinct subgroups $N_I \subseteq G$, where $I \in \mc{I}$ for some indexing set $\mc{I}$. We will see an example of this in \Fref{ex: mobius_cov_nets} below.\\
	\end{remark}

	\noindent
	In the following, let $(\rho, \mcH_\rho)$ be a continuous unitary $G$-representation and let $\mc{N}:= \rho(N)^{\prime \prime} \subseteq \B(\mcH_\rho)$ be the von Neumann-algebra generated by $\rho(N)$. If $\phi \in \mc{S}(\mc{N})$, write $\pi_\phi : \mc{N} \to \B(\mcH_\phi)$ for the GNS-representation of $\mc{N}$ relative to $\phi$. Let $\Omega_\phi \in \mcH_\phi$ denote the canonical $\mc{N}$-cyclic vector satisfying $\phi(x) = \langle \Omega_\phi, \pi_\phi(x)\Omega_\phi\rangle$ for all $x \in \mc{N}$. Write $\rho_\phi := \pi_\phi \circ \rho : N \to \U(\mcH_\phi)$ for the unitary $N$-representation on $\mcH_\phi$. Define $\mc{N}_\phi := \rho_\phi(N)^{\prime \prime}\subseteq \B(\mcH_\phi)$.
	
	\begin{lemma}\label{lem: smooth_state_vector}
		Let $\phi \in \mc{S}(\mc{N})$. Then $\widehat{\phi}$ is smooth on $N$ if and only if $\Omega_\phi \in \mcH_{\rho_\phi}^\infty$. In this case $\mcH_{\rho_\phi}^\infty$ is dense, so $\rho_\phi$ is smooth.
	\end{lemma}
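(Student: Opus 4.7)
Observe first that $\widehat{\phi}(n) = \phi(\rho(n)) = \langle \Omega_\phi, \rho_\phi(n) \Omega_\phi\rangle$, so $\widehat{\phi}$ is precisely the diagonal matrix coefficient of $\rho_\phi$ at $\Omega_\phi$. The forward implication is then immediate: if $\Omega_\phi \in \mcH_{\rho_\phi}^\infty$, the orbit map $n \mapsto \rho_\phi(n)\Omega_\phi$ is smooth, so $\widehat\phi$ is smooth as the composition of a smooth map with the continuous linear functional $\langle \Omega_\phi, \bcdot\,\rangle$.

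For the converse I would invoke the standard equivalence --- valid for any continuous unitary representation $\sigma$ of a regular locally convex Lie group $H$ on a Hilbert space $\mcH_\sigma$ --- that a vector $\psi \in \mcH_\sigma$ lies in $\mcH_\sigma^\infty$ if and only if the diagonal matrix coefficient $h \mapsto \langle\psi, \sigma(h)\psi\rangle$ is smooth on $H$. Applying this with $\sigma = \rho_\phi$ and $\psi = \Omega_\phi$ promotes smoothness of $\widehat\phi$ to $\Omega_\phi \in \mcH_{\rho_\phi}^\infty$. If a more hands-on argument were preferred, one could first note that smoothness of $\widehat\phi$ implies smoothness of each matrix coefficient $n \mapsto \widehat\phi(m_1^{-1} n m_2) = \langle\rho_\phi(m_1)\Omega_\phi,\, \rho_\phi(n)\rho_\phi(m_2)\Omega_\phi\rangle$ with $m_1, m_2 \in N$, and then upgrade this weak smoothness to norm-smoothness of the orbit map using unitarity via the standard identity $\|\rho_\phi(n_1)\Omega_\phi - \rho_\phi(n_2)\Omega_\phi\|^2 = 2\|\Omega_\phi\|^2 - 2\,\mathrm{Re}\,\widehat\phi(n_2^{-1} n_1)$ and its higher-order analogues (the Taylor expansion of $\widehat\phi$ along one-parameter subgroups controls the difference quotients of the orbit map).

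For the density statement, assume $\Omega_\phi \in \mcH_{\rho_\phi}^\infty$. The subspace $\mcH_{\rho_\phi}^\infty$ is stable under $\rho_\phi(N)$: if $\psi$ is smooth then so is $\rho_\phi(m)\psi$, since its orbit map $n \mapsto \rho_\phi(nm)\psi$ is the composition of the smooth right-translation $n \mapsto nm$ with the smooth orbit map of $\psi$. Hence $\rho_\phi(N)\Omega_\phi \subseteq \mcH_{\rho_\phi}^\infty$, and it remains to verify that $\rho_\phi(N)\Omega_\phi$ is dense in $\mcH_\phi$. Let $K := \overline{\rho_\phi(N)\Omega_\phi}$ with orthogonal projection $P_K$; since $\rho_\phi(N)$ is $\ast$-stable as a set of operators, both $K$ and $K^\perp$ are $\rho_\phi(N)$-invariant, so $P_K \in \rho_\phi(N)' = \mc{N}_\phi'$. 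From $P_K \Omega_\phi = \Omega_\phi$ we then get $\mc{N}_\phi \Omega_\phi = \mc{N}_\phi P_K \Omega_\phi = P_K \mc{N}_\phi \Omega_\phi \subseteq K$. Finally, normality of $\pi_\phi$ (which holds since $\phi$ is a normal state on $\mc{N}$) gives $\pi_\phi(\mc{N}) \subseteq \pi_\phi(\rho(N))^{\prime\prime} = \mc{N}_\phi$, so GNS-cyclicity of $\Omega_\phi$ for $\pi_\phi(\mc{N})$ yields density of $\mc{N}_\phi \Omega_\phi$ in $\mcH_\phi$, forcing $K = \mcH_\phi$.

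The main obstacle is clearly the non-trivial direction in the equivalence between smoothness of $\Omega_\phi$ and smoothness of $\widehat\phi$; once this general fact about smooth vectors is in place, the remaining operator-algebraic manipulations (cyclicity transfer from $\pi_\phi(\mc{N})$ to $\rho_\phi(N)$, invariance of smooth vectors) are routine.
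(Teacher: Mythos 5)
Your proof is correct and follows essentially the same route as the paper: the nontrivial direction (smoothness of $\widehat\phi$ implies $\Omega_\phi \in \mcH_{\rho_\phi}^\infty$) is exactly the result the paper invokes from Neeb's work on differentiable vectors (Thm.\ 7.2 of \emph{On differentiable vectors for representations of infinite dimensional Lie groups}), and the density statement follows, as you say, from $N$-invariance of $\mcH_{\rho_\phi}^\infty$ plus cyclicity of $\Omega_\phi$ for $\rho_\phi(N)$. Your explicit verification of the cyclicity transfer from $\pi_\phi(\mc{N})$ to $\rho_\phi(N)$ (via $P_K \in \mc{N}_\phi'$ and normality of $\pi_\phi$) correctly supplies a step the paper's proof asserts without comment.
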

	\begin{proof}
		Assume that $\widehat{\phi}$ is smooth on $N$. Then $n \mapsto \langle \Omega_\phi, \rho_\phi(n)\Omega_\phi\rangle$ is smooth. By \cite[Thm.\ 7.2]{Neeb_diffvectors}, it follows $n \mapsto \rho_\phi(n)\Omega_\phi$ is smooth $N \to \mcH_\phi$. The converse direction is trivial. Assume that $\Omega_\phi \in \mcH_{\rho_\phi}^\infty$. As $\mcH_{\rho_\phi}^{\infty}$ is $N$-invariant and $\Omega_\phi$ is cyclic for $N$, it follows that $\mcH_{\rho_\phi}^\infty$ is dense in $\mcH_{\phi}$. 
	\end{proof}

	\noindent
	Consider the left action of $G$ on $\mc{S}(\mc{N})$ defined by $(g.\phi)(x) := \phi(\rho(g)^{-1}x\rho(g))$ for $x \in \mc{N}$. Notice that this action leaves $\mc{S}(\mc{N})^\infty$ invariant.
	
	\begin{lemma}\label{lem: covariance_kms_states}
		Let $g \in G$ and $\xi \in \g$. Then $\phi \in \KMS(\rho, \xi, N) \iff g.\phi \in \KMS(\rho, \Ad_g(\xi), gNg^{-1})$.
	\end{lemma}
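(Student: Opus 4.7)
The plan is to recognise that both sides of the equivalence describe the \emph{same} modular datum transported by the unitary $\rho(g)$, so that the equivalence is essentially a tautology once the conjugation isomorphism is set up.

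First, I would record the three elementary compatibilities. Using $e^{t\Ad_g(\xi)} = g e^{t\xi} g^{-1}$, one has
\[
e^{t\Ad_g(\xi)}(gNg^{-1})e^{-t\Ad_g(\xi)} \;=\; g\bigl(e^{t\xi}Ne^{-t\xi}\bigr)g^{-1},
\]
so the subgroup condition $e^{t\xi}Ne^{-t\xi} \subseteq N$ is equivalent to its $\Ad_g$-conjugate. Second, since $\rho(g)$ is a unitary, $\Phi := \Ad(\rho(g))$ restricts to a $\ast$-isomorphism $\mc{N} \to \rho(gNg^{-1})^{\prime\prime} = \rho(g)\mc{N}\rho(g)^{-1}$. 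Third, writing $\sigma_t := \Ad(\rho(e^{t\xi}))$ on $\mc{N}$ and $\sigma^g_t := \Ad(\rho(e^{t\Ad_g(\xi)}))$ on $\rho(gNg^{-1})^{\prime\prime}$, the identity $\rho(e^{t\Ad_g(\xi)}) = \rho(g)\rho(e^{t\xi})\rho(g)^{-1}$ gives the intertwining $\sigma^g_t \circ \Phi = \Phi \circ \sigma_t$. Finally, by direct computation $(g.\phi)(\Phi(x)) = \phi(\rho(g)^{-1}\rho(g)x\rho(g)^{-1}\rho(g)) = \phi(x)$, so $g.\phi = \phi \circ \Phi^{-1}$ on $\rho(gNg^{-1})^{\prime\prime}$.

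Next I would transport the modular condition. Given bounded continuous functions $F_{x,y} : \overline{\St} \to \C$ holomorphic on $\St$ that witness the modular condition of \Fref{def: mod_condition_kms} for $\phi$ with respect to $t \mapsto \sigma_{-\beta t}$, the functions $F^g_{x',y'} := F_{\Phi^{-1}(x'),\Phi^{-1}(y')}$ witness the corresponding condition for $g.\phi$ with respect to $t \mapsto \sigma^g_{-\beta t}$, because
\[
(g.\phi)\bigl(\sigma^g_{-\beta t}(x')y'\bigr) = \phi\bigl(\Phi^{-1}(\sigma^g_{-\beta t}(x')y')\bigr) = \phi\bigl(\sigma_{-\beta t}(\Phi^{-1}(x'))\Phi^{-1}(y')\bigr),
\]
and analogously on the upper boundary. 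Invariance of $\phi$ under $\sigma$ transports to invariance of $g.\phi$ under $\sigma^g$ by the same diagram chase. Hence $\phi \in \KMS(\rho,\xi,N)$ implies $g.\phi \in \KMS(\rho, \Ad_g(\xi), gNg^{-1})$, with the same inverse temperature.

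For the converse, I would simply apply the forward direction with $g$ replaced by $g^{-1}$ to the state $g.\phi \in \mc{S}(\rho(gNg^{-1})^{\prime\prime})$, after noting $g^{-1}.(g.\phi) = \phi$ and $\Ad_{g^{-1}}(\Ad_g(\xi)) = \xi$, $g^{-1}(gNg^{-1})g = N$. There is no substantial obstacle: the only delicate point is interpreting $g.\phi$ on the correct algebra $\rho(gNg^{-1})^{\prime\prime}$ rather than on $\mc{N}$, but since the defining formula $(g.\phi)(x) = \phi(\rho(g)^{-1}x\rho(g))$ makes unambiguous sense for $x \in \rho(gNg^{-1})^{\prime\prime}$, this is a cosmetic matter.
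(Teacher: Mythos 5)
Your proof is correct and follows essentially the same route as the paper's: conjugate everything by $\rho(g)$, record the intertwining relation $\sigma^g_t\circ\Ad(\rho(g))=\Ad(\rho(g))\circ\sigma_t$, and transport the holomorphic witness functions $F_{x,y}$ accordingly. The paper only writes out the forward implication explicitly; your observation that the converse follows by applying the forward direction with $g^{-1}$ (together with noting $g.\phi$ lives on $\rho(gNg^{-1})''$) is the standard completion and matches the intent.
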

	\begin{proof}
		Write $\mc{N}_g := \rho(g)\mc{N}\rho(g)^{-1}$. Let $\phi \in \KMS(\rho, \xi, N)$. As $e^{t\xi}Ne^{-t\xi} \subseteq N$ it follows that $e^{t\Ad_g(\xi)}$ normalizes $gNg^{-1}$ for every $t\in \R$. Define the following automorphism groups:
		\begin{alignat*}{2}
			\sigma^\xi : \R &\to \Aut(\mc{N}),& \qquad \sigma^\xi &:= \Ad(\rho(e^{t\xi})),\\
			\eta^\xi : \R &\to \Aut(\mc{N}_g),& \qquad \eta^\xi &:= \Ad(\rho(e^{t\Ad_g\xi})).
		\end{alignat*}
		In order to show $g.\phi \in \KMS(\rho, \Ad_g(\xi), gNg^{-1})$, we must verify that $g.\phi$ satisfies the modular condition for the automorphism group $\eta_{-t}^\xi$ of $\mc{N}_g$. Notice that as isomorphisms $\mc{N}_g \to \mc{N}$ we have
		\begin{equation}\label{eq: intermediate_equivariance_of_action}
			\sigma_t^\xi \circ \Ad(\rho(g)^{-1}) = \Ad(\rho(g)^{-1}) \circ \eta_t^\xi, \qquad \forall t \in \R.
		\end{equation}
		As $\phi \in \KMS(\rho, \xi, N)$, we know that $\phi \circ \sigma_t^\xi = \phi$ for all $t \in \R$. It then follows immediately from \eqref{eq: intermediate_equivariance_of_action} that 
		\[(g.\phi) \circ \eta_t^\xi = \phi \circ \Ad(\rho(g)^{-1}) \circ \eta_t^\xi  = \phi \circ \sigma_t^\xi \circ \Ad(\rho(g)^{-1}) = \phi \circ  \Ad(\rho(g)^{-1})  = g.\phi, \qquad \forall t \in \R. \]
		Next, take $x,y \in \mc{N}_g$. Then $x = \rho(g)x^\prime \rho(g)^{-1}$ and $y = \rho(g)y^\prime \rho(g)^{-1}$ for some $x^\prime,y^\prime \in \mc{N}$. Let the function $F_{x^\prime,y^\prime} : \overline{\St} \to \C$ be continuous and bounded, holomorphic on $\St$ and satisfy $F_{x^\prime, y^\prime}(t) = \phi(\sigma_{-t}^\xi(x^\prime)y^\prime)$ and $F_{x^\prime, y^\prime}(t + i) = \phi(y^\prime\sigma_{-t}^\xi(x^\prime))$ for all $t \in \R$. Define $\widetilde{F}_{x,y} : \overline{\St} \to \C$ by $\widetilde{F}_{x,y}(z) := F_{x^\prime, y^\prime}(z)$. Then $\widetilde{F}_{x,y}$ satisfies the conditions of \Fref{def: mod_condition_kms} for $\eta_{-t}^\xi$. Indeed, notice using \Fref{eq: intermediate_equivariance_of_action} that $\sigma_{t}^\xi(x^\prime) = \rho(g)^{-1}\eta_t^\xi(x)\rho(g)$. Thus
		\begin{align*}
			\widetilde{F}_{x,y}(t) = F_{x^\prime, y^\prime}(t) = \phi(\sigma_{-t}^\xi(x^\prime)y^\prime) = \phi\bigg(\rho(g)^{-1}\eta_{-t}^\xi(x)y\rho(g)\bigg) = (g.\phi)(\eta_{-t}^{\xi}(x)y),\\
			\widetilde{F}_{x,y}(t + i) = F_{x^\prime, y^\prime}(t + i) = \phi(y^\prime\sigma_{-t}^\xi(x^\prime)) = \phi\bigg(\rho(g)^{-1}y\eta_{-t}^{\xi}(x)\rho(g)\bigg) = (g.\phi)(y\eta_{-t}^{\xi}(x)).
		\end{align*}
		Thus $g.\phi \in \KMS(\rho, \Ad_g(\xi), gNg^{-1})$.\qedhere \\
	\end{proof}

	\noindent
	Let $\phi \in \KMS(\rho, \xi, N)$. Let $\alpha$ denote the smooth $\R$-action on $N$ defined by $\alpha_t(n) := e^{t\xi}ne^{-t\xi}$. We extend $\rho_\phi$ to $N \rtimes_{\alpha} \R$ by setting $\rho_\phi(n,t) = \rho_\phi(n)\Delta_\phi^{-it}$. Define
	
	\begin{equation}\label{eq: kms_domains}
		\begin{aligned}
			\mc{N}^{\infty, \phi} &:= \set{ x \in \mc{N} \st (n,t) \mapsto \rho_\phi(n,t)\pi_\phi(x)\Omega_\phi \text{ is smooth }N \rtimes_{\alpha} \R \to \mcH_\phi},\\
			\mcD_\phi &:= \pi_\phi(\mc{N}^{\infty, \phi})\Omega_\phi \subseteq \mcH_{\rho_\phi}^\infty.
		\end{aligned}
	\end{equation}
	Notice that $\mc{N}^{\infty, \phi}$ and $\mcD_\phi$ are invariant under the left $N$- and $N \rtimes_{\alpha} \R$-actions, respectively.\\
	
	\begin{lemma}\label{lem: smooth_kms_domain_dense}
		If $\phi \in \KMS(\rho, \xi, N)^\infty$, then $\mc{N}^{\infty, \phi}$ is SOT-dense in $\mc{N}$ and $\mcD_\phi$ is dense in $\mcH_\phi$.\\
		In particular, $\rho_\phi$ is smooth when considered as representation of $N \rtimes_{{\alpha}} \R$.
	\end{lemma}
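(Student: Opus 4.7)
The plan is to show that the unital $\ast$-subalgebra $\mathrm{span}_{\C}\,\rho(N)\subseteq\mc{N}$ is already contained in $\mc{N}^{\infty,\phi}$; from there, SOT-density follows from the double commutant theorem, and density of $\mcD_\phi$ follows from cyclicity of $\Omega_\phi$ for $\mc{N}_\phi$. The key computation will rest on the covariance $\sigma_t(\rho(n_0))=\rho(\alpha_t(n_0))$ for $n_0\in N$, together with the identity $\Delta_\phi^{-it}\pi_\phi(x)\Omega_\phi=\pi_\phi(\sigma_t(x))\Omega_\phi$ for $x\in\mc{N}$, which is a direct consequence of \Fref{rem: kms_states_and_mod_groups} and $\Delta_\phi^{it}\Omega_\phi=\Omega_\phi$.

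First I would verify that $1\in\mc{N}^{\infty,\phi}$, i.e.\ that $\Omega_\phi$ is smooth for $\rho_\phi$ as a representation of $N\rtimes_\alpha\R$. By \Fref{lem: smooth_state_vector}, the hypothesis $\phi\in\mc{S}(\mc{N})^\infty$ yields that $\Omega_\phi$ is smooth for $\rho_\phi|_N$; combined with $\rho_\phi(n,t)\Omega_\phi=\rho_\phi(n)\Delta_\phi^{-it}\Omega_\phi=\rho_\phi(n)\Omega_\phi$, the $N\rtimes_\alpha\R$-orbit map factors through the smooth projection to $N$ and is therefore smooth. Next, for an arbitrary $n_0\in N$, the covariance above gives
\begin{equation*}
\rho_\phi(n,t)\pi_\phi(\rho(n_0))\Omega_\phi=\pi_\phi(\rho(n)\sigma_t(\rho(n_0)))\Omega_\phi=\rho_\phi(n\alpha_t(n_0))\Omega_\phi,
\end{equation*}
which is smooth in $(n,t)$ because $(n,t)\mapsto n\alpha_t(n_0)$ is smooth $N\rtimes_\alpha\R\to N$ and $\Omega_\phi$ is $N$-smooth. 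Hence $\rho(n_0)\in\mc{N}^{\infty,\phi}$, and since $\mc{N}^{\infty,\phi}$ is evidently a linear subspace of $\mc{N}$, we obtain $\mathrm{span}_{\C}\,\rho(N)\subseteq\mc{N}^{\infty,\phi}$.

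The remaining assertions follow formally. Since $\rho(N)$ is a group of unitaries, $\mathrm{span}_{\C}\,\rho(N)$ is a unital $\ast$-subalgebra of $\mc{N}$ whose SOT-closure equals $\rho(N)''=\mc{N}$ by the double commutant theorem; this gives SOT-density of $\mc{N}^{\infty,\phi}$. For $\mcD_\phi$, observe that $\mathrm{span}_{\C}\,\rho_\phi(N)\Omega_\phi=\pi_\phi(\mathrm{span}_{\C}\,\rho(N))\Omega_\phi\subseteq\mcD_\phi$; its closure is a $\rho_\phi(N)$-invariant, hence $\mc{N}_\phi$-invariant, closed subspace containing $\Omega_\phi$, so by cyclicity of $\Omega_\phi$ for $\mc{N}_\phi$ it equals all of $\mcH_\phi$. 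The final clause is then immediate, since $\mcD_\phi\subseteq\mcH_{\rho_\phi}^\infty$ by the definition of $\mc{N}^{\infty,\phi}$. There is no real analytic obstacle here: the only nontrivial point is the realization that $\rho(N)$ itself already lies in $\mc{N}^{\infty,\phi}$, so no smoothing procedure on $\mc{N}$ (which would be problematic if $N$ is not locally compact) is needed.
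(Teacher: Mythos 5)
Your proof is correct and follows essentially the same route as the paper: both establish $\rho(N)\subseteq\mc{N}^{\infty,\phi}$ by rewriting $\rho_\phi(n,t)\pi_\phi(\rho(n_0))\Omega_\phi=\rho_\phi(n\alpha_t(n_0))\Omega_\phi$ via \Fref{rem: kms_states_and_mod_groups}, then conclude using $\rho(N)''=\mc{N}$ and totality of $\rho_\phi(N)\Omega_\phi$. The only cosmetic difference is that you spell out the intermediate observation $1\in\mc{N}^{\infty,\phi}$ and the passage through $\mathrm{span}_\C\rho(N)$, whereas the paper compresses these into one step.
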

	\begin{proof}
		Since $\phi \in \mc{S}(\mc{N})^\infty$, the vector $\Omega_\phi$ is smooth for the $N$-action $\rho_\phi$ by \Fref{lem: smooth_state_vector}. Let $m \in N$. Then
		\[ \rho_\phi(n,t)\rho_\phi(m)\Omega_\phi = \rho_\phi(n)\Delta_\phi^{-it} \rho_\phi(m)\Delta_{\phi}^{it}\Omega_\phi = \rho_\phi(n)\sigma_{-t}^\phi(\rho_\phi(m))\Omega_\phi = \rho_\phi(ne^{t\xi}me^{-t\xi})\Omega_\phi, \qquad \forall n \in N, \; t \in \R,\]
		where the last equality follows by \Fref{rem: kms_states_and_mod_groups}. Thus $(n,t)\mapsto \rho_\phi(n,t)\rho_\phi(m)\Omega_\phi$ is smooth $N \rtimes_{\alpha} \R \to \mcH_\phi$ and so $\rho(m) \in \mc{N}^{\infty, \phi}$. Thus $\rho(N) \subseteq \mc{N}^{\infty, \phi}$ and $\rho_\phi(N)\Omega_\phi \subseteq \mcD_\phi$. Since $\rho(N)^{\prime \prime} = \mc{N}$ and $\rho_\phi(N)\Omega_\phi$ is total for $\mcH_\phi$, it follows that $\mc{N}^{\infty, \phi}$ is SOT-dense in $\mc{N}$ and that $\mcD_\phi$ is dense in $\mcH_\phi$. As $\mcD_\phi$ is contained in the set of $N\rtimes_{\alpha}\R$-smooth vectors by definition, the final observation follows.
	\end{proof}

		\subsubsection{Restrictions Imposed by the KMS Condition}
	
	\noindent
	Let us next determine some consequences of the KMS condition. Most notably, we will show that representations $\rho$ which are smoothly-KMS give rise to generalized positive energy representations $\rho_\phi$ on the GNS-Hilbert space $\mcH_\phi$ of the corresponding state $\phi$.\\
	
	\noindent
	We continue in the notation of \Fref{sec: kms_rep}. Fixing a Lie subgroup $N \subseteq G$ and some element $\xi \in \g$ satisfying $\Ad(e^{t\xi})N \subseteq N$ for all $t \in \R$, we may as well suppose that $G = N \rtimes_\alpha \R$ for some smooth $\R$-action $\alpha$ on $N$ by automorphisms. Let $\g := \Lie(G)$, $\n := \Lie(N)$ and write $D \in \der(\n)$ for the derivation on $\n$ corresponding to $\alpha$. Thus $\g = \n \rtimes_D \R \bm{d}$, where $\bm{d} := 1 \in \R$ denotes the standard basis element. Assume that $\rho$ is KMS at $\bm{d}$ relative to $N$, and let $\phi \in \KMS(\rho, \bm{d}, N)$. We extend the $N$-representation $\rho_\phi = \pi_\phi \circ \rho$ on the GNS-Hilbert space $\mcH_\phi$ to $G = N \rtimes_{\alpha} \R$ by setting $\rho_\phi(n,t) = \rho_\phi(n)\Delta_\phi^{-it}$. Define further $H_\phi := -\log \Delta_\phi = -i \overline{d\rho_\phi(\bm{d})}$. \\
	
	\noindent
	A first observation is the following:
	
	\begin{proposition}\label{prop: abelian_invariant_implies_trivial_kms}
		Let $A$ be an Abelian Lie subgroup of $N$ such that $\alpha_t(A) \subseteq A$ for all $t \in \R$. \\
		Then $\rho_\phi(\alpha_t(a)) = \rho_\phi(a)$ for every $t \in \R$ and $a \in A$. In particular, if $\mc{N}$ is a factor then $\rho(\alpha_t(a)) = \rho(a)$ for every $t \in \R$ and $a \in A$.
	\end{proposition}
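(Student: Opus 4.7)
The plan is to recast the claim as a statement about the modular flow on $\mc{N}_\phi$ and then exploit the interaction between the KMS condition and commutativity. By \Fref{rem: kms_states_and_mod_groups}, $\sigma_t^\phi(\rho_\phi(n)) = \rho_\phi(\alpha_{-t}(n))$ for every $n \in N$, so the conclusion is equivalent to $\sigma_t^\phi(\rho_\phi(a)) = \rho_\phi(a)$ for every $a \in A$. The hypothesis $\alpha_t(A) \subseteq A$ combined with the invertibility of $\alpha_t$ forces $\alpha_t(A) = A$; hence both $\rho_\phi(a)$ and $\sigma_t^\phi(\rho_\phi(a)) = \rho_\phi(\alpha_{-t}(a))$ lie in $\rho_\phi(A)$ and in particular commute.

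Fix $a \in A$, set $u := \rho_\phi(a)$, and write $\omega := \langle \Omega_\phi, \bcdot\; \Omega_\phi \rangle$ on $\mc{N}_\phi$, which satisfies the modular condition for $\sigma^\phi$. Applying \Fref{def: mod_condition_kms} to $x = u$ and $y = u^\ast$ produces a bounded continuous function $F : \overline{\St} \to \C$, holomorphic on $\St$, with $F(t) = \omega(\sigma_t^\phi(u) u^\ast)$ and $F(t+i) = \omega(u^\ast \sigma_t^\phi(u))$. By the commutativity noted above, $F(t) = F(t+i)$ on $\R$; a standard Morera pasting across the horizontal boundary lines extends $F$ periodically to a bounded entire function, which is constant by Liouville. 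Evaluating at $t = 0$ gives $F \equiv \omega(u u^\ast) = 1$, since $u$ is unitary.

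Next, apply the Cauchy-Schwarz inequality in $\omega$ to $u$ and $v := \sigma_t^\phi(u)$: $|\omega(u^\ast v)|^2 \leq \omega(u^\ast u)\omega(v^\ast v) = 1$, while $|\omega(u^\ast v)| = |F(t+i)| = 1$, so equality holds. This forces $v \Omega_\phi = \lambda_t\, u \Omega_\phi$ for some $\lambda_t \in \C$, which substituted back into $\omega(u^\ast v) = \lambda_t$ pins down $\lambda_t = 1$. Since $\Omega_\phi$ is separating for $\mc{N}_\phi$ by \Fref{rem: kms_states_and_mod_groups_uniqueness}, it follows that $\sigma_t^\phi(u) = u$, i.e.\ $\rho_\phi(\alpha_t(a)) = \rho_\phi(a)$. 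For the final assertion, if $\mc{N}$ is a factor then \Fref{rem: kms_states_and_mod_groups_uniqueness} guarantees that any KMS state is automatically faithful, so $\pi_\phi$ is a $\ast$-isomorphism onto its image and the identity descends to $\rho(\alpha_t(a)) = \rho(a)$.

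The only non-formal step is the Liouville-type extension, where one must verify that $F(t) = F(t+i)$ combined with continuity on $\overline{\St}$ really produces a \emph{holomorphic} extension across the boundary lines (rather than a merely continuous one); this is handled by a Morera argument on the pasted translates of the strip, standard in the KMS literature.
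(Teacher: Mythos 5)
Your proof is correct, but it takes a genuinely different route from the paper's. The paper argues at the level of the von Neumann algebra $\A_\phi := \rho_\phi(A)''$: it checks that $\restr{\phi}{\A_\phi}$ satisfies the modular condition for the restricted flow $\restr{\sigma^\phi_t}{\A_\phi}$, invokes the uniqueness of the modular automorphism group (via the separating property of $\Omega_\phi$) to conclude $\sigma^\psi_t = \restr{\sigma^\phi_t}{\A_\phi}$, and then cites the known result that the modular flow of a faithful normal state on an \emph{Abelian} von Neumann algebra is trivial ([BR,~Prop.~5.3.28]). You instead work only with the unitaries $u = \rho_\phi(a)$ and re-prove the triviality of the modular flow on these elements from scratch: you observe that $u$ and $\sigma^\phi_t(u)$ commute, feed $x = u$, $y = u^\ast$ into the modular condition to get $F(t)=F(t+i)$, extend $F$ to a bounded entire function by Morera pasting, conclude $F\equiv 1$ by Liouville, and then squeeze Cauchy--Schwarz to get $\sigma^\phi_t(u)\Omega_\phi = u\Omega_\phi$, hence $\sigma^\phi_t(u) = u$ by the separating property. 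This is essentially the proof of the cited Abelian-implies-tracial fact, specialized to the unitaries you need. Your version is more self-contained and requires no black-box citation; the paper's version is shorter and yields the stronger conclusion that $\sigma^\phi_t$ acts trivially on all of $\A_\phi$, not just on $\rho_\phi(A)$. Both give the stated conclusion, and your handling of the factor case (faithfulness of $\phi$, hence injectivity of $\pi_\phi$) matches the paper's.

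One small remark: the observation that $\alpha_t(A) = A$ is not actually needed; all you use is $\alpha_{-t}(a) \in A$, which is already part of the hypothesis. Also, you should note explicitly that $u^\ast = \rho_\phi(a^{-1}) \in \rho_\phi(A)$ (using that $A$ is a group), so that the commutativity $[\sigma^\phi_t(u), u^\ast] = 0$ you invoke to equate $F(t)$ and $F(t+i)$ is indeed available.
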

	\begin{proof}
		Let $\A_\phi := \rho_\phi(A)^{\prime \prime}$. Write again $\phi$ for the vector state $\langle \Omega_\phi, \bcdot\; \Omega_\phi\rangle$ on $\mc{N}_\phi$. Let $\psi := \restr{\phi}{\A_\phi}$ denote its restriction to $\A_\phi$. As $A$ is $\R$-invariant, so is $\A_\phi \subseteq \mc{N}_\phi$. Thus, the modular automorphism group $\sigma^\phi$ of $\mc{N}_\phi$ leaves $\A_\phi$ invariant. As $\phi$ satisfies the modular condition for $\sigma^\phi$, so does $\psi$ for the automorphism group $t \mapsto\restr{\sigma_t^\phi}{\A_\phi}$. Recall from \Fref{rem: kms_states_and_mod_groups_uniqueness}(2) that $\Omega_\phi$ is separating for $\mc{N}_\phi$. Hence it is so for $\A_\phi$. In view of \Fref{rem: kms_states_and_mod_groups_uniqueness}(1) this implies that the modular automorphism group $\sigma^{\psi}$ on $\A_\phi$ is uniquely determined by the modular condition. Thus $\sigma_t^{\psi} = \restr{\sigma_t^\phi}{\A_\phi}$ for all $t \in \R$. As $\A_\phi$ is Abelian, we know by \cite[Prop.\ 5.3.28]{bratelli_robinson_2} that $\sigma_t^{\psi} = \id_{\A_\phi}$. Thus $\restr{\sigma_t^\phi}{\A_\phi} = \id_{\A_\phi}$. We know from \Fref{rem: kms_states_and_mod_groups} that $\rho_\phi \circ \alpha_{-t} = \sigma_t^\phi \circ \rho_\phi$. Thus $\rho_\phi(\alpha_t(a)) = \rho_\phi(a)$ for all $a \in \A$ and $t \in \R$. If $\mc{N}$ is a factor, then $\phi$ is faithful and $\pi_\phi$ is injective by \Fref{rem: kms_states_and_mod_groups_uniqueness}(1,3). Thus $\rho(\alpha_t(a)) = \rho(a)$ follows from $\rho_\phi(\alpha_t(a)) = \rho_\phi(a)$.
	\end{proof}
	
	\noindent
	Let us illustrate \Fref{prop: abelian_invariant_implies_trivial_kms} with the following noteworthy consequence for loop groups:
	
	\begin{corollary}\label{cor: loop_group_no_kms}
		Let $K$ be a compact $1$-connected simple Lie group with Lie algebra $\mfr{k}$. Define $LK := C^\infty(S^1; K)$ and $L\mfr{k} := C^\infty(S^1; \mfr{k})$. Let $\alpha$ denote the $\T$-action on $LK$ by rotations, with corresponding derivation $D := {d\over d\theta}$ on $L\mfr{k}$. Consider the Lie group $G := LK \rtimes_\alpha \T$ with Lie algebra $\g := L\mfr{k} \rtimes_D \R \bm{d}$, where $\bm{d} := 1 \in \R$. Suppose that the smooth unitary $G$-representation $\rho$ is KMS at $\bm{d} \in \g$ relative to $LK$. Assume that $\rho(LK)^{\prime \prime}$ is a factor. Then $LK \subseteq \ker \rho$.
	\end{corollary}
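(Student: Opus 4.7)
The strategy is to apply \Fref{prop: abelian_invariant_implies_trivial_kms} to a sufficiently rich family of $\alpha$-invariant abelian Lie subgroups of $LK$ in order to show that $d\rho$ annihilates the image of $D = d/d\theta$ on $L\mfr{k}$, and then to use simplicity of $\mfr{k}$ to promote this to the full conclusion $L\mfr{k} \subseteq \ker d\rho$.

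For the first step, fix a maximal torus $T \subseteq K$ with Lie algebra $\mfr{t}$. The loop group $LT := C^\infty(S^1; T) \subseteq LK$ is a $\T$-invariant abelian Lie subgroup, since rotation preserves $T$-valued loops and pointwise multiplication in $T$ is commutative. As $\mc{N} := \rho(LK)^{\prime\prime}$ is a factor by assumption, \Fref{prop: abelian_invariant_implies_trivial_kms} gives $\rho(\alpha_t(a)) = \rho(a)$ for all $a \in LT$ and $t \in \T$. Differentiating at $t = 0$ along $\exp(sY)$ with $Y \in L\mfr{t}$ yields $d\rho(DY) = 0$. Choosing a basis $\{e_1,\ldots,e_n\}$ of $\mfr{k}$ with each $e_i$ lying in some Cartan $\mfr{t}_i$, any $Y \in L\mfr{k}$ decomposes as $Y = \sum_i f_i e_i$ with $f_i e_i \in L\mfr{t}_i$, and applying the previous observation termwise gives $d\rho(DY) = 0$ for every $Y \in L\mfr{k}$. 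Since the image of $D$ is precisely the codimension-$\dim\mfr{k}$ subspace
\[ L_0 := \Bigl\{ Z \in L\mfr{k} \st \int_{S^1} Z \, d\theta = 0 \Bigr\}, \]
this shows $L_0 \subseteq \ker d\rho|_{L\mfr{k}}$.

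The main point is to upgrade $L_0 \subseteq \ker d\rho$ to $L\mfr{k} \subseteq \ker d\rho$. For this it suffices to show that the Lie ideal $I \triangleleft L\mfr{k}$ generated by $L_0$ equals $L\mfr{k}$. Given $x, y \in \mfr{k}$, the elementary identity
\[ [\cos\theta \cdot x,\, \cos\theta \cdot y] = \tfrac{1}{2}\bigl(1 + \cos(2\theta)\bigr)\,[x,y] \]
rewrites $\tfrac{1}{2}[x,y]$ as the difference of a bracket of two elements of $L_0$ (namely $\cos\theta \cdot x$ and $\cos\theta \cdot y$) and the zero-integral loop $\tfrac{1}{2}\cos(2\theta)\,[x,y] \in L_0$. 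Both summands lie in $I$, hence so does $[x,y]$. Simplicity of $\mfr{k}$ gives $\mfr{k} = [\mfr{k},\mfr{k}]$, so every constant loop lies in $I$, and together with $L_0$ these span $L\mfr{k}$, proving $I = L\mfr{k}$. Since $\ker d\rho$ is a Lie ideal containing $L_0$, we conclude $L\mfr{k} \subseteq \ker d\rho$.

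Finally, since $K$ is $1$-connected we have $\pi_0(LK) = \pi_1(K) = 0$, so $LK$ is connected and $L\mfr{k} \subseteq \ker d\rho$ integrates to $LK \subseteq \ker \rho$. The only non-routine step is the ideal argument above, where simplicity of $\mfr{k}$ enters essentially; the remaining steps reduce to direct applications of \Fref{prop: abelian_invariant_implies_trivial_kms} combined with the factor hypothesis.
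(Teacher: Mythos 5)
Your proof is correct and follows essentially the same route as the paper: apply \Fref{prop: abelian_invariant_implies_trivial_kms} to $LT$ for a maximal torus $T$, observe that every element of $\mfr{k}$ lies in some maximal torus to conclude $d\rho(D L\mfr{k}) = 0$, and then use simplicity of $\mfr{k}$ to show the ideal generated by $\mathrm{Im}(D)$ is all of $L\mfr{k}$; the only (cosmetic) difference is that you work over $\R$ with $\cos\theta$-loops, whereas the paper complexifies and uses the identity $L\mfr{k}_\C = DL\mfr{k}_\C + [DL\mfr{k}_\C, DL\mfr{k}_\C]$. One small point worth making explicit at the end: passing from $L\mfr{k} \subseteq \ker d\rho$ to $LK \subseteq \ker\rho$ uses not just connectedness of $LK$ but also that $LK$ is locally exponential (so that $\exp(L\mfr{k})$ generates $LK$), which the paper cites from \cite{Neeb_borel_weil_loop_groups}.
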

	\begin{proof}
		Suppose $T \subseteq K$ is a maximal torus with Lie algebra $\mfr{t}$. Then $LT \subseteq LK$ is an Abelian $\alpha$-invariant subgroup. By \Fref{prop: abelian_invariant_implies_trivial_kms} it follows that $d\rho(DL\mfr{t}) = \{0\}$. As any $X \in \mfr{k}$ is contained in a maximal torus, it follows that $d\rho({df\over d\theta}\otimes X) = 0$ for any $f \in C^\infty(S^1)$ and $X \in \mfr{k}$. Consequently $d\rho(D\g) = \{0\}$ and hence $D \g_\C \subseteq \ker d\rho$, where we have extended $d\rho : \g \to \mc{L}^\dagger(\mcH_\rho^\infty)$ $\C$-linearly to the complexification $\g_\C$. As $\ker d\rho$ is an ideal in $\g_\C$ and $L\mfr{k}_\C = DL\mfr{k}_\C + [DL\mfr{k}_\C, DL\mfr{k}_\C]$, it follows that $L\mfr{k}_\C \subseteq \ker d\rho$. Notice that $LK$ is connected because $K$ is $1$-connected. It is also locally exponential by \cite[Thm.\ II.1]{Neeb_borel_weil_loop_groups}. It follows that $LK \subseteq \ker \rho$.
	\end{proof}

	\noindent
	Thus, one necessarily has to pass to a non-trivial central $\T$-extension $\circled{LK}$ of $LK \rtimes_\alpha \T$ to allow for interesting KMS-representations of $\circled{LK}$ that are smoothly-KMS at some $\circled{\bm{d}} \in \circled{L\mfr{k}}$ covering $\bm{d} \in L\mfr{k} \rtimes_D \R \bm{d}$, as one may have expected from the positive energy analogue (which follows from \cite[Thm 9.3.5]{Segal_Loop_Groups}).\\

	\noindent
	We now proceed with the observation that KMS representations give rise to generalized positive energy representations on the GNS-Hilbert space corresponding to the KMS state:
	
	\begin{theorem}\label{thm: kms_of_qpe}
		Let $\phi \in \KMS(\rho, \bm{d}, N)^\infty$. Let $x \in \mcN^{\phi, \infty}$ and assume $\psi := \pi_\phi(x)\Omega_\phi \in \mcD_\phi$ has unit norm. Then
		\begin{equation}\label{eq: unif_estimate_kms}
			\langle\pi_\phi(x)\Omega_\phi,  -id\rho_\phi(\Ad_n(\bm{d}))\pi_\phi(x)\Omega_\phi\rangle \geq -\log\big(\norm{\pi_\phi(x)}^2\big) \qquad \forall n \in N.
		\end{equation}
		In particular the representation $\rho_\phi$ of $N \rtimes_{\alpha} \R$ on $\mcH_\phi$ is of generalized positive energy at $\bm{d} \in \n \rtimes_D \R \bm{d}$.
	\end{theorem}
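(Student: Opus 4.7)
The plan is to reduce via a covariance identity to the special case where $n$ equals the identity of $N$, and then apply Jensen's inequality (concavity of $\log$) to the spectral measure of the modular operator $\Delta_\phi$. The uniformity in $n \in N$ will follow because conjugation by the unitary $\rho_\phi(n)$ preserves operator norms.

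For the \emph{reduction}, I would use the intertwining $\rho_\phi(\alpha_t(m)) = \Delta_\phi^{-it}\rho_\phi(m)\Delta_\phi^{it}$ extracted from \Fref{rem: kms_states_and_mod_groups} and compute, for $n \in N$,
\[ \rho_\phi(e^{t\,\Ad_n \bm{d}}) \;=\; \rho_\phi\bigl(n\alpha_t(n^{-1})\bigr)\,\Delta_\phi^{-it} \;=\; \rho_\phi(n)\,\Delta_\phi^{-it}\,\rho_\phi(n)^{-1}, \]
so that $-i\, d\rho_\phi(\Ad_n \bm{d}) = U_n H_\phi U_n^{-1}$ with $U_n := \rho_\phi(n)$. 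Setting $y := \rho(n^{-1})x$, the vector $\pi_\phi(y)\Omega_\phi = U_n^{-1}\pi_\phi(x)\Omega_\phi$ again lies in $\mcD_\phi$ (one checks $y \in \mcN^{\infty,\phi}$ by composing the smooth orbit map of $\pi_\phi(x)\Omega_\phi$ with the smooth translation $(m,s)\mapsto (m\alpha_s(n^{-1}), s)$), has unit norm, and $\|\pi_\phi(y)\| = \|\pi_\phi(x)\|$ since $U_n$ is unitary. The identity
\[ \langle \pi_\phi(x)\Omega_\phi,\, -i\,d\rho_\phi(\Ad_n \bm{d})\pi_\phi(x)\Omega_\phi\rangle \;=\; \langle \pi_\phi(y)\Omega_\phi,\, H_\phi \pi_\phi(y)\Omega_\phi\rangle \]
reduces \eqref{eq: unif_estimate_kms} to the case of a trivial group element.

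For that residual case, let $\psi := \pi_\phi(x)\Omega_\phi$. By \Fref{rem: kms_states_and_mod_groups_uniqueness}(2), $\Omega_\phi$ is cyclic and separating for $\mcN_\phi$, so the Tomita relation $S_\phi = J_\phi \Delta_\phi^{1/2}$ gives $\Delta_\phi^{1/2}\psi = J_\phi\,\pi_\phi(x)^*\Omega_\phi$, whence
\[ \langle \psi,\, \Delta_\phi \psi\rangle \;=\; \|\pi_\phi(x)^*\Omega_\phi\|^2 \;=\; \phi\!\bigl(\pi_\phi(x)\pi_\phi(x)^*\bigr) \;\leq\; \|\pi_\phi(x)\|^2. \]
Since $x \in \mcN^{\infty,\phi}$, the vector $\psi$ is smooth for the one-parameter group $\Delta_\phi^{-it}$ and hence lies in $\dom(H_\phi)$ with $\langle \psi, H_\phi\psi\rangle$ finite. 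Concavity of $\log$ applied to the spectral probability measure $\mu_\psi$ of $\Delta_\phi$ at $\psi$ then yields
\[ \langle \psi, H_\phi \psi\rangle \;=\; -\!\int_0^\infty \log(\lambda)\, d\mu_\psi(\lambda) \;\geq\; -\log\!\int_0^\infty \lambda\, d\mu_\psi(\lambda) \;\geq\; -\log\|\pi_\phi(x)\|^2, \]
proving \eqref{eq: unif_estimate_kms}.

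For the generalized positive energy conclusion, $\mcD_\phi$ is dense in $\mcH_\phi$ by \Fref{lem: smooth_kms_domain_dense} and consists of $\rho_\phi$-smooth vectors for $N \rtimes_\alpha \R$. Since $e^{s\bm{d}}$ commutes with itself, the $\Ad_H$-orbit of $\bm{d}$ (for $H$ a $1$-connected Lie group with Lie algebra $\g$) is contained in the $\Ad_N$-orbit, so the uniform bound over $n \in N$ gives $E_\psi(\rho_\phi, \bm{d}) > -\infty$ for every unit $\psi \in \mcD_\phi$; rescaling handles non-unit vectors. The most delicate step is the Jensen estimate: the domain requirement $\psi \in \dom(\Delta_\phi^{1/2})$ and finiteness of $\langle \psi, H_\phi\psi\rangle$ both rely crucially on the smoothness built into the definition of $\mcN^{\infty,\phi}$, without which the spectral computation would be purely formal.
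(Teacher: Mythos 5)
Your proof is correct, and it reorganizes the argument in a way that makes it more self-contained than the paper's. The paper first establishes the entropy inequality
\[
\frac{\langle \psi, H_\phi\psi\rangle}{\|\psi\|^2} \geq -\log\!\left(\frac{\|S_\phi\psi\|^2}{\|\psi\|^2}\right)
\]
as a standalone lemma by citing the auto-correlation lower bound for KMS states (Bratteli--Robinson, Thm.\ 5.3.15, or Fannes--Verbeure), and then applies it directly to the transported vector $\rho_\phi(n)\psi$, for which $\|S_\phi\rho_\phi(n)\psi\| = \|\pi_\phi(x)^*\rho_\phi(n)^{-1}\Omega_\phi\| \leq \|\pi_\phi(x)\|$. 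You do something equivalent but more elementary: you first perform the covariance reduction $-i\,d\rho_\phi(\Ad_n\bm{d}) = U_n H_\phi U_n^{-1}$ (this is the same mechanism as the paper's application to $\rho_\phi(n)\psi$, up to swapping $n$ and $n^{-1}$), and then prove the entropy inequality from scratch by applying Jensen's inequality to the spectral measure of $\Delta_\phi$, using $\|\Delta_\phi^{1/2}\psi\|^2 = \|\pi_\phi(x)^*\Omega_\phi\|^2 = \phi(\pi_\phi(x)\pi_\phi(x)^*) \leq \|\pi_\phi(x)\|^2$. This replaces the black-box citation with a short, purely spectral-theoretic derivation of the same bound, which is a genuine simplification. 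Two minor points worth making explicit: (i) your verification that $y = \rho(n^{-1})x \in \mc{N}^{\infty,\phi}$ is correct but relies on the identity $\Delta_\phi^{-is}\rho_\phi(n^{-1}) = \rho_\phi(\alpha_s(n^{-1}))\Delta_\phi^{-is}$, which you should tie back to Remark 4.7; (ii) Jensen already applies once $\psi \in \dom(\Delta_\phi^{1/2})$ (which holds for all of $\mc{N}_\phi\Omega_\phi$), so the full smoothness encoded in $\mc{N}^{\infty,\phi}$ is needed only to know that $\langle\psi, H_\phi\psi\rangle$ is an honest finite number rather than an element of $(-\infty, +\infty]$; the bound itself would hold trivially when the left side is $+\infty$.
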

	
	\begin{lemma}
		Let $x \in \mc{N}$ be such that $0\neq \psi := \pi_\phi(x) \Omega_\phi \in \dom(H_\phi)$. Then 
		\begin{equation}\label{eq: KMS_entropy_inequality}
			{\langle \psi, H_\phi\psi\rangle \over \norm{\psi}^2 } \geq -\log\bigg({ \norm{S_\phi \psi}^2 \over \norm{\psi}^2}\bigg).	
		\end{equation}
	\end{lemma}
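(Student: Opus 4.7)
The plan is to recognize the inequality as a direct application of Jensen's inequality for the concave logarithm, applied to the spectral measure of $\Delta_\phi$ with respect to $\psi$.

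First, I would reduce the statement to a cleaner form. Using $S_\phi = J_\phi \Delta_\phi^{1/2}$ and the fact that $J_\phi$ is anti-unitary,
\[ \norm{S_\phi \psi}^2 = \norm{\Delta_\phi^{1/2}\psi}^2 = \langle \psi, \Delta_\phi \psi\rangle, \]
where note that $\psi = \pi_\phi(x)\Omega_\phi$ lies in $\dom(S_\phi) = \dom(\Delta_\phi^{1/2})$ by the very definition of $S_\phi$. Substituting this together with $H_\phi = -\log \Delta_\phi$ into the claimed inequality and dividing by $\norm{\psi}^2$, the statement \eqref{eq: KMS_entropy_inequality} becomes
\[ \left\langle \widetilde\psi, \log\Delta_\phi\, \widetilde\psi\right\rangle \leq \log\left\langle \widetilde\psi, \Delta_\phi \,\widetilde\psi\right\rangle, \qquad \widetilde\psi := \psi/\norm{\psi}. \]

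Next, I would invoke the spectral theorem for the positive self-adjoint operator $\Delta_\phi$ (which is strictly positive since $\Omega_\phi$ is separating for $\mc{N}_\phi$). Let $E$ be its spectral measure on $(0,\infty)$ and set $\mu(B) := \langle \widetilde\psi, E(B)\widetilde\psi\rangle$, a Borel probability measure on $(0,\infty)$. The hypothesis $\psi \in \dom(H_\phi) = \dom(\log\Delta_\phi)$ yields $\int (\log \lambda)^2\,d\mu(\lambda) < \infty$, so in particular $\int \log\lambda\, d\mu(\lambda)$ is finite; and $\psi \in \dom(\Delta_\phi^{1/2})$ gives $\int \lambda\, d\mu(\lambda) < \infty$. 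The desired inequality then reads
\[ \int_0^\infty \log\lambda \, d\mu(\lambda) \leq \log \int_0^\infty \lambda\, d\mu(\lambda), \]
which is exactly Jensen's inequality applied to the concave function $\log:(0,\infty) \to \R$.

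There is no substantial obstacle to this argument; the only care required is checking that $\log\lambda$ and $\lambda$ are both $\mu$-integrable, which is built into the hypothesis $\psi \in \dom(H_\phi)$ and the fact that $\psi \in \mc{N}_\phi\Omega_\phi \subseteq \dom(S_\phi)$. Conceptually, \eqref{eq: KMS_entropy_inequality} is a Hilbert-space manifestation of Klein's inequality (or equivalently the non-negativity of relative entropy), which is the thermodynamic content behind the KMS condition and underlies why KMS states will subsequently yield the uniform lower bound \eqref{eq: unif_estimate_kms} needed for the generalized positive energy property in \Fref{thm: kms_of_qpe}.
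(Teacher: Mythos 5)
Your proof is correct and takes a genuinely different route from the paper's. The paper invokes the general \emph{correlation lower bounds} for KMS states (Bratteli--Robinson, Thm.\ 5.3.15, or Fannes--Verbeure), which give
$\langle \pi_\phi(x)\Omega_\phi, [H_\phi, \pi_\phi(x)]\Omega_\phi\rangle \geq -\norm{\pi_\phi(x)\Omega_\phi}^2\log\big( \norm{\pi_\phi(x)^\ast\Omega_\phi}^2 / \norm{\pi_\phi(x)\Omega_\phi}^2\big)$, and then reduces the commutator to $H_\phi$ using $H_\phi\Omega_\phi = 0$. Your argument instead observes that once the commutator is collapsed, the inequality involves only the modular operator $\Delta_\phi$ and the vector $\psi$, so it is a pure statement about a single positive self-adjoint operator: writing $\mu$ for the spectral probability measure of $\Delta_\phi$ at $\widetilde\psi := \psi/\norm{\psi}$, the claim reduces to $\int \log\lambda\, d\mu \leq \log\int\lambda\, d\mu$, i.e.\ Jensen's inequality for the concave logarithm. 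This is more elementary and self-contained than citing the correlation-bound theorem (whose proof ultimately rests on the same kind of spectral/convexity argument, but packaged more heavily), and it makes explicit that the KMS structure is not really needed for this particular lemma---only that $\Omega_\phi$ is cyclic and separating so that $\Delta_\phi$ and $S_\phi$ are defined, $\Delta_\phi > 0$, and $H_\phi\Omega_\phi = 0$. Your domain bookkeeping is also correct: $\psi = \pi_\phi(x)\Omega_\phi \in \mc{N}_\phi\Omega_\phi \subseteq \dom(S_\phi) = \dom(\Delta_\phi^{1/2})$ gives $\int\lambda\, d\mu < \infty$, and $\psi \in \dom(H_\phi)$ gives $\log\lambda \in L^2(\mu) \subseteq L^1(\mu)$. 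The one notational point worth flagging is that $\langle\psi, \Delta_\phi\psi\rangle$ should be read as the quadratic form $\norm{\Delta_\phi^{1/2}\psi}^2 = \int\lambda\, d\langle\psi, E(\lambda)\psi\rangle$, since $\psi$ need not lie in $\dom(\Delta_\phi)$; you handle this correctly in the spectral reformulation.
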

	\begin{proof}
		In view of the correlation lower bounds satisfied by KMS states, see e.g.\ \cite[Thm.\ 5.3.15 $\textrm{(1)} \implies \textrm{(2)}$]{bratelli_robinson_2} or \cite[Thm.\ II.4, $\textrm{(i)} \implies \textrm{(iii)}$]{FannesVerbeure_KMS_correlation_inequalities}, we have
		\[ \langle \pi_\phi(x)\Omega_\phi, [H_\phi, \pi_\phi(x)]\Omega_\phi\rangle \geq -\norm{\pi_\phi(x)\Omega_\phi}^2\log\bigg({ \norm{\pi_\phi(x)^\ast\Omega_\phi}^2 \over \norm{\pi_\phi(x)\Omega_\phi}^2}\bigg). \]
		Since $H_\phi\Omega_\phi = 0$, it follows that $\langle \pi_\phi(x)\Omega_\phi, [H_\phi, \pi_\phi(x)]\Omega_\Phi\rangle = \langle \pi_\phi(x)\Omega_\phi, H_\phi \pi_\phi(x)\Omega_\phi\rangle$. The assertion follows.\qedhere 
	\end{proof}

	\begin{proof}[Proof of \Fref{thm: kms_of_qpe}:]~\\
		Recall that $\mc{D}_\phi \subseteq \mcH_{\rho_\phi}^\infty$ and that $\mc{D}_\phi \subseteq \dom(S_\phi)$, because the stronger condition $\mc{N}_\phi \Omega_\phi \subseteq \dom(S_\phi)$ is satisfied. Let $n \in N$. Notice that $\norm{S_\phi \rho_\phi(n)\psi} = \norm{\pi_\phi(x^\ast)\rho_\phi(n)^{-1}\Omega_\phi} \leq \norm{\pi_\phi(x)}$. Recalling that $\mcD_\phi$ is $N$-invariant, we can apply \fref{eq: KMS_entropy_inequality} to the vector $\rho_\phi(n)\psi$. Using $-i \overline{d\rho_\phi(\bm{d})} = -\log \Delta_\phi = H_\phi$ it follows that
		\begin{equation*}
			\langle\psi,  -id\rho_\phi(\Ad_{n^{-1}}(\bm{d}))\psi\rangle
			= \langle\rho_\phi(n)\psi,  -id\rho_\phi(\bm{d})\rho_\phi(n)\psi\rangle
			\geq -\log\big(\norm{S_\phi \rho_\phi(n)\psi}^2\big)
			\geq -\log\big( \norm{\pi_\phi(x)}^2\big).\qedhere
		\end{equation*}
	\end{proof}

	\noindent
	As a consequence of \Fref{thm: kms_of_qpe}, we find that the observations of \Fref{sec: quas_pe} impose restrictions on KMS representations. Let us illustrate this with the following immediate consequence:

	\begin{corollary}
		Let $\overline{\rho}$ be a smooth projective unitary representation of $G$ on $\mcH_\rho$. Assume that $\mc{N} := \overline{\rho}(N)^{\prime \prime}$ is a factor. Let $\rho : \circled{G} \to \U(\mcH_\rho)$ be the lift of $\overline{\rho}$, for some central $\T$-extension $\circled{G}$ of $G$ with Lie algebra $\circled{\g}$. Let $\circled{N}\subseteq \circled{G}$ cover $N$. Let $\omega$ represent the class in $H^2_{\ct}(\g, \R)$ corresponding to $\circled{\g}$. Let $\xi \in \g$ and suppose $\circled{\xi} \in \circled{\g}$ covers $\xi$. Let $\phi \in \KMS(\rho, \circled{\xi},\circled{N})^\infty$. Assume that $\eta \in \n$ satisfies $[[\xi, \eta], \eta] = 0$. Then $\omega([\xi, \eta], \eta)\geq 0$ and
		\begin{align*}
			\omega([\xi, \eta], \eta) = 0 \iff d\overline{\rho}([\xi, \eta]) = 0.
		\end{align*}
	\end{corollary}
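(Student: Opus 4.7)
The strategy is to chain together three ingredients: \Fref{thm: kms_of_qpe} (which supplies the generalized positive energy condition for $\rho_\phi$), \Fref{cor: qpe_kernel_special_case} (which converts g.p.e.\ into a cocycle statement), and the factor hypothesis (to transfer the conclusion from the GNS side back to $\overline{\rho}$).

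First, I would unwind the setup. From $\phi \in \KMS(\rho, \circled{\xi}, \circled{N})^\infty$ we have $e^{t\xi}Ne^{-t\xi} \subseteq N$ for all $t \in \R$, and differentiating at $t=0$ gives $[\xi, \n] \subseteq \n$; in particular $[\xi, \eta] \in \n$. Identifying $\bm{d}$ with $\xi$ and setting $D(\bm{d}) := \ad_\xi|_\n$, the subalgebra $\n \rtimes_D \R\bm{d} \subseteq \g$ inherits a central extension $\circled{\n} \rtimes_D \R\bm{d}$ from $\circled{\g}$. A brief cocycle computation --- using $[\circled{\xi}, \eta] = [\xi, \eta] + \omega(\xi, \eta)\bm{c}$ in $\circled{\g}$ --- shows that the cocycle of the inherited extension restricts on $\n \times \n$ to $\omega|_{\n \times \n}$, so the quantity $\omega([\xi, \eta], \eta)$ has an unambiguous meaning across the two viewpoints.

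Next, I would extend $\rho_\phi = \pi_\phi \circ \rho$ from $\circled{N}$ to $\circled{N} \rtimes_\alpha \R$ via $(\circled{n}, t) \mapsto \rho_\phi(\circled{n}) \Delta_\phi^{-it}$, and invoke \Fref{thm: kms_of_qpe}: this extension is of generalized positive energy at $\bm{d}$, so $\bm{d} \in \mfr{C}(\overline{\rho_\phi})$. Applying \Fref{cor: qpe_kernel_special_case} to $\overline{\rho_\phi}$ with $p = \bm{d}$, and noting that $[D(p)\eta, \eta] = [[\xi, \eta], \eta] = 0$ by hypothesis, yields
\[
\omega([\xi, \eta], \eta) \geq 0, \qquad \omega([\xi, \eta], \eta) = 0 \iff \overline{\rho_\phi}([\xi, \eta]) = 0.
\]

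Finally, the factor hypothesis enters. By \Fref{rem: kms_states_and_mod_groups_uniqueness}(1,3), $\phi$ is faithful and $\pi_\phi : \mc{N} \to \mc{N}_\phi$ is an injective $\ast$-homomorphism. Since $[\xi, \eta] \in \n$, the unitary one-parameter group $\rho(e^{s[\xi, \eta]})$ lies in $\mc{N}$ and is sent by $\pi_\phi$ to $\rho_\phi(e^{s[\xi, \eta]})$. By Stone's theorem and the injectivity of $\pi_\phi$ (which maps $\T I$ bijectively onto $\T I$), the two conditions $d\overline{\rho}([\xi, \eta]) = 0$ and $\overline{\rho_\phi}([\xi, \eta]) = 0$ --- each saying that the corresponding one-parameter group is scalar-valued for all $s \in \R$ --- are equivalent. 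Combining this equivalence with the previous display completes the argument. I expect the main technical nuisances to be the cocycle bookkeeping of the first step (confirming that the inherited cocycle really does restrict to $\omega|_{\n \times \n}$) and the careful injectivity transfer in the last step; all the conceptual content sits in \Fref{thm: kms_of_qpe} and \Fref{cor: qpe_kernel_special_case}.
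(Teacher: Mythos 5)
Your proof is correct and follows essentially the same route as the paper's: extend $\rho_\phi$ to $\circled{N}\rtimes\R$, invoke \Fref{thm: kms_of_qpe} to obtain generalized positive energy at $\bm{d}$, apply \Fref{prop: qpe_kernel} (you use its special case \Fref{cor: qpe_kernel_special_case}, which is equivalent here), and then use faithfulness of $\phi$ and injectivity of $\pi_\phi$ via \Fref{rem: kms_states_and_mod_groups_uniqueness}(1,3) to identify $\ker d\rho_\phi$ with $\ker d\rho$. The only difference is that you spell out the cocycle bookkeeping on $\n\rtimes\R\bm{d}$ and the Stone/injectivity transfer more explicitly than the paper does, which is a harmless elaboration rather than a different method.
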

	\begin{proof}
		Consider the representation $\rho_\phi$ of $\circled{N} \rtimes \R$ on the GNS Hilbert space $\mcH_\phi$, where $\R$ acts on $\circled{N}$ by $\restr{\Ad(e^{t \circled{\xi}})}{\circled{N}}$ and where $\rho_\phi(1,t) = \Delta_\phi^{-it}$ for $t \in \R$. Let $\overline{\rho}_\phi$ be the corresponding projective unitary representation of $N \rtimes \R$ on $\mcH_\phi$, where $\R$ acts on $N$ by $\restr{\Ad(e^{t\xi})}{N}$. By \Fref{thm: kms_of_qpe}, $\rho_\phi$ is of g.p.e.\ at $\bm{d} \in \circled{\n} \rtimes \R \bm{d}$ and so $\overline{\rho}_\phi$ is of g.p.e.\ at $\bm{d}$. It follows from \Fref{prop: qpe_kernel} that $\omega([\xi, \eta], \eta)\geq 0$ and $\omega([\xi, \eta], \eta) = 0 \iff d\overline{\rho}_\phi([\xi, \eta]) = 0$. As $\mc{N}$ is a factor, the KMS state $\phi \in \mc{S}(\mcN)$ is faithful and the corresponding GNS-representation $\pi_\phi : \mc{N} \to \B(\mcH_\phi)$ is injective, by \Fref{rem: kms_states_and_mod_groups_uniqueness}(1,3). This implies that $\ker d\rho_\phi = \ker d\rho$. Thus $\omega([\xi, \eta], \eta) = 0 \iff d\overline{\rho}([\xi, \eta]) = 0$.
	\end{proof}

	\begin{remark}
		A related notation is that of a \textit{passive state}, which is usually considered in the context of a $C^\ast$-dynamical system $(\mc{A}, \sigma)$, where $\mc{A}$ is a $C^\ast$-algebra and $\sigma : \R \to \Aut(\mc{A})$ is a strongly continuous homomorphism. If $\delta$ is the generator of $\sigma$ with domain $\mcD(\delta) \subseteq \mc{A}$, a state $\phi$ on $\mc{A}$ is said to be \textit{passive} if 
		\begin{equation}\label{eq: passive_state}
			-i\phi(u^\ast \delta(u)) \geq 0, \qquad \forall u \in \U_0(\mc{A}) \cap \mcD(\delta),
		\end{equation}
		where $\U_0(\mc{A})$ denotes the identity component of the group $\U(\mc{A})$ of unitary elements in $\mc{A}$. In this case, $\phi$ is necessarily $\sigma$-invariant \cite[Thm.\ 1.1]{Pusz_passive_states}, so that $\sigma$ is canonically implemented by a strongly-continuous unitary $1$-parameter group $t \mapsto e^{it H_\phi}$ on the GNS-Hilbert space $\mcH_\phi$. Let $\pi_\phi : \A \to \B(\mcH_\phi)$ be the GNS-representation of $\mc{A}$ associated to $\phi$ and let $\Omega_\phi \in \mcH_\phi$ be the corresponding cyclic vector. Then \eqref{eq: passive_state} becomes
		\[ -i \langle \Omega_\phi, \pi_\phi(u)^{-1} H_\phi \pi_\phi(u) \Omega_\phi\rangle \geq 0, \qquad \forall u \in \U_0(\mc{A}) \cap \mcD(\delta), \]
		which is similar to \fref{eq: qpe_la}. It was moreover shown in \cite{Pusz_passive_states} that any ground- or $\sigma$-KMS state is necessarily passive (cf.\ \cite[Thm.\ 5.3.22]{bratelli_robinson_2}), which is analogous to the observation that both positive energy and KMS representations provide examples of generalized positive energy ones, in view of \Fref{thm: kms_of_qpe}. We refer to \cite{Pusz_passive_states} and \cite{bratelli_robinson_2} for more information on (completely) passive states.
	\end{remark}

	\subsubsection{Some Examples of KMS Representations}\label{sec: examples_kms}

	\noindent
	Let us consider a variety of examples of KMS representations, thereby showing in various situations that a well-known $\sigma$-KMS state $\phi$ on a von Neumann algebra $\mc{N}$ admits some underlying smooth structure. More precisely, we construct a continuous unitary representation $\rho$ of a (typically infinite-dimensional) Lie group $G$ such that $\mc{N} = \rho(N)^{\prime\prime}$, $\phi \in \KMS(\rho, \xi, N)^\infty$ and $\sigma_t = \Ad(\rho(e^{t\xi}))$ for some $\xi \in \g$ and Lie subgroup $N$ of $G$. In particular, in this case the $1$-parameter group $\sigma$ on $\mc{N}$ implements the $\R$-action $t\mapsto \restr{\Ad(e^{t\xi})}{N}$ on the Lie subgroup $N$ of $G$. \\
	
	\noindent
	Let us begin with the simplest class of examples, which correspond to Gibbs states, as in \Fref{ex: type_I_case}:

	\begin{example}\label{ex: Gibbs_KMS_reprs}
		Take for $N$ simply $N = G$. Let $(\rho, \mcH)$ be a continuous irreducible unitary $G$-representation. Then $\mc{N} = \B(\mcH)$. Let $\xi \in \g$ and define the self-adjoint operator $H := -i \restr{d\over dt}{t=0}\rho(e^{t\xi})$. Let $\beta > 0$ and assume that $Z_\beta := \Tr(e^{-\beta H}) < \infty$. Define the Gibbs state $\phi(x) := {1\over Z_\beta}\Tr(e^{-\beta H}x)$ for $x \in \mc{N}$. As in \Fref{ex: type_I_case}, we have $\sigma_{-t}^{\phi}(x) = e^{it\beta H}xe^{-it\beta H} = \rho(e^{t \beta \xi})x\rho(e^{-t \beta \xi})$ for any $x \in \mc{N}$. Consequently, $\phi \in \KMS(\rho, \beta \xi)$ and so $\rho$ is a KMS representation at $\beta \xi \in \g$. If in addition $\widehat{\phi} : G \to \C$ is smooth, then $\rho$ is smoothly-KMS at $\beta \xi \in \g$. By \Fref{lem: covariance_kms_states}, $\rho$ is also KMS at any element in the adjoint orbit of $\beta \xi$. In view of \Fref{ex: type_I_case}, the representation $\rho_\phi$ of $G \rtimes \R$ on $\mcH_\phi := \overline{\B(\mcH_\rho)}^{\langle \--, \--\rangle_\phi}$ is given by $\rho_\phi(g, t)x \Omega_\phi = \rho(g)\rho(e^{t\beta \xi})x\rho(e^{-t \beta \xi})\Omega_\phi = \rho(g)\sigma_{-t}^{\phi}(x)\Omega_\phi$, where $\Omega_\phi := I \in \B(\mcH_\rho) \subseteq \mcH_\phi$ denotes the cyclic vector.
	\end{example}

	\noindent
	In fact, \Fref{prop: gibbs_states_type_I_factor} below entails that any KMS representation $\rho$ for which $\mc{N}$ is a factor of type $\mathrm{I}$ is of the form described in \Fref{ex: Gibbs_KMS_reprs}. Moreover a complete characterization of such representations was very recently obtained in the context where $N$ is a finite-dimensional Lie group \cite{Tobias_typeI_factor_reps}.
	
	\begin{proposition}\label{prop: gibbs_states_type_I_factor}
		Let $\xi \in \g$ and $\beta > 0$. Suppose that $\restr{\rho}{N}$ is irreducible and that $\phi \in \KMS(\rho, \beta \xi, N)$. Let $H := -i \restr{d\over dt}{t=0}\rho(e^{t\xi})$. Then $Z_\beta := \Tr(e^{-\beta H}) < \infty$ and $\phi(x) = {1\over Z_\beta}\Tr(e^{- \beta H}x)$.
	\end{proposition}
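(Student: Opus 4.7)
The plan is to identify $\phi$ as given by a density operator and then pin down this operator via uniqueness of the modular automorphism group. Since $\restr{\rho}{N}$ is irreducible, $\mc{N} = \rho(N)^{\prime\prime} = \B(\mcH_\rho)$, a type $\mathrm{I}$ factor, so by \Fref{rem: kms_states_and_mod_groups_uniqueness}(3) the KMS state $\phi$ is faithful. Standard duality for normal states on $\B(\mcH_\rho)$ then furnishes a positive trace-class operator $T$ with $\Tr(T) = 1$ and $\ker T = \{0\}$ such that $\phi(x) = \Tr(Tx)$ for every $x \in \B(\mcH_\rho)$, and the modular automorphism group attached to such $\phi$ is, as in \Fref{ex: type_I_case},
\[ \sigma^\phi_t(x) = T^{it}xT^{-it}. \]

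Next I would read off $\sigma^\phi$ from the KMS hypothesis. By Stone's theorem $\rho(e^{t\xi}) = e^{itH}$, so $\phi \in \KMS(\rho, \beta\xi, N)$ asserts that $\phi$ is a $\sigma$-KMS state for $\sigma_t := \Ad(e^{it\beta H})$; unpacking \Fref{def: mod_condition_kms}, this means that $\phi$ satisfies the modular condition for $t \mapsto \sigma_{-t} = \Ad(e^{-it\beta H})$. The uniqueness asserted in \Fref{rem: kms_states_and_mod_groups_uniqueness}(1) then forces $\sigma^\phi_t = \Ad(e^{-it\beta H})$ for all $t \in \R$. Comparing with the formula for $\sigma^\phi_t$ above, the unitary $T^{it}e^{it\beta H}$ commutes with every element of $\B(\mcH_\rho)$, so $T^{it}e^{it\beta H} = \chi(t)I$ for a strongly continuous function $\chi : \R \to \T$; using that scalars commute with $e^{is\beta H}$, a short check shows that $\chi$ is a group homomorphism, hence $\chi(t) = e^{i\lambda t}$ for some $\lambda \in \R$.

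The final step is an application of Stone's theorem. The identity $T^{it} = e^{i\lambda t}e^{-it\beta H}$ equates two strongly continuous unitary one-parameter groups on $\mcH_\rho$, so Stone's theorem yields equality of their self-adjoint generators: $\log T = \lambda I - \beta H$. Exponentiating gives $T = e^{\lambda}e^{-\beta H}$; since $T$ is trace class with $\Tr(T) = 1$, the operator $e^{-\beta H}$ is trace class with $Z_\beta := \Tr(e^{-\beta H}) = e^{-\lambda} < \infty$, whence $T = Z_\beta^{-1} e^{-\beta H}$ and $\phi(x) = Z_\beta^{-1}\Tr(e^{-\beta H}x)$, as claimed. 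The only delicate point is making sure that $\chi(t)e^{-it\beta H}$ is a genuine strongly continuous one-parameter unitary group so that Stone's theorem applies cleanly; this is automatic since $\chi(t)$ is scalar, so the continuity and the group law are immediate.
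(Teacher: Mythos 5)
Your proof is correct and essentially identical to the paper's: both identify $\phi$ with a density operator via normal-state duality on $\B(\mcH_\rho)$, invoke uniqueness of the modular automorphism group (from \Fref{rem: kms_states_and_mod_groups_uniqueness}) to match $T^{it}xT^{-it}$ with $e^{-it\beta H}xe^{it\beta H}$, use that a unitary commuting with all of $\B(\mcH_\rho)$ is a scalar to extract a character $\chi(t) = e^{i\lambda t}$, and then apply Stone's theorem and the normalization $\Tr(T) = 1$ to pin down $T = Z_\beta^{-1}e^{-\beta H}$. The one minor extra point you supply—verifying explicitly that $\chi$ is a homomorphism—is left tacit in the paper, but the argument is otherwise the same.
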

	\begin{proof}
		As $\restr{\rho}{N}$ is irreducible, it follows that $\mc{N} = \B(\mcH_\rho)$. Thus $\phi(x) = \Tr(\delta x)$ for some $\delta \in L^1(\mcH_\rho)_+$ satisfying $\Tr(\delta) = 1$, where $L^1(\mcH_\rho)$ denotes Banach space of trace-class operators on $\mcH_\rho$. Moreover, in view of \Fref{rem: kms_states_and_mod_groups_uniqueness}(3), we know that $\phi$ is faithful on $\mc{N}$. By assumption, $\phi$ satisfies the modular condition for the automorphism group $t \mapsto \Ad(\rho(e^{-t \beta \xi})) =: \sigma_{-\beta t}$. On the other hand, as $\phi$ is faithful, there exists by \Fref{rem: kms_states_and_mod_groups_uniqueness}(1) a \textit{unique} automorphism group $\sigma_t^\phi$ of $\mc{N}$ for which $\phi$ satisfies the modular condition. It follows that $\sigma_{-\beta t} = \sigma_t^\phi$. When $\mc{N} = \B(\mcH_\rho)$ and $\phi(x) = \Tr(\delta x)$, the modular automorphism group $\sigma_t^\phi$ corresponding to $\phi$ is $\sigma_t^\phi(x) = \delta^{it}x\delta^{-it}$. In view of $\sigma_t^\phi = \sigma_{-t\beta}$,
		it follows that $\delta^{it}x \delta^{-it} = \rho(e^{-t\beta \xi})x \rho(e^{t\beta \xi})$ for every $x \in \mc{N}$. As $\mcZ(\mc{N}) = \C I$ and both  $t \mapsto \delta^{it}$ and $t \mapsto \rho(e^{t\beta \xi})$ are strongly continuous unitary $1$-parameter groups, it follows that there is some continuous homomorphism $c : \R \to \T$ such that $\delta^{it} = c(t)\rho(e^{-t\beta \xi}) = c(t)e^{-it \beta H}$ for all $t \in \R$. Thus there exists $\mu \in \R$ such that $\delta^{it} = e^{-it(\beta H + \mu I)}$ for all $t \in \R$. So $\log \delta = - (\beta H + \mu I)$. Since $\Tr(\delta) = 1$, we have $Z_\beta = \Tr(e^{-\beta H}) = \Tr(e^{-(\beta H + \mu)}e^{\mu}1) = \Tr(\delta e^{\mu}1) = e^{\mu}\phi(1) = e^\mu < \infty$. It follows that 
		\[{1\over Z_\beta}\Tr(e^{-\beta H}x) = e^{-\mu}\Tr(e^{-\beta H}x) = \Tr(e^{-(\beta H + \mu)}x) = \Tr(\delta x) = \phi(x), \qquad \forall x\in \B(\mcH).\qedhere\]
	\end{proof}

	\noindent
	For more interesting examples, one has to consider a Lie subgroup $N$ of $G$ which is not of type $\mathrm{I}$, so that the von Neumann algebra $\mc{N}$ need not be type $\mathrm{I}$.

	\begin{example}[Powers' factors]
		Define $G_n := \prod_{k=1}^n \SU(2)$ and let $\eta_n : G_n \hookrightarrow G_{n+1}$ be defined by 
		\[ \eta_n : G_n \xrightarrow{\id \times 1} G_n \times \SU(2) = G_{n+1}.\]
		Write $\g_n := \Lie(G_n)$ and $L(\eta_n) := \Lie(\eta_n)$. The direct limit $G := \varinjlim_{n} (G_n, \eta_n)$ consists of sequences $(u_k)$ in $\SU(2)$ with $u_k = 1$ for all but finitely many values of $k$. It can be equipped with the structure of a regular Lie group that is modeled on the locally convex inductive limit $\g := \varinjlim_{n} (\g_n, L(\eta_n))$ \cite[Thm.\ 4.3]{Glockner_direct_lim} and has the exponential map $\exp_G = \varinjlim_{n}\exp_{G_n}$ \cite[Prop.\ 4.6]{Glockner_direct_lim}. Let 
		$H := \begin{pmatrix}
			1 & 0 \\
			0 & -1
		\end{pmatrix}$ 
		and $\xi := iH \in \su(2)$. Consider the following $\R$-action $\alpha$ on $G$ defined by $(\alpha_t(u))_k := e^{t \xi}u_k e^{-t \xi}$ for $u \in G$. The corresponding action $\R \times G \to G$ is smooth. Indeed, the restriction of $\alpha$ to $\R\times G_n$ yields a smooth action $\alpha^{(n)} : \R \times G_n \to G_n$ for every $n \in \N$. It follows from \cite[Thm.\ 3.1]{Glockner_direct_lim} that $\varinjlim_n \alpha^{(n)} : \varinjlim_n (\R \times G_n) \to G$ is smooth. By \cite[Prop.\ 3.7]{Glockner_direct_lim} we further have $\varinjlim_n (\R \times G_n) = \R \times G$ as smooth manifolds. This shows that $\alpha : \R \times G \to G$ is smooth. Consider the Lie group $G^\sharp := G \rtimes_\alpha \R$ with Lie algebra $\g^\sharp := \g \rtimes_D \R \bm{d}$, where $\bm{d} := (0,1)$. Using the so-called Powers' factors, we define unitary representations $\rho$ of $G^\sharp$ which are smoothly-KMS at $\bm{d}$ relative to $G \triangleleft G^\sharp$ and for which $\rho(G)^{\prime \prime}$ is a factor of type $\mathrm{III}_\lambda$ for arbitrary $\lambda \in (0,1)$. Define the finite-dimensional $C^\ast$-algebra $\M_n := \bigotimes_{k=1}^n \B(\C^2)$ for every $n \in \N$. Let $\beta > 0$. Define the state $\phi(x) := {1\over Z}\Tr(e^{-\beta H}x)$ on $\B(\C^2)$, where $Z := \Tr(e^{- \beta H}) = 2\cosh(\beta)$. Let $\phi_n$ be the state on $\M_n$ defined by $\phi_n(x_1 \otimes \cdots \otimes x_n) = \prod_{k=1}^n \phi(x_k)$. The GNS-representation of $\B(\C^2)$ defined by $\phi$ is $\mcH_\phi := \B(\C^2)$ equipped with left $\B(\C^2)$-action and the inner product $\langle a,b\rangle := {1\over Z}\Tr(e^{-\beta H}a^\ast b)$. Similarly the GNS-representation of $\M_n$ corresponding to $\phi_n$ is $\mcH_{\phi_n} := \bigotimes_{k=1}^n \mcH_{\phi}$. The isometric inclusions $\mcH_{\phi_n} \hookrightarrow \mcH_{\phi_{n+1}}, x\mapsto x \otimes 1$ define a directed system of Hilbert spaces, and the algebraic direct limit $\varinjlim_{n} \mcH_{\phi_n}$ becomes naturally a pre-Hilbert space. Let $\mcH$ denote its Hilbert space completion. Let $\iota_{n} : \mcH_{\phi_n} \hookrightarrow \mcH$ denote the canonical inclusion. For every $n\in \N$, there is a $\ast$-representation $\pi_n$ of $\M_n$ on $\mcH$ defined for $x = x_1 \otimes \cdots \otimes x_n \in \M_n$ by 
		\[\pi_n(x) \iota_{m}(\psi_1\otimes \cdots \otimes \psi_m) := \iota_{m}(x_1\psi_1\otimes \cdots x_n \psi_n \otimes \psi_{n+1}\otimes \cdots \otimes \psi_m), \qquad m \geq n.\]
		Let $\M_\infty := \bigg(\bigcup_{n\in \N}\pi_n(\M_n)\bigg)^{\prime \prime}$. The vector $\Omega := 1 \otimes 1\otimes \cdots \in \mcH$ is cyclic and separating for $\M_\infty$ \cite[XIV, Prop.\ 1.11]{Takesaki_III}, so $\mcH$ may be identified with the GNS-representation of $\M_\infty$ w.r.t. the state $\phi_\infty := \langle \Omega, \bcdot\; \Omega \rangle$ on $\M_\infty$. Observe that $\phi_\infty$ satisfies $\phi_\infty(\iota_n(x)) = \phi_n(x)$ for all $n \in \N$ and $x \in \M_n$. The von Neumann algebra $(\M_\infty, \phi_\infty) =: \bigotimes_{k=1}^\infty (\B(\C^2), \phi)$ is the so-called Powers' factor with parameter $a := {e^{-\beta} \over 2\cosh(\beta)} \in (0, {1\over 2})$, which is a factor of type $\mathrm{III}_\lambda$ with $\lambda = e^{-2\beta} = {a \over 1-a} \in (0,1)$ \cite[XVIII, Theorem 1.1]{Takesaki_III}. The modular automorphism group $\sigma_t^{\phi_\infty}$ on $\M_\infty$ defined by $\phi_\infty$ is given by $\sigma_t^{\phi_\infty} = \bigotimes_{k=1}^\infty \Ad(e^{-\beta t \xi})$ \cite[XIV, Prop.\ 1.11]{Takesaki_III}, where $\bigotimes_{k=1}^\infty \Ad(e^{\beta t \xi}) \in \Aut(\M_\infty)$ satisfies $\bigotimes_{k=1}^\infty \Ad(e^{\beta t \xi}) \circ \iota_n = \iota_n \circ \bigotimes_{k=1}^n \Ad(e^{\beta t \xi})$ for all $t \in \R$ and $n \in \N$ and is defined from this condition by continuity, where we used \cite[XIV, Thm.\ 1.13]{Takesaki_III} and that $\phi \circ \Ad(e^{\beta t \xi}) = \phi$ for all $t \in \R$. Consider the unitary representation $\rho : G \rtimes_\alpha \R \to \U(\mcH)$ defined by
		\[\rho(u, \beta t) := \bigg(\bigotimes_{k=1}^\infty u_k\bigg) \circ \Delta_{\phi_\infty}^{-it}, \qquad u \in G, t \in \R\]
		which is well-defined because $u = (u_k) \in G$ is a sequence in $\SU(2)$ with $u_k = 1$ for all $k$ sufficiently large. Since $\rho(\beta t) = \Delta_{\phi_\infty}^{-it}$ and $\rho(G)^{\prime \prime} = \M_\infty$, it follows that $\rho$ is KMS at $\beta \bm{d} \in \g^\sharp$ relative to $G \triangleleft G^\sharp$. To see that $\widehat{\phi_\infty} : G \to \C$ is smooth, it suffices to show that its restriction to $G_n$ is smooth for every $n \in \N$, using the universal property of the smooth manifold structure on $G = \varinjlim_n G_n$ \cite[Thm.\ 3.1]{Glockner_direct_lim}. This is the case, as $\langle \Omega, \rho(u)\Omega\rangle =\prod_{k=1}^n \phi(u_k)$ for any $u \in G_n$, which is smooth $G_n \to \C$. Thus $\Omega \in \mcH_\rho^\infty$ and so $\rho$ is smoothly-KMS at $\beta \bm{d} \in \g^\sharp$ relative to $G \triangleleft G^\sharp$. 
	\end{example}

	\begin{example}[Standard real subspaces and Heisenberg representations]\label{ex: std_subspaces_heis_kms}
		Let $\mcH$ be a complex Hilbert space. Consider the real Heisenberg group $G := \mrm{H}(\mcH, \omega)$, where $\omega(v, w) = \mrm{Im} \langle v, w\rangle$. An $\R$-linear closed subspace $\mc{K} \subseteq \mcH$ is called \textit{cyclic} if $\mc{K} + i \mc{K}$ is dense in $\mcH$. It is called \textit{separating} if $\mc{K} \cap i \mc{K} = \{0\}$. A \textit{standard subspace} is a closed $\R$-linear subspace $\mc{K} \subseteq \mcH$ which is both cyclic and separating. We show that any standard real subspace gives rise to a smooth KMS representation. Let $\mc{K} \subseteq \mcH$ be a standard real subspace. Write $\delta_{\mc{K}}$ for the corresponding modular operator on $\mcH$, which is generally unbounded, positive and self-adjoint, see e.g.\ \cite[Sec. 3]{Neeb_Olafsson_mod_theory}. Then $t \mapsto \delta_{\mc{K}}^{it}$ is a strongly-continuous unitary $1$-parameter group on $\mcH$ satisfying in particular $\delta_{\mc{K}}^{it} \mc{K} \subseteq \mc{K}$. We first pass to the $\R$-smooth vectors $\mc{K}^\infty$ to obtain a regular Lie group $\mrm{H}(\mc{K}^\infty, \omega) \rtimes \R$. We then construct a KMS representation thereof using second-quantization. The details are given below.\\
		
		\noindent
		Let $\mc{K}^\infty$ denote the set $\R$-smooth vectors in $\mc{K}$. Then $\mc{K}^\infty$ is dense in $\mc{K}$ and $\R$-invariant. It moreover carries a Fr\'echet topology which is finer than the one inherited as a subspace of $\mc{K}$ and for which the action $\R \times \mc{K}^\infty \to \mc{K}^\infty$ is smooth \cite[Thm.\ 4.4, Lem.\ 5.2]{Neeb_diffvectors}. As $\omega : \mc{K}^\infty \times \mc{K}^\infty \to \R$ is bilinear and continuous w.r.t.\ this topology, it is smooth. Thus the generalized Heisenberg group $N := \mrm{H}(\mc{K}^\infty, \omega)$ is a Lie group. (Notice that $\restr{\omega}{\mc{K}^\infty}$ may be degenerate.) It is as a subgroup of $G$ generated by $\mc{K}^\infty$. As $\mc{K}^\infty$ is a Fr\`echet space, it is Mackey complete by \cite[Thm.\ I.4.11]{michor_convenient}, which implies using \cite[Thm.\ V.1.8]{neeb_towards_lie} that $N$ is regular. Write $\n := \Lie(N)$. By construction $\R$ acts smoothly on $N$ by $\delta_{\mc{K}}^{it}$, so that $N^\sharp := N \rtimes \R$ is a regular Lie group. Let $\n^\sharp := \n \rtimes \R \bm{d}$ denote its Lie algebra. We construct a representation of $N^\sharp$ which is smoothly KMS at $\bm{d} \in \n^\sharp$ relative to $N \triangleleft N^\sharp$. Let us recall the standard representation of $\mrm{H}(\mcH, \omega)$ on the Bosonic Fock space $\F(\mcH)$. Equip the symmetric algebra $S^\bullet(\mcH)$ with the inner product
		\begin{equation}\label{eq: ip_fock_space}
			\langle v_1\cdots v_n, w_1\cdots w_n\rangle = \sum_{\sigma \in S_n} \prod_{j=1}^n \langle v_j, w_{\sigma_j}\rangle.	
		\end{equation}
		Let $\F(\mcH)$ denote the Hilbert space completion of $S^\bullet(\mcH)$ and let $\Omega := 1 \in \mcH$ denote the vacuum vector. Then $\mcH$ contains (and is generated by) the vectors $e^v := \sum_{n=0}^\infty {1\over n!}v^n \in \mcH$ for $v \in \mcH$. There is a continuous irreducible unitary representation $W$ of $\mrm{H}(\mcH, \omega)$ on $\F(\mcH)$ satisfying $W(z,v)e^w = ze^{-{1\over 2}\norm{v}^2 - \langle v, w\rangle}e^{v + w}$ for $v,w \in \mcH$ and $z \in \T$ \cite[Sec. 9.5]{Segal_Loop_Groups}. Moreover, any unitary $u \in \U(\mcH)$ extends canonically to a unitary $\F(u) \in \U(\F(\mcH))$. We further have:
		\begin{equation}\label{eq: covariance_second_quantization}
			W(uv) = \F(u)W(v)\F(u)^{-1}, \qquad \forall u \in \U(\mcH), v \in \mcH
		\end{equation}
		In view of \eqref{eq: covariance_second_quantization}, $W$ and $\F$ together define a representation $\rho$ of the Lie group $N^\sharp$ by $\rho(n,t) := W(n)\F(\delta_{\mc{K}}^{it})$. Let $\mc{N} := W(N)^{\prime \prime}$. As $\mc{K}$ is a standard real subspace and $\mc{K}^\infty$ is dense in $\mc{K}$, it follows that $\Omega$ is cyclic and separating for $\mc{N}$ \cite[Lem.\ 6.2]{Neeb_Olafsson_mod_theory}. Let $\phi$ denote the faithful vector state on $\mc{N}$ defined by $\phi(x) = \langle \Omega, x\Omega\rangle$. Using \cite[Prop.\ 6.10]{Neeb_Olafsson_mod_theory} we have $\Delta_\phi^{it} = \F({\delta_{\mc{K}}^{it}})$ for all $t \in \R$. Consequently $\rho$ is KMS at $-\bm{d} \in \n^\sharp$ relative to $N \triangleleft N^\sharp$ (notice the minus sign in \Fref{def: mod_condition_kms}). To see it is smoothly KMS, observe that $\widehat{\phi} : N \to \C$ is smooth because it is given by
		\begin{equation}\label{eq: std_subspace_state_smooth}
			\widehat{\phi}(z,v) = \langle \Omega, W(z, v) \Omega\rangle = ze^{-{1\over 2}\norm{v}^2}.
		\end{equation}
	\end{example}

	\noindent
	The following provides an example where $\rho$ is smoothly-KMS at various $\xi_I \in \g$, relative to distinct subgroups $N_I \subseteq G$, where $I \in \mc{I}$ for some indexing set $\mc{I}$:

	\begin{example}[Bisognano-Wichmann and $\SU(1,1)$-covariant nets]\label{ex: mobius_cov_nets}~\\
		Recall that $\SU(1,1)$ acts on $S^1$. Explicitly, for $g = \begin{pmatrix}
			\alpha & \beta \\
			\overline{\beta} & \overline{\alpha}
		\end{pmatrix}\in \SU(1,1)$ with $\alpha, \beta \in \C$ satisfying $|\alpha|^2 - |\beta|^2 = 1$, define $g(z) := {\alpha z + \beta \over \overline{\beta}z + \overline{\alpha}}$ for $z \in \C$ with $|z| = 1$. With $g$ as above, define the unitary action of $\SU(1,1)$ on the complex Hilbert space $L^2(S^1; \C)$ by $(u(g)f)(z) := (\alpha - \overline{\beta}z)^{-1}f(g^{-1}(z))$ for $f \in L^2(S^1; \C)$. Let $H_+^2(S^1; \C)$ be the closed subspace of $L^2(S^1; \C)$ spanned by the non-negative Fourier modes. Let $H^2_-(S^1; \C)$ be its orthogonal complement in $L^2(S^1; \C)$. Notice that $\SU(1,1)$ leaves these subspaces invariant. Consider the complex Hilbert space $V := H_+^2(S^1; \C) \oplus \overline{H_-^2(S^1; \C)}$, where $\overline{H_-^2(S^1; \C)}$ denotes the Hilbert space complex-conjugate to $H_-^2(S^1; \C)$. Let $V_\R = L^2(S^1; \C)$ denote the real vector space underlying $V$. Define the real Fr\'echet space $V_\R^\infty := C^\infty(S^1; \C)$ and consider the symplectic vector space $(V_\R^\infty, \omega)$, where $\omega(v,w) := \mrm{Im}\langle v, w\rangle_V$ for $v,w \in V_\R^\infty$. Let $\mrm{H}(V_\R^\infty, \omega)$ be the corresponding real Heisenberg group. Consider the regular Fr\'echet-Lie group $G := \mrm{H}(V_\R^\infty, \omega)\rtimes \SU(1,1)$. Let $\bm{r} :=
		{i\over 2}
		\begin{pmatrix}
			1 & 0 \\
			0 & -1
		\end{pmatrix}
		$ and
		$\bm{d} := {1\over 2}
		\begin{pmatrix}
			0 & 1\\
			1 & 0
		\end{pmatrix}$
		denote the generators in $\su(1,1)$ of the rotation and the dilation subgroups in $\SU(1,1)$, respectively. By an interval of $S^1$, we mean a connected, open, non-empty and non-dense subset of $S^1$. Write $\mc{I}$ for the set of intervals of $S^1$, on which $\SU(1,1)$ acts naturally, and let $I_0$ denote the upper-semicircle. For $I \in \mc{I}$, define $\xi_I \in \su(1,1)$ by $\xi_I := \Ad_g(\bm{d})$, where $g \in \SU(1,1)$ is any element satisfying $g.I_0 = I$. Notice that $\xi_I$ is well-defined. Define further the closed real subspace $V_I := L^2(I; \C)$ of $V_\R$ and set $V_I^\infty := V_I\cap V_\R^\infty$. Let $N_I := \mrm{H}(V_I^\infty, \omega)\subseteq G$ be the corresponding closed subgroup of $G$. We construct a unitary representation $\rho$ of $G$ which is of p.e.\ at $\bm{r} \in \su(1,1)$ and which is KMS at $\xi_I \in \su(1,1)$ relative to $N_I$, for every $I \in \mc{I}$. The details are given below. \\
		
		\noindent
		As the $\SU(1,1)$-action $u$ on $L^2(S^1; \C)$ leaves both $H_+^2(S^1; \C)$ and $H_-^2(S^1; \C)$ invariant, we obtain a unitary representation $\widetilde{u}$ of $\SU(1,1)$ on $V = H_+^2(S^1; \C) \oplus \overline{H_-^2(S^1; \C)}$ which is by construction of p.e.\ at $\bm{r} \in \su(1,1)$. As in \Fref{ex: std_subspaces_heis_kms}, let $W$ denote the standard representation of the real Heisenberg group $\mrm{H}(V, \mrm{Im}\langle \--, \--\rangle)$ on the Fock space $\F(V)$. Letting $\SU(1,1)$ act on $\F(V)$ by second quantization, we obtain a smooth unitary representation $\rho$ of $G$ on $\F(V)$ which is of p.e.\ at $\bm{r} \in \su(1,1)$. Explicitly, $\rho$ is given by $\rho(v, g) = W(v)\mc{F}(\widetilde{u}(g))$ for $v \in \mrm{H}(V_\R^\infty, \omega)$ and $g \in \SU(1,1)$. It follows from \cite[Sec.\ II.14]{Wassermann_fusion_PE_reps} that $V_I \subseteq V$ is a standard real subspace for any interval $I \in \mc{I}$. Let $\delta_I^{it}$ denote the corresponding modular $1$-parameter group, as in \Fref{ex: std_subspaces_heis_kms}. The assignment $I \mapsto V_I$, called a \textit{net of standard subspaces}, satisfies $I_1 \subseteq I_2 \implies V_{I_1} \subseteq V_{I_2}$ (\textit{isotony}), $V_{g.I} = \widetilde{u}(g)V_I$ for $g \in \SU(1,1)$ (\textit{$\SU(1,1)$-covariance}) and $I_1 \cap I_2 = \emptyset \implies V_{I_2} \subseteq V_{I_1}^{\perp_{\omega}}$ (\textit{locality}). It moreover follows from \cite[Sec.\ II.14]{Wassermann_fusion_PE_reps} that $\delta_I^{it} = \widetilde{u}(e^{-2\pi t\xi_I})$ for all $t \in \R$ and $I \in \mc{I}$ (cf.\ \cite[Thm.\ 3.3.1]{longo_notes_I} and \cite[Thm.\ II.9]{Borchers_CPT}). Passing to the second quantization, let $\mc{N}_I := \rho(N_I)^{\prime \prime} = W(V_I^\infty)^{\prime \prime}$ denote the von Neumann algebra generated by $W(V_I^\infty)$ for $I \in \mc{I}$. By \Fref{ex: std_subspaces_heis_kms}, we obtain that
		\[\rho(e^{-2\pi t\xi_I}) = \F(\widetilde{u}(e^{-2\pi t\xi_I})) = \F(\delta_I^{it}) = \Delta_I^{it},\]
		where $\Delta_I$ denotes the modular operator on $\F(V)$ defined from $\mc{N}_I$ using the cyclic and separating vector $\Omega :=1 \in \F(V)$. Let $\phi = \langle \Omega, \bcdot\; \Omega\rangle$ be the corresponding state on $\mc{N}_I$. Then \eqref{eq: std_subspace_state_smooth} shows that $\widehat{\phi} : N_I \to \C$ is smooth. Thus $\rho$ is smoothly-KMS at $2\pi \xi_I \in \su(1,1)$ relative to $N_I\subseteq G$, for any $I \in \mc{I}$. For more details on the Bisognano-Wichmann property and nets of standard subspaces, see e.g.\ \cite{Morinelli_bw_sufficient} or \cite{Mund_BW}.
	\end{example}

	\part{Generalized Positive Energy Representations of Jet Lie Groups and Algebras}\label{part: jets}

	\noindent	
	We now depart from the general context of \Fref{part: reps}. Using the observations made in \Fref{part: reps}, we study projective unitary representations of jet Lie groups and algebras that are of generalized positive energy. Let us first fix our notation, which is kept throughout \Fref{part: jets}.
	
	\section{Notation}

	\noindent
	Let $V$ be a finite-dimensional real vector space and $K$ a $1$-connected compact simple Lie group with Lie algebra $\mfr{k}$. For any $n \in \N_{\geq 0}$, we denote by $P^n(V) \subseteq R$ the space of homogeneous polynomials on $V$ of degree $n$. Let $R := \R\llbracket V^\ast \rrbracket := \prod_{n=0}^\infty P^n(V)$ denote the ring of formal power series on $V$ with coefficients in $\R$, equipped with the product topology. Let $I = (V^\ast)$ be the maximal ideal of $R$, containing those elements with vanishing constant term. We write $\ev_0 : R \to \R \cong R/I$ for the corresponding quotient map. Let $\g$ be the $R$-module $\g := R \otimes \mfr{k}$ of formal power series on $V$ with coefficients in $\mfr{k}$. Then $\g$ is a topological Lie algebra with the Lie bracket defined by
	\[ [f \otimes X, g \otimes Y] := fg\otimes [X,Y], \qquad f,g \in R, \quad X,Y \in \mfr{k}.\]
	We also write $fX$ instead of $f\otimes X$ for $f \in R$ and $X \in \mfr{k}$. Define $R_k := R/I^{k+1}$, $I_k := I/I^{k+1}$ and $\g_k := \g/(I^{k+1}.\g)$ for $k \in \N_{\geq 0}$. Then $R = \varprojlim R_k$ and $\g = \varprojlim \g_k$ as topological vector spaces and Lie algebras, respectively. For $k \in \N_{\geq 0}$, let $G_k = J^k_0(V; K)$ be the unique $1$-connected Lie group integrating the finite-dimensional Lie algebra $\g_k$. Let $G := J^\infty_0(V; K) := \varprojlim G_k$ be the corresponding projective limit, which is a pro-Lie group with topological Lie algebra $\g = \varprojlim_k \g_k$. (See e.g.\ \cite{Hofmann_pro_lie_groups} for a detailed consideration of pro-Lie groups). Write $\X_I$ for the Lie algebra of formal vector fields on $V$ vanishing at the origin. Identify $\X_I \cong \der(I)$ using the Lie derivative $\bm{v} \mapsto \mc{L}_{\bm{v}}$. Notice further that $\der(I) \cong I \otimes V$. Define similarly $\X_{I_k} := \X_I/(I^{k+1}\X_I) \cong \der(I_k)$.\\
	
	\noindent
	Let $\p$ be a finite-dimensional Lie algebra acting on $\g$ by the homomorphism $D : \p \to \der(\g)$. Using the fact that all derivations of $\mfr{k}$ are inner, by Whitehead's first Lemma \cite[III.7.\ Lem.\ 3]{Jacobson}, it follows from \cite[Ex. 7.4]{Kac_book} that $D(p)$ splits into a horizontal and vertical part according to $D(p) = -\mc{L}_{\bm{v}(p)} + \ad_{\sigma(p)}$, where $\bm{v} : \p \to \X_I^{\op}$ is a homomorphism of Lie algebras and where $\sigma : \p \to \g$ is a linear map that necessarily satisfies the following Maurer-Cartan equation:
	\begin{equation}\label{eq: lift_satisfies_mc}
		- \mc{L}_{\bm{v}(p_1)}\sigma(p_2) + \mc{L}_{\bm{v}(p_2)}\sigma(p_1) - \sigma([p_1, p_2]) + [\sigma(p_1), \sigma(p_2)] = 0, \qquad \forall p_1, p_2 \in \p.
	\end{equation}
	
	\begin{remark}\label{rem: mc_equation}
		As we shall see in \Fref{sec: normal_form_phi} below, \Fref{eq: lift_satisfies_mc} can be written as $\delta \sigma + {1\over 2}[\sigma, \sigma] = 0$ in the differential graded Lie algebra $(\bigwedge^\bullet\p^\ast) \otimes \g$, whose differential is that of the Chevalley-Eilenberg complex, where $\g$ is considered as $\p$-module according to $p\mapsto -\mc{L}_{\bm{v}(p)}$. \\
	\end{remark}
	
	\noindent
	We will refer to $D$ as a \textit{lift} of the $\p$-action on $R$ to $\g$ and we call $\sigma$ the \textit{vertical twist} of the lift $D$.  We remark also that $D(p)$ satisfies the following Leibniz rule:
	\begin{equation}\label{eq: D_leibniz}
		D(p)(f\xi) = -\mc{L}_{\bm{v}(p)}(f)\xi + f D(p)\xi, \qquad f \in R, \xi \in \g, \qquad \forall p \in \p.	
	\end{equation}

	\noindent
	We will denote by $j^k$ various $k$-jet projections $R \to R_k$, $\g \to \g_k$ and $\X_I  \to \X_{I_k}$. It should be clear from the context which map is being used. Also, we will freely identify the quotient $\g_0 \cong \mfr{k}$ with the Lie subalgebra $\mfr{k} \subseteq \g$ of formal power series having only a non-trivial constant term. Similarly, we identify $j^1 \X_I = \X_{I_1} \cong \gl(V)$ with the subalgebra $\gl(V) \subseteq \X_I$ of linear vector fields on $V$.\\
	
	\noindent
	A first observation is the fact that $G = J^\infty_0(V; K)$ is not just a pro-Lie group, but actually a regular Lie group modeled on the Fr\'echet space $\g = J^\infty_0(V; \mfr{k})$.
	
	\begin{proposition}\label{prop: jet_lie_gp}
		Both $G$ and $G \rtimes_\alpha P$ are regular Fr\'echet-Lie groups.
	\end{proposition}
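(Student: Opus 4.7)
The plan is to decompose $G = J_0^\infty(V; K)$ as a semidirect product $N \rtimes K$ and exploit the pro-nilpotent structure of the first factor. The inclusion of constants $\mfr{k} \hookrightarrow \g$ is a Lie algebra homomorphism, and since $K$ is $1$-connected it integrates to a Lie group morphism $K \to G$ splitting the evaluation $\ev_0 : G \to K$. Its kernel $N := \ker(\ev_0 : G \to K)$ is the projective limit of the $1$-connected nilpotent finite-dimensional Lie groups $N_k := \ker(\ev_0 : G_k \to K)$, and its Lie algebra $\n := I \otimes \mfr{k}$ is pro-nilpotent: from $[\n, \n] \subseteq I^2 \otimes \mfr{k}$ one obtains inductively that the $j$-th term of the lower central series lies in $I^j \otimes \mfr{k}$, so that $\n_k := \n/(I^{k+1} \otimes \mfr{k})$ is nilpotent of class at most $k$.

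On the pro-nilpotent Fr\'echet Lie algebra $\n$ the Baker--Campbell--Hausdorff series $\mathrm{BCH}(X, Y) = X + Y + {1\over 2}[X, Y] + \cdots$ converges in the product topology, since on each quotient $\n_k$ it reduces to a finite polynomial expression. Setting $x \cdot y := \mathrm{BCH}(x, y)$ equips $\n$ with a Fr\'echet-Lie group structure whose exponential map is the identity, and functoriality of BCH together with $1$-connectedness of each $N_k$ identifies this group with $N = \varprojlim_k N_k$. Combined with the embedding $K \hookrightarrow G$, this gives $G \cong N \rtimes K$ as Fr\'echet-Lie groups modeled on $\g = \n \oplus \mfr{k}$.

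For regularity, I would project a smooth curve $\xi : [0, 1] \to \g$ onto each finite-dimensional quotient $\g_k$, solve the right-logarithmic evolution equation inside the finite-dimensional (hence regular) Lie group $G_k$, and observe that the solutions are compatible under the connecting morphisms $G_{k+1} \to G_k$ because these are morphisms of Lie groups. The universal property of projective limits of Fr\'echet spaces then guarantees that the resulting curve in $G = \varprojlim_k G_k$ is smooth and depends smoothly on $\xi$. Regularity of $G \rtimes_\alpha P$ follows from regularity of both $G$ and $P$ together with smoothness of the action $\alpha$, by a standard result on semidirect products of regular locally convex Lie groups.

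The main technical point is verifying that the BCH formula really does define a \emph{smooth} group operation on the Fr\'echet space $\n$ and that the resulting Fr\'echet-Lie group coincides with $\varprojlim_k N_k$; once this is in hand, the assembly into $G \cong N \rtimes K$, the level-by-level solution of the evolution equation, and the extension to $G \rtimes_\alpha P$ are essentially routine.
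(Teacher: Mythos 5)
Your route is genuinely different from the paper's. The paper treats $G = \varprojlim_n G_n$ as a pro-Lie group, checks that it is $1$-connected (via $\pi_k(\varprojlim G_n) \cong \varprojlim \pi_k(G_n)$, using that each $G_n$ is $1$-connected because $K$ is), and then invokes Hofmann--Neeb's structure theory of pro-Lie groups (\cite[Theorem 1.2, Theorem 1.3, Prop.\ 5.7]{hofmann_neeb_prolie}) to conclude directly that $G$ is a regular Fr\'echet-Lie group; regularity of $G \rtimes_\alpha P$ then comes from \cite[Thm.\ V.I.8]{neeb_towards_lie} exactly as you do. You instead build the manifold structure by hand: decompose $G \cong N \rtimes K$ with $N = \ker(\ev_0)$ pro-nilpotent, equip $N$ with the chart coming from the globally convergent BCH series on $\n = I \otimes \mfr{k}$, and then solve the right-logarithmic ODE level by level in the finite-dimensional quotients $G_k$. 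This is essentially the content the paper later packages into \Fref{lem: exp_restricts_to_diffeo}, so your approach and the paper's are complementary: yours is more elementary and self-contained (no appeal to the deeper pro-Lie structure theory), the paper's is shorter and offloads the work to a cited theorem.

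Two minor points worth tightening in your write-up. First, the splitting $K \hookrightarrow G$ should not be obtained by ``integrating'' $\mfr{k} \hookrightarrow \g$, since at that stage $G$ does not yet carry the Lie group structure you are constructing; instead, define it directly as $\varprojlim_n s_n$ where $s_n : K \to G_n$ sends $k$ to the $n$-jet of the constant map at $k$, which is a compatible family of Lie group morphisms. Second, for the regularity step the phrase ``universal property of projective limits of Fr\'echet spaces'' is doing nontrivial work: one must verify that the chart you built (BCH chart on $N$, product with $K$) is compatible with the projective-limit topology on $G$, so that a compatible family of smooth curves $\gamma_k : [0,1] \to G_k$ indeed assembles into a curve $\gamma : [0,1] \to G$ that is smooth, and that the resulting evolution map $C^\infty([0,1],\g) \to G$ is smooth. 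This is true here precisely because $\n$ is a Fr\'echet projective limit and the BCH chart intertwines $j^k$ with $\exp_{N_k}$, but it deserves an explicit sentence rather than an appeal to the abstract universal property.
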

	\begin{proof}
		It is clear that the Lie algebras $\g$ and $\g \rtimes_D \p$ are Fr\'echet. For every $n \in \N$, the Lie group $G_n = J^n_0(V; K)$ is $1$-connected, because $K$ is so. Then also $G$ is $1$-connected, since $\pi_k(G) = \pi_k\big(\varprojlim_n G_n\big) = \varprojlim_n \pi_k(G_n)$ for every $k \in \N$, see e.g.\ \cite[Prop.\ 4.67]{Hatcher_alg_top}. Thus $G$ is the unique $1$-connected pro-Lie group with $\Lie(G) = \g$, which is locally contractible by \cite[Theorem 1.2]{hofmann_neeb_prolie}. Then \cite[Theorem 1.3, Prop.\ 5.7]{hofmann_neeb_prolie} entails that $G$ is a regular Lie group. As the action $\alpha : P \times G \to G$ is smooth, also $G \rtimes_\alpha P$ is a Lie group and it is regular by \cite[Thm.\ V.I.8]{neeb_towards_lie}, because both $G$ and $P$ are so.
	\end{proof}

	\noindent
	Moreover, we have the following useful fact:
	\begin{lemma}\label{lem: exp_restricts_to_diffeo}
		The exponential map $\exp_G : \g \to G$ restricts to a diffeomorphism from $I \otimes \mfr{k} = \ker\big(\ev_0 : \g \to \mfr{k}\big)$ onto $\ker\big(\ev_0 : G \to K\big)$. 
	\end{lemma}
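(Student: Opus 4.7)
The approach is to reduce to the classical fact that the exponential map of a connected, $1$-connected finite-dimensional nilpotent Lie group is a diffeomorphism, and then pass to a projective limit. Set $\mfr{n} := I \otimes \mfr{k}$ and $N := \ker(\ev_0 : G \to K)$, with their $k$-th jet truncations $\mfr{n}_k := I_k \otimes \mfr{k} = \ker(\ev_0 : \g_k \to \mfr{k})$ and $N_k := \ker(\ev_0 : G_k \to K)$. Since $\g = \varprojlim_k \g_k$ and $G = \varprojlim_k G_k$, one has $\mfr{n} = \varprojlim_k \mfr{n}_k$ and $N = \varprojlim_k N_k$.

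At each finite level, $\mfr{n}_k$ is nilpotent: the bracket satisfies $[I^a \otimes \mfr{k}, I^b \otimes \mfr{k}] \subseteq I^{a+b} \otimes \mfr{k}$, and $I_k^{k+1} = 0$ forces the lower central series of $\mfr{n}_k$ to terminate after step $k$. Moreover $N_k$ is $1$-connected: applying the long exact sequence in homotopy to the fibration $N_k \to G_k \to K$ and using $\pi_1(K) = \pi_2(K) = 0$ (both are standard for a $1$-connected compact Lie group) together with $\pi_0(G_k) = \pi_1(G_k) = 0$ (as $G_k$ is $1$-connected) yields $\pi_0(N_k) = \pi_1(N_k) = 0$. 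Hence $N_k$ is the unique $1$-connected nilpotent Lie group integrating $\mfr{n}_k$, and the restriction $\exp_{G_k}|_{\mfr{n}_k} : \mfr{n}_k \to N_k$ is a diffeomorphism with smooth inverse $\log_k$.

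Finally, one passes to the projective limit. The quotient maps $G \to G_k$ are Lie group homomorphisms, and so intertwine the exponential maps; restricting to $N$ and $\mfr{n}$, the map $\exp_G|_{\mfr{n}}$ equals the inverse limit of the diffeomorphisms $\exp_{G_k}|_{\mfr{n}_k}$ and is thus a bijection onto $N$ whose set-theoretic inverse is $\log := \varprojlim_k \log_k$. The remaining point is smoothness of $\log$ in the Fr\'echet sense: each composition $N \to N_k \xrightarrow{\log_k} \mfr{n}_k$ is smooth, and because $\mfr{n}$ carries the projective-limit Fr\'echet structure inherited from $\g$, the universal property then forces $\log$ itself to be smooth. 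This last verification is the principal subtlety, and it is a consequence of the fact that the Fr\'echet-Lie group structure on $G$ manufactured in the preceding proposition is precisely the projective-limit one, so that the projective-limit characterisation of smoothness into $\mfr{n}$ is available.
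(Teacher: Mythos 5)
Your proof is correct and essentially coincides with the paper's: both reduce to the classical fact that $\exp$ is a global diffeomorphism for a $1$-connected finite-dimensional nilpotent Lie group (applied to $N_k$ and $\mfr{n}_k := I_k\otimes\mfr{k}$ at each jet level $k$), check that the logarithms are compatible with the jet projections, and pass to the projective limit, invoking the projective-limit Fr\'echet structure of $G$ and $\g$ to get smoothness of $\log$. The only difference is cosmetic: you spell out the nilpotency of $\mfr{n}_k$ via the filtration $[I^a\otimes\mfr{k}, I^b\otimes\mfr{k}]\subseteq I^{a+b}\otimes\mfr{k}$ and the $1$-connectedness of $N_k$ via the homotopy long exact sequence of $N_k\to G_k\to K$, whereas the paper simply asserts these facts.
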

	\begin{proof}
		For $k \in \N_{\geq 0}$, let $H_k := \ker\big(\ev_0 : G_k \to K\big)\triangleleft G_k$ be the maximal nilpotent normal subgroup of $G_k$. Then $\h_k := \Lie(H_k) = \ker\big(\ev_0 : \g_k \to \mfr{k}\big) = (I_k \otimes \mfr{k}) \triangleleft \g_k$. Write $H := \varprojlim_kH_k$ for the corresponding normal subgroup of $G$ and $\h = \varprojlim \h_k$ for its Lie algebra. Let $k \in \N$. Notice that $H_k$ is nilpotent and $1$-connected. Consequently, its exponential map is a diffeomorphism $\exp_{H_k} : \h_k \to H_k$ \cite[Thm.\ 1.2.1]{CorwinGreenleaf_book}. Write $\log_{H_k} : H_k \to \h_k$ for its inverse. If $m \geq k$, then $\exp_{H_k} \circ j^k = j^k \circ \exp_{H_m} : \h_m \to H_k$ and consequently $\log_{H_k} \circ j^k = j^k \circ \log_{H_m} : H_m \to \h_k$. Passing to the projective limit, we obtain the inverse $\log_H := \varprojlim_k \log_{H_k}$ of $\exp_H$. It is smooth because $H = \varprojlim_k H_k$ carries the projective limit topology and $\log_{H_k}$ is smooth for every $k \in \N$. Thus $\exp_H : \h \to H$ is a global diffeomorphism.
	\end{proof}

	\section{Normal Form Results}\label{sec: normal_form}

	\noindent
	By choosing suitable local coordinates, one may attempt to simplify the vector fields $\bm{v}(p)$ and the vertical twist $\sigma(p)$ of the lift $D(p) = -\mc{L}_{\bm{v}(p)} + \ad_{\sigma(p)}$ simultaneously. One might for example try to show that there are local coordinates in which the formal vector fields $\bm{v}(p)$ are linear for every $p \in \p$ simultaneously, thereby linearizing the formal $\p$-action. Similarly, one might aim to show that in suitable coordinates, $\sigma(p) \in \mfr{k} \subseteq R \otimes \mfr{k}$ is constant for all $p \in \p$, so that $\sigma$ is a Lie algebra homomorphism $\p \to \mfr{k}$. In the following, this \squotes{normal form problem} is considered. The results of \Fref{sec: restr_proj_reps} will depend on the availability of suitable normal forms, whose existence we study in the present section.\\
	
	\noindent
	In \Fref{sec: transformation_behavior}, we briefly recall the transformation behavior of $\bm{v}$ and $\sigma$ under suitable automorphisms of $\g$. We proceed in \Fref{sec: nform_vfields} to recollect some known results regarding normal forms for Lie algebras of vector fields with a common fixed point. Finally, we consider in \Fref{sec: normal_form_phi} the vertical twist $\sigma$.
	
	\subsection{Transformation Behavior}\label{sec: transformation_behavior}
	
		\begin{definition}\label{def: gauge_transf}~
		\begin{itemize}
			\item A \textit{formal diffeomorphism of $V$} is an automorphism $h$ of $R$. An automorphism of $\g$ is said to be \textit{horizontal} if it is of the form $h \otimes \id_{\mfr{k}}$ for some $h \in \Aut(R)$.  We write $h.\xi$ or $h(\xi)$ instead of $(h \otimes \id_{\mfr{k}})(\xi)$ for $\xi \in \g$.
			\item A \textit{gauge transformation} is an automorphism of $\g$ of the form $e^{\ad_\xi}$ for some $\xi \in \g$. 
		\end{itemize}	
	\end{definition}
	
	\begin{remark}\label{rem: formal_diffeo}
			Any formal diffeomorphism $h \in \Aut(R)$ preserves the maximal proper ideal $I$ and is determined by its restriction $\restr{h}{V^\ast}$, which can be regarded as an element $\widetilde{h}$ of $I \otimes V$ for which $j^1 \widetilde{h} \in V^\ast \otimes V \cong \gl(V)$ is invertible. It is then a consequence of Borel's Lemma \cite[Thm.\ 1.2.6]{Hormander_I} and the Inverse Function Theorem that for any automorphism $h$ of $R$, there exist $0$-neighborhoods $U, U^\prime \subseteq V$ and a diffeomorphism $h_0 : U \to U^\prime$ satisfying $h_0(0) = 0$ such that $h(j^\infty_0(f)) = j^\infty_0(f \circ h_0^{-1})$. Similarly, for $\xi \in \g$ there exists $\eta \in C^\infty_{\mrm{c}}(V; \mfr{k})$ s.t.\ $j_0^\infty(\eta) = \xi$, where we have identified $\g \cong J^\infty_0(V; \mfr{k})$. We then have $e^{\ad_\xi} \circ j^\infty_0 = j^\infty_0 \circ e^{\ad_\eta}$.\\
	\end{remark}

	\noindent
	To determine the transformation behavior of $D : \p \to \der(\g)$, we have to consider the adjoint action of $\Aut(\g)$ on $\der(\g)$. Instead of considering arbitrary automorphisms of $\g$, we will specialize to horizontal ones and to gauge transformations. For $h \in \Aut(R)$ and $v \in \X_I^{\op}$, we write $h.v$ for the action of $\Aut(R)$ on $\X_I^{\op}$ obtained from the adjoint action of $\Aut(R)$ on $\der(R) \cong \X_I \cong \X_I^{\op}$. The following two proofs are due to K.-H.\ Neeb and B.\ Janssens. They appear in the presently unpublished article \cite{BasNeeb_PE_reps_II}.
	
	\begin{lemma}[\cite{BasNeeb_PE_reps_II}]\label{lem: transf_behavior_of_derivations}
		Let $D \in \der(\g)$ and $\xi \in \g$. Then 
		\[ e^{\ad_\xi} \circ D \circ e^{-\ad_\xi} = D + \ad\big(F(\ad_\xi)D\xi\big), \]
		where $F(w) = -\int_{0}^{1}e^{tw}dt = - \sum_{n=0}^\infty{1\over (n+1)!} w^n$.
	\end{lemma}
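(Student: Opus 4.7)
The plan is to use a standard interpolation argument: define the smooth family $\Phi(s) := e^{s\ad_\xi} \circ D \circ e^{-s\ad_\xi}$ for $s \in [0,1]$, note that $\Phi(0) = D$ and $\Phi(1)$ is the conjugation of interest, and then compute $\Phi'(s)$ explicitly. The key claim is that $\Phi'(s) = -\ad\!\bigl(e^{s\ad_\xi}(D\xi)\bigr)$, after which integrating from $0$ to $1$ and pulling $\ad$ through the integral yields
\[
\Phi(1) - D \;=\; -\ad\!\left(\int_0^1 e^{s\ad_\xi}(D\xi)\,ds\right) \;=\; \ad\bigl(F(\ad_\xi)\,D\xi\bigr),
\]
by the defining formula for $F$.

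To derive the differential identity, I would write $\phi_s := e^{s\ad_\xi}$ and differentiate using $\tfrac{d}{ds}\phi_s^{\pm 1} = \pm \ad_\xi \circ \phi_s^{\pm 1}$, obtaining $\Phi'(s) = \ad_\xi \circ \Phi(s) - \phi_s \circ D \circ \ad_\xi \circ \phi_s^{-1}$. Now exploit that $D$ is a derivation, which gives $D\circ \ad_\xi = \ad_{D\xi} + \ad_\xi\circ D$. Conjugating the two summands by $\phi_s$ and applying the automorphism identity $\phi_s \circ \ad_\eta \circ \phi_s^{-1} = \ad_{\phi_s(\eta)}$ together with the fact that $\phi_s(\xi) = \xi$, the first summand becomes $\ad_{\phi_s(D\xi)}$ and the second becomes $\ad_\xi \circ \Phi(s)$. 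The $\ad_\xi \circ \Phi(s)$ pieces then cancel, leaving $\Phi'(s) = -\ad\!\bigl(\phi_s(D\xi)\bigr)$ as required.

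I do not anticipate a genuine obstacle: the identity is purely algebraic, and in the present Fr\'echet setting its rigorous formulation reduces to convergence of $e^{\pm \ad_\xi}$ and $F(\ad_\xi)$ on $\g$ and the continuity of $\ad$, all of which are routine. Concretely, after decomposing $\xi = \xi_0 + \xi_+$ with $\xi_0 \in \mfr{k}$ and $\xi_+ \in I \otimes \mfr{k}$, the operator $\ad_{\xi_0}$ acts fibrewise on each $P^n(V)\otimes\mfr{k}$ (where the exponential is the usual one on a finite-dimensional module) while $\ad_{\xi_+}$ is pro-nilpotent in the $I$-adic filtration, so the series defining $e^{\pm \ad_\xi}$ and $F(\ad_\xi)$ converge in the projective-limit topology on $\g$. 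Continuity of $\ad(\--) : \g \to \der(\g)$ then justifies exchanging the integral and $\ad$ in the final step.
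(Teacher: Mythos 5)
Your proof is correct and follows essentially the same route as the paper: define the interpolating path $\gamma(s)=e^{s\ad_\xi}\circ D\circ e^{-s\ad_\xi}$, show $\gamma'(s)=-\ad\bigl(e^{s\ad_\xi}D\xi\bigr)$ using the derivation property of $D$ and the automorphism identity $\alpha\circ\ad_\eta=\ad_{\alpha(\eta)}\circ\alpha$, then integrate. The paper packages the derivative computation as $\gamma'(s)=e^{s\ad_\xi}[\ad_\xi,D]e^{-s\ad_\xi}$ with $[\ad_\xi,D]=-\ad_{D\xi}$, while you expand the two terms separately and observe the cancellation; it is the same calculation. For the smoothness/justification point, the paper simply notes that $j^k\circ\gamma$ is smooth in finite dimensions for each $k$, hence $\gamma$ is smooth in the projective limit — a slightly more economical observation than your decomposition of $\xi$ into $\xi_0+\xi_+$, but both work.
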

	\begin{proof}
		Let $k \in \N$ be arbitrary. Consider the continuous path $\gamma : I \to \der(\g)$ defined by $\gamma(t) = e^{t\ad_{\xi}} D e^{-t\ad_{\xi}}$. Notice that $j^k \circ \gamma : I \to \der(\g_k)$ is smooth for all $k$ and consequently so is $\gamma$. Moreover
		\[ \gamma^\prime(t) = e^{t\ad_{\xi}} [\ad_{\xi}, D] e^{-t\ad_{\xi}} = -e^{t\ad_{\xi}} \ad_{D\xi} e^{-t\ad_{\xi}} = -\ad\big( e^{t \ad_{\xi}} D\xi\big),\]
		where the last step uses that $\alpha \circ \ad_{\eta} = \ad_{\alpha(\eta)} \circ \alpha$ for any $\alpha \in \Aut(\g)$. Thus 
		\[ e^{\ad_{\xi}} \circ D \circ e^{-\ad_{\xi}} - D = \int_0^1 \gamma^\prime(t) dt = -\int_0^1 \ad\big( e^{t \ad_{\xi}}D\xi\big) dt = -\ad\bigg( \int_0^1  e^{t \ad_{\xi}}dt\bigg)(D\xi) = \ad\big(F(\ad_{\xi})D\xi\big). \qedhere\]
	\end{proof}

	\begin{proposition}[\cite{BasNeeb_PE_reps_II}]\label{prop: transf_beh}
		Let $h \in \Aut(R)\subseteq \Aut(\g)$, $\sigma,\xi \in \g$ and $v \in \X_I$. \\
		Consider the derivation $D := -\mc{L}_v + \ad_\sigma \in \der(\g)$. Then
		\begin{equation}\label{eq: transf_beh}
			\begin{aligned}
				h \circ D \circ h^{-1} &= -\mc{L}_{h.v} + \ad(h.\sigma),\\
				e^{\ad_\xi} \circ D \circ e^{-\ad_\xi} &= -\mc{L}_v + \ad\bigg(e^{\ad_\xi}\sigma + F(\ad_\xi)(-\mc{L}_v\xi)\bigg).
			\end{aligned}
		\end{equation}
	\end{proposition}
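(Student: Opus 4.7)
The plan is to verify the two identities in \Fref{eq: transf_beh} separately, exploiting the fact that $\der(\g) = \mc{L}_{\X_I} \oplus \ad_\g$ is a decomposition into horizontal and vertical parts. The first identity is essentially bookkeeping, while the second is an application of \Fref{lem: transf_behavior_of_derivations} combined with a small resummation.

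For the first identity, since $h \in \Aut(\g)$ is a Lie algebra automorphism, one has the general fact $h \circ \ad_\sigma \circ h^{-1} = \ad_{h.\sigma}$, which handles the vertical part. For the horizontal part, I would use that $h$ acts on $\g = R \otimes \mfr{k}$ as $h_R \otimes \id_{\mfr{k}}$ while $\mc{L}_v$ acts as $\mc{L}_v \otimes \id_{\mfr{k}}$, so that $h \circ \mc{L}_v \circ h^{-1} = (h_R \circ \mc{L}_v \circ h_R^{-1}) \otimes \id_{\mfr{k}}$. The inner conjugation is the adjoint action of $\Aut(R)$ on $\der(R) \cong \X_I$, which under the identification $\mc{L} : \X_I \xrightarrow{\sim} \der(R)$ is exactly the action $v \mapsto h.v$ appearing in the statement. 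Combining gives $h \circ \mc{L}_v \circ h^{-1} = \mc{L}_{h.v}$, and adding the two pieces yields the first identity.

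For the second identity, I would directly apply \Fref{lem: transf_behavior_of_derivations} to $D = -\mc{L}_v + \ad_\sigma$, which gives
\[ e^{\ad_\xi} \circ D \circ e^{-\ad_\xi} = -\mc{L}_v + \ad_\sigma + \ad\bigl(F(\ad_\xi)D\xi\bigr), \]
since $\mc{L}_v$ is unaffected by conjugation by $e^{\ad_\xi}$ modulo an inner derivation (this is the content of the lemma in the case $D = -\mc{L}_v$). Since $D\xi = -\mc{L}_v\xi + [\sigma,\xi]$, the last term splits as
\[ \ad\bigl(F(\ad_\xi)(-\mc{L}_v \xi)\bigr) + \ad\bigl(F(\ad_\xi)[\sigma,\xi]\bigr), \]
so it remains to identify $\sigma + F(\ad_\xi)[\sigma,\xi]$ with $e^{\ad_\xi}\sigma$.

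The main (and only) calculation is this last identification, which is a telescoping resummation. Writing $[\sigma,\xi] = -\ad_\xi \sigma$ and using $F(w) = -\sum_{n\geq 0} \frac{1}{(n+1)!}w^n$, one gets
\[ F(\ad_\xi)[\sigma,\xi] = \sum_{n=0}^{\infty} \frac{1}{(n+1)!}(\ad_\xi)^{n+1}\sigma = \sum_{m=1}^{\infty} \frac{1}{m!}(\ad_\xi)^m \sigma = e^{\ad_\xi}\sigma - \sigma, \]
so $\sigma + F(\ad_\xi)[\sigma,\xi] = e^{\ad_\xi}\sigma$, yielding the claim. No serious obstacle is expected; the one thing to be slightly careful about is convergence of the series in $F(\ad_\xi)$ acting on elements of $\g$, which follows from the fact that $\g$ is a projective limit of nilpotent quotients plus $\mfr{k}$ on which $\ad_\xi$ is already a bounded operator, so all manipulations of formal series are in fact convergent in the Fr\'echet topology.
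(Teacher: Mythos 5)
Your proof is correct and follows essentially the same route as the paper: the first identity via the general facts $h\circ\mc{L}_v\circ h^{-1}=\mc{L}_{h.v}$ and $h\circ\ad_\sigma\circ h^{-1}=\ad_{h.\sigma}$, and the second by applying \Fref{lem: transf_behavior_of_derivations} directly to $D$ and then telescoping $\sigma+F(\ad_\xi)[\sigma,\xi]=e^{\ad_\xi}\sigma$. The parenthetical aside about ``$\mc{L}_v$ unaffected modulo an inner derivation'' is superfluous (the lemma already gives $D+\ad(F(\ad_\xi)D\xi)$ with $D=-\mc{L}_v+\ad_\sigma$ unchanged), and your closing remark on convergence is a reasonable justification, though the paper instead invokes regularity of $G=\varprojlim_k G_k$ and the integral form $F(w)=-\int_0^1e^{tw}\,dt$ for the same purpose.
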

	\begin{proof}
		It is trivial that $h \circ\mc{L}_v \circ h^{-1} = \mc{L}_{h.v}$. Moreover, $h \circ \ad_\sigma \circ h^{-1} = \ad_{h.\sigma}$ is valid because $\alpha \circ \ad_\sigma = \ad_{\alpha(\sigma)} \circ \alpha$ for any $\alpha \in \Aut(\g)$. Notice next that $F(\ad_\xi)([\sigma, \xi]) = \sum_{n=1}^\infty{1\over n!}{\ad_\xi}^n\sigma = e^{\ad_\xi}\sigma - \sigma$. It follows from \Fref{lem: transf_behavior_of_derivations}:
		\begin{align*}
			e^{\ad_\xi} \circ D \circ e^{-\ad_\xi} 
			& -\mc{L}_v + \ad_{\sigma} + \ad\big(F(\ad_\xi)(-\mc{L}_v\xi)\big) + \ad\big(F(\ad_\xi)[\sigma, \xi]\big) \\
			&= -\mc{L}_v + \ad\bigg(e^{\ad_\xi}\sigma + F(\ad_\xi)(-\mc{L}_v\xi)\bigg).\qedhere
		\end{align*}
	\end{proof}
	
	\begin{definition}\label{def: gauge_equivalence}~
		\begin{itemize}
			\item Two homomorphisms $\bm{v}, \bm{w} : \p \to \X_I^{\op}$ are said to be \textit{formally-equivalent} if there is a formal diffeomorphism $h\in \Aut(R)$ such that $h.\bm{v}(p) = \bm{w}(p)$ for all $p \in \p$. 
			\item Two linear maps $\sigma, \eta : \p \to R \otimes \mfr{k}$ satisfying the Maurer-Cartan equation \eqref{eq: lift_satisfies_mc} are called \textit{gauge-equivalent} if there is some $\xi \in \g$ such that
			\begin{equation}\label{eq: gauge_action}
				\eta(p) = e^{\ad_\xi}\sigma(p) + F(\ad_\xi)(-\mc{L}_{\bm{v}(p)}\xi), \qquad \forall p \in \p.	
			\end{equation}
			In this case, we write $\sigma \sim \eta$ and say that $\sigma$ and $\eta$ are related by the gauge transformation $e^{\ad_\xi}$. 
		\end{itemize}		
	\end{definition}
	
	\subsection{Lie Algebras of Formal Vector Fields with a Common Fixed Point}\label{sec: nform_vfields}
	
	\noindent
	The normal form problem for vector fields near a fixed point has been subject to extensive study. Let us first gather some relevant known results.

	\subsubsection*{The case of a single vector field}
	\noindent
	Naturally, the special case which has been considered most is the case where $\p$ is simply $\R$, in which case one is looking for normal forms of dynamical systems near a fixed point, in the formal context. This case is already quite interesting. Let us recollect some relevant results. For more information, we refer to \cite{arnold_normal_forms}. \\

	\noindent
	Let $\bm{v}$ be a vector field on $V$. Write $\bm{v} = \bm{v}_{\mrm{l}} + \bm{v}_{\mrm{ho}}$, where $\bm{v}_{\mrm{l}} = j_0^1(\bm{v}) \in \gl(V)\subseteq \X_I$ is the linearization of $\bm{v}$ at $0 \in V$ and $\bm{v}_{\mrm{ho}} \in \X_{I^2}$ is a vector field vanishing up to first order at $0 \in V$. Let $\bm{v}_{\mrm{l}}  = \bm{v}_{\mrm{l,s}} + \bm{v}_{\mrm{l,n}}$ be the Jordan decomposition of $\bm{v}_{\mrm{l}}$ over $\C$, where $\bm{v}_{\mrm{l,s}}$ is semisimple and $\bm{v}_{\mrm{l,n}}$ is nilpotent. Write $V_\C := V \otimes_\R\C$. Let $(e_j)_{j=1}^d$ be a basis of eigenvectors of $\bm{v}_{\mrm{l,s}}$ in $V_\C$ with dual basis $(x_j)_{j=1}^d$ of $V^\ast$. Let $(\mu_j)_{j=1}^d$ denote the corresponding eigenvalues.
	
	\begin{definition}\label{def: resonances}
		 Let $\bm{n} \in \N_{\geq 0}^d$ be a multi-index. A monomial vector field $x^{\bm{n}} \partial_{x_j}$ with $|\bm{n}| \geq 2$ is called \textit{resonant} if $\langle \bm{n}, \bm{\mu} \rangle = \mu_j$, where $\langle \bm{n}, \bm{\mu} \rangle := \sum_{i=1}^d n_i \mu_i$. Identifying $\bm{v}_{\mrm{l,s}}$ with the linear vector field $\sum_{j=1}^d \mu_j x_j \partial_{x_j}$ on $\C^d$, this is equivalent to $[\bm{v}_{\mrm{l,s}},\, x^{\bm{n}} \partial_{x_j}] = 0$.
	\end{definition}
	
	\begin{theorem}[Poincar{\'e}-Dulac Theorem \cite{dulac_thm} {\cite[ch.5]{arnold_normal_forms}}] \label{thm: poincare_dulac}~\\
		There exists $\bm{w}_{\mrm{ho}} \in \X_{I^2}$ which is a $\C$-linear combination of resonant monomials s.t.\ $\bm{v}$ is formally equivalent to $\bm{w} = \bm{v}_{\mrm{l}} + \bm{w}_{\mrm{ho}} \in \X_I$. In particular $[\bm{v}_{\mrm{l,s}},\, \bm{w}_{\mrm{ho}}] = 0$ in $\X_I$.
	\end{theorem}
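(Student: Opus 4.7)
I plan to kill the non-resonant part of $\bm{v}$ order by order via an inductively constructed sequence of near-identity formal diffeomorphisms, and then pass to the limit in the formal topology. The linear algebra will take place on the finite-dimensional complexified spaces $W_k := (P^k(V)\otimes V)\otimes_\R \C$ of degree-$k$ homogeneous polynomial vector fields, on which $\ad_{\bm{v}_l}$ acts. Two facts will be central: first, the monomial basis $x^{\bm{n}}\partial_{x_j}$ with $|\bm{n}|=k$ diagonalises $\ad_{\bm{v}_{l,s}}$ on $W_k$ with eigenvalues $\langle \bm{n},\bm{\mu}\rangle-\mu_j$, yielding a decomposition $W_k = W_k^{\mathrm{res}}\oplus W_k^{\mathrm{nres}}$ where $W_k^{\mathrm{res}} := \ker\ad_{\bm{v}_{l,s}}$ is spanned by the resonant monomials and $W_k^{\mathrm{nres}} := \mathrm{im}\,\ad_{\bm{v}_{l,s}}$; second, since $\ad_{\bm{v}_l} = \ad_{\bm{v}_{l,s}} + \ad_{\bm{v}_{l,n}}$ is a Jordan decomposition (the summands commute and are respectively semisimple and nilpotent), $\ad_{\bm{v}_l}$ preserves $W_k^{\mathrm{nres}}$ and is invertible on it, because an invertible semisimple operator plus a commuting nilpotent is invertible.

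\textbf{Inductive step.} Writing $\bm{v} = \bm{v}_l + \sum_{k\geq 2}\bm{v}_k$ with $\bm{v}_k \in W_k$, I will construct inductively elements $h_N \in \Aut(R)$ for $N\geq 2$ so that the compositions $g_N := h_N \circ \cdots \circ h_2$ satisfy $(g_N.\bm{v})_k \in W_k^{\mathrm{res}}$ for every $k\leq N$. Having arranged this for $k<N$, split the degree-$N$ component as $\bm{v}_N = \bm{v}_N^{\mathrm{res}} + \bm{v}_N^{\mathrm{nres}}$, and use the invertibility above to pick $\bm{u}_N\in W_N^{\mathrm{nres}}$ with $\ad_{\bm{v}_l}(\bm{u}_N) = \bm{v}_N^{\mathrm{nres}}$. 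Since $\mc{L}_{\bm{u}_N}$ raises the degree of formal power series by $N-1\geq 1$, the formal series $h_N := \exp(\mc{L}_{\bm{u}_N})$ converges in $\Aut(R)$, and by \Fref{prop: transf_beh} its action on $\X_I$ is $\bm{v}\mapsto e^{\ad_{\bm{u}_N}}\bm{v} = \bm{v} + [\bm{u}_N,\bm{v}] + \tfrac{1}{2}[\bm{u}_N,[\bm{u}_N,\bm{v}]] + \cdots$. Because bracketing with $\bm{u}_N$ raises degree by $N-1$, this alters $\bm{v}$ only in degrees $\geq N$, and the only contribution at degree exactly $N$ is $[\bm{u}_N,\bm{v}_l] = -\bm{v}_N^{\mathrm{nres}}$. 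Hence the new degree-$N$ component becomes $\bm{v}_N^{\mathrm{res}}$, as required; higher-degree components may shift but will be treated at subsequent steps.

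\textbf{Limit, reality, and main difficulty.} Since $j^k(g_N) = j^k(g_k)$ for all $N\geq k$, the limit $h := \lim_N g_N$ exists in $\Aut(R) = \varprojlim_k \Aut(R_k)$ and satisfies $h.\bm{v} = \bm{v}_l + \bm{w}_{ho}$ with $\bm{w}_{ho} \in \bigoplus_{k\geq 2}W_k^{\mathrm{res}}$, i.e.\ $\bm{w}_{ho}$ is a $\C$-linear combination of resonant monomials and $[\bm{v}_{l,s},\bm{w}_{ho}] = 0$. For reality, note that $\bm{v}_{l,s}$, as the semisimple part of the real operator $\bm{v}_l$, commutes with complex conjugation on $V\otimes_\R\C$, so both $W_k^{\mathrm{res}}$ and $W_k^{\mathrm{nres}}$ are conjugation-stable; starting from real $\bm{v}$ one may therefore choose each $\bm{u}_N$ real, rendering $h$ real. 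The argument is entirely standard, and the only non-trivial ingredient is the linear-algebra fact underpinning the homological equation, namely the invertibility of $\ad_{\bm{v}_l}$ on $W_k^{\mathrm{nres}}$; the passage to the limit is automatic in the formal category, since each fixed degree stabilises after finitely many iterations.
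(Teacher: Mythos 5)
The paper does not prove this theorem; it is cited as a classical result (Dulac, Arnold Ch.\ 5) and used as input. Your reconstruction follows the standard Arnold-style order-by-order normalization and is correct. The two load-bearing points are both in place: invertibility of $\ad_{\bm{v}_{\mrm{l}}}$ on the non-resonant subspace $W_k^{\mathrm{nres}}=\mathrm{im}\,\ad_{\bm{v}_{\mrm{l,s}}}|_{W_k}$, which you obtain correctly from the fact that $\ad_{\bm{v}_{\mrm{l}}} = \ad_{\bm{v}_{\mrm{l,s}}} + \ad_{\bm{v}_{\mrm{l,n}}}$ is itself a Jordan decomposition on each $W_k$ (commuting, semisimple plus nilpotent) so that it preserves the eigenspace decomposition of $\ad_{\bm{v}_{\mrm{l,s}}}$ and acts invertibly off the kernel; and the reality of the normalizing transformation, which you deduce from the reality of $\bm{v}_{\mrm{l,s}}$ (semisimple part of a real operator) and hence the conjugation-stability of both $W_k^{\mathrm{res}}$ and $W_k^{\mathrm{nres}}$, allowing a real choice of each $\bm{u}_N$. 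The formal-limit argument via jet stabilisation ($j^k(g_N)=j^k(g_k)$ for $N\ge k$, since $h_N$ acts trivially on $R_k$ once $N\geq k+1$) is also fine. One small design choice worth flagging: solving the homological equation with the full $\ad_{\bm{v}_{\mrm{l}}}$ rather than just $\ad_{\bm{v}_{\mrm{l,s}}}$ is deliberate and important — with $\ad_{\bm{v}_{\mrm{l,s}}}$ alone the term $[\bm{v}_{\mrm{l,n}},\bm{u}_N]$ would reintroduce a non-resonant contribution at degree $N$, whereas your choice kills it in a single step. In short, a correct and complete proof matching the cited reference.
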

	
	\begin{corollary}[Poincar{\'e} \cite{poincare1879proprietes}]\label{cor: no_resonances_then_linearizable}
		If there are no resonances, that is to say, if $\langle \bm{n},  \bm{\mu}\rangle \neq \mu_j$ for all $\bm{n} \in \N_{\geq 0}^d$ with $|\bm{n}|\geq 2$ and $j\in \{1,\cdots, d\}$, then the vector field $\bm{v}$ can be formally linearized, so that $\bm{v}$ is formally equivalent to the linear vector field $\bm{v}_{\mrm{l}}$.
	\end{corollary}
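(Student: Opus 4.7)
The plan is to invoke \Fref{thm: poincare_dulac} as a black box and observe that the no-resonance hypothesis forces the higher-order correction to vanish identically.

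First I would apply \Fref{thm: poincare_dulac} to $\bm{v}$, which produces some $\bm{w}_{\mrm{ho}} \in \X_{I^2}$, expressible as a $\C$-linear combination of resonant monomials, such that $\bm{v}$ is formally equivalent to $\bm{v}_{\mrm{l}} + \bm{w}_{\mrm{ho}}$. Next I would recall from \Fref{def: resonances} that a resonant monomial is precisely a vector field $x^{\bm{n}}\partial_{x_j}$ with $|\bm{n}|\geq 2$ and $\langle \bm{n},\bm{\mu}\rangle = \mu_j$. The no-resonance hypothesis rules out the existence of any such $(\bm{n}, j)$ with $|\bm{n}|\geq 2$, so the set of resonant monomials is empty and the corresponding $\C$-linear combination $\bm{w}_{\mrm{ho}}$ must vanish. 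Hence $\bm{v}$ is formally equivalent to $\bm{v}_{\mrm{l}}$.

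I expect no substantive obstacle here, since once \Fref{thm: poincare_dulac} is in hand the corollary amounts to an unpacking of definitions. The one minor point worth noting is the consistency of the underlying field: the resonant monomials are most naturally described using the complex eigenbasis $(e_j)$ of $\bm{v}_{\mrm{l,s}}$ in $V_\C$, yet both $\bm{v}$ and $\bm{v}_{\mrm{l}}$ are real. Since $\bm{w}_{\mrm{ho}} = 0$ requires no complex coefficients at all, there is no difficulty in arranging the formal equivalence to lie in $\Aut(R)$ with $R = \R\llbracket V^\ast\rrbracket$, matching the convention already implicit in the statement of \Fref{thm: poincare_dulac}.
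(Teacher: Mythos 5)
Your proposal is correct and follows exactly the route the paper intends: the corollary is stated as an immediate consequence of \Fref{thm: poincare_dulac}, and your observation that the no-resonance hypothesis empties the set of resonant monomials (forcing $\bm{w}_{\mrm{ho}}=0$) is precisely the intended unpacking. The aside about the reality of the conjugating formal diffeomorphism is a sensible sanity check but, as you note, poses no obstruction since $\bm{w}_{\mrm{ho}}=0$ trivially.
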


	\subsubsection*{The case of actions by a compact Lie group}
	For actions of compact Lie groups there is the following well-known result, see also \cite[Ch. 2.2]{Duistermaat_book}.

	\begin{theorem}[Bochner's Linearization Theorem \cite{Bochner_linearization}]\label{thm: bochner_linearization}~\\
		Let $G \times M \to M$ be a smooth action of compact Lie group on a smooth manifold which has a fixed point $a \in M$. Then, in suitably chosen smooth local coordinates around the fixed point, the action is linear.
	\end{theorem}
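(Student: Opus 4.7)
The plan is to use the standard averaging argument for compact group actions. Write $\rho: G \to \mathrm{GL}(T_aM)$ for the linear representation obtained by differentiating the action at $a$, i.e.\ $\rho(g) := d(\alpha_g)_a$ where $\alpha_g : M \to M$ denotes the action of $g$. The goal is to construct a $G$-equivariant local diffeomorphism $\phi$ from a neighborhood of $a$ in $M$ onto a neighborhood of $0$ in $T_aM$, intertwining the nonlinear $G$-action on $M$ with the linear action $\rho$ on $T_aM$.

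First I would fix an arbitrary smooth chart $\psi: U \to V \subseteq T_aM$ around $a$, normalized so that $\psi(a) = 0$ and $d\psi_a = \id_{T_aM}$. Using compactness of $G$ and continuity of the action, shrink $U$ so that $g \cdot U \subseteq \psi^{-1}(V)$ for all $g \in G$; this is possible by applying the tube lemma to the compact set $G \times \{a\}$ inside $G \times M$. Then, letting $\mu$ denote the normalized Haar measure on $G$, define
\[
\phi(x) := \int_G \rho(g)^{-1} \psi(g \cdot x)\, d\mu(g), \qquad x \in U.
\]
The integrand takes values in the finite-dimensional vector space $T_aM$ and is smooth jointly in $(g,x)$, so $\phi$ is smooth on $U$.

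Two properties remain to verify. For equivariance, write $\phi(h \cdot x) = \int_G \rho(g)^{-1} \psi(gh \cdot x)\, d\mu(g)$, substitute $g \mapsto gh^{-1}$, and use left-invariance of $\mu$ to obtain $\rho(h)\phi(x)$. For the local diffeomorphism property at $a$, differentiate under the integral:
\[
d\phi_a = \int_G \rho(g)^{-1} d\psi_a\, d(\alpha_g)_a\, d\mu(g) = \int_G \rho(g)^{-1} \rho(g)\, d\mu(g) = \id_{T_aM},
\]
so the inverse function theorem provides an open neighborhood on which $\phi$ is a diffeomorphism onto its image. In this coordinate chart the $G$-action reads $x \mapsto \rho(g)x$, which is linear as required.

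There is no substantive obstacle in this argument; the compactness of $G$ does essentially all the work by providing a finite bi-invariant Haar measure. The only mildly technical points are ensuring that $g \cdot x$ stays in the chart domain uniformly in $g$ (handled by the tube lemma and shrinking $U$) and justifying differentiation under the integral (immediate from smooth dependence on a compact parameter space). The argument relies crucially on compactness: without it, the averaging integral need not converge, and indeed Bochner-type linearization can fail for noncompact group actions, which is precisely why the normal form results in later sections of the paper (such as those leading to \Fref{thm: poincare_dulac} for $\p = \R$) require substantially more work.
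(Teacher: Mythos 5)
Your proof is the standard averaging argument for Bochner's theorem, and it is correct in its essentials. The paper does not supply a proof of this statement; it cites it as a known result (Bochner's original article and Duistermaat--Kolk), and both of those references use precisely this construction, so there is no alternative approach to compare against.

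One small point you should close off: the conclusion of the theorem requires the chart domain to be $G$-invariant, since otherwise the assertion ``in these coordinates the action is linear'' only has meaning in the weak sense that $\phi(g\cdot x) = \rho(g)\phi(x)$ whenever both $x$ and $g\cdot x$ happen to lie in the chart. Your argument stops after the inverse function theorem, which gives an open $U'$ on which $\phi$ is a diffeomorphism, but $U'$ need not be $G$-invariant. A second application of the same tube-lemma/compactness argument you already used fixes this: take $W := \{x \in U' : G\cdot x \subseteq U'\}$, which is open (by compactness of $G$), contains $a$, and is manifestly $G$-invariant; then $\phi(W)$ is $\rho(G)$-invariant by the equivariance of $\phi$, and $(\phi, W)$ is the desired linearizing chart. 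With that addendum the proof is complete.
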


	\subsubsection*{The case of actions by semisimple Lie algebras}
	
	\noindent
	Next, we move to Lie algebra representations by formal vector fields of semisimple Lie algebras. As nicely explained in \cite{Fernandez_linearization_poiss} and first observed by Hermann in \cite{Hermann_linearization}, in the formal setting the obstructions to being able to linearize a Lie algebra of vector fields simultaneously lie in various first Lie algebra cohomology groups $H^1(\p, W)$ for suitable finite-dimensional $\p$-modules $W$. In view of Whitehead's First Lemma \cite[III.7.\ Lem.\ 3]{Jacobson}, this results in:
	
	\begin{theorem}[\cite{Hermann_linearization}]\label{thm: herman_linearization}
		Let $\p$ be a semisimple Lie algebra and $\bm{v} : \p \to \X_I^{\op}$ be a Lie algebra homomorphism. Then $\bm{v}$ is formally equivalent to its linearization $j^1\bm{v} : \p \to \gl(V)\subseteq \X_I^{\op}$ around the origin.
	\end{theorem}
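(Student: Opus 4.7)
The plan is to follow the standard cohomological approach: induct on the order at which $\bm{v}$ agrees with its linearization $\bm{w} := j^1\bm{v}$. Specifically, I would construct a sequence $(h_k)_{k\geq 1}$ of formal diffeomorphisms with $h_1 = \id$ such that $h_k.\bm{v}(p) - \bm{w}(p) \in I^{k+1}\otimes V$ for every $p \in \p$, arranged so that consecutive $h_k$ differ by a formal diffeomorphism that acts as the identity on $R/I^{k+1}$. The projective limit $h := \lim_k h_k$ then exists in $\Aut(R) = \varprojlim_N \Aut(R/I^{N+1})$ and satisfies $h.\bm{v} = \bm{w}$ identically, which is the desired formal equivalence.

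For the inductive step, write $h_k.\bm{v}(p) = \bm{w}(p) + \bm{u}^{(k+1)}(p) + r(p)$ with $r(p) \in I^{k+2}\otimes V$ and $\bm{u}^{(k+1)}\colon\p \to W_{k+1} := P^{k+1}(V)\otimes V$ linear. Via $\bm{w}(\p) \subseteq \gl(V) \subseteq \X_I^{\op}$, the finite-dimensional space $W_{k+1}$ becomes a $\p$-module under $p.\phi := [\bm{w}(p), \phi]_{\X_I^{\op}}$. Expanding the homomorphism identity $[\bm{v}(p), \bm{v}(q)]_{\X_I^{\op}} = \bm{v}([p,q]_\p)$ modulo $I^{k+2}\otimes V$, the quadratic cross-term $[\bm{u}^{(k+1)}(p), \bm{u}^{(k+1)}(q)]$ has polynomial degree at least $2k+1 > k+1$ and drops out, leaving precisely the Chevalley-Eilenberg cocycle condition
\[
p.\bm{u}^{(k+1)}(q) - q.\bm{u}^{(k+1)}(p) = \bm{u}^{(k+1)}([p,q]_\p).
\]
Since $\p$ is semisimple and $W_{k+1}$ is finite-dimensional, Whitehead's First Lemma yields $H^1(\p, W_{k+1}) = 0$, so there exists $\psi \in W_{k+1}$ with $\bm{u}^{(k+1)}(p) = -p.\psi$ for every $p \in \p$.

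Now set $h_{k+1} := e^{\mc{L}_\psi} \circ h_k$. Because $\psi$ has polynomial degree $k+1 \geq 2$, the derivation $\mc{L}_\psi$ raises polynomial degree by $k$ on $R$, so its exponential series truncates on every quotient $R/I^{N+1}$ and defines an element of $\Aut(R)$ that acts as the identity on $R/I^{k+1}$. Using the horizontal-automorphism case of \Fref{prop: transf_beh}, the leading correction of $e^{\mc{L}_\psi}$ to a vector field $\bm{a}$ is the commutator $[\psi, \bm{a}]_{\X_I}$; applied to $\bm{a}(p) = h_k.\bm{v}(p)$, the order-$(k+1)$ contribution is $[\psi, \bm{w}(p)]_{\X_I} = p.\psi$, so that the order-$(k+1)$ part of $h_{k+1}.\bm{v}(p) - \bm{w}(p)$ equals $\bm{u}^{(k+1)}(p) + p.\psi = 0$. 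All higher iterated commutators from the exponential raise degree beyond $k+1$ and are absorbed into the $I^{k+2}\otimes V$-remainder. Iterating and passing to the projective limit produces the desired $h$.

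The main obstacle is the cohomological bookkeeping: one must verify the cocycle identity carefully in the opposite-bracket convention of $\X_I^{\op}$ and track signs through the transformation rule of \Fref{prop: transf_beh}, so that the element $\psi$ produced by Whitehead's Lemma genuinely cancels the leading obstruction. Once these conventions are nailed down, Whitehead's Lemma performs all the essential work, and the convergence of the projective limit is automatic.
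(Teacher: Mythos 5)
The paper does not prove this theorem; it simply cites Hermann and remarks (in the paragraph preceding the statement) that the obstructions to formal linearization lie in first Lie algebra cohomology groups $H^1(\p, W)$ of finite-dimensional $\p$-modules, which vanish by Whitehead's First Lemma. Your proposal is a correct and careful elaboration of exactly that argument: the degree-$(k+1)$ discrepancy $\bm{u}^{(k+1)}$ is a $1$-cocycle for the $\p$-module $P^{k+1}(V)\otimes V$ (the cross-term dropping because $2k+1 > k+1$), Whitehead trivializes it, $e^{\mc{L}_\psi}$ removes the leading obstruction, and the successive corrections stabilize on each finite jet so the projective limit $h\in\Aut(R)$ exists. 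This matches the paper's indicated approach, and your sign bookkeeping in the $\X_I^{\op}$ convention checks out.
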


	\begin{remark}
		Corresponding statements of \Fref{thm: herman_linearization} in the setting of germs of smooth/analytic vector fields and diffeomorphisms have been proven in \cite{Sternberg_linearization} and \cite{Fernandez_linearization_poiss} under additional assumptions. They are false in general without suitable extra conditions, as was shown in \cite{Sternberg_linearization}.
	\end{remark}
	
	\subsection{Normal Form Results for the Vertical Twist}\label{sec: normal_form_phi}

	\noindent
	Let us next consider the vertical twist $\sigma : \p \to \g$ of the lift $D(p) = -\mc{L}_{\bm{v}(p)} + \ad_{\sigma(p)}$ to $\g$ of the $\p$-action $-\mc{L}_{\bm{v}(p)}$ on $R$, which has to satisfy the Maurer Cartan equation \eqref{eq: lift_satisfies_mc}. We fix the horizontal part $\bm{v} : \p \to \X_I^{\op}$ and act by gauge transformations. The main results of this section are the following two theorems, whose proof comprises the remainder of the section. The reader who is eager to consider the projective unitary g.p.e.\ representations of $\g$ can proceed to \Fref{sec: restr_proj_reps} after reading \Fref{thm: normalformverticaltwistss} and \Fref{thm: normalformverticaltwistoned} below.\\
	
	\noindent
	Let us also remark that the methods used in this section to prove \Fref{thm: normalformverticaltwistss} and \Fref{thm: normalformverticaltwistoned} were communicated to the author by B.\ Janssens and K.H.\ Neeb and appear in similar form in their presently unpublished work \cite{BasNeeb_PE_reps_II}, albeit in a more specific context. The author has placed their approach in a more general context and extracted the two theorems below.

	\begin{restatable}{theorem}{normalformverticaltwistss}\label{thm: normalformverticaltwistss}
		Assume that $\p$ is semisimple. Let the linear map $\sigma : \p \to \g$ satisfy the Maurer-Cartan equation \eqref{eq: lift_satisfies_mc}. Then $\sigma$ is gauge-equivalent to $\sigma_0 := \ev_0 \circ \sigma : \p \to \mfr{k}$. If $\p$ has no non-trivial compact ideals, then $\sigma$ is gauge-equivalent to $0$.
	\end{restatable}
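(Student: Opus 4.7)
The strategy is to kill the higher-degree parts of $\sigma$ one degree at a time by successive gauge transformations, using the Chevalley--Eilenberg cohomology of $\p$ with coefficients in finite-dimensional modules, then to combine the gauges into a single one by a pro-nilpotent convergence argument. Write $\sigma = \sum_{n\geq 0}\sigma^{(n)}$ with $\sigma^{(n)}:\p \to W_n:=P^n(V)\otimes \mfr{k}$, and decompose $\bm{v}(p) = \vl{p} + \vho{p}$. The degree-$0$ projection of the Maurer--Cartan equation \eqref{eq: lift_satisfies_mc} shows that $\sigma_0 := \ev_0\circ \sigma = \sigma^{(0)}$ is a Lie algebra homomorphism $\p \to \mfr{k}$. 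Because $\vl{p}$ preserves polynomial degree and $\sigma_0(p)$ is constant, the operator
\[
D_{\mrm{l}}(p) := -\mc{L}_{\vl{p}} + \ad_{\sigma_0(p)}
\]
preserves each $W_n$, and a short bracket computation (using that $[\mc{L}_{\vl{p_1}},\ad_{\sigma_0(p_2)}]=0$ and that $\sigma_0$ is a homomorphism) shows $D_{\mrm{l}}$ is a representation of $\p$ on the finite-dimensional space $W_n$.

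The inductive step proceeds as follows. Assume $\sigma^{(k)}=0$ for $1\leq k \leq n-1$; I will exhibit $\xi_n\in W_n$ whose gauge action kills $\sigma^{(n)}$. Expanding \eqref{eq: gauge_action} for $\xi = \xi_n \in W_n$ and tracking the degree-$n$ component (noting that $\ad_{\xi_n}$ raises degree by $n\geq 1$, that $\mc{L}_{\vho{p}}$ strictly raises degree, and that $\sigma^{(k)}$ vanishes for $1\leq k\leq n-1$), one finds that the new $\eta^{(n)}$ equals $\sigma^{(n)}(p) + \mc{L}_{\vl{p}}\xi_n - \ad_{\sigma_0(p)}\xi_n = \sigma^{(n)}(p) - D_{\mrm{l}}(p)\xi_n$, while $\eta^{(k)} = \sigma^{(k)}$ for $k < n$. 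So we need $\sigma^{(n)}(p) = D_{\mrm{l}}(p)\xi_n$, i.e.\ $\sigma^{(n)}$ should be a coboundary in the Chevalley--Eilenberg complex $C^\bullet(\p, W_n)$. Projecting the MC equation to degree $n$, with the vanishing assumption on lower degrees, yields precisely the $1$-cocycle identity
\[
D_{\mrm{l}}(p_1)\sigma^{(n)}(p_2) - D_{\mrm{l}}(p_2)\sigma^{(n)}(p_1) - \sigma^{(n)}([p_1,p_2]) = 0.
\]
Whitehead's first Lemma (\cite[III.7, Lem.\ 3]{Jacobson}), applicable since $\p$ is semisimple and $W_n$ is finite-dimensional, gives $H^1(\p, W_n) = 0$; hence the cocycle $\sigma^{(n)}$ is a coboundary $D_{\mrm{l}}(\cdot)\xi_n$ for some $\xi_n \in W_n$, completing the inductive step.

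To combine the gauges $e^{\ad_{\xi_n}}$ into a single one, note that $\xi_n\in W_n$ projects to zero in $I_{n-1}\otimes\mfr{k}$, so the group products $g_N := \exp_G(\xi_N)\cdots \exp_G(\xi_1) \in \ker(\ev_0\colon G\to K)$ stabilize under each jet projection $j^{n-1}$ once $N\geq n-1$, hence converge in the pro-Lie group $G$. By \Fref{lem: exp_restricts_to_diffeo}, the limit equals $\exp_G(\zeta)$ for a unique $\zeta\in I\otimes \mfr{k}$, and the gauge transformation $e^{\ad_\zeta}$ sends $\sigma$ to $\sigma_0$; this is the main technical point, but it follows cleanly from the pro-nilpotent structure. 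For the second assertion, the image $\sigma_0(\p)\subseteq \mfr{k}$ is a compact semisimple Lie subalgebra isomorphic to $\p/\ker\sigma_0$; semisimplicity of $\p$ yields a splitting $\p \cong \ker\sigma_0 \oplus \sigma_0(\p)$, so $\sigma_0(\p)$ is a compact ideal of $\p$. Under the hypothesis of no non-trivial compact ideals, $\sigma_0 = 0$, and the first part then gives $\sigma \sim 0$.
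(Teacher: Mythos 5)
Your proof is correct and follows essentially the same route as the paper: an inductive degree-by-degree elimination of the higher-order parts of $\sigma$, with the obstruction at degree $n$ living in $H^1(\p, P^n(V)\otimes_{\sigma_0}\mfr{k})$ and killed by Whitehead's first Lemma, followed by a pro-nilpotent convergence argument (\Fref{lem: limit_gauge_transf}/\Fref{lem: exp_restricts_to_diffeo}) to assemble the sequence of gauges into a single one, and finally the observation that $\sigma_0:\p\to\mfr{k}$ is a Lie algebra homomorphism whose (complementary-ideal) image must be a compact ideal, hence trivial. The only difference is packaging: where the paper factors through the shifted DGLA $L_I^{\sigma_0}$ and the abstract MC-element extension lemma (\Fref{lem: splitting_off_constants}, \Fref{lem: extension_of_mc_elements}, \Fref{prop: normal_form_phi_if_obstruction_vanishes}), you unroll the same induction directly in terms of the graded components $\sigma^{(n)}$, verifying the $1$-cocycle identity for $\sigma^{(n)}$ and the representation property of $D_{\mrm{l}}$ by hand — which is correct and arguably more self-contained, at the cost of not isolating the reusable deformation-theoretic lemma.
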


	\noindent
	The next result concerns the case $\p = \R$, in which case we identify $\bm{v}$ with $\bm{v}(1) \in \X_I$ and $\sigma$ with $\sigma(1) \in \g$. In this case, the Maurer-Cartan equation \eqref{eq: lift_satisfies_mc} is trivially satisfied for any $\sigma \in \g$ and $\bm{v} \in \X_I$. 

	\begin{restatable}{theorem}{normalformverticaltwistoned}\label{thm: normalformverticaltwistoned}
		Assume that $\p = \R$. Let $\sigma \in \g$ and $\bm{v} \in \X_I$. Let $\bm{v}_{\mrm{l}} := j^1\bm{v} \in \gl(V)$ be the linearization of $\bm{v}$ at $0\in V$. Assume w.l.o.g. that $\sigma_0 := \ev_0(\sigma) \in \mfr{t}$ for some maximal torus $\mfr{t} \subseteq \mfr{k}$. The following assertions hold:
		\begin{enumerate}
			\item Assume that $\langle \bm{n}, \bm{\mu}\rangle \neq \alpha(\sigma_0)$ for any root $\alpha \in i\mfr{t}^\ast$ of $\mfr{k}$ and $\bm{n} \in \N_{\geq 0}^{d}$ with $|\bm{n}| \geq 1$. \\
			Then $\sigma$ is gauge-equivalent to some $\sigma^\prime \in R \otimes \mfr{t}$.
			\item If $\bm{v}_{\mrm{l}}$ is semisimple, then $\sigma$ is gauge-equivalent to some $\nu \in R \otimes \mfr{k}$ satisfying $-\mc{L}_{\bm{v}_{\mrm{l}}}\nu + [\sigma_0, \nu] = 0$.
			\item Suppose that $\bm{v} = \bm{v}_{\mrm{l}}$ is linear. Assume that $D = -\mc{L}_{\bm{v}} + \ad_\sigma$ integrates to a continuous $\T = \R/2\pi\Z$-action on $\g$. Then $\sigma$ is gauge-equivalent to $\sigma_0 \in \mfr{t}$. Moreover $\Spec(\bm{v}_{\mrm{l}})\cup \Spec(\ad_\sigma) \subseteq 2\pi i\Z$.
		\end{enumerate} 
	\end{restatable}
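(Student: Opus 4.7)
The plan is to prove all three parts by a Poincar\'e--Dulac style degree-by-degree iteration in the $I$-adic filtration of $\g$. Set $D_0 := -\mc{L}_{\bm{v}_{\mrm{l}}} + \ad_{\sigma_0}$. Differentiating the gauge action formula of \Fref{prop: transf_beh} at a gauge parameter $\xi \in P^n(V)\otimes\mfr{k}$ with $n \geq 1$ shows that $e^{\ad_\xi}$ modifies $\sigma$ at degree $n$ by $-D_0\xi$ modulo $I^{n+1}\otimes\mfr{k}$, while leaving the terms of degree strictly less than $n$ unchanged (the extra contribution from $\ad_{\sigma-\sigma_0}\xi$ lives in higher degree since $\sigma-\sigma_0 \in I\otimes\mfr{k}$). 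Because $\sigma_0 \in \mfr{t}$, the operator $D_0$ preserves the root-space decomposition $\mfr{k}_\C = \mfr{t}_\C \oplus \bigoplus_\alpha \mfr{k}_\alpha$, and on $P^n(V)\otimes\mfr{k}_\alpha$ its semisimple part has eigenvalues $-\langle\bm{n}',\bm{\mu}\rangle + \alpha(\sigma_0)$ with $|\bm{n}'|=n$. This is the arithmetic driving all three parts.

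For Part 1, the non-resonance hypothesis makes $D_0$ invertible on every $P^n(V)\otimes\mfr{k}_\alpha$ with $n\geq 1$. Iteratively choose $\xi^{(n)} \in P^n(V)\otimes\bigoplus_\alpha\mfr{k}_\alpha$ whose infinitesimal action cancels the root-valued part of $\sigma$ at degree $n$; the infinite composition of the resulting gauge transformations converges in the pro-nilpotent group $\exp(I\otimes\mfr{k})$ and places $\sigma$ inside $R\otimes\mfr{t}$. For Part 2, semisimplicity of $\bm{v}_{\mrm{l}}$ makes $D_0$ semisimple on each $P^n(V)\otimes\mfr{k}$; fixing a linear complement $W$ to $\ker D_0$, the analogous iteration with $\xi^{(n)} \in W$ deposits $\sigma$ into $\bigoplus_n \ker(D_0|_{P^n(V)\otimes\mfr{k}})$, i.e.\ produces $\nu$ satisfying $-\mc{L}_{\bm{v}_{\mrm{l}}}\nu + [\sigma_0, \nu] = 0$.

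For Part 3, first restrict the (semisimple, because $\T$-integrable) operator $D$ to the subquotient $V^\ast\otimes\mfr{k}$ of $\g_1$, on which it acts as $-\mc{L}_{\bm{v}_{\mrm{l}}}|_{V^\ast}\otimes\id + \id\otimes\ad_{\sigma_0}$; combined with the fact that $D$ restricts on $\g_0 = \mfr{k}$ to $\ad_{\sigma_0}$, this forces $\bm{v}_{\mrm{l}}$ and $\ad_{\sigma_0}$ individually to be semisimple with spectra in the appropriate imaginary lattice, so that $D_0$ itself integrates to a continuous $\T$-action on $\g$. Apply Part 2 to pass to $\nu$ with $-\mc{L}_{\bm{v}_{\mrm{l}}}\nu + [\sigma_0, \nu] = 0$. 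A direct computation using $D\nu = -\mc{L}_{\bm{v}_{\mrm{l}}}\nu$ and $D\sigma_0 = [\nu,\sigma_0]$ gives $D(\nu-\sigma_0) = D_0(\nu-\sigma_0) = 0$, whence $[D,D_0] = [\ad_{\nu-\sigma_0}, D_0] = -\ad_{D_0(\nu-\sigma_0)} = 0$. Consequently $e^{t(D-D_0)} = e^{tD}e^{-tD_0}$ is a well-defined $2\pi$-periodic one-parameter subgroup of $\Aut(\g)$ whose generator is $\ad_{\nu-\sigma_0}$; this forces $\ad_{\nu-\sigma_0}$ to be semisimple on every finite-dimensional quotient $\g_k$. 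On the other hand, since $\nu-\sigma_0 \in I\otimes\mfr{k}$, the operator $\ad_{\nu-\sigma_0}$ strictly raises $I$-adic degree and is therefore nilpotent on each $\g_k$. A simultaneously semisimple and nilpotent operator must vanish; since $\mfr{k}$ is simple we have $\mfr{z}(\g) = 0$, and hence $\nu = \sigma_0$. The spectral assertion then follows from the individual $\T$-integrability of $-\mc{L}_{\bm{v}_{\mrm{l}}}$ and $\ad_{\sigma_0}$ established above.

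The main obstacle is the rigidity argument in Part 3: isolating the single operator $\ad_{\nu-\sigma_0}$ that is simultaneously semisimple (from $\T$-integrability) and nilpotent (from the $I$-adic grading), whose forced vanishing removes the residual gauge freedom left over by Part 2. Parts 1 and 2 reduce to standard Poincar\'e--Dulac bookkeeping; the only subtleties are checking that the lower-degree terms are preserved at each inductive step and that the infinite product of gauge transformations converges, both of which follow immediately from the structure of the $I$-adic filtration.
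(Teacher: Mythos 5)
Your proof is correct and follows the same underlying strategy as the paper's: for Parts~1--2 a Poincar\'e--Dulac degree-by-degree adjustment of $\sigma$ driven by the arithmetic of $D_0 = -\mc{L}_{\vl{}} + \ad_{\sigma_0}$ on $P^n(V)\otimes\mfr{k}_\alpha$, and for Part~3 a Jordan-decomposition rigidity argument identifying $D-D_0 = \ad_{\nu-\sigma_0}$ as simultaneously semisimple and nilpotent. The only real divergence is presentational: the paper packages the iteration through the DGLA/Maurer--Cartan extension machinery (\Fref{lem: splitting_off_constants}, \Fref{lem: extension_of_mc_elements}, \Fref{lem: limit_gauge_transf}), whereas you carry out the degree-$n$ correction directly from \Fref{prop: transf_beh} and observe that the modification is $-D_0\xi$ modulo $I^{n+1}\otimes\mfr{k}$ --- which is precisely the cohomological obstruction the paper's lemmas compute --- and then settle convergence by hand using \Fref{lem: exp_restricts_to_diffeo}-style reasoning. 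In Part~3 the paper reads off the Jordan decomposition of the generator $D_n$ on each finite quotient $\g_n$ and concludes $j^n\sigma'$ is central there, while you instead exhibit $e^{t\ad_{\nu-\sigma_0}} = e^{tD}e^{-tD_0}$ as a $2\pi$-periodic one-parameter group on $\g$ and conclude $\nu - \sigma_0 \in \mfr{z}(\g) = \{0\}$ globally; this is slightly cleaner and sidesteps any discussion of $\mfr{Z}(\g_n)$, but it is the same idea. Both approaches are valid; the paper's DGLA setup pays for itself because it is reused for \Fref{thm: normalformverticaltwistss}, whereas your direct route is shorter when proving this theorem in isolation.
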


	\begin{remark}\label{rem: compact_action}
		Suppose that $\mc{K} \to M$ is a principal fiber bundle with compact simple structure group $K$. Let $\alpha : \T \to \Aut(\mc{K})$ be a smooth action on $\mc{K}$ by bundle automorphisms. Suppose that $a \in M$ is a fixed point of the induced $\T$-action on $M$ and set $V := T_a(M)$. By \Fref{thm: bochner_linearization}, the $\T$-action on $M$ is linear in suitable local coordinates around $a \in M$. Passing to $J^\infty_a(M) \cong R$ and $J^\infty_a(\Ad(\mfr{K})) \cong \g$, one obtains a $\T$-action on both $R$ and $\g$. The corresponding derivations are given by $-\mc{L}_{\bm{v}}$ and $D = -\mc{L}_{\bm{v}} + \ad_\sigma$ respectively, for some linear semisimple vector field $\bm{v}$ on $V$ and some $\sigma \in \g$. This is the setting of the third item in \Fref{thm: normalformverticaltwistoned}, according to which we may further assume that $\sigma \in \mfr{t}$, where $\mfr{t} \subseteq \mfr{k}$ is a maximal torus, by acting with gauge transformations.
	\end{remark}

	\noindent
	The remainder of this section is devoted to the proof of \Fref{thm: normalformverticaltwistss} and \Fref{thm: normalformverticaltwistoned}.
	
	\subsubsection*{Reformulation using differential graded Lie algebras}
	
	\noindent
	In order to classify the equivalence classes of vertical twists $\sigma : \p \to \g$, we interpret \fref{eq: lift_satisfies_mc} as the Maurer Cartan equation in the differential graded Lie algebra (DGLA) $L := L_R := (\bigwedge^\bullet \p^\ast) \otimes \g$. As a cochain complex, $L$ is the Chevalley-Eilenberg complex associated to the $\p$-module $\g$, where $\p$ acts on $\g$ by $p.\psi = -\mc{L}_{\bm{v}(p)}\psi$. Explicitly, the differential $\delta$ is given by
	\[ \delta(\alpha)(p_1, \cdots, p_{k+1}) = \sum_{i}(-1)^{i+1}p_i.\alpha(p_1, \cdots, \widehat{p_i}, \cdots, p_{k+1}) + \sum_{i < j} (-1)^{i+j} \alpha([p_i, p_j], p_1, \cdots, \widehat{p_i}, \cdots, \widehat{p_j}, \cdots, p_{k+1}),\]
	where as usual, the arguments with a caret are to be omitted. The graded Lie bracket on $L$ is the unique bilinear map $[\--, \--] : L \times L \to L$ satisfying  $[\alpha \otimes \sigma, \beta \otimes \psi] := (\alpha\wedge\beta) \otimes [\sigma, \psi]$ for $\alpha, \beta \in \bigwedge^\bullet \p^\ast$ and $\sigma, \psi \in \g$. Write $L^k := \big(\bigwedge^k\p^\ast\big) \otimes \g$ for the degree $k$-elements in $L$. Interpreting $\sigma$ as a degree-1 element of $L$, \fref{eq: lift_satisfies_mc} can now equivalently be written as the usual MC-equation $\delta \sigma + {1\over 2}[\sigma, \sigma] = 0$ in $L$.\\
	
	\noindent
	Let us next reformulate the gauge-action \eqref{eq: gauge_action} of $\g$ on the set of vertical twists, using the DGLA $L$. Consider the extended DGLA $L \rtimes \R D$, where $D$ is a degree-1 element satisfying $[D, \sigma] = \delta(\sigma)$ for any $\sigma \in L$. Notice for $\xi \in \g = L^0$ that $\delta(\xi)(p) = -\mc{L}_{\bm{v}(p)}\xi$. We define the gauge-action of $L^0 = \g$ on $L$ by
	\begin{equation}\label{eq: gauge_action_2}
		\xi.\sigma = e^{\ad(\xi)}(D + \sigma) - D = e^{\ad_\xi}(\sigma) + F(\ad_\xi)(\delta(\xi)),	\qquad \xi \in \g,
	\end{equation}
	considered as an expression in $L \rtimes \R D$, where $F(w) = - \sum_{n=0}^\infty {1\over (n+1)!}w^n = - \int_0^1 e^{tw}dt$. Let us check that the above is indeed well-defined, even though $L$ is not a nilpotent DGLA. Since $G = \varprojlim_k G_k$ is a regular Lie group, it has an exponential map and so the automorphism $e^{\ad_\xi} := \Ad(e^\xi)$ on $\g$ is defined. Consequently, so is
	\[ F(\ad_\xi)(-\mc{L}_{\bm{v}(p)}\xi) = \int_{0}^1 e^{t \ad_\xi}(\mc{L}_{\bm{v}(p)}\xi)dt\]
	for any $p \in \p$. Thus the expression in \fref{eq: gauge_action_2} makes sense. Notice further that for $\sigma \in L^1$, the above reduces precisely to the transformation behavior \eqref{eq: gauge_action} of the vertical twist. In accordance with \Fref{def: gauge_equivalence}, we say that the MC-elements $\sigma, \sigma^\prime \in L^1$ are gauge-equivalent if they satisfy $\sigma^\prime = \xi.\sigma$ for some $\xi \in L^0$, in which case we write $\sigma \sim \sigma^\prime$. Our goal is to study the MC-elements in $L^1$ up to gauge-equivalence. \\
	
	\noindent
	Let $n \in \N_{\geq 0}$. Define analogously the following DGLAs, where we consider $P^n(V)$ as $\p$-module by identifying $P^{n}(V)$ with $I^n/I^{n+1}$ for $n \in \N_{\geq 0}$, so that $p.f = -\mc{L}_{\vl{p}}f$ for $p \in \p$ and $f \in P^n(V)$:
	\begin{alignat*}{2}
		L_I &:= {\textstyle\bigwedge^\bullet}\, \p^\ast \otimes (I \otimes \mfr{k}), \qquad \qquad &
		L_{R_n} &:= {\textstyle\bigwedge^\bullet}\, \p^\ast \otimes (R_n \otimes \mfr{k}),\\
		L_{I_n} &:= {\textstyle\bigwedge^\bullet}\, \p^\ast \otimes (I_n \otimes \mfr{k}),&
		L_{P^n} &:= {\textstyle\bigwedge^\bullet}\, \p^\ast \otimes (P^n(V) \otimes \mfr{k}).
	\end{alignat*}

	\subsubsection*{Shifted DGLAs}
	\noindent
	It will be beneficial to split off the constants terms of the $\mfr{k}$-valued formal power series, because contrary to $L_R$, $L_I$ is a projective limit of \textit{nilpotent} DGLAs. We discuss next how this can be done.\\
	
	\noindent
	For any MC-element $\chi \in L_{R_0}^1 = \p^\ast\otimes \mfr{k} \subseteq L_R$ of degree $1$, define the "shifted" DGLA $L^\chi_R$, which agrees with $L_R$ as a graded Lie algebra but has a shifted differential given by $\delta_\chi(\sigma) := \delta(\sigma) + [\chi, \sigma]$. The differential $\delta_\chi$ agrees with the Chevalley-Eilenberg differential of $\big(\bigwedge^\bullet \p^\ast\big) \otimes \g$ if $\g$ is considered as $\p$-module with the twisted action $p.\sigma := -\mc{L}_{\bm{v}(p)} + [\chi, \--]$. In particular, $\delta_\chi^2 = 0$. Let us write $R\otimes_\chi \mfr{k}$ for this module structure to distinguish it form the usual one on $\g = R\otimes \mfr{k}$, which was given by $p.\xi = -\mc{L}_{\bm{v}(p)}\xi$. Define also the extended DGLA $L_R^\chi \rtimes \R D_\chi$, where $[D_\chi, \sigma] = \delta_\chi(\sigma)$. Define in analogous fashion $L_{I_n}^\chi$, $L_I^{\chi}$ and $L_{P^n}^\chi$, where we have used that the $\p$-action on $R\otimes_\chi \mfr{k}$ leaves $I_n\otimes \mfr{k}$ invariant for every $n$, so that $P^n(V) \otimes \mfr{k} \cong (I_n \otimes \mfr{k}) / (I_{n+1}\otimes \mfr{k}$) is naturally a $\p$-module. The following is a standard result: 
	
	\begin{lemma}\label{lem: splitting_off_constants}~
		\begin{enumerate}
			\item Let $\sigma \in L_R^1$. Then $\chi + \sigma$ is a MC-element in $L_R$ if and only if $\sigma$ is a MC-element in $L_R^\chi$.
			\item Let $\sigma, \sigma^\prime \in L_{R}^\chi$ be degree-1 MC-elements. Then $\chi + \sigma \sim \chi + \sigma^\prime$ in $L_R$ if and only if $\sigma \sim \sigma^\prime$ in $L_R^\chi$.
			\item Let $\psi \in L_R^1$ be a MC-element. Then $\psi = \chi + \sigma$ for some degree-1 MC-elements $\sigma \in L_I^\chi$ and $\chi \in L_{R_0}$.
		\end{enumerate}
	\end{lemma}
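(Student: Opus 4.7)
The three statements of the lemma are linked by a single organizational idea: the shift by $\chi$ turns the affine problem of decomposing $\psi$ around its constant part into a genuinely linear one in a modified DGLA. My plan is to establish Part 1 by a direct expansion, then deduce Part 2 by interpreting the gauge action as a conjugation in the extended DGLA, and finally obtain Part 3 by applying the DGLA morphism $\ev_0$ together with Part 1.

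For Part 1, I would just substitute $\psi = \chi + \sigma$ into the Maurer–Cartan equation in $L_R$ and expand using bilinearity of the bracket and linearity of $\delta$. Since $\chi$ is already assumed to be a degree-$1$ MC-element in $L_{R_0}\subseteq L_R$, the constant terms $\delta\chi + \tfrac12[\chi,\chi]$ cancel, leaving exactly $\delta\sigma + [\chi,\sigma] + \tfrac12[\sigma,\sigma] = \delta_\chi\sigma + \tfrac12[\sigma,\sigma]$, which is the MC-equation in $L_R^\chi$. Both directions are immediate from this.

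For Part 2, the key is the identification $D_\chi = D + \chi$ inside $L_R \rtimes \R D$: indeed $[D+\chi,\tau] = \delta\tau + [\chi,\tau] = \delta_\chi\tau$ for every $\tau\in L_R$, so $L_R^\chi \rtimes \R D_\chi$ embeds into $L_R \rtimes \R D$ via $D_\chi\mapsto D+\chi$. With this identification, the gauge action \eqref{eq: gauge_action_2} transports cleanly:
\[
\xi.(\chi+\sigma) = e^{\ad\xi}(D+\chi+\sigma) - D = \big(e^{\ad\xi}(D_\chi+\sigma) - D_\chi\big) + \chi = \chi + \xi._\chi\sigma,
\]
where $\xi._\chi$ denotes the gauge action in $L_R^\chi$. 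Hence $\chi+\sigma\sim\chi+\sigma'$ in $L_R$ if and only if $\sigma\sim\sigma'$ in $L_R^\chi$.

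For Part 3, I would use that $\ev_0\colon R\to \R$ intertwines the $\p$-actions (because each $\bm{v}(p)\in\X_I$ annihilates constants and maps $I$ into $I$), and that it is an algebra morphism; thus $(\id_{\bigwedge^\bullet\p^\ast}\otimes\ev_0\otimes\id_\mfr{k})\colon L_R\to L_{R_0}$ is a morphism of DGLAs. Set $\chi := (\ev_0\otimes\id_\mfr{k})\psi\in L_{R_0}^1$ and $\sigma := \psi - \chi$. Applying this morphism to the MC-equation for $\psi$ yields the MC-equation for $\chi$, and Part 1 then gives that $\sigma$ is MC in $L_R^\chi$. By construction $\sigma\in \p^\ast\otimes (I\otimes \mfr{k}) = L_I^1$, and I would conclude by checking that $I\otimes\mfr{k}$ is stable under the twisted $\p$-action $p.\xi = -\mc{L}_{\bm{v}(p)}\xi + [\chi(p),\xi]$ (the Lie derivative preserves $I$ because $\bm{v}(p)$ vanishes at the origin, and $[\chi(p),\--]$ is $R$-linear in the $\mfr{k}$-factor and so preserves $I\otimes \mfr{k}$), which ensures $L_I^\chi$ is a genuine sub-DGLA of $L_R^\chi$ in which $\sigma$ is a MC-element. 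There is no real obstacle — each part is a short bookkeeping exercise — the only point to be careful about is the sign/order conventions in \eqref{eq: gauge_action_2} when identifying $D_\chi$ with $D+\chi$, which determines whether the two gauge actions agree or differ by a sign.
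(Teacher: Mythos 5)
Your proof is correct and follows essentially the same route as the paper's: a direct expansion using that $\chi$ is MC for Part 1, the interplay between the gauge action and the shifted element $D_\chi$ for Part 2, and the DGLA morphism $j^0=\ev_0$ combined with Part 1 for Part 3. Your Part 2 packaging via the identification $D_\chi = D+\chi$ in $L_R\rtimes\R D$ is a slightly cleaner way of organizing the very computation the paper performs explicitly using $F(\ad_\xi)([\chi,\xi]) = e^{\ad_\xi}\chi - \chi$.
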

	\begin{proof}~
		\begin{enumerate}
			\item As $\chi$ is a MC-element and $[\sigma, \chi]  = [\chi, \sigma]$ we have
			\begin{align*}
				\delta(\chi + \sigma) + {1\over 2}[\chi + \sigma, \chi+ \sigma] = \delta(\sigma) + {1\over 2}[\sigma, \sigma] + [\chi, \sigma] = \delta_\chi(\sigma) + {1\over 2}[\sigma, \sigma].
			\end{align*}
			\item Observe that $-F(\ad_\xi)([\xi, \chi]) = e^{\ad_\xi}(\chi) - \chi$. Consequently 
			\begin{equation}\label{eq: different_gauge_actions}
				\begin{aligned}
					F(\ad_\xi)(\delta_\chi(\xi)) = F(\ad_\xi)(\delta(\xi)) + F(\ad_\xi)([\chi, \xi]) = F(\ad_\xi)(\delta(\xi)) + e^{\ad_\xi}(\chi) - \chi.	
				\end{aligned}
			\end{equation}
			Thus, for any $\xi \in \g$ we have
			\[ e^{\ad_\xi}(\chi + \sigma) + F(\ad_\xi)(\delta(\xi)) =  \chi + \bigg(e^{\ad_\xi}(\sigma) + F(\ad_\xi)(\delta_\chi(\xi)) \bigg).\]
			\item Since $R = R_0 \oplus I$ as a vector space, we can write $\psi = \chi + \sigma$, where $\chi = j^0(\psi) \in L_{R_0}$ and $\sigma \in L^\chi_{I}$. As $j^0$ is a morphism of DGLAs, it is clear that $\chi = j^0(\psi)$ is a MC-element in $L_{R_0} \subseteq L_R$. By the first point it follows that $\sigma$ is a MC-element in $L_I^\chi \subseteq L_R^\chi$.\qedhere
		\end{enumerate}
	\end{proof}

	\subsubsection*{Study of MC-elements}

	\noindent
	In view of \Fref{lem: splitting_off_constants}, let us first study the classification problem of gauge-orbits of MC-elements in $L_{R_0}^1$ and then, for each MC-element $\chi \in L_{R_0}^1$ consider the orbits in $L_{I}^\chi$ under the gauge-action.
	
	\begin{lemma}\label{lem: mc_element_constant_term}~
		\begin{enumerate}
			\item Let $\chi : \p \to \mfr{k}$ be linear. Then $\chi$ is a MC-element in $L_{R_0}^1$ if and only if it is a Lie algebra homomorphism. Thus if there are no homomorphisms $\p \to \mfr{k}$, then any MC-element $\chi \in L_{R_0}^1 = \p^\ast \otimes \mfr{k}$ is trivial.
			\item The gauge-action of $X \in \mfr{k} = L_{R_0}^0$ on $L_{R_0}$ is given by $X.\chi = e^{X}\chi$.
		\end{enumerate}
	\end{lemma}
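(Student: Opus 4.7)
Both assertions follow by unpacking the definitions and using the key observation that the $\p$-action on $R_0\otimes\mfr{k} \cong \mfr{k}$ is trivial: since every $\bm{v}(p) \in \X_I$ vanishes at the origin, the Lie derivative $\mc{L}_{\bm{v}(p)}$ annihilates constants. Equivalently, $p.X = 0$ for all $X \in \mfr{k} \subseteq R\otimes\mfr{k}$.

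For (1), I would evaluate the MC equation $\delta\chi + \tfrac{1}{2}[\chi,\chi] = 0$ on an arbitrary pair $(p_1, p_2) \in \p \times \p$. The Chevalley-Eilenberg formula gives
\[
(\delta\chi)(p_1,p_2) = p_1.\chi(p_2) - p_2.\chi(p_1) - \chi([p_1,p_2]) = -\chi([p_1,p_2]),
\]
the first two terms vanishing by the observation above. A direct computation using the wedge-product formula for the graded bracket yields
\[
\tfrac{1}{2}[\chi,\chi](p_1,p_2) = [\chi(p_1),\chi(p_2)].
\]
Combining, the MC equation is equivalent to $\chi([p_1,p_2]) = [\chi(p_1),\chi(p_2)]$ for all $p_1,p_2 \in \p$, i.e., to $\chi : \p \to \mfr{k}$ being a Lie algebra homomorphism. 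The \emph{in particular} clause is then immediate: absent any homomorphism $\p \to \mfr{k}$, the only MC-element in $L_{R_0}^1$ is $0$.

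For (2), observe that for $X \in \mfr{k} = L_{R_0}^0$, the differential $\delta X \in L_{R_0}^1 = \p^\ast\otimes\mfr{k}$ is given by $(\delta X)(p) = p.X = -\mc{L}_{\bm{v}(p)}X = 0$, again because $X$ is constant. Substituting into the formula \eqref{eq: gauge_action_2} for the gauge action,
\[
X.\chi = e^{\ad_X}\chi + F(\ad_X)(\delta X) = e^{\ad_X}\chi,
\]
which, under the identification $e^{\ad_X} = \Ad(e^X)$ acting pointwise on $\p^\ast\otimes\mfr{k}$, is precisely $X.\chi = e^X\chi$.

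No serious obstacle is anticipated; the only subtle point is keeping track of the sign/combinatorial conventions for the graded bracket and the CE differential, to correctly identify the MC equation with the cocycle condition defining a Lie algebra homomorphism.
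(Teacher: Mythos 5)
Your proof is correct and follows essentially the same route as the paper: both hinge on the observation that the $\p$-action on $\mfr{k} = R_0\otimes\mfr{k}$ is trivial, then unwind the MC equation and the gauge-action formula respectively. One minor imprecision: the Lie derivative $\mc{L}_{\bm{v}(p)}$ annihilates constants simply because any derivation of $R$ kills the unit (and hence all scalars), not because $\bm{v}(p)$ vanishes at the origin — the latter hypothesis is what makes the $\p$-action preserve the filtration by powers of $I$, but is not needed here.
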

	\begin{proof}
		Notice that $\delta(X) = 0$ for any $X \in \mfr{k} \subseteq \g$, because $-\mc{L}_{\bm{v}(p)}X = 0$ for any $p \in \p$. So $\p$ acts trivially on $\mfr{k} = \g_0 = j^0 \g$. Thus the Maurer-Cartan condition reads simply $\chi([p_1, p_2]) - [\chi(p_1), \chi(p_2)] = 0$ for all $p_1, p_2 \in \p$, proving the first statement. The second statement follows at once from the definition \eqref{eq: gauge_action_2}, using once more that the $\p$-action on $\mfr{k}$ is trivial.
	\end{proof}
	
	\noindent
	Next, we fix a homomorphism $\chi : \p \to \mfr{k}$ and turn to the MC-elements of the twisted DGLAs $L_I^\chi$. Consider the following diagram of DGLAs:
	\[
	\begin{tikzcd}
			{}		& L_I^\chi  \arrow{d} \arrow{rd} \arrow{rrrd} 	& {}             	&  {}            	& {}   \\
		\cdots \arrow{r} & L_{I_{k+1}}^\chi \arrow{r} & L_{I_{k}}^\chi \arrow{r} & \cdots \arrow{r} & L_{I_{0}}^\chi = \{0\}
	\end{tikzcd}
	\]
	\noindent
	Any MC-element in $L_I^\chi$ projects to one in $L_{I_k}^\chi$ for any $k \in \N$, and all maps in the above diagram are equivariant w.r.t. the gauge-actions. Notice further that each $L_{I_k}^\chi$ is nilpotent. To study the MC-elements in $L_I^\chi$, we consider lifts of MC-elements from $L_{I_{k}}^\chi$ to $L_{I_{k+1}}^\chi$, so as to solve the problem step-by-step. This can be done using the following central extension of nilpotent DGLAs, where $L^\chi_{I_{0}} = \{0\}$ is trivial:
	\begin{equation}\label{eq: exact_seq_dglas}
		0 \to L^\chi_{P^{k}} \to L^\chi_{I_{k}} \to L^\chi_{I_{k-1}} \to 0, \qquad k \in \N
	\end{equation}
	in combination with the following known result from deformation theory (cf.\ \cite[Sec.\ V.6]{Manetti_lectures}):
	
	\newpage
	\begin{lemma}\label{lem: extension_of_mc_elements}
		Let $0 \to K \to L \to M \to 0$ be a central extension of nilpotent DGLAs. Let $\sigma_M \in M^1$ be a MC-element.
		\begin{enumerate}
			\item Suppose that $\sigma_L \in L$ projects to $\sigma_M$. Then $h:= \delta \sigma_L + {1\over 2}[\sigma_L, \sigma_L] \in K^2$	is closed and $[h] \in H^2(K)$ is independent of the lift $\sigma_L$ of $\sigma_M$. Moreover, there is some $\eta \in K^1$ such that $\sigma_L + \eta$ is a MC-element in $L^1$ if and only if $[h] = 0$ in $H^2(K)$. 
			\item If $\sigma_L$ and $\sigma_L^\prime$ are two lifts of $\sigma_M$ that are both MC-elements in $L^1$, then $\Delta := \sigma_L^\prime - \sigma_L \in K^1$ is closed. Conversely, if $\Delta \in K^1$ is closed and $\sigma_L$ is a lift of $\sigma_M$ which is a MC-element, then $\sigma_L^\prime := \sigma_L + \Delta$ is also a lift of $\sigma_M$ which is a MC-element. Moreover, the class $[\Delta] \in H^1(K)$ vanishes if and only if $\sigma_{L}$ and $\sigma_{L^\prime}$ are related by a gauge transformation of some element $\xi \in K^0$.
			\item If $\sigma_L$ is any lift of $\sigma_M$ which is a MC-element, then the map $\Delta \mapsto \sigma_L + \Delta$ induces a bijection between $H^1(K)$ and $K^0$-orbits of MC-elements in $L^1$ lifting $\sigma_M$.
		\end{enumerate}
	\end{lemma}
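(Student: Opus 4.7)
The approach is the standard obstruction-theoretic argument from deformation theory, carried out in the Maurer--Cartan formalism. Its two workhorses are the Bianchi-type identity
\[
\delta F(\sigma) = [F(\sigma), \sigma], \qquad F(\sigma) := \delta \sigma + {1\over 2}[\sigma, \sigma],
\]
valid for any degree-$1$ element $\sigma$ and proved by combining the graded Leibniz rule with the graded Jacobi identity (which for such $\sigma$ yields $[[\sigma,\sigma],\sigma]=0$); and the centrality hypothesis $[K,L]=0$, which kills every nonlinear interaction involving an element of $K$. Centrality is what linearizes the whole problem: both $\Delta \mapsto F(\sigma_L + \Delta)$ on $K^1$ and $\xi \mapsto \xi.\sigma_L$ on $K^0$ become affine once restricted to $K$.

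For part (1), since $\sigma_L$ lifts $\sigma_M$ and $F(\sigma_M)=0$, one has $h := F(\sigma_L) \in K^2$. The Bianchi identity then gives $\delta h = [h, \sigma_L]$, which vanishes because $h \in K$ and $K$ is central in $L$. If $\sigma_L'=\sigma_L + k$ is another lift with $k \in K^1$, expanding $F(\sigma_L')$ and using $[k,\sigma_L]=[\sigma_L,k]=0$, $[k,k]=0$ yields $F(\sigma_L') = F(\sigma_L) + \delta k$, so $[h]$ is a well-defined class in $H^2(K)$. The last clause of (1) is immediate: $\sigma_L + \eta$ is MC exactly when $h + \delta \eta = 0$, i.e.\ exactly when $[h] = 0$.

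For part (2), the same expansion $F(\sigma_L + \Delta) = F(\sigma_L) + \delta \Delta$ (valid for $\Delta \in K^1$ by centrality) shows both that $\sigma_L' - \sigma_L$ must be closed and, conversely, that adding a closed element of $K^1$ to a MC lift produces another MC lift. For the gauge-equivalence statement, take $\xi \in K^0$; centrality forces $\ad_\xi$ to annihilate both $\sigma_L \in L^1$ and $\delta \xi \in K^1$, so $e^{\ad_\xi}(\sigma_L) = \sigma_L$ and $F(\ad_\xi)(\delta \xi) = F(0)\,\delta \xi = -\delta \xi$. Hence the $K^0$-gauge action on MC lifts of $\sigma_M$ reduces to the affine action $\xi.\sigma_L = \sigma_L - \delta \xi$, and $\sigma_L \sim \sigma_L + \Delta$ via some $\xi \in K^0$ iff $\Delta \in \delta(K^0)$, iff $[\Delta]=0$ in $H^1(K)$.

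Part (3) follows by packaging: given the fixed MC lift $\sigma_L$, the first half of (2) shows that every MC lift of $\sigma_M$ has the form $\sigma_L + \Delta$ for a unique closed $\Delta \in K^1$, and the second half shows that two such lifts lie in the same $K^0$-orbit iff their $\Delta$'s are cohomologous. I do not foresee a genuine obstacle; the only place where care is needed is in tracking the graded signs through the Bianchi computation, as that is where the degree-$1$ peculiarities of the MC formalism enter. Nilpotency of the DGLAs is used only insofar as it ensures convergence of the exponential series defining the gauge action; in all of the centrality-based reductions above it plays no further role.
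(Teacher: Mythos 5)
Your proof is correct and follows essentially the same route as the paper: both compute $\delta h$ via the Bianchi-type identity $\delta F(\sigma)=[F(\sigma),\sigma]$ (the paper does this inline rather than isolating it), both use centrality to get $F(\sigma_L+\Delta)=F(\sigma_L)+\delta\Delta$ for $\Delta\in K^1$, and both reduce the $K^0$-gauge action to the affine map $\xi.\sigma_L=\sigma_L-\delta\xi$. No substantive differences.
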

	\begin{proof}~
		\begin{enumerate}
			\item It is clear that $h \in K^2$ as it projects to zero in $M^2$. Since $\delta h = [\delta \sigma_L, \sigma_L]$ (by the graded Leibniz rule), we find using $\delta \sigma_L = h - {1\over 2}[\sigma_L, \sigma_L] $ that 
			\[ \delta h = [h, \sigma_L] - {1\over 2}[[\sigma_L, \sigma_L], \sigma_L] = 0, \]
			where the second term vanishes by the graded Jacobi identity and the first term vanishes because $h \in K$ is central. Thus $h$ is closed. Suppose that $\sigma_L^\prime$ is some other lift of $\sigma_M$ and define $h^\prime := \delta \sigma_L^\prime + {1\over 2}[\sigma_L^\prime, \sigma_L^\prime] \in K^2$. Then $\Delta := \sigma_L^\prime - \sigma_L \in K$ lies in the center, so that $h^\prime = h + \delta \Delta$. It follows that $[h] \in H^2(K)$ does not depend on the lift. If there is some $\eta \in K^1$ such that $\sigma_L + \eta$ is a MC-element in $L^1$, then 
			\[0 = \delta(\sigma_L + \eta) + {1\over 2}[\sigma_L + \eta, \sigma_L + \eta] = \delta\eta + h.\]
			Hence $[h] = 0$. Conversely, if $[h] = 0 \in H^2(K)$, then there exists $\eta \in K^1$ such that $h + \delta\eta = 0$. Then $\sigma_L + \eta$ is a MC-element, by the same computation.
			\item Let $\sigma_L^\prime$ and $\sigma_L$ be MC-elements in $L^1$ lifting $\sigma_M$. We have already noticed that $h^\prime = h + \delta\Delta$, where $\Delta := \sigma_L^\prime - \sigma_L \in K^1$. Since $h = h^\prime = 0$ by assumption, it follows that $\delta \Delta= 0$. Conversely, suppose $\Delta \in K^1$ is closed and that $\sigma_L$ is a MC-element projecting to $\sigma_M$. Then $\sigma_L^\prime := \sigma_L+ \Delta$ projects to $\sigma_M$ as well. Also, $\sigma_L^\prime$ is a MC-element, because $\delta \sigma_L^\prime + {1\over 2}[\sigma_L^\prime, \sigma_L^\prime] = \delta \sigma_L + {1\over 2}[\sigma_L, \sigma_L] + \delta \Delta = 0$. To see that $[\Delta] = 0$ in $H^1(K)$ if and only if $\sigma_L$ and $\sigma_L^\prime =\sigma_L + \Delta$ are related by a gauge transformation by some element $\xi \in K^0$, observe that if $\xi \in K^0$, then as $\xi$ is central we have
			\[ \xi.\sigma_L = e^{\ad_\xi}(\sigma_L + D) - D = \sigma_L - [D, \xi] = \sigma_L - \delta \xi. \]
			\item This is immediate from the previous point.
		\end{enumerate}
	\end{proof}

	\noindent
	Next, we apply \Fref{lem: extension_of_mc_elements} to the exact sequences \eqref{eq: exact_seq_dglas}.
	
	\begin{lemma}\label{lem: limit_gauge_transf}
		For every sequence $(\xi_k)_{k \in \N}$ of degree-$0$ elements in $L_I^\chi$ with $\xi_k \in P^k(V) \otimes_\chi \mfr{k}$ for every $k \in \N$, there exists $\eta \in I \otimes_\chi \mfr{k}$ such that $j^n(\eta.\sigma) = \xi_n.\xi_{n-1}.\cdots.\xi_1.\sigma$ for every $\sigma \in L_I^\chi$ and $n \in \N$.
	\end{lemma}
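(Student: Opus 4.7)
My plan is to construct $\eta$ as the $I$-adic limit of a sequence defined via the Baker--Campbell--Hausdorff formula, exploiting that $\xi_k \in P^k(V) \otimes \mfr{k}$ acquires increasingly high vanishing order at the origin as $k$ grows. The intuition is that $e^\eta$ should represent the formal product $e^{\xi_n} e^{\xi_{n-1}} \cdots e^{\xi_1}$ in the appropriate limit, so that the gauge action of $\eta$ recovers the composition of the gauge actions of the individual $\xi_k$.

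First I would set $\eta_0 := 0$ and inductively define $\eta_n := \mathrm{BCH}(\xi_n, \eta_{n-1})$. This is well-defined modulo $I^{m+1} \otimes \mfr{k}$ for every $m \in \N$, since $(I \otimes \mfr{k})/(I^{m+1} \otimes \mfr{k})$ is a nilpotent Lie algebra in which BCH truncates, and the compatible truncations assemble $\eta_n$ into an element of $I \otimes \mfr{k}$. Because $\xi_n \in I^n \otimes \mfr{k}$, every term of $\mathrm{BCH}(\xi_n, \eta_{n-1}) - \eta_{n-1}$ contains at least one factor of $\xi_n$, and hence $\eta_n - \eta_{n-1} \in I^n \otimes \mfr{k}$. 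Thus $(\eta_n)_n$ is Cauchy in the Fr\'echet topology on $\g = \varprojlim_m \g_m$ and converges to some $\eta \in I \otimes \mfr{k}$ satisfying $j^n \eta = j^n \eta_n$ for every $n$.

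To verify the asserted identity, I would first establish the composition rule $\mathrm{BCH}(\alpha, \beta).\sigma = \alpha.(\beta.\sigma)$ for any $\alpha, \beta \in I \otimes \mfr{k}$. Working modulo $I^{m+1} \otimes \mfr{k}$ in the truncated extended Lie algebra $L_{I_m}^\chi \rtimes \R D_\chi$, the standard BCH identity in a nilpotent setting yields $e^{\ad_{\mathrm{BCH}(\alpha,\beta)}} = e^{\ad_\alpha} \circ e^{\ad_\beta}$ as automorphisms; substituting into the gauge action formula $\xi.\sigma = e^{\ad_\xi}(D_\chi + \sigma) - D_\chi$ from \eqref{eq: gauge_action_2} then gives the composition rule. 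An easy induction yields $\eta_n.\sigma = \xi_n.\xi_{n-1}.\cdots.\xi_1.\sigma$ at each truncation, and the required identity $j^n(\eta.\sigma) = \xi_n.\xi_{n-1}.\cdots.\xi_1.\sigma$ follows from $j^n \eta = j^n \eta_n$ together with the compatibility of the gauge action with the jet projections (the latter holds because $j^n$ is a morphism of extended DGLAs and hence commutes with $\ad$ and its exponential).

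The main subtlety I anticipate is justifying the BCH identity $e^{\ad_{\mathrm{BCH}(\alpha,\beta)}} = e^{\ad_\alpha} \circ e^{\ad_\beta}$ as automorphisms of the full extended algebra $L_{I_m}^\chi \rtimes \R D_\chi$, since the outer generator $D_\chi$ lies outside the nilpotent subalgebra where the usual BCH argument takes place. This is handled by noting that both sides are Lie algebra automorphisms of the extension that agree on the nilpotent Lie subalgebra generated by $\alpha$ and $\beta$ (by the standard BCH for nilpotent Lie groups), while their action on $D_\chi$ is fully determined by the derivation property together with the defining bracket $[D_\chi, \xi] = \delta_\chi(\xi)$.
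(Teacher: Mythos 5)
Your proof follows essentially the same route as the paper's: the paper defines $\eta_N := \log\bigl(\prod_{k=1}^N e^{\xi_k}\bigr)$ via $\exp/\log$ of the pro-nilpotent group $H = \ker(\ev_0 : G \to K)$ (using its \Fref{lem: exp_restricts_to_diffeo}), while you build the same sequence by iterated BCH directly at the Lie algebra level; the convergence estimate ($\eta_n - \eta_{n-1} \in I^n\otimes\mfr{k}$, hence $j^n\eta = j^n\eta_n$), the inductive composition of gauge actions, and the reduction to finite jets are identical in substance.

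One point worth tightening: in your last paragraph, the claim that two automorphisms agreeing on the nilpotent subalgebra $\h = I_m\otimes\mfr{k}$ must also agree on $D_\chi$ ``because its action is determined by the derivation property and $[D_\chi,\xi]=\delta_\chi(\xi)$'' is not quite an argument. If $\Phi_1|_\h = \Phi_2|_\h$ and both are automorphisms of $\h\rtimes\R D_\chi$ fixing $\h$, the bracket relations only force $\Phi_1(D_\chi)-\Phi_2(D_\chi)$ to lie in the \emph{center} of $\h$ (which is $P^m(V)\otimes\mfr{k}\neq 0$), so agreement is not automatic. The cleaner route, still entirely within your framework, is to observe that $\ad_\alpha$ and $\ad_\beta$ are nilpotent endomorphisms of the \emph{whole} finite-dimensional space $L_{I_m}^\chi\rtimes\R D_\chi$ (since $\alpha,\beta$ lie in the nilpotent ideal), so that BCH for nilpotent operators gives $e^{\ad_\alpha}e^{\ad_\beta} = e^{\mathrm{BCH}(\ad_\alpha,\ad_\beta)}$ directly on the full extended algebra; combining this with $\mathrm{BCH}(\ad_\alpha,\ad_\beta) = \ad_{\mathrm{BCH}(\alpha,\beta)}$ (functoriality of BCH under the Lie algebra homomorphism $\ad$) yields the desired identity including its action on $D_\chi$ with no subalgebra-extension argument needed.
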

	\begin{proof}
		Consider the Lie subgroup $H := \ker\big(\ev_0 : G \to K\big)\triangleleft G$ with Lie algebra $\h := \ker\big(\ev_0 : \g \to \mfr{k}\big) = I \otimes \mfr{k}$. Similarly, for $n \in \N$ let $H_n := \ker\big(\ev_0 : G_n \to K\big)$ and $\h_n := \Lie(H_n)$. Recall that the exponential map $\exp : \h \to H$ is a global diffeomorphism, by \Fref{lem: exp_restricts_to_diffeo}. Write $\log : H \to \h$ for its inverse. From $j^n \circ \exp = \exp \circ j^n : \h \to H_n$ we obtain that $\log \circ j^n = j^n \circ \log : H \to \h_n$ for any $n \in \N$. As $[\xi_k, I \otimes \mfr{k}] \subseteq I^{k+1}\otimes \mfr{k}$ and the $\xi_k$ are of increasing order, we claim that the limit $ \eta := \lim_{N \to \infty} \log\big(\prod_{k=1}^N e^{\xi_k}\big)$ exists in $I \otimes \mfr{k}$ w.r.t. the projective limit-topology, where $k$ increases from \textit{right to left} in the expression. Indeed, to see this it suffices to show that for each $n \in \N$ the sequence $(j^n \eta_N)_{N=1}^\infty$ stabilizes for large enough values of $N$, where $\eta_N :=\log\big(\prod_{k=1}^N e^{\xi_k}\big)$. This is the case because for $N \geq n$ we have
		\[ j^n \eta_N = j^n \log\bigg(\prod_{k=1}^N e^{\xi_k}\bigg) = \log\bigg(\prod_{k=1}^N e^{j^n(\xi_k)}\bigg) = \log\bigg(\prod_{k=1}^n e^{j^n(\xi_k)}\bigg) = j^n \eta_n\]
		where it was used that $j^n(\xi_k) = 0$ for all $k > n$, because $\xi_k \in I^{k}$. Thus $\eta = \lim_N \eta_N$ is well-defined and satisfies $j^n \eta = j^n\eta_n$ for all $n \in \N$. Let $\sigma \in L_I^\chi$. Using the fact that 
		\begin{align*}
			\xi_{n+1}.\eta_n.\sigma 
			&= e^{\ad_{\xi_{n+1}}}(\eta_n.\sigma + D_\chi) - D_\chi\\
			&= e^{\ad_{\xi_{n+1}}}e^{\ad_{\eta_n}}(D_\chi + \sigma) - D_\chi = e^{\ad_{\eta_{n+1}}}(D_\chi+\sigma) - D_\chi = \eta_{n+1}.\sigma,	
		\end{align*}
		it follows by induction that for any $n\in \N$, the equality $\eta_n.\sigma = \xi_{n}.\xi_{n-1}\cdots \xi_1.\sigma$ is valid. We thus get:
		\[j^n(\eta.\sigma) = j^n(\eta_{n}.\sigma) = j^n(\xi_n. \xi_{n-1}\cdots \xi_1.\sigma), \qquad \forall n \in \N.\qedhere\]
	\end{proof}

	\begin{proposition}\label{prop: normal_form_phi_if_obstruction_vanishes}
		Assume that $H^1(\p, P^k(V) \otimes_\chi \mfr{k}) =0$ for every $k \in \N$. Then every degree-1 MC-element in $L_I^\chi$ is gauge-equivalent to $0$ in $L_I^\chi$.
	\end{proposition}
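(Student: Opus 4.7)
My plan is to kill the MC-element one jet order at a time, using the exact sequences \eqref{eq: exact_seq_dglas} together with the cohomological machinery of \Fref{lem: extension_of_mc_elements}, and then to assemble the resulting tower of gauge transformations into a single one via \Fref{lem: limit_gauge_transf}.

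Given a degree-$1$ MC-element $\sigma \in L_I^\chi$, I will construct recursively a sequence $(\xi_k)_{k \geq 1}$ with $\xi_k \in P^k(V) \otimes \mfr{k}$ such that, setting $\sigma_0 := \sigma$ and $\sigma_k := \xi_k.\sigma_{k-1}$, one has $j^k \sigma_k = 0$ in $L_{I_k}^\chi$ for every $k$. The base case is vacuous because $L_{I_0}^\chi = 0$. For the inductive step I start from the hypothesis $j^{k-1}\sigma_{k-1} = 0$; then $j^k \sigma_{k-1}$ is a MC-element of $L_{I_k}^\chi$ lying in the central kernel $L_{P^k}^\chi$ of the projection $L_{I_k}^\chi \to L_{I_{k-1}}^\chi$. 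The subtlety I expect to be most careful about is identifying the relevant obstruction: since both $j^k \sigma_{k-1}$ and $0$ are MC-lifts of $0 \in L_{I_{k-1}}^\chi$, it is \Fref{lem: extension_of_mc_elements}(2) rather than (1) that applies, so the obstruction lies in $H^1(L_{P^k}^\chi)$ rather than $H^2$. Crucially, the $\p$-module structure on $P^k(V) \otimes \mfr{k}$ computing this cohomology is the $\chi$-twisted one $P^k(V) \otimes_\chi \mfr{k}$, which is precisely the one appearing in the hypothesis. The assumption $H^1(\p, P^k(V) \otimes_\chi \mfr{k}) = 0$ therefore furnishes $\xi_k$ with $\delta_\chi \xi_k = j^k \sigma_{k-1}$, and the gauge action restricted to the central piece $L_{P^k}^\chi$ (which reduces to translation by $-\delta_\chi \xi_k$) kills $j^k \sigma_{k-1}$, giving $j^k \sigma_k = 0$.

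To finish, I would invoke \Fref{lem: limit_gauge_transf} to turn the sequence $(\xi_k)_{k \geq 1}$ into a single element $\eta \in I \otimes \mfr{k}$ satisfying $j^n(\eta.\sigma) = j^n \sigma_n = 0$ for every $n \in \N$. Since $L_I^\chi = \varprojlim_n L_{I_n}^\chi$ as topological vector spaces, the joint vanishing of all the $j^n(\eta.\sigma)$ forces $\eta.\sigma = 0$ in $L_I^\chi$, as desired. No serious analytic obstacle should arise: the inductive construction is purely formal within the nilpotent quotients $L_{I_k}^\chi$, and the passage to the inverse limit has already been packaged into \Fref{lem: limit_gauge_transf}.
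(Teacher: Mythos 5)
Your proposal is correct and follows essentially the same route as the paper: an inductive, jet-by-jet argument via \Fref{lem: extension_of_mc_elements} and the exact sequences \eqref{eq: exact_seq_dglas}, assembled into a single gauge transformation by \Fref{lem: limit_gauge_transf}. The one thing you spell out that the paper leaves implicit — and it is a genuinely helpful clarification — is that, because at each stage both $j^k\sigma_{k-1}$ and $0$ are MC-lifts of $0 \in L_{I_{k-1}}^\chi$, the relevant part of the lemma is item (2) (two MC-lifts differ by a closed element of $K^1$, gauge-equivalent iff its $H^1$-class vanishes), so the obstruction sits in $H^1(\p, P^k(V)\otimes_\chi\mfr{k})$, which is exactly the group assumed to vanish.
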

	\begin{proof}
		Fix a MC-element $\sigma \in L_I^\chi$. Recall that $j^n(\zeta.\sigma)$ is again a MC-element in $L_{I_n}^\chi$ for any $n \in \N$ and $\zeta \in L_I^{\chi}$ of degree $0$. Notice also that $j^0\sigma = 0$. As $H^1(\p, P^k(V) \otimes_\chi \mfr{k}) =0 $ for every $k \in \N$, it follows using \Fref{lem: extension_of_mc_elements} and the exact sequences \eqref{eq: exact_seq_dglas}, by induction on $n \in \N$, that we can find a sequence of degree-$0$ elements $(\xi_k)_{k \in \N}$ in $L_I^\chi$ with $\xi_k \in P^k(V) \otimes \mfr{k}$ such that $j^{n}(\xi_n. \xi_{n-1}\cdots \xi_1.\sigma) = 0$ for every $n \in \N$. It follows from \Fref{lem: limit_gauge_transf} that there is some $\eta \in I \otimes_\chi \mfr{k}$ such that $j^n(\eta.\sigma) = \xi_n.\xi_{n-1}.\cdots.\xi_1.\sigma = 0$ in $L_I^\chi$ for all $n \in \N$. Thus $\sigma \sim 0$.
	\end{proof}
	
	\begin{lemma}\label{lem: no_homs_noncompact_to_compact}
		Let $\p$ be a semisimple Lie algebra with no nontrivial compact ideals. If $\mfr{k}$ is a compact semisimple Lie algebra, then there are no non-trivial homomorphisms $\p \to \mfr{k}$.
	\end{lemma}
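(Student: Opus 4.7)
The plan is to use the structure of semisimple Lie algebras as direct sums of simple ideals, combined with the fact that any subalgebra of a compact Lie algebra inherits compactness. Let $\phi : \p \to \mfr{k}$ be a Lie algebra homomorphism. The key observation is that $\ker \phi$ is an ideal in $\p$, and by semisimplicity of $\p$ we can write $\p = \ker \phi \oplus \mfr{q}$ as a direct sum of ideals, where $\mfr{q}$ is the sum of the simple factors of $\p$ not contained in $\ker \phi$. Then $\restr{\phi}{\mfr{q}}$ is injective, so $\mfr{q} \cong \phi(\mfr{q})$ as Lie algebras.

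Next I would argue that $\phi(\mfr{q})$ is a compact Lie algebra. Since $\mfr{k}$ is compact, it carries an $\Ad$-invariant inner product $\langle \--, \--\rangle$. Restricting this inner product to the subalgebra $\phi(\mfr{q}) \subseteq \mfr{k}$ yields an $\Ad_{\phi(\mfr{q})}$-invariant inner product on $\phi(\mfr{q})$, so $\phi(\mfr{q})$ is compact in the sense of admitting such an invariant inner product. Alternatively, and perhaps more cleanly, one can argue that $\phi(\mfr{q})$ is the Lie algebra of the closure of the subgroup of the compact connected Lie group with Lie algebra $\mfr{k}$ generated by $\exp(\phi(\mfr{q}))$, which is a closed subgroup of a compact group and hence compact. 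Either way, $\mfr{q} \cong \phi(\mfr{q})$ is compact.

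Transporting the invariant inner product back along $\phi$, we see that $\mfr{q}$ itself is a compact Lie algebra. Since $\mfr{q}$ is an ideal of $\p$, the hypothesis that $\p$ has no nontrivial compact ideals forces $\mfr{q} = 0$, so $\ker \phi = \p$ and $\phi = 0$. The only conceivable obstacle is to justify compactness of a subalgebra of a compact semisimple Lie algebra; this is standard but worth flagging, since it ultimately relies on the existence of the $\Ad$-invariant inner product on $\mfr{k}$ (e.g.\ the negative of the Killing form, which restricts to a positive definite form on any semisimple subalgebra by Cartan's criterion, and more generally an averaged invariant inner product exists on any compact Lie algebra).
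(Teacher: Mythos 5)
Your proof is correct and takes essentially the same route as the paper: the paper passes to the quotient $\p/\ker\chi$, observes it is isomorphic to a subalgebra of $\mfr{k}$ and hence compact, and then invokes that a semisimple Lie algebra with no nontrivial compact ideals has no nontrivial compact quotients. You simply make that last step explicit by choosing the complementary ideal $\mfr{q}$ with $\p = \ker\phi \oplus \mfr{q}$ and observing $\mfr{q} \cong \p/\ker\phi \cong \phi(\mfr{q})$; this is the same argument unwound, with the needed compactness of subalgebras of compact Lie algebras carefully justified.
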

	\begin{proof}
		Let $\chi : \p \to \mfr{k}$ be a homomorphism. Then $\p/\ker\chi$ is isomorphic to a subalgebra of $\mfr{k}$ and is therefore compact. As $\p$ is semisimple and has no nontrivial compact ideals, it also has no non-trivial compact quotients. Thus $\p/\ker \chi = \{0\}$ or equivalently $\p = \ker \chi$, so $\chi$ is trivial.
	\end{proof}

	\begin{proposition}\label{prop: lift_easy_when_p_semisimple}
		Assume that $\p$ is semisimple. Let $\chi : \p \to \mfr{k}$ be a homomorphism and let $\sigma^\prime \in L_{I}^\chi$. Suppose that $\sigma := \chi + \sigma^\prime$ is a degree-$1$ MC-element in $L_R$. Then $\sigma$ is gauge-equivalent to $\chi$ in $L_R$. 
	\end{proposition}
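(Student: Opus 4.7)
\medskip

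\noindent
\textbf{Proof proposal.} The plan is to reduce this to the already-established \Fref{prop: normal_form_phi_if_obstruction_vanishes} by splitting off the constant term $\chi$ and then invoking Whitehead's first lemma to kill all obstructions.

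\smallskip

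\noindent
First, I would apply \Fref{lem: splitting_off_constants}(1) with the MC-element $\chi \in L_{R_0}^1$ (which is indeed MC by \Fref{lem: mc_element_constant_term}(1), since $\chi$ is a Lie algebra homomorphism): this tells us that $\sigma = \chi + \sigma'$ being a degree-$1$ MC-element in $L_R$ is equivalent to $\sigma'$ being a degree-$1$ MC-element in the twisted DGLA $L_R^\chi$, and in fact $\sigma' \in L_I^\chi$ by hypothesis. Next, using \Fref{lem: splitting_off_constants}(2) with $\sigma^{\prime\prime} := 0$ (which is trivially MC in $L_I^\chi$), gauge-equivalence of $\chi + \sigma'$ to $\chi = \chi + 0$ in $L_R$ is equivalent to gauge-equivalence of $\sigma'$ to $0$ in $L_I^\chi$. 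Hence the entire problem reduces to showing $\sigma' \sim 0$ in $L_I^\chi$.

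\smallskip

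\noindent
Second, \Fref{prop: normal_form_phi_if_obstruction_vanishes} says that this reduction holds provided $H^1(\p, P^k(V)\otimes_\chi \mfr{k}) = 0$ for every $k \in \N$. Here the $\p$-module $P^k(V)\otimes_\chi \mfr{k}$ carries the twisted action $p.(f\otimes X) = (-\mc{L}_{\bm{v}_{\mrm{l}}(p)}f)\otimes X + f\otimes [\chi(p), X]$; in particular, since $P^k(V)$ is finite-dimensional and $\mfr{k}$ is finite-dimensional, so is $P^k(V)\otimes_\chi \mfr{k}$.

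\smallskip

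\noindent
The final step is simply to invoke Whitehead's first lemma: since $\p$ is semisimple and $P^k(V)\otimes_\chi \mfr{k}$ is a finite-dimensional $\p$-module, $H^1(\p, P^k(V)\otimes_\chi \mfr{k}) = 0$ (cf.\ \cite[III.7.\ Lem.\ 3]{Jacobson}). Plugging this into \Fref{prop: normal_form_phi_if_obstruction_vanishes} yields $\sigma' \sim 0$ in $L_I^\chi$, and the reduction above gives $\sigma \sim \chi$ in $L_R$. There is no hard part; the only point to double-check is that the twisted action is well-defined on the finite-dimensional quotient $P^k(V)\otimes \mfr{k}$, which follows from the fact that the original $\p$-action on $R\otimes_\chi \mfr{k}$ preserves the ideal filtration by $I^n\otimes \mfr{k}$, as was already used in setting up the DGLA $L_{P^n}^\chi$ earlier in the section.
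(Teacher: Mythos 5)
Your proof is correct and takes essentially the same route as the paper: reduce via \Fref{lem: splitting_off_constants} to the twisted DGLA $L_I^\chi$, then invoke \Fref{prop: normal_form_phi_if_obstruction_vanishes} after observing that Whitehead's first lemma kills $H^1(\p, P^k(V)\otimes_\chi \mfr{k})$ for every $k$. You spell out the two reduction steps a bit more explicitly than the paper does, but the content is identical.
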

	\begin{proof}
		Since $H^1(\p, P^k(V) \otimes_\chi \mfr{k}) = 0$ for all $k \in \N_{\geq 0}$ by Whitehead's Lemma \cite[III.7.\ Lem.\ 3]{Jacobson}, \Fref{prop: normal_form_phi_if_obstruction_vanishes} implies that $\sigma$ is equivalent to $0$ in $L_{I}^\chi$. Equivalently $\chi + \sigma$ is equivalent to $\chi$ in $L_R$.
	\end{proof}

	\normalformverticaltwistss*

	\begin{proof}
		Since $\sigma \in \p^\ast \otimes \g$ is a MC-element in $L_R$, there is some degree-1 MC-element $\sigma^\prime \in L_I^{\sigma_0}$ such that $\sigma = \sigma_0 + \sigma^\prime$, by \Fref{lem: splitting_off_constants}. Then \Fref{prop: lift_easy_when_p_semisimple} implies that $\sigma$ is gauge-equivalent to $\sigma_0$. By \Fref{lem: mc_element_constant_term} we further know that $\sigma_0 : \p \to \mfr{k}$ is a homomorphism of Lie algebras. Thus, if $\p$ has no non-compact ideals then $\sigma_0$ is trivial by \Fref{lem: no_homs_noncompact_to_compact}. 
	\end{proof}

	\begin{remark}
		Alternatively, \Fref{thm: normalformverticaltwistss} also follows from the structure theory of pro-Lie algebras, developed in \cite{Hofmann_pro_lie_groups}. To see this, assume that $\p$ is semisimple. Consider the pro-Lie algebra $\h \rtimes_{D_0} \p$, were $\h := I \otimes \mfr{k} \subseteq \g$ and where $D_0 : \p \to \der(\h)$ is given by $D_0(p) = -\mc{L}_{\bm{v}(p)} + \ad_{\sigma_0(p)}$ for $p \in \p$. Since $\p$ is semisimple, the radical and Levi-factor of $\h \rtimes_{D_0} \p$ are $\h$ and $\p$, respectively. A Levi subalgebra of $(\g \rtimes_{D_0} \p)$ is equivalently given by a splitting of the exact sequence
		\begin{equation}
			0 \to \h \to \h \rtimes_{D_0} \p \to \p \to 0,
		\end{equation}
		which in turn is equivalently given by a linear map $\sigma^\prime : \p \to \h$ satisfying the Maurer-Cartan equation
		\[ \sigma^\prime([p_1, p_2]) = [\sigma^\prime(p_1), \sigma^\prime(p_2)] + D_0(p_1)\sigma^\prime(p_2) - D_0(p_2)\sigma^\prime(p_1), \qquad \forall p_1, p_2 \in \p.\]
		That is, by a degree-1 MC-element $\sigma^\prime$ in the DGLA $L_I^{\sigma_0}$. The splitting $s_{\sigma^\prime}$ and Levi subalgebra $\mfr{l}_{\sigma^\prime}$ corresponding to $\sigma^\prime$ are given by $s_{\sigma^\prime} : \p \to \h \rtimes_{D_0} \p, \; s_{\sigma^\prime}(p) := (\sigma^\prime(p), p)$, and $\mfr{l}_{\sigma^\prime} := \set{ (\sigma^\prime(p), p) \st p \in \p} \subseteq \h \rtimes_{D_0} \p$, respectively. Any two Levi subalgebras in $\h \rtimes_{D_0} \p$ are conjugate by an automorphism of the form $e^{\ad_\xi}$ for some $\xi \in \h$, by \cite[Thm.\ 7.77$(i)$]{Hofmann_pro_lie_groups}. So if $\sigma^\prime \in L_I^{\sigma_0}$ is a degree-1 MC-element, there exists $\xi \in \h$ such that $e^{\ad_\xi}(\sigma^\prime(p), p) = (0, p)$ for all $p \in \p$. Notice for $p \in \p$ that $e^{\ad_\xi}(\sigma^\prime(p), p) = ((\xi.\sigma^\prime)(p), p)$, where 
		\[(\xi . \sigma^\prime)(p) = e^{\ad_\xi}\sigma^\prime(p) + F(\ad_\xi)(D_0(p)\xi) = e^{\ad_\xi}\sigma^\prime(p) + F(\ad_\xi)(\delta_{\sigma_0}(\xi)(p))\]
		is precisely the gauge action of the degree-zero elements $(L_I^{\sigma_0})^0 = \h$ on $L_I^{\sigma_0}$. We thus find that $\xi . \sigma^\prime = 0$, so $\sigma^\prime \sim 0$ in $L_I^{\sigma_0}$. By \Fref{lem: mc_element_constant_term}, this is equivalent with $\sigma \sim \sigma_0$ in $L_R$.\\
	\end{remark}

	\noindent
	We now prove \Fref{thm: normalformverticaltwistoned}:
	
	\normalformverticaltwistoned*
	\begin{proof}~
		\begin{enumerate}
			\item Using \Fref{lem: splitting_off_constants}, write $\sigma = {\sigma_0} + \sigma^\prime$, where $\sigma^\prime \in L_I^{\sigma_0}$ is a degree-1 MC-element in the shifted DGLA $L_I^{\sigma_0}$. Passing to the complexification, observe for $n \in \N$ that 
			\[P^{n}(V_\C) \otimes_{\sigma_0} (\mfr{k}/\mfr{t})_\C \cong \bigoplus_{\alpha} P^{n}(V_\C) \otimes_{\sigma_0} (\mfr{k}_\C)_\alpha\]
			as $\p$-modules. The eigenvalues of $-\mc{L}_{\bm{v}_{\mrm{l}}} + \ad_{\sigma_0}$ acting on $P^n(V) \otimes_{\sigma_0} (\mfr{k}_\C)_\alpha$ are given by $\alpha({\sigma_0}) - \langle \bm{n}, \bm{\mu}\rangle$, as $\bm{n}$ ranges over the multi-indices $\bm{n} \in \N_{\geq 0}^d$ with $|\bm{n}| = n$, and $\alpha$ over the roots of $\mfr{k}$. Thus $-\mc{L}_{\bm{v}_{\mrm{l}}} + \ad_{\sigma_0}$ is invertible on $P^n(V) \otimes_{\sigma_0} (\mfr{k}_\C)_\alpha$. Consequently 
			\[H^0(\p, P^n(V_\C) \otimes_{\sigma_0} (\mfr{k}_\C)_\alpha) = H^1(\p, P^n(V_\C) \otimes_{\sigma_0} (\mfr{k}_\C)_\alpha) = 0\]
			for any $n\in \N$ and root $\alpha$, which in turn implies that $H^0(\p, P^n(V) \otimes_{\sigma_0} \mfr{k}/\mfr{t}) = H^1(\p, P^n(V) \otimes_{\sigma_0} \mfr{k}/\mfr{t}) = 0$. By the long exact sequence of cohomology groups associated to the short exact sequence 
			\[0 \to \bigg({\textstyle\bigwedge^\bullet}\, \p^\ast\bigg)\otimes P^n(V) \otimes \mfr{t} \to \bigg({\textstyle\bigwedge^\bullet}\, \p^\ast\bigg) \otimes P^n(V) \otimes_{\sigma_0} \mfr{k} \to \bigg({\textstyle\bigwedge^\bullet}\, \p^\ast\bigg) \otimes P^n(V) \otimes_{\sigma_0} \mfr{k}/\mfr{t} \to 0,\]
			it follows that for every $n \in \N$, the inclusion $\p^\ast \otimes P^n(V) \otimes \mfr{t} \hookrightarrow \p^\ast \otimes P^n(V) \otimes_{\sigma_0} \mfr{k}$ induces an isomorphism $H^1(\p, P^n(V)\otimes \mfr{t}) \cong H^1(\p, P^n(V) \otimes_{\sigma_0} \mfr{k})$. It then follows using \Fref{lem: extension_of_mc_elements} by induction on $n \in \N$ that we can find elements $\xi_k \in P^k(V) \otimes_{\sigma_0} \mfr{k}$ s.t.\ $j^{n}(\xi_n. \xi_{n-1}\cdots \xi_1.\sigma^\prime) \in I_n \otimes \mfr{t}$ for every $n \in \N$, the gauge action taking place in $L_{I}^{\sigma_0}$. By \Fref{lem: limit_gauge_transf} there exists $\eta \in I \otimes_{\sigma_0} \mfr{k}$ s.t.\ $j^n(\eta.\sigma^\prime) = j^n(\xi_n.\xi_{n-1}.\cdots.\xi_1.\sigma^\prime) \in I_n \otimes \mfr{t}$ for every $n \in \N$. Hence $\zeta := \eta.\sigma^\prime \in I \otimes \mfr{t}$ and $\sigma^\prime$ is gauge-equivalent to $\zeta$ in $L_I^{\sigma_0}$. By \Fref{lem: splitting_off_constants} it follows that $\sigma = {\sigma_0} + \sigma^\prime$ is gauge-equivalent to ${\sigma_0} + \zeta \in R \otimes \mfr{t}$.
			
			\item As before, decompose $\sigma = {\sigma_0} + \sigma^\prime$ using \Fref{lem: splitting_off_constants}, where $\sigma^\prime \in L_I^{\sigma_0}$ is a degree-1 MC-element in the shifted DGLA $L_I^{\sigma_0}$. Let $n \in \N$. Identify $\p^\ast \otimes P^n(V) \otimes_{\sigma_0} \mfr{k}$ with $P^n(V) \otimes_{\sigma_0} \mfr{k}$ by evaluating elements of $\p^\ast$ at $1 \in \p = \R$. This induces an isomorphism
			\[H^1(\p, P^n(V)\otimes_{\sigma_0} \mfr{k}) \cong (P^n(V)\otimes_{\sigma_0} \mfr{k}) / \mrm{Im}(-\mc{L}_{\bm{v}_{\mrm{l}}} + \ad_{\sigma_0})\]
			Since ${\bm{v}_{\mrm{l}}}$ is semisimple, so is $-\mc{L}_{\bm{v}_{\mrm{l}}} + \ad_{\sigma_0}$ as operator on $P^n(V) \otimes_{\sigma_0} \mfr{k}$. Consequently, the inclusion $(P^n(V) \otimes_{\sigma_0} \mfr{k})^\p \hookrightarrow P^n(V) \otimes_{\sigma_0} \mfr{k}$ induces an isomorphism 
			\[(P^n(V) \otimes_{\sigma_0} \mfr{k})^\p \cong (P^n(V)\otimes_{\sigma_0} \mfr{k}) / \mrm{Im}(-\mc{L}_{\bm{v}_{\mrm{l}}} + \ad_{\sigma_0}).\]
			So every element of $H^1(\p, P^n(V)\otimes_{\sigma_0} \mfr{k})$ admits a representative in $\p^\ast \otimes (P^n(V) \otimes_{\sigma_0} \mfr{k})^\p$, for any $n \in \N$. By a similar argument as in the previous item, it follows that $\sigma^\prime$ is gauge-equivalent to some $\zeta \in I \otimes_{\sigma_0} \mfr{k}$ in $L_I^{\sigma_0}$ that satisfies $-\mc{L}_{\bm{v}_{\mrm{l}}}(\zeta) + [{\sigma_0}, \zeta] = 0$. By \Fref{lem: splitting_off_constants} it follows that ${\sigma_0} + \sigma^\prime \sim {\sigma_0} + \zeta$ in $L_R$. Notice that $\nu := {\sigma_0} + \zeta$ satisfies $-\mc{L}_{\bm{v}_{\mrm{l}}}\nu + [{\sigma_0}, \nu] = 0$.
			
			\item Observe first that any derivation $D^\prime \in \der(\g)$ satisfying $D^\prime (I \otimes \mfr{k}) \subseteq (I \otimes \mfr{k})$ integrates to a unique $1$-parameter group $t \mapsto e^{t D^\prime}$ of automorphisms on $\g$ that leave the ideal $I \otimes \mfr{k} \subseteq \g$ invariant. Indeed, this follows from fact that the corresponding statement is true for the finite-dimensional Lie algebra $\der(\g_n)$ for every $n \in \N$, where we use that $\g$ is the projective limit $\g = \varprojlim_n \g_n$. In particular this applies to the $\T$-action $e^{tD}$ on $\g$, which therefore leaves $I \otimes \mfr{k}$ invariant. It thus induces a continuous $\T$-action on $V^\ast \otimes \mfr{k} \cong (I \otimes \mfr{k})/(I^2 \otimes \mfr{k})$, which integrates the linear operator $-\mc{L}_{{\bm{v}_{\mrm{l}}}} \otimes 1 + 1\otimes \ad_{\sigma_0}$ on $V^\ast \otimes \mfr{k}$. This implies that ${\bm{v}_{\mrm{l}}} \in \gl(V)$ and $\ad_{\sigma_0} \in \der(\mfr{k})$ integrate to continuous $\T = \R/2\pi\Z$-actions on $V$ and $\mfr{k}$, respectively. As $\T$ is compact it follows in particular that ${\bm{v}_{\mrm{l}}}$ is semisimple and that $\Spec({\bm{v}_{\mrm{l}}}) \cup \Spec(\ad_{\sigma_0}) \subseteq 2\pi i\Z$. By \Fref{lem: splitting_off_constants} we know that there is some degree-1 MC-element $\sigma^\prime \in L_I^{\sigma_0}$ such that $\sigma = {\sigma_0} + \sigma^\prime$. By the previous item, it follows that we may assume that $\sigma^\prime$ satisfies $-\mc{L}_{{\bm{v}_{\mrm{l}}}}\sigma^\prime + [{\sigma_0}, \sigma^\prime] = 0$, by acting with gauge transformations in $L_I^{\sigma_0}$ if necessary. Let $n \in \N$. The $\T$-action on $\g_n = R_n \otimes \mfr{k}$ must be unitarizable because $\T$ is compact, so that its generator $D_n := -\mc{L}_{{\bm{v}_{\mrm{l}}}} + [j^n\sigma, \--] \in \der(\g_n)$ must be semisimple. Notice further that $-\mc{L}_{\bm{v}_{\mrm{l}}} + [{\sigma_0}, \--]$ is semisimple on $\g_n$ whereas $[j^n\sigma^\prime, \--]$ is nilpotent. Since $-\mc{L}_{\bm{v}_{\mrm{l}}}\sigma^\prime + [{\sigma_0}, \sigma^\prime] = 0$, the operators $-\mc{L}_{\bm{v}_{\mrm{l}}} + [{\sigma_0}, \--]$ and $[j^n\sigma^\prime, \--]$ on $\g_n$ commute. Thus $D_n = \big(-\mc{L}_{\bm{v}_{\mrm{l}}} + [{\sigma_0}, \--]\big) + [j^n\sigma^\prime, \--]$ is the Jordan decomposition of $D_n$. As $D_n$ is semisimple, this implies that $[j^n\sigma^\prime, \--]=0$. Thus $j^n\sigma^\prime \in \mfr{Z}(\g_n)$, where $\mfr{Z}(\g_n)$ denotes the center of $\g_n$. As $\mfr{k}$ is simple, we know that $\mfr{Z}(\g_n) = P^n(V)\otimes \mfr{k} \subseteq \g_n$. Thus $\sigma^\prime \in I^{n-1}\otimes \mfr{k}$ for every $n \in \N$, where $I^0 := R$. As $\bigcap_{n\in \N}(I^{n-1}\otimes \mfr{k}) = \{0\}$, it follows that $\sigma^\prime = 0$. Hence $\sigma = {\sigma_0} \in \mfr{t}$.\qedhere
		\end{enumerate}
	\end{proof}

	\section{Projective Unitary G.P.E.\ Representations of $(R \otimes \mfr{k}) \rtimes_D \p$}\label{sec: restr_proj_reps}

	\noindent
	Having obtained the normal form results \Fref{thm: normalformverticaltwistss} and \Fref{thm: normalformverticaltwistoned}, we now proceed with the study of continuous projective unitary representation of jet Lie groups and algebras that are of generalized positive energy.\\
		
	\noindent
	Let us begin by briefly recalling the setting and our notation. We have that $V$ is a finite-dimensional real vector space, $R = \R\llbracket V^\ast \rrbracket := \prod_{n=0}^\infty P^k(V)$ is the ring of formal power series on $V$ with coefficients in $\R$ and equipped with the product topology. Moreover, $\g$ denotes the topological Lie algebra $\g = R \otimes \mfr{k}$, where $\mfr{k}$ is a compact simple Lie algebra and $\p$ is a finite-dimensional real Lie algebra acting on $\g$ by the homomorphism $D : \p \to \der(\g)$, which splits into a horizontal and a vertical part according to $D(p) = -\mc{L}_{\bm{v}(p)} + \ad_{\sigma(p)}$, where $\bm{v} : \p \to \X_I^{\op}$ is a homomorphism and where the linear map $\sigma : \p \to \g$ satisfies the Maurer Cartan equation \eqref{eq: lift_satisfies_mc}. Let $P$ and $K$ be the $1$-connected Lie groups integrating $\p$ and $K$, respectively. For $n \in \N$ write $G_n := J^n_0(V; K)$, $G_n^\sharp := J^n_0(V; K) \rtimes P$ and $\g_n^\sharp := \g_n \rtimes_D \p$. Define further $G := J^\infty_0(V; K) := \varprojlim_n G_{n}$, $G^\sharp :=G \rtimes_\alpha P$ and $\g^\sharp := \g \rtimes_D \p \cong \varprojlim_n \g_n^\sharp$.  \\

	\noindent
	In the following, we are interested in understanding the extent to which the linearization $\bm{v}_{\mrm{l}}$ of $\bm{v}$ and the values of $\sigma(p)$ at the origin already determine properties of the class of representations which are of g.p.e.\ at a given cone $\mfr{C} \subseteq \p$. To describe the main results, we first have to introduce some more notation.\\
	
	\noindent
	Define $\sigma_0 := \ev_0 \circ \sigma : \p \to \mfr{k}$ and let ${\bm{v}_{\mrm{l}}} = j^1 \bm{v} : \p \to \gl(V)$ be the linearization of $\bm{v}$ at the origin. For $p \in \p$, the vector fields $\bm{v}(p)$ splits as $\bm{v}(p) = \vl{p} + \vho{p}$ for some formal vector field $\vho{p} \in \X_{I^2}$ vanishing up to first order at the origin. Let $\vl{p} = \vls{p} + \vln{p}$ be the Jordan decomposition of $\vl{p}$ over $\C$. Let $V_{\mrm{c}}^\C(p)$ denote the span in $V_\C$ of all generalized eigenspaces of $\vl{p}$ corresponding to eigenvalues with zero real part. Set $V_{\mrm{c}}(p) := V_{\mrm{c}}^\C(p) \cap V$. If $\mfr{C} \subseteq \p$ is a subset, define $V_{\mrm{c}}(\mfr{C}) := \bigcap_{p \in \mfr{C}}V_{\mrm{c}}(p)$. We call $V_{\mrm{c}}(\mfr{C})$ the \squotes{center subspace associated to $\mfr{C}$}, in analogy with the center manifold of a fixed point of a dynamical system. Let $V_{\mrm{c}}(\mfr{C})^\perp\subseteq V^\ast$ denote the annihilator of $V_{\mrm{c}}(\mfr{C})$ in $V^\ast$. For any $p \in \p$, let $\Sigma_p \subseteq \C$ denote the additive subsemigroup of $\C$ generated by $\Spec(\vl{p})$. Recall from \Fref{def: qpe} that for any continuous projective unitary representation $\olpi$ of $\g\rtimes_D \p$, the set $\mfr{C}(\olpi)$ consists of all points $p \in \p$ for which $\olpi$ is of generalized positive energy at $p$.\\
	
	\noindent
	Let us describe the main results of this section. In the context of positive energy representations, we have:
	
	\drhofactpecase

	\begin{remark}
		Notice that $\Spec(\ad_{\sigma_0(p)}) = \set{\alpha(\sigma_0(p)) \st \alpha \in \Delta} \cup \{0\}$ is a finite subset of $i\R$. In particular, the condition $\Spec(\vl{p}) \cap \Spec(\ad_{\sigma_0(p)}) = \emptyset$ is satisfied if ${\bm{v}_{\mrm{l}}}$ has no purely imaginary eigenvalues. 
	\end{remark}
	
	\noindent
	This is complemented by the following results, which in particular give sufficient conditions for $\restr{\olpi}{\g}$ to factor through $\mfr{k}$, because $\R\llbracket V_{\mrm{c}}(\mfr{C})^\ast \rrbracket \otimes \mfr{k} \cong \mfr{k}$ whenever $V_{\mrm{c}}(\mfr{C}) = \{0\}$.
	
	\thmfactresultspectralcondition
	
	\begin{restatable}{theorem}{thmfactresultnformcondition}\label{thm: fact_result_nform_condition}
		Let $\mfr{t} \subseteq \mfr{k}$ be a maximal Abelian subalgebra. Let $\olpi$ be a continuous projective unitary representation of $\g^\sharp$. Let $\mfr{C} \subseteq \mfr{C}(\olpi)$. Assume for every $p \in \mfr{C}$ that $\sigma(p) \in R \otimes \mfr{t}$ and $[\vls{p},\, \vho{p}] = 0$.  Then $RV_{\mrm{c}}(\mfr{C})^\perp\otimes \mfr{k} \subseteq \ker \olpi$ and hence $\restr{\olpi}{\g}$ factors through $\R\llbracket V_{\mrm{c}}(\mfr{C})^\ast \rrbracket \otimes \mfr{k}$. 
	\end{restatable}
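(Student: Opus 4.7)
The plan is to complexify throughout and argue separately for each $p \in \mfr{C}$, reducing the conclusion to showing that the ideal $\mc{J}_p := \sum_\alpha R \cdot \Lambda_\alpha(R) \subseteq R$, generated as $\alpha$ ranges over the roots of $\mfr{k}_\C$ with respect to a complexified Cartan $\mfr{t}_\C$, contains $W_p := V_{\mrm{c}}(p)^\perp_\C$. Here $\Lambda_\alpha := -\mc{L}_{\bm{v}(p)} + \alpha(\sigma(p))$ is the operator on $R$ induced by $D(p)$ on $R \otimes E_\alpha$. Granting this for each $p$, summing and using $V_{\mrm{c}}(\mfr{C})^\perp = \sum_{p \in \mfr{C}} V_{\mrm{c}}(p)^\perp$ yields $R V_{\mrm{c}}(\mfr{C})^\perp \otimes \mfr{k} \subseteq \ker \olpi$, hence the desired factorization.

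To inject $\Lambda_\alpha(R) \otimes E_\alpha$ into $\ker \olpi$ for a fixed root $\alpha$, I first apply the Chevalley--Eilenberg cocycle identity to a triple $(fH, g E_\alpha, h E_\alpha)$ with $H \in \mfr{t}_\C$ satisfying $\alpha(H) \neq 0$; combined with antisymmetry, this forces the representing $2$-cocycle $\omega$ of the central extension underlying $\olpi$ to vanish identically on $RE_\alpha \times RE_\alpha$. For $\eta := fE_\alpha$, the hypothesis $\sigma(p) \in R \otimes \mfr{t}$ gives $D(p)\eta = \Lambda_\alpha(f) E_\alpha \in RE_\alpha$, and $[E_\alpha, E_\alpha] = 0$ yields $[D(p)\eta, \eta] = 0$, so \Fref{cor: qpe_kernel_special_case} applies; the preceding step forces $\omega(D(p)\eta, \eta) = 0$, whence $\olpi(\Lambda_\alpha(f) E_\alpha) = 0$ for all $f \in R$. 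Iterating $\ad_\g$-brackets against $\Lambda_\alpha(f) E_\alpha$ and invoking the simplicity of $\mfr{k}$ propagates this to $R \cdot \Lambda_\alpha(R) \otimes \mfr{k}_\C \subseteq \ker \olpi$ for every root $\alpha$, so $\mc{J}_p \otimes \mfr{k}_\C \subseteq \ker \olpi$ after summing over $\alpha$.

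The main technical step, and the hardest part of the argument, is verifying $W_p \subseteq \mc{J}_p$. Fix a $\mc{L}_{\vls{p}}$-eigenbasis $(x_j)$ of $V_\C^\ast$ with eigenvalues $\mu_j$. The hypothesis $[\vls{p}, \vho{p}] = 0$ ensures that $\mc{L}_{\vho{p}}$ preserves $RW_p$: the coefficient of $\partial_{x_j}$ in $\vho{p}$ lies in $RW_p$ whenever $x_j \in W_p$, by the resonance condition $\langle \bm{m}, \bm{\mu}\rangle = \mu_j$ combined with $\mrm{Re}(\mu_j) \neq 0$. Together with $\sigma(p) \in R \otimes \mfr{t}$ this yields a decomposition $\Lambda_\alpha(x_j) = c_{\alpha, j} x_j - \mc{L}_{\vln{p}}(x_j) + h_{\alpha, j}$, where $c_{\alpha, j} := \alpha(\sigma_0(p)) - \mu_j \neq 0$ (since $\alpha(\sigma_0(p)) \in i\R$ while $\mu_j \notin i\R$), $\mc{L}_{\vln{p}}(x_j)$ stays in $W_p$, and $h_{\alpha, j} \in I^2 \cap RW_p$. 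Letting $M_\alpha$ denote the inverse of $(\alpha(\sigma_0(p)) - \mc{L}_{\vl{p}})|_{W_p}$ on $W_p$, one obtains $\Lambda_\alpha(M_\alpha w) = w + \tilde{h}_w$ with $\tilde{h}_w \in I^2 \cap RW_p$, hence $w \equiv -\tilde{h}_w \pmod{\mc{J}_p}$ for each $w \in W_p$. Induction on $N$ then gives $W_p \subseteq I^N + \mc{J}_p$ for every $N \geq 1$: each monomial $x^{\bm{n}}$ appearing in $\tilde{h}_w$ has $|\bm{n}| \geq 2$ and factors as $x_i \cdot x^{\bm{n} - e_i}$ for some $x_i \in W_p$, so by the inductive hypothesis $x^{\bm{n}} \in (I^N + \mc{J}_p) \cdot I^{|\bm{n}|-1} \subseteq I^{N+1} + \mc{J}_p$. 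Applying Krull's intersection theorem to $R/\mc{J}_p$ (a complete Noetherian local ring when $\mc{J}_p$ is proper; the case $\mc{J}_p = R$ makes the statement trivial) yields $\bigcap_N (I^N + \mc{J}_p) = \mc{J}_p$, so $W_p \subseteq \mc{J}_p$. The crux of the proof is precisely this bootstrap: the normal-form hypotheses confine the corrections $\tilde{h}_w$ inside the ideal $RW_p$, which is exactly what makes the inductive descent collapse onto $\mc{J}_p$ in the completion.
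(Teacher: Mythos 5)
Your proof has a genuine gap in the step where you pass from $\omega(D(p)\eta,\eta)=0$ to $\olpi(D(p)\eta)=0$ for $\eta=fE_\alpha$. \Fref{cor: qpe_kernel_special_case} (and the underlying \Fref{lem: CS-qpe}) apply only to $\eta\in\g$ real: the inequality $\langle\psi,-i\pi(e^{t\ad_\eta}\xi)\psi\rangle\geq E_\psi(\pi,\xi)$ which produces the quadratic polynomial $at^2+bt+c\geq 0$ requires $e^{t\eta}$ to be a genuine group element, and for complex $\eta$ the operators $\pi(e^{t\ad_\eta}\xi)$ are no longer skew-adjoint, so the estimate breaks down. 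You are effectively noting that $\kappa(E_\alpha,E_\alpha)=0$ makes $\omega$ vanish identically on $RE_\alpha\times RE_\alpha$ and reading this as degeneracy of a quadratic form, but this is only the isotropy of the Killing form on a root vector and carries no information about $\olpi$. The resulting intermediate claim $\olpi(\Lambda_\alpha(f)E_\alpha)=0$ for \emph{all} $f\in R$ is too strong: for instance, with $V=\R^2$, $\bm{v}$ a rotation field and $\sigma=0$ one has $V_{\mrm{c}}(\mfr{C})=V$ and the theorem correctly says nothing, yet your claim would force $R\,\mc{L}_{\bm{v}}(R)\otimes\mfr{k}\subseteq\ker\olpi$, which is not a consequence of generalized positive energy (indeed it is stronger than the conclusion of \Fref{thm: fact_pe_case} in that case).

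The paper avoids this by first working with real $\eta=fH$, $H\in\mfr{t}$, so that $\omega(D(p)\eta,\eta)=-\kappa(H,H)\,\lambda(\mc{L}_{\bm{v}}(f)\,df)$ with $\kappa(H,H)<0$, giving a genuine positive-semidefinite quadratic form $q(f)=\lambda(\mc{L}_{\bm{v}}(f)\,df)$ on $R$ (see \Fref{prop: kernel_of_repr_and_quadr_form}); only $f$ in $\mc{N}=\ker q$ contributes to $\ker\olpi$, and the representation-theoretic output is $R\,\mc{L}_{\bm{v}}\mc{N}\otimes\mfr{k}\subseteq\ker\olpi$, not all of $R\,\mc{L}_{\bm{v}}(R)\otimes\mfr{k}$. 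What remains is to show that the relevant eigenspaces lie inside $\mc{N}$, which is the content of \Fref{lem: lambda_invariant_under_linear_ss_part} (using $\p$-invariance and closedness of $\lambda$) and \Fref{lem: center_subspace_in_kernel}; your argument has no analogue of these because it tries to get the vanishing for free. Your algebraic bootstrap showing $W_p\subseteq\mc{J}_p$ via the resonance condition, Krull's intersection theorem, and the normal-form hypotheses is essentially sound and parallel in spirit to \Fref{lem: char_of_ideal_when_lin_part_non_deg}, but it sits on top of a false premise. The actual proof of the theorem in the paper is a one-liner that applies \Fref{thm: general_factorization_result} to each $p\in\mfr{C}$ and then uses \Fref{lem: ann_of_intersection} to combine annihilators; the work you have reconstructed lives in the proof of \Fref{thm: general_factorization_result}, and the missing piece of your argument is exactly the passage through the quadratic form $q$ and its kernel.
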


	\noindent
	To prove these results, we consider in \Fref{sec: second_la_cohom} the second continuous Lie algebra cohomology $H_{\ct}^2(\g \rtimes_D \p; \R)$ so as to obtain particular representatives $\omega$ of cohomology classes therein. We proceed in \Fref{sec: fact_fin_jets} to show that any irreducible smooth projective unitary representation $G \rtimes_\alpha P$ factors through the finite-dimensional $G_n \rtimes P$ for some $n \in \N$. This gives us access to techniques that are available for finite-dimensional Lie groups, and in particular to \Fref{cor: pe_vanishing_ideal}, which leads to \Fref{thm: fact_pe_case}. In \Fref{sec: case_p_R} and \Fref{sec: restr_mom_gen_case} we study the kernel of the quadratic form $\xi \mapsto \omega(D(p)\xi, \xi)$. Recalling from \Fref{cor: qpe_kernel_special_case} that
	\[ [D(p)\eta, \eta] = 0 \implies \bigg(\omega(D(p)\eta, \eta) = 0 \iff \olpi(D(p)\eta) = 0 \bigg), \qquad \forall \eta \in \g,	\]
	this leads to an ideal in $\g$ contained in $\ker \olpi$, and to the proof of \Fref{thm: fact_result_spectral_condition} and \Fref{thm: fact_result_nform_condition}. These results are supplemented in \Fref{sec: restr_mom_set_semisimple} by a consideration of the special case where $\p$ is a simple non-compact Lie algebra, in which case \Fref{thm: normalformverticaltwistss} is available. This leads to the following:
	
	\begin{restatable}{theorem}{thmfactresultss}\label{thm: factorization_result_semisimple}
		Assume that $\p$ is non-compact and simple. Let $\olpi$ be a continuous projective unitary representation of $\g \rtimes_D\p$. Write $\mfr{C} := \mfr{C}(\olpi) \subseteq \p$. Then $\restr{\olpi}{\g}$ factors through $\R\llbracket V_{\mrm{c}}(\mfr{C})^\ast \rrbracket \otimes \mfr{k}$. 
	\end{restatable}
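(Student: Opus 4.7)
My plan is to reduce the theorem to \Fref{thm: fact_result_nform_condition} by means of the normal-form results of \Fref{sec: normal_form}. Since $\p$ is simple and non-compact, it is semisimple and has no non-trivial compact ideals, which is precisely what allows me to combine Hermann's linearization theorem (\Fref{thm: herman_linearization}) with \Fref{thm: normalformverticaltwistss}.

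First I will apply Hermann's theorem to produce a formal diffeomorphism $h_0 \in \Aut(R)$ with $h_0\cdot \bm{v}(p) = \vl{p}$ for every $p \in \p$. Taking $1$-jets of both sides yields $j^1(h_0)\,\vl{p}\,j^1(h_0)^{-1} = \vl{p}$, so $j^1(h_0) \in GL(V)$ commutes with each $\vl{p}$; by replacing $h_0$ with $j^1(h_0)^{-1}\circ h_0$ if needed, I may arrange $j^1(h_0) = \id_V$, which ensures the linearizations $\vl{p}$, and hence the center subspace $V_{\mrm{c}}(\mfr{C})$, are preserved. By \Fref{prop: transf_beh}, the horizontal automorphism $h_0 \otimes \id_{\mfr{k}}$ of $\g$ conjugates $D(p)$ into $\widetilde{D}(p) = -\mc{L}_{\vl{p}} + \ad_{\widetilde{\sigma}(p)}$ for some new vertical twist $\widetilde{\sigma}:\p\to\g$ still satisfying \eqref{eq: lift_satisfies_mc}, and the map $\Phi_1 : \g\rtimes_D \p \to \g \rtimes_{\widetilde{D}} \p$, $(\xi,p)\mapsto(h_0(\xi),p)$, is a Lie algebra isomorphism that is the identity on the $\p$-factor.

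Next, since $\p$ has no non-trivial compact ideals, \Fref{thm: normalformverticaltwistss} applied to $\widetilde{\sigma}$ will furnish $\xi_0 \in \g$ whose gauge transformation sends $\widetilde{\sigma}$ to $0$, yielding a second isomorphism $\Phi_2:\g\rtimes_{\widetilde{D}}\p \to \g\rtimes_{D'} \p$, $(\xi,p)\mapsto (e^{\ad_{\xi_0}}\xi,p)$, with $D'(p) = -\mc{L}_{\vl{p}}$ and new vertical twist identically $0$. The composition $\Phi := \Phi_2 \circ \Phi_1$ is then a Lie algebra isomorphism acting as $\id_\p$ on the $\p$-factor, hence the continuous projective unitary representation $\olpi' := \olpi \circ \Phi^{-1}$ of $\g \rtimes_{D'} \p$ is equivalent to $\olpi$ and satisfies $\mfr{C}(\olpi') = \mfr{C}(\olpi) = \mfr{C}$ by the equivariance of the generalized positive energy condition under Lie algebra isomorphisms.

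Finally, \Fref{thm: fact_result_nform_condition} will apply to $\olpi'$: fixing any maximal torus $\mfr{t} \subseteq \mfr{k}$, the vertical twist $0$ lies in $R\otimes\mfr{t}$, and in the normalized horizontal action the higher-order part $\vho{p}$ vanishes, so $[\vls{p},\vho{p}]=0$ trivially. I thereby obtain $RV_{\mrm{c}}(\mfr{C})^\perp \otimes \mfr{k} \subseteq \ker \olpi'$; pulling back through the isomorphism $\Phi|_\g$ produces an ideal of $\g$ contained in $\ker \olpi$ whose quotient is isomorphic, via $\Phi|_\g$, to $\R\llbracket V_{\mrm{c}}(\mfr{C})^\ast\rrbracket \otimes \mfr{k}$, giving the desired factorization. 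The main obstacle I foresee is verifying cleanly that the two normal-form transformations assemble into a single Lie algebra isomorphism of the semidirect product which is the identity on $\p$, so that the cone $\mfr{C}$ and the associated $V_{\mrm{c}}(\mfr{C})$ are carried over unchanged to $\olpi'$; this is ensured by \Fref{prop: transf_beh} together with the arrangement $j^1(h_0)=\id_V$.
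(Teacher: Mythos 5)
Your proof is correct and takes essentially the same route as the paper: linearize $\bm{v}$ via Hermann's theorem (\Fref{thm: herman_linearization}), gauge $\sigma$ to zero via \Fref{thm: normalformverticaltwistss} using the absence of nontrivial homomorphisms $\p\to\mfr{k}$ for non-compact simple $\p$, and then invoke \Fref{thm: fact_result_nform_condition}. The extra normalization $j^1(h_0)=\id_V$ is unnecessary — the 1-jet identity $j^1(h_0)\,\vl{p}\,j^1(h_0)^{-1}=\vl{p}$ already shows $j^1(h_0)$ commutes with every $\vl{p}$ and hence preserves $V_{\mrm{c}}(\mfr{C})$ — but it does no harm.
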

	
	\thmsemisimplecone

	\subsection{The Second Lie Algebra Cohomology $H_{\ct}^2(\g \rtimes_D \p, \R)$.}\label{sec: second_la_cohom}
	\noindent
	We next determine suitable representatives of classes in the second Lie algebra cohomology $H^2_{\ct}(\g \rtimes_D \p, \R)$, which classifies the continuous $\R$-central extensions of $\g \rtimes \p =: \g^\sharp$ up to equivalence. As an intermediate step we first consider $H^2_{\ct}(\g, \R)$, which is completely understood. \\
	
	\noindent
	Define $\Omega_R^k := R \otimes \bigwedge^k V^\ast$, equipped with projective limit topology obtained from $\Omega_R^k = \varprojlim_n \Omega_{R_n}^k$, where $\Omega_{R_n}^k := R_n \otimes \bigwedge^k V^\ast$. This makes $\Omega_R^k$ into a Fr\'echet space. In particular $\Omega_R^0 = R$. Since $J_0^\infty(\Omega^k(V)) \cong \Omega_R^k$, we can define a continuous differential $d : \Omega_R^k \to \Omega_R^{k+1}$ by $dj^\infty_0\alpha := j^\infty_0d\alpha \in J^\infty_0(\Omega^{n+1}(V)) \cong \Omega_R^{k+1}$, which is indeed well-defined. Choosing a basis $(e_\mu)_{\mu=1}^d$ of $V$ with dual basis $(d x_\mu)_{\mu=1}^d$ of $V^\ast$, the above differential $d$ is on $R$ given by $df = \sum_\mu (\partial_\mu f) \otimes dx_\mu$ for $f \in R$.
	
	\begin{lemma}\label{lem: one_forms_universal}
		Let $E$ be a topological $R$-module and let $D : R \to E$ be a continuous derivation. Then there exists a unique continuous $R$-linear map $\overline{D} : \Omega^1_R \to E$ such that $D = \overline{D} \circ d$. 
	\end{lemma}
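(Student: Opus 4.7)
The plan is to use the fact that $\Omega^1_R = R \otimes V^\ast$ is a free $R$-module of rank $d = \dim V$ with basis $(dx_\mu)_{\mu=1}^d$, and that polynomials are dense in $R$ with respect to the product topology. So uniqueness is essentially forced by $R$-linearity, and existence comes from defining $\overline{D}$ on the basis and checking compatibility.

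For uniqueness, suppose $\overline{D}$ satisfies $D = \overline{D} \circ d$. Applying this to the coordinate function $x_\mu$ gives $\overline{D}(dx_\mu) = D(x_\mu)$, and $R$-linearity then fixes $\overline{D}$ on all of $\Omega^1_R$, since every element can be written uniquely as $\sum_\mu f_\mu \, dx_\mu$. For existence, I would define
\[ \overline{D}\!\left(\sum_{\mu=1}^d f_\mu \, dx_\mu\right) := \sum_{\mu=1}^d f_\mu\, D(x_\mu). \]
This is manifestly $R$-linear. Continuity follows because $\Omega^1_R \cong R^d$ carries the product topology, the projection onto the $\mu$-th component is continuous, and the scalar multiplication $R \times E \to E$ is continuous by the hypothesis that $E$ is a topological $R$-module.

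It then remains to verify $D = \overline{D} \circ d$. For a monomial $f = x^{\bm n} = \prod_\mu x_\mu^{n_\mu}$, the Leibniz rule applied iteratively gives
\[ D(x^{\bm n}) = \sum_\mu n_\mu x^{\bm n - e_\mu}\, D(x_\mu) = \sum_\mu (\partial_\mu f)\, D(x_\mu) = \overline{D}(df). \]
By $\R$-linearity both $D$ and $\overline{D}\circ d$ agree on the subalgebra of polynomials $\R[x_1,\dots,x_d] \subseteq R$. Since the partial sums $(f_0, f_1, \ldots, f_N, 0, 0, \ldots)$ of $f = (f_n)_n \in R$ converge to $f$ in the product topology, polynomials are dense in $R$, and both $D$ and $\overline{D} \circ d$ are continuous, the identity $D = \overline{D}\circ d$ extends to all of $R$.

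No step looks like a serious obstacle here; the only things to be careful about are (i) that $d$ really has the explicit coordinate expression $df = \sum_\mu (\partial_\mu f)\,dx_\mu$ on all of $R$ (which follows from the definition via $\infty$-jets of the de Rham differential together with continuity), and (ii) that the density of polynomials in $R$ is with respect to the correct topology, which is immediate from the definition $R = \prod_n P^n(V)$ with the product topology.
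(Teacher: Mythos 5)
Your proof is correct and follows essentially the same route as the paper's: both rest on the coordinate formula $df=\sum_\mu(\partial_\mu f)\,dx_\mu$, the density of polynomials, and continuity of the $R$-action on $E$. The only difference is cosmetic — the paper first constructs $\overline{D}$ on $\Omega^1_{\R[V^\ast]}$ via the universal property of Kähler differentials and then extends to $\Omega^1_R$ by density, whereas you write down $\overline{D}(\sum_\mu f_\mu\,dx_\mu)=\sum_\mu f_\mu D(x_\mu)$ on all of $\Omega^1_R$ at once and instead use density to verify the identity $D=\overline{D}\circ d$.
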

	\begin{proof}
		Let $\R[V^\ast]$ denote the ring of polynomial functions on $V$. As $\R[V^\ast] \subseteq R$, $E$ is also a $\R[V^\ast]$-module and $\restr{D}{\R[V^\ast]} : \R[V^\ast] \to E$ is a derivation. Using the universal property of the K\"ahler differential forms $\Omega^1_{\R[V^\ast]} \cong \R[V^\ast] \otimes V^\ast$, there is a unique $\R[V^\ast]$-linear map $\overline{D} : \Omega^1_{\R[V^\ast]} \to E$ such that $\restr{\overline{D} \circ d}{\R[V^\ast]} = \restr{D}{\R[V^\ast]}$. As $\Omega^1_{\R[V^\ast]}$ is dense in $\Omega_R^1$, it remains to extend $\overline{D}$ continuously to the Fr\'echet space $\Omega_R^1$. Let $\alpha \in \Omega^1_R$ and let $(\alpha_n)_{n\in \N}$ be a sequence in $\Omega^1_{\R[V^\ast]}$ s.t.\ $\alpha_n \to \alpha$ in $\Omega^1_R$. We show that $D\alpha := \lim_{n \to \infty}\overline{D} \alpha_n$ exists and is independent of the approximating sequence $(\alpha_n)$. Choose a basis $(dx_\mu)_{\mu=1}^d$ of $V^\ast$. Write $\alpha_n = \sum_{\mu=1}^d f_\mu^{(n)} dx_\mu$ and $\alpha = \sum_{\mu=1}^d f_\mu dx_\mu$ for some unique $f_\mu^{(n)} \in \R[V^\ast]$ and $f_\mu \in R$. Then $f_\mu^{(n)} \to f_\mu$ in $R$ for every $\mu$ and hence $\lim_{n \to \infty} \overline{D}\alpha_n  = \sum_{\mu=1}^d \lim_{n \to \infty} f_\mu^{(n)} Dx_\mu = \sum_{\mu=1}^d f_\mu Dx_\mu$, which is independent of the approximating sequence $(\alpha_n)$. It follows that $\overline{D}$ extends to a continuous $R$-linear map $D : \Omega^1_R \to E$, which satisfies $\overline{D} \circ d = D$ by construction. It is unique with these properties because its restriction to the dense subspace $\R[V^\ast]\subseteq R$ is so.
	\end{proof}

	\begin{remark}
		\Fref{lem: one_forms_universal} entails that $d : R \to \Omega_R^1$ is the universal differential module $R$ in the category of complete locally convex $\R$-modules, in the sense of \cite[Thm.\ 6]{Maier_CE_top_current_algs}
	\end{remark}
		
	\begin{proposition}\label{prop: characterization_of_cts_second_cohom}
		Any class $[\omega] \in H_{\ct}^2(\g, \R)$ has a unique representative of the form $\omega(\xi, \eta) = \lambda(\kappa(\xi, d\eta))$, where $\lambda \in {\Omega^1_R}^\ast$ is closed and continuous functional on $\Omega^1_R$ and $\kappa$ is the Killing form on $\mfr{k}$. (Closed meaning that $\lambda(dR) = 0$.) Conversely, any such $\lambda$ defines a $2$-cocycle representing some non-zero class in $H^2_{\ct}(\g, \R)$. Consequently, the center of the universal central extension of $\g$ is $(\Omega^1_R / dR)$.
	\end{proposition}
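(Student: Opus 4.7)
The plan is to show that $\lambda \mapsto [\omega_\lambda]$ is a bijection from closed continuous linear functionals on $\Omega^1_R$ onto $H^2_{\ct}(\g, \R)$. \emph{Cocycle check:} writing $\xi = fX$ and $\eta = gY$, one has $\omega_\lambda(\xi, \eta) = \kappa(X, Y)\lambda(f\, dg)$; skew-symmetry is immediate from $d(fg) = f\,dg + g\,df \in dR \subseteq \ker\lambda$, and the cocycle identity, using the cyclic symmetry of $\kappa([X,Y], Z)$, reduces to $\lambda(d(fgh)) = 0$, which holds by closedness.

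\emph{Uniqueness:} if $\omega_\lambda = \delta \ell$ for some continuous linear $\ell : \g \to \R$, pick $X \in \mfr{t}$ with $\kappa(X, X) \neq 0$ (available since $\kappa$ is negative-definite on compact simple $\mfr{k}$). As $[fX, gX] = 0$, evaluating $\omega_\lambda$ gives $\kappa(X, X)\lambda(f\, dg) = \ell([fX, gX]) = 0$. Since $\Omega^1_R \cong R \otimes V^\ast$ is generated as an $R$-module by the differentials of a basis of $V^\ast$, this forces $\lambda = 0$.

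\emph{Surjectivity:} given a continuous $2$-cocycle $\omega$, I would first replace it by its $K$-average $\bar\omega := \int_K k.\omega\, dk$, which is $K$-invariant and cohomologous to $\omega$, since the $K$-action on $\g$ factors through the inner $\mfr{k}$-action coming from the inclusion $\mfr{k} \hookrightarrow \g$ as constants, and inner derivations satisfy $\mc{L}_X\omega = \delta \iota_X\omega$ and hence act trivially on $H^2_{\ct}(\g, \R)$. Decomposing the resulting $K$-invariant $\omega(fX, gY)$ according to the swap $(X, f) \leftrightarrow (Y, g)$ into $T^+ + T^-$ (with $T^+$ symmetric in $X, Y$ and antisymmetric in $f, g$, and vice versa for $T^-$), simplicity of $\mfr{k}$ forces $T^+ = \mu(f, g)\kappa(X, Y)$ for some antisymmetric $\mu$ (any $\Ad_K$-invariant symmetric bilinear form on $\mfr{k}$ is proportional to $\kappa$) and $T^- = 0$ (since $(\bigwedge^2\mfr{k}^\ast)^K = 0$ for simple $\mfr{k}$, via Schur's lemma applied to $\phi : \mfr{k} \to \mfr{k}^\ast \cong \mfr{k}$). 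The cocycle identity on $\omega$ then rewrites as $\mu(fg, h) + \mu(gh, f) + \mu(hf, g) = 0$; setting $h = 1$ gives $\mu(\cdot, 1) = 0$, and rearranging yields $\mu(f, gh) = \mu(fg, h) + \mu(fh, g)$, precisely the Leibniz rule needed for $f\, dg \mapsto \mu(f, g)$ to descend to an $R$-linear functional $\lambda : \Omega^1_R \to \R$ via \Fref{lem: one_forms_universal}. Closedness $\lambda(dg) = \mu(1, g) = 0$ follows from antisymmetry, and continuity is inherited from that of $\omega$.

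The final statement about the universal central extension follows by dualizing the bijection $\lambda \leftrightarrow [\omega_\lambda]$: the universal $\Omega^1_R/dR$-valued cocycle is given by $\omega_{\mrm{univ}}(\xi, \eta) := \kappa(\xi, d\eta) \bmod dR$. I expect the main technical obstacle to be the $K$-averaging step: verifying that $\bar\omega - \omega$ is a \emph{continuous} coboundary requires producing $\ell_k : \g \to \R$ with $\delta \ell_k = k.\omega - \omega$ depending continuously on $k \in K$, which can be done by integrating $(e^{tX})^\ast \iota_X \omega$ along paths emanating from the identity in $K$ via the exponential map (surjective, since $K$ is compact connected), but requires some care with the choice of logarithm and with the Fréchet topologies on $\g$ and $\Omega^1_R$.
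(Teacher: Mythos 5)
The paper disposes of this proposition by a one-line citation to Maier's Theorem 16 on the continuous second cohomology of current algebras $A \otimes \mfr{k}$ over a unital commutative Fréchet algebra $A$; you instead reconstruct the argument from scratch. Your steps are the standard ones behind such results (compact averaging, invariant theory of the simple summand, Leibniz identity for the resulting bilinear functional on $R$), and the computational parts are correct: the cocycle check via $\lambda(dR)=0$, the injectivity argument using $[fX,gX]=0$, the decomposition $\omega = T^+ + T^-$ with $T^- = 0$ since $(\bigwedge^2\mfr{k})^K = 0$ (here you should note that compact simple $\mfr{k}$ is absolutely simple, so $\End_{\mfr{k}}(\mfr{k}) = \R$ and Schur applies cleanly over $\R$), the reduction of the cocycle identity to the Jacobi-like relation for $\mu$, and the verification that $\mu$ induces a well-defined closed continuous functional $\lambda$ on $\Omega^1_R$ via the universal property of \Fref{lem: one_forms_universal}. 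This is, in effect, an unpacking of Maier's proof specialized to $R = \R\llbracket V^\ast\rrbracket$ and compact simple $\mfr{k}$.

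The one place that genuinely needs shoring up is exactly the one you flagged: constructing a \emph{continuous} primitive for $\bar\omega - \omega$. Your path-integral construction requires a continuous (or at least measurable) family of logarithms $k \mapsto X_k$ on $K$, which does not exist globally. For this specific $\g$, however, you can sidestep the issue entirely: since $\g = \varprojlim_k \g_k$ with $\g_k$ finite-dimensional and the topology is the projective limit one, any continuous $2$-cocycle $\omega$ on $\g$ is dominated by a seminorm pulled back from some $\g_k$ and hence factors through a $2$-cocycle on the finite-dimensional Lie algebra $\g_k$ (this is the argument of \Fref{lem: lambda_factors_through_finite_jets}, which applies verbatim with $\g$ in place of $\g^\sharp$). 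After this reduction, the $K$-action on the finite-dimensional cochain complex $C^\bullet(\g_k, \R)$ is a smooth linear action, and the homotopy $\ell := \int_K \int_0^1 (\exp tX_k)^\ast \iota_{X_k}\omega \, dt\, dk$ can be replaced by the elementary fact that on a finite-dimensional $K$-representation the averaging projection is chain-homotopic to the identity (e.g.\ via the Casimir on the orthogonal complement of the invariants). With that fix, your argument closes the gap and gives a self-contained proof of the proposition, rather than relying on Maier's theorem as the paper does.
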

	\begin{proof}
		This is a special case of \cite[Theorem 16]{Maier_CE_top_current_algs}, seeing as $\g = R \otimes \mfr{k}$, where $R$ is a unital, associative and commutative Fr\'echet algebra and $\mfr{k}$ is a simple Lie algebra.
	\end{proof}

	\begin{lemma}\label{lem: lambda_p_invariant}
		Let $\omega$ be an extension of the 2-cocycle $\lambda(\kappa(\xi, d\eta))$ on $\g$ to a $2$-cocycle on $\g^\sharp = \g \rtimes \p$. Then $\lambda$ is $\p$-invariant in the sense that $\lambda(\mc{L}_{\bm{v}(p)} \Omega^1_R) = 0$ for every $p \in \p$.
	\end{lemma}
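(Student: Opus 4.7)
The plan is to exploit the cocycle identity for $\omega$ on triples $(p, \xi, \eta)$ with $p \in \p$ and $\xi, \eta \in \g$, and then to specialise $\xi, \eta$ so as to kill every term except the horizontal one. First, unwinding the 2-cocycle identity for $\omega$ and using $[p, \xi] = D(p)\xi \in \g$, one obtains
\[\omega_\g(D(p)\xi, \eta) + \omega_\g(\xi, D(p)\eta) = -\omega([\xi, \eta], p),\]
where $\omega_\g := \restr{\omega}{\g \times \g}$ is the cocycle $(\xi, \eta) \mapsto \lambda(\kappa(\xi, d\eta))$.

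Next, I would expand the LHS using $D(p) = -\mc{L}_{\bm{v}(p)} + \ad_{\sigma(p)}$. The horizontal contribution simplifies via $[d, \mc{L}_{\bm{v}(p)}] = 0$ (Cartan's formula) together with the Leibniz rule for the Lie derivative, giving
\[\lambda(\kappa(-\mc{L}_{\bm{v}(p)}\xi, d\eta)) + \lambda(\kappa(\xi, -d\mc{L}_{\bm{v}(p)}\eta)) = -\lambda(\mc{L}_{\bm{v}(p)}\, \kappa(\xi, d\eta)).\]
For the vertical contribution, the identity $d[\sigma(p), \eta] = [d\sigma(p), \eta] + [\sigma(p), d\eta]$ combined with the $\ad$-invariance of $\kappa$ (which yields the cancellation $\kappa([\sigma(p), \xi], d\eta) + \kappa(\xi, [\sigma(p), d\eta]) = 0$) reduces it to $\lambda(\kappa(\xi, [d\sigma(p), \eta]))$. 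A second application of $\ad$-invariance rewrites this as $-\lambda(\kappa([\xi, \eta], d\sigma(p)))$, which depends on $\xi, \eta$ only through their bracket $[\xi, \eta]$.

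The key step is now a clever specialisation: take $\xi := f X$ and $\eta := g X$ for $f, g \in R$ and a single $X \in \mfr{k}$ with $\kappa(X, X) \neq 0$ (such an $X$ exists since $\kappa$ is non-degenerate on $\mfr{k}$). Then $[\xi, \eta] = 0$, which simultaneously annihilates both $-\omega([\xi,\eta], p)$ and the entire vertical contribution, leaving
\[\kappa(X, X)\, \lambda(\mc{L}_{\bm{v}(p)}(f\, dg)) = 0,\]
and hence $\lambda(\mc{L}_{\bm{v}(p)}(f\, dg)) = 0$ for all $f, g \in R$. Since any $\beta \in \Omega_R^1$ is a finite sum $\beta = \sum_\mu h_\mu\, dx_\mu$ in a basis $(dx_\mu)$ of $V^\ast$, this gives $\lambda(\mc{L}_{\bm{v}(p)} \Omega_R^1) = 0$, as required.

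I do not anticipate a serious obstacle. The only delicate step is the bookkeeping of the two $\ad$-invariance manipulations (first to cancel the cross-terms involving $[\sigma(p), \bcdot\,]$, and then to rewrite the remaining $[d\sigma(p), \eta]$ term as a function of $[\xi, \eta]$). Once these are in place, the choice $\xi = fX$, $\eta = gX$ eliminates in one stroke both the vertical-twist dependence and the \emph{a priori} unknown mixed part $\restr{\omega}{\g \times \p}$, isolating precisely the statement about $\lambda$.
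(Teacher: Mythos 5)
Your proof is correct, and you use the same crucial specialisation as the paper — taking $\xi = fX$, $\eta = gX$ with $\kappa(X,X)\neq 0$, so that $[\xi,\eta]=0$ wipes out the a priori unknown mixed part $\omega([\xi,\eta],p)$. The genuine difference is how the vertical contribution from $\ad_{\sigma(p)}$ is handled. You expand $\omega_\g$ into the explicit formula $\lambda(\kappa(\cdot, d\cdot))$ from the outset, push $d$ across the bracket, and apply the $\ad$-invariance of $\kappa$ twice to show the $\sigma(p)$-terms reduce to $-\lambda(\kappa([\xi,\eta],d\sigma(p)))$, hence vanish. The paper instead applies the $2$-cocycle identity for $\omega$ a \emph{second} time, now with first argument $\sigma(p)\in\g$ rather than $p\in\p$, obtaining $\omega([\sigma(p),\xi],\eta)+\omega(\xi,[\sigma(p),\eta]) = \omega(\sigma(p),[\xi,\eta]) = 0$; subtracting this from the $(p,\xi,\eta)$ cocycle identity and using $D(p)=-\mc{L}_{\bm{v}(p)}+\ad_{\sigma(p)}$ cancels the vertical terms purely at the level of the cocycle, without ever opening up the formula for $\omega_\g$. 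Both routes land on $\lambda(\mc{L}_{\bm{v}(p)}(f\,dg))\kappa(X,X)=0$, and then $R\,dR = \Omega^1_R$ finishes it. The paper's argument is a bit more economical and avoids the Cartan/Leibniz/$\ad$-invariance bookkeeping; yours is more explicit and has the mild advantage of actually exhibiting the vertical contribution as $-\lambda(\kappa([\xi,\eta],d\sigma(p)))$, which would be useful information if one ever needed to track it rather than just discard it.
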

	\begin{proof}
		Take $\xi = f \otimes X$ and $\eta = g \otimes X$ for $f,g \in R$ and $0\neq X \in \mfr{k}$. Notice that $[\xi, \eta] = 0$ in $R \otimes\mfr{k}$. Using the cocycle identity, this implies
		\begin{align*}
			\omega(D(p)\xi, \eta) + \omega(\xi, D(p)\eta) &= \omega(D(p), [\xi, \eta]) = 0\\
			\omega([\sigma(p), \xi], \eta) + \omega(\xi, [\sigma(p), \eta]) &= \omega(\sigma(p), [\xi, \eta]) = 0.
		\end{align*}
		Using $D(p)\xi = -\mc{L}_{\bm{v}(p)}\xi + [\sigma(p), \xi]$ it follows that $0 =\omega(\mc{L}_{\bm{v}(p)} \xi, \eta) + \omega(\xi, \mc{L}_{\bm{v}(p)}\eta) = \lambda(\mc{L}_{\bm{v}(p)}fdg)\kappa(X, X)$. As $\kappa(X, X) \neq 0$ and $R dR = \Omega^1_R$, this shows the claim.
	\end{proof}

		\begin{lemma}\label{lem: lambda_factors_through_finite_jets}
		Let $\omega : \g^\sharp \times \g^\sharp \to \R$ be a continuous $2$-cocycle on $\g^\sharp = \g \rtimes_D \p$. Then there exists $n \in \N$ and a $2$-cocycle $\omega_n$ on $\g_n^\sharp$ such that $\omega(\xi, \eta) = \omega_n(j^n\xi, j^n\eta)$ for all $\xi, \eta \in \g^\sharp$.
	\end{lemma}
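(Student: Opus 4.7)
The plan is to exploit that $\g^\sharp = \g \rtimes_D \p = \varprojlim_n \g_n^\sharp$ is a Fr\'echet space, arising as a countable projective limit of finite-dimensional Lie algebras, and to use the general principle that a continuous bilinear form on such a space is controlled by a seminorm arising from projection to some finite level, which automatically forces factorization.

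First, I would fix norms on each of the finite-dimensional spaces $\g_n$ and on $\p$ to produce an increasing family of continuous seminorms
\[
q_n(\xi + p) := \|j^n\xi\| + \|p\|, \qquad \xi \in \g, \; p \in \p,
\]
which generate the Fr\'echet topology on $\g^\sharp$. Since $\omega$ is continuous (and for a bilinear map on a Fr\'echet space separate continuity upgrades to joint continuity by Banach-Steinhaus), there exist $N \in \N$ and $C > 0$ with
\[
|\omega(x, y)| \leq C\, q_N(x)\, q_N(y), \qquad \forall x, y \in \g^\sharp.
\]

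Next, I would observe that $q_N$ vanishes precisely on $\ker(j^N : \g^\sharp \to \g_N^\sharp) = I^{N+1}\g$. The above estimate therefore forces $\omega(x, y) = \omega(y, x) = 0$ whenever $j^N x = 0$, so that $\omega$ descends to a well-defined bilinear form $\omega_N$ on the finite-dimensional quotient $\g_N^\sharp$ via $\omega(x,y) = \omega_N(j^N x, j^N y)$. Since $j^N$ is a surjective homomorphism of Lie algebras and $\omega$ satisfies the cocycle identity, the same identity for $\omega_N$ is automatic, concluding the argument.

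The only (mild) obstacle is the joint-continuity upgrade, handled by the standard Banach-Steinhaus theorem for Fr\'echet spaces; everything else amounts to bookkeeping about the projective-limit topology on $\g^\sharp$.
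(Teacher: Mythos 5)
Your proposal is correct and follows essentially the same route as the paper: both choose seminorms generating the projective-limit topology (the paper is slightly more careful to pick the norms on $\g_n^\sharp$ so that the quotient maps are contractive, ensuring the seminorm family is increasing), use continuity to bound $\omega$ by a single seminorm $q_N$, and descend through the surjective Lie algebra homomorphism $j^N$. The Banach--Steinhaus remark is superfluous here since the paper's hypothesis is joint continuity, but it does no harm.
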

	\begin{proof}
		Let $\omega : \g^\sharp \times \g^\sharp \to \R$ be a continuous $2$-cocycle on $\g^\sharp$. Choose norms $\norm{\--}_{n}$ on the finite-dimensional Lie algebras $\g_n^\sharp$ s.t.\ the quotient maps $j^n : \g_m^\sharp \to \g_n^\sharp$ are contractive for any $n,m \in \N$ with $n \leq m$. The topology on $\g^\sharp = \varprojlim \g_n^\sharp$ is specified by the seminorms $\xi \mapsto \norm{j^n\xi}_n$ for $n \in \N$ and $\xi \in \g^\sharp$. As $\omega$ is continuous and the maps $\g_m^\sharp \to \g_n^\sharp$ are contractive for $n \leq m$, there exist $n \in \N$ such that $|\omega(\xi, \eta)| \leq \norm{j^n\xi}_n \norm{j^n\eta}_n$ for all $\xi, \eta \in \g^\sharp$ (e.g.\ using \cite[Prop. 43.1 and Prop.\ 43.4]{Treves_tvs}). As $j^n : \g^\sharp \to \g_n^\sharp$ is surjective, it follows that $\omega(\xi, \eta) = \omega_n(j^n \xi, j^n \eta)$ for a unique $2$-cocycle $\omega_n$ on $\g_n^\sharp$. 
	\end{proof}

	\subsection{Factorization Through Finite Jets}\label{sec: fact_fin_jets}
	
	\noindent
	In the context of smooth projective unitary representations $\overline{\rho}$ of the Lie group $G^\sharp$, it is no loss of generality to consider the case where $\overline{\rho}$ factors through the finite-dimensional Lie group $G_n^\sharp$ for some $n \in \N$:
	
	\begin{theorem}\label{thm: always_factor_through_fin_jet}
		Let $\overline{\rho}$ be a smooth projective unitary representation of $G^\sharp$ with lift $\rho : \circled{G} \to \U(\mcH_\rho)$ for some central $\T$-extension $\circled{G}$ of $G^\sharp$. Then $\rho$ decomposes as a (possibly uncountable) direct sum $\rho = \bigoplus_{i \in \mc{I}}\rho_i$ s.t.\ for every $i \in \mc{I}$ there exists $n \in \N$ s.t.\ the projective unitary representations $\overline{\rho}_i$ associated to $\rho_i$ factors through $G_n^\sharp$.  In particular, if $\overline{\rho}$ is irreducible then it factors through $G_n^\sharp$ for some $n \in \N$.
	\end{theorem}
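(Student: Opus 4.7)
The plan is to show that every smooth vector $\psi \in \mcH_\rho^\infty$ is projectively fixed, i.e., fixed modulo the central $\T$, by the closed normal subgroup $K_n = \ker(G \to G_n)$ for all $n$ sufficiently large depending on $\psi$. The desired decomposition will then arise from the resulting increasing chain of $\circled{G^\sharp}$-invariant subspaces on which $\overline{\rho}$ descends to the finite-jet quotient.

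By \cite[Thm.\ 4.3]{BasNeeb_ProjReps}, $\circled{G^\sharp}$ is itself a locally convex Lie group and $\rho$ is smooth, so for every $\psi \in \mcH_\rho^\infty$ the orbit map $\circled{G^\sharp} \to \mcH_\rho$, $g \mapsto \rho(g)\psi$, is smooth and its derivative at the identity $L_\psi \colon \circled{\g^\sharp} \to \mcH_\rho$, $\xi \mapsto d\rho(\xi)\psi$, is a continuous linear map. Since the Fr\'echet topology on $\g = \varprojlim_n \g_n$ is defined by the directed family of seminorms $\xi \mapsto \|j^n \xi\|$ (relative to any choice of norms on the finite-dimensional quotients $\g_n$), continuity of $L_\psi$ produces some $n \in \N$ and $C > 0$ with $\|L_\psi(s, \xi_\g + \xi_\p)\| \leq C(\|j^n \xi_\g\| + \|\xi_\p\|_\p + |s|)$ for every $(s, \xi_\g + \xi_\p) \in \circled{\g^\sharp}$. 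In particular $d\rho(0, \xi_\g)\psi = 0$ whenever $\xi_\g \in \mfr{k}_n := \ker(\g \to \g_n) = I^{n+1}\otimes \mfr{k}$.

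Enlarging $n$ if necessary, I may also arrange $n \geq n_0$ for the threshold $n_0$ supplied by \Fref{lem: lambda_factors_through_finite_jets}, so that the defining cocycle $\omega$ vanishes on $\mfr{k}_n$ and $\widetilde{\mfr{k}}_n := \{0\} \oplus \mfr{k}_n \subseteq \circled{\g^\sharp}$ is a closed ideal isomorphic to $\mfr{k}_n$. This ideal integrates to a closed normal subgroup $\widetilde{K}_n \triangleleft \circled{G^\sharp}$ projecting isomorphically onto $K_n$, and by (the argument behind) \Fref{lem: exp_restricts_to_diffeo} the exponential restricts to a diffeomorphism $\mfr{k}_n \to K_n$. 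For any $\eta \in \mfr{k}_n$, using $[\eta, \eta] = 0$ and $\omega(\eta, \eta) = 0$, one computes $\tfrac{d}{dt}\rho(\exp t(0,\eta))\psi = \rho(\exp t(0,\eta))\, d\rho(0,\eta)\psi = 0$, so $\rho(\exp t(0,\eta))\psi = \psi$ for all $t \in \R$. Combined with $\rho|_\T = \id_\T \cdot I$ and the surjectivity of the exponential onto $K_n$, this yields $\rho(\widetilde{K}_n)\psi \subseteq \T \cdot \psi$, i.e., the projective class $[\psi]$ is fixed by $K_n$ in $\mrm{P}(\mcH_\rho)$.

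For each $n \geq n_0$, define $\mcH_{(n)} := \bigoplus_\chi \mcH_n(\chi)$ where $\chi$ ranges over continuous characters of $\widetilde{K}_n$ extending $\id_\T$ and $\mcH_n(\chi) := \{v \in \mcH_\rho \st \rho(k)v = \chi(k)v \text{ for all }k \in \widetilde{K}_n\}$. Normality of $\widetilde{K}_n$ in $\circled{G^\sharp}$ makes $\mcH_{(n)}$ a $\circled{G^\sharp}$-invariant closed subspace, and by construction $\overline{\rho}|_{\mcH_{(n)}}$ factors through $G_n^\sharp$ because $\widetilde{K}_n$ acts by scalars on each $\mcH_n(\chi)$. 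The previous paragraph shows $\mcH_\rho^\infty \subseteq \bigcup_{n \geq n_0}\mcH_{(n)}$, so by density of smooth vectors $\overline{\bigcup_n \mcH_{(n)}} = \mcH_\rho$. Since the sequence $(\mcH_{(n)})_{n \geq n_0}$ is increasing, setting $\mcW_n := \mcH_{(n)} \ominus \mcH_{(n-1)}$ (with $\mcH_{(n_0 - 1)} := 0$) produces the orthogonal decomposition $\mcH_\rho = \bigoplus_{n \geq n_0} \mcW_n$, each $\mcW_n$ being $\circled{G^\sharp}$-invariant with $\overline{\rho}|_{\mcW_n}$ factoring through $G_n^\sharp$; in the irreducible case only one summand is nonzero. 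The principal subtlety is the projective-limit factorization of $L_\psi$: it relies on the standard principle that a continuous linear map from a projective-limit Fr\'echet space into a normed space must be dominated by one of the defining seminorms, and hence must vanish on its kernel.
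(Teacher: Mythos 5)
Your overall strategy is sound and takes a genuinely more hands-on route than the paper, which constructs the filter basis $\mathcal N := \{\mathcal N_m\}_{m \ge n}$ of split normal subgroups covering $N_m^\sharp = \ker(G^\sharp \to G_m^\sharp)$ and then invokes the general decomposition theorem \cite[Thm.\ 12.2]{Neeb_diffvectors}. However, there is a genuine gap at the decomposition step, and it is worth spelling out because the fix also simplifies your argument.

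You define $\mcH_{(n)} := \bigoplus_\chi \mcH_n(\chi)$ over all characters $\chi$ of $\widetilde K_n$ and then assert that $\overline{\rho}|_{\mcH_{(n)}}$ factors through $G_n^\sharp$ ``because $\widetilde K_n$ acts by scalars on each $\mcH_n(\chi)$.'' That reasoning does not go through: acting by the scalar $\chi(k)$ on $\mcH_n(\chi)$ for each $\chi$ separately does not make $\rho(k)$ a scalar operator on the full sum $\mcH_{(n)}$; it is merely block-diagonal. Consequently $\overline\rho(K_n)$ is not trivial in $\PU(\mcH_{(n)})$, and neither is it on the difference $\mcW_n = \mcH_{(n)} \ominus \mcH_{(n-1)}$, so the claimed factorization through $G_n^\sharp$ fails for the summands you build. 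Moreover, while the individual $\mcH_n(\chi)$ are permuted by the conjugation action, only the trivial-character component is a priori invariant, so your nontrivial components do not yield invariant subspaces either. Fortunately your own computation proves a \emph{stronger} fact than the one you recorded: from $\tfrac{d}{dt}\rho(\exp t(0,\eta))\psi = 0$ you get $\rho(\widetilde K_n)\psi = \psi$, i.e., the \emph{vector} (not just the ray) is fixed, so $\psi$ lies in the trivial-character subspace $\mcH_n(\mathbf 1)$. This subspace is closed, is $\circled G^\sharp$-invariant (the trivial character is a fixed point of conjugation, and $\widetilde K_n$ is normal), and on it $\rho(\widetilde K_n) = I$, so $\overline\rho$ restricted there honestly factors through $G_n^\sharp$. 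Replacing your $\mcH_{(n)}$ with $\mcH_n(\mathbf 1)$ everywhere gives an increasing sequence of closed invariant subspaces with dense union, and the orthogonal differences $\mcW_n := \mcH_n(\mathbf 1) \ominus \mcH_{n-1}(\mathbf 1)$ are invariant with $\rho(\widetilde K_n)|_{\mcW_n} = I$; this yields the theorem (indeed with a countable index set).

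Two smaller remarks. First, the step where you integrate $\widetilde{\mfr k}_n = \{0\} \oplus \mfr k_n$ to a normal subgroup projecting isomorphically onto $K_n$ is exactly the place where the paper invokes the trivialization of the central $\T$-extension over the $1$-connected subgroup $N_n^\sharp$ via \cite[Thm.\ III.1.5]{neeb_towards_lie}; you gesture at this but it deserves an explicit appeal, since it is not automatic that the Lie-algebra splitting integrates equivariantly to a group splitting. Second, once you work only with the trivial character you no longer need the full isotypic machinery or the phrase ``extending $\id_\T$,'' which was in any case ambiguous since $\widetilde K_n$ was constructed to be disjoint from the central $\T$.
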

	\begin{proof}
		Write $N_{m}^\sharp := \ker\bigg(j^{m} : G^\sharp \to G_{m}^\sharp\bigg)$ and $\n_{m}^\sharp := \ker\bigg(j^{m} : \g^\sharp \to \g_{m}^\sharp\bigg)$ for any $m \in \N_{\geq 0}$, so that $G_m^\sharp \cong G^\sharp/N_m^\sharp$ for any $m \in \N_{\geq 0}$. Notice that $N_m^\sharp \subseteq N_{n}^\sharp$ whenever $n\leq m$. Since $\overline{\rho}$ is a smooth projective representation, it follows from \cite[Thm.\ 4.3]{BasNeeb_ProjReps} that $\circled{G}$ is a Lie group. It is moreover regular by \cite[Thm.\ V.I.8]{neeb_towards_lie}, because both $G^\sharp$ and $\T$ are so. Let $\circled{\g} := \Lie(\circled{G})$. Then $\circled{\g}$ is a central $\R$-extension of $\g^\sharp$ in the category of locally convex Lie algebras. Let the continuous $2$-cocycle $\omega : \g^\sharp \times \g^\sharp \to \R$ represent the corresponding class in $H^2_{\ct}(\g^\sharp, \R)$. By \Fref{lem: lambda_factors_through_finite_jets}, there is some $n \in \N$ such that for all $\xi \in \g^\sharp$ we have $j^{n}\xi = 0 \implies \omega(\xi, \eta) = 0$ for all $\eta \in \g^\sharp$. Let $\circled{N}_n$ be the closed normal subgroup of $\circled{G}$ covering $N_n^\sharp$ and let $\circled{\n}_n$ be its Lie algebra. Then $\circled{N}_n$ is a central $\T$-extension of $N_n^\sharp$ integrating $\circled{\n}_n$. Since $\restr{\omega}{\n_n^\sharp \times \n_n^\sharp} = 0$, the central $\R$-extension $\circled{\n}_n$ is trivial. Hence $\circled{\n}_n \cong \R \oplus \n_n^\sharp$ as central $\R$-extensions of $\n_n^\sharp$. As $N_n^\sharp$ is regular and $1$-connected, it follows from \cite[Thm.\ III.1.5]{neeb_towards_lie} that there is a commutative diagram
		\[
		\begin{tikzcd}
			{\R} \arrow{r} \arrow{d}{e^{2\pi i \, \bcdot}} & {\R \times N_n^\sharp} \arrow{r} \arrow{d}{\widetilde{\phi}} & {N_n^\sharp} \arrow{d}{\id} \\
			{\T} \arrow{r}           & {\circled{N}_n} \arrow{r}           & {N_n^\sharp}          
		\end{tikzcd}
		\]
		of locally convex regular Lie groups. Observe that $\widetilde{\phi}$ is surjective and that $\ker \widetilde{\phi} = \Z$. Thus $\circled{N}_n \cong \T \times N_n^\sharp$ as central $\T$-extension of $N_n^\sharp$. Let $\phi : \T \times N_n^\sharp \to \circled{N}$ realize the isomorphism. For any integer $m \geq n$, let $\mc{N}_m := \phi(\{1\} \times N_m^\sharp) \subseteq \circled{N}_n \subseteq \circled{G}$, which is a closed normal subgroup of $\circled{G}$ isomorphic to and covering $N_m^\sharp$. Then $\mc{N} := \{\mc{N}_m\}_{m \geq n}$ is a filter basis of (decreasing) closed normal subgroups of $\circled{G}$ satisfying $\varinjlim \mc{N} = \{1\}$, in the sense that for any $1$-neighborhood $U$ of $\circled{G}$ there exists $m \geq n$ such that $\mc{N}_m \subseteq U$. Indeed, since $G^\sharp = \varprojlim_m G_m^\sharp$ carries the projective limit topology and $\circled{G}$ is a locally trivial principal $\T$-bundle over $G^\sharp$ \cite[Thm.\ 4.3]{BasNeeb_ProjReps}, it follows that any $1$-neighborhood $U \subseteq \circled{G}$ contains $\phi(I \times {N}_m^\sharp)$ for large enough $m$ and some open $1$-neighborhood $I \subseteq \T$. It now follows from \cite[Thm.12.2]{Neeb_diffvectors} that $\rho$ decomposes as a possibly uncountable direct sum $\rho\cong \bigoplus_{i \in \mc{I}} \rho_i$ such that for every $i \in \mc{I}$ there exists some $m \geq n$ with $\rho_i(\mc{N}_m) = \{1\}$, which implies that $\overline{\rho}_i(N_m^\sharp) = \{1\}$.
	\end{proof}

	\noindent
	\Fref{thm: always_factor_through_fin_jet} gives us access to techniques that are available for finite-dimensional Lie groups, and in particular to \Fref{cor: pe_vanishing_ideal}. This can be used to prove \Fref{thm: fact_pe_case}.

	\drhofactpecase*
	
	\noindent
	To prove \Fref{thm: fact_pe_case}, it suffices by \Fref{thm: always_factor_through_fin_jet} to consider the case where $\overline{\rho}$ factors through the finite-dimensional Lie group $G_k^\sharp$ for some $k \in \N$, which we thus assume. Write $\mfr{a}$ for the ideal in $\g_k^\sharp$ generated by $p \in \p$. Let $\mfr{a}_n \subseteq \mfr{a}$ denote the maximal nilpotent ideal in $\mfr{a}$. According to \Fref{cor: pe_vanishing_ideal} we have $[\mfr{a}, [\mfr{a}_n, \mfr{a}_n]] \subseteq \ker d\overline{\rho}$. Recall that $I_k := I/I^{k+1}$.

	\begin{lemma}\label{lem: computation_for_an}
		Suppose that $V^\ast \otimes \mfr{k} \subseteq j^1 \mfr{a}_n$. Then $I_k \otimes \mfr{k} \subseteq \mfr{a}_n$. 
	\end{lemma}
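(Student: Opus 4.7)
My plan is to exploit the filtration of $\g_k$ by powers of the maximal ideal, running a descending induction that uses two preliminary facts: that $\mfr{a}_n$ is actually an ideal in the whole of $\g_k^\sharp$ (not merely in $\mfr{a}$), and that $\mfr{k}$ is perfect.

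For the first fact, note that since $\mfr{a}$ is an ideal in $\g_k^\sharp$, the adjoint action $\ad\,\g_k^\sharp$ acts on $\mfr{a}$ by derivations. The maximal nilpotent ideal (nilradical) of a finite-dimensional Lie algebra is a characteristic ideal, i.e., invariant under every derivation. Hence $\mfr{a}_n$ is stable under $\ad\,\g_k^\sharp$, so $\mfr{a}_n \triangleleft \g_k^\sharp$.

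For $1\leq m \leq k+1$, set $F_m := (I^m/I^{k+1})\otimes \mfr{k} \subseteq \g_k$, so that $F_1 = I_k \otimes \mfr{k}$, $F_{k+1} = 0$, and $[F_i, F_j]\subseteq F_{i+j}$. The hypothesis $V^\ast \otimes \mfr{k} \subseteq j^1 \mfr{a}_n$ translates into $F_1 \subseteq \mfr{a}_n + F_2$. The second key observation is that $F_m = [F_1, F_{m-1}]$ for $m\geq 2$: any $g\otimes Y \in F_m$ can be written $g = \sum_i f_i h_i$ with $f_i \in V^\ast$ and $h_i \in I^{m-1}/I^{k+1}$ (since $I^m = V^\ast\cdot I^{m-1}$), and $Y = \sum_j [X_j, Z_j]$ because $\mfr{k}$ is perfect; then $g\otimes Y = \sum_{i,j}[f_i \otimes X_j,\; h_i \otimes Z_j] \in [F_1, F_{m-1}]$.

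The core induction then proves the claim $F_m \subseteq \mfr{a}_n + F_{m+1}$ for all $1\leq m \leq k$. The base case $m=1$ is the hypothesis. Assuming it at step $m$, expand using the base case and the inductive hypothesis:
\[ F_{m+1} = [F_1, F_m] \subseteq [\mfr{a}_n + F_2,\; \mfr{a}_n + F_{m+1}]. \]
The three cross terms $[\mfr{a}_n, \mfr{a}_n]$, $[\mfr{a}_n, F_{m+1}]$, $[F_2, \mfr{a}_n]$ all lie in $\mfr{a}_n$ because $\mfr{a}_n$ is an ideal of $\g_k^\sharp$, while $[F_2, F_{m+1}] \subseteq F_{m+3} \subseteq F_{m+2}$. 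Thus $F_{m+1} \subseteq \mfr{a}_n + F_{m+2}$. Since $F_{k+1}=0$, a descending sweep yields $F_k \subseteq \mfr{a}_n$, then $F_{k-1} \subseteq \mfr{a}_n$, and finally $F_1 = I_k \otimes \mfr{k} \subseteq \mfr{a}_n$.

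There is no real obstacle in this argument; the only point that requires a moment's thought is recognizing that $\mfr{a}_n$ is characteristic in $\mfr{a}$ and hence an ideal in $\g_k^\sharp$, which is what lets the cross terms in the inductive step be absorbed into $\mfr{a}_n$.
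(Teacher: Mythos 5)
Your proof is correct and follows essentially the same route as the paper: filter $I_k\otimes\mfr{k}$ by powers of the maximal ideal, use perfectness of $\mfr{k}$ to write $F_{m} = [F_1, F_{m-1}]$, and run a finite induction that terminates because $F_{k+1}=0$. One small improvement over the paper's write-up is that you explicitly justify the absorption of cross terms into $\mfr{a}_n$ by noting that the nilradical is a characteristic ideal of $\mfr{a}$ and hence an ideal of $\g_k^\sharp$, a point the paper uses silently.
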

	\begin{proof}
		By assumption $V^\ast \otimes \mfr{k} \subseteq \mfr{a}_n + I_k^2 \otimes \mfr{k}$. As $\mfr{k}$ is perfect it follows that
		\[I_k^{l+1}\otimes \mfr{k} = [V^\ast \otimes \mfr{k}, I_k^l \otimes \mfr{k}] \subseteq \mfr{a}_n + [I_k^2 \otimes \mfr{k}, I_k^l \otimes \mfr{k}] = \mfr{a}_n + I_k^{l+2}\otimes \mfr{k}, \qquad \forall l \in \N.\]
		Thus it follows by induction that $V^\ast \otimes \mfr{k} \subseteq \mfr{a}_n + I_k^{l+1}\otimes \mfr{k}$ for all $l \in \N$. As $\bigcap_l (\mfr{a}_n + I_k^{l+1}\otimes \mfr{k}) = \mfr{a}_n$, it follows that $V^\ast \otimes \mfr{k} \subseteq \mfr{a}_n$ and hence $I_k \otimes \mfr{k} \subseteq \mfr{a}_n$.
	\end{proof}

	\begin{proof}[Proof of \Fref{thm: fact_pe_case}]
		We may assume that $\overline{\rho}$ factors through $G_k^\sharp$ for some $k \in \N$. It suffices to show that $d\overline{\rho}$ factors through $\g_2^\sharp$ and that the image of $-\mc{L}_{\vl{p}} + \ad_{\sigma_0(p)}$ in $P^2(V) \otimes \mfr{k} \subseteq \g_2$ is contained in $\ker d\overline{\rho}$. By \Fref{cor: pe_vanishing_ideal} we know that $\big[\mfr{a}, [\mfr{a}_n, \mfr{a}_n]\big] \subseteq \ker d\overline{\rho}$. Moreover $I_k^3 \otimes \mfr{k} = [I_k \otimes \mfr{k}, [I_k \otimes \mfr{k}, I_k \otimes \mfr{k}]]$, because $\mfr{k}$ is perfect. To see that $d\overline{\rho}$ factors through $\g_2^\sharp$ it thus suffices to show that $I_k \otimes \mfr{k} \subseteq \mfr{a}_n$. By \Fref{lem: computation_for_an} it is further sufficient to show that $V^\ast \otimes \mfr{k} \subseteq j^1(\mfr{a}_n)$. Write $D_1(p) := -\mc{L}_{\vl{p}} + [\sigma_0(p), \--]$. Notice that $j^1(D(p)\xi) = D_1(p) \xi$ for $\xi \in V^\ast \otimes \mfr{k}$. The assumption $\Spec(\vl{p}) \cap \Spec(\ad_{\sigma_0(p)}) = \emptyset$ implies that $D_1(p)$ is invertible on $V^\ast \otimes \mfr{k}$. Thus if $\eta \in V^\ast \otimes \mfr{k}$ is arbitrary, there exists $\xi \in V^\ast \otimes \mfr{k}$ such that $\eta = D_1(p) \xi$. Then $\eta = D_1(p) \xi = j^1(D(p) \xi) = j^1([p,\xi]) \in j^1(\mfr{a}_n)$. Thus $V^\ast \otimes \mfr{k} \subseteq j^1(\mfr{a}_n)$. We obtain that $I_k \otimes \mfr{k} \subseteq \mfr{a}_n$ and $I_k^3 \otimes \mfr{k} \subseteq \ker d\overline{\rho}$, so $d\overline{\rho}$ factors through $\g_2^\sharp$. We may thus assume that $k=2$. We then obtain 
		\[D_1(p)(P^2(V)\otimes \mfr{k}) = D(p)(I_k^2 \otimes \mfr{k}) = [p, [I_k \otimes \mfr{k}, I_k \otimes \mfr{k}]] \subseteq [\mfr{a}, [\mfr{a}_n, \mfr{a}_n] \subseteq \ker d\overline{\rho}.\qedhere\]
	\end{proof}

	\subsection{The Case Where $\p = \R$}\label{sec: case_p_R}
	
	\noindent
	We proceed with the study of projective unitary representations $\olpi$ of $\g^\sharp = \g \rtimes_D \p$ which are of generalized positive energy. We first specialize to the case where $\p = \R$, aiming to consider its consequences for the general case afterwards. \\
	
	\noindent
	As $\p = \R$, we may as well identify $\bm{v}$ with $\bm{v}(1) \in \X_I$, $D$ with $D(1)$ and $\sigma$ with $\sigma(1) \in \g$. Recall that the derivation $D$ is given by $D = -\mc{L}_{\bm{v}} + [\sigma, \--]$. Write $\bm{v}  = \bm{v}_{\mrm{l}} + \bm{v}_{\mrm{ho}}$, where $\bm{v}_{\mrm{l}} := j^1 \bm{v} \in \gl(V)$ is the linearization of $\bm{v}$ at $0 \in V$ and where $\bm{v}_{\mrm{ho}} \in \X_{I^2}$. Let $\bm{v}_{\mrm{l}} = \bm{v}_{\mrm{l,s}} + \bm{v}_{\mrm{l,n}}$ denote the Jordan decomposition of $\bm{v}_{\mrm{l}}$ over $\C$. Write $V_{\mrm{c}}^\C$ for the span of the eigenspaces of $\bm{v}_{\mrm{l}}$ whose corresponding eigenvalue has zero real part. Set $V_{\mrm{c}} := V_{\mrm{c}}^\C \cap V$. Write $\bm{d} := (0,1) \in \g^\sharp = \g \rtimes_D \R$. Let $V_{\mrm{c}}^\perp \subseteq V^\ast$ denote the annihilator of $V_{\mrm{c}}$ in $V^\ast$, so $V_{\mrm{c}}^\perp \cong (V/V_{\mrm{c}})^\ast$.
	
	\begin{restatable}{theorem}{thmcentersubspacefactorization}\label{thm: general_factorization_result}
		Let $\mfr{t} \subseteq \mfr{k}$ be a maximal Abelian subalgebra. Assume that $\sigma \in R \otimes \mfr{t} \subseteq \g$ and $[\bm{v}_{\mrm{l,s}}, \bm{v}_{\mrm{ho}}] = 0$ in $\X_I$. Let $\olpi$ be a continuous projective unitary representation of $\g^\sharp$ on the pre-Hilbert space $\mc{D}$. Assume that $\olpi$ is of g.p.e.\ at $\bm{d} \in \g \rtimes_D \R\bm{d}$. Then $RV_{\mrm{c}}^\perp \subseteq \ker \olpi$. Consequently, $\restr{\olpi}{\g}$ factors through $\R\llbracket V_{\mrm{c}}^\ast \rrbracket \otimes \mfr{k}$. In particular, if $V_{\mrm{c}} = \{0\}$ then $\restr{\olpi}{\g}$ factors through $\mfr{k}$.
	\end{restatable}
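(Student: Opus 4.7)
\bigskip
\noindent
\textbf{Proof plan.} The plan is to identify an $R$-ideal $J_0 \subseteq R$ such that $\ker\olpi \cap \g = J_0 \otimes \mfr{k}$, and then to show $V_{\mrm{c}}^\perp \subseteq J_0$. The first reduction is standard: since $\mfr{k}$ is simple, its adjoint representation is irreducible, so every Lie ideal of $\g = R \otimes \mfr{k}$ is of the form $J \otimes \mfr{k}$ for an $R$-ideal $J$. Applying this to $\ker\olpi \cap \g$ (which is a Lie ideal of $\g$ that is moreover $D$-invariant), one obtains $J_0$ with the additional feature that $\mc{L}_{\bm{v}}(J_0) \subseteq J_0$; the latter follows from $D(fX) = -\mc{L}_{\bm{v}}(f)X + f[\sigma,X]$ together with $[\sigma,X] \in R\otimes \mfr{k}$, which ensures that only the $-\mc{L}_{\bm{v}}(f)X$ term can escape the $R$-ideal structure.

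\medskip
\noindent
The central step is to extract information from the g.p.e.\ condition at $\bm{d}$ using Corollary~\Fref{cor: qpe_kernel_special_case}. For a fixed $X \in \mfr{t}\setminus\{0\}$ and arbitrary $f \in R$, set $\eta = fX \in \g$. Because $\sigma \in R\otimes \mfr{t}$ one has $[\sigma,X]=0$, and hence $D\eta = -\mc{L}_{\bm{v}}(f)X$, whence $[D\eta,\eta] = -f\mc{L}_{\bm{v}}(f)[X,X] = 0$. Using the description of $[\omega] \in H_{\ct}^2(\g,\R)$ from Proposition~\Fref{prop: characterization_of_cts_second_cohom} as $\omega(\xi,\eta) = \lambda(\kappa(\xi,d\eta))$ with $\lambda$ closed and $\p$-invariant (Lemma~\Fref{lem: lambda_p_invariant}), a direct computation gives $\omega(D\eta,\eta) = -\kappa(X,X)\,\lambda\!\left(\mc{L}_{\bm{v}}(f)\,df\right)$. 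Since $\kappa$ is negative-definite on $\mfr{k}$, Corollary~\Fref{cor: qpe_kernel_special_case} reads
\[
  \lambda\!\left(\mc{L}_{\bm{v}}(f)\,df\right) \geq 0, \qquad \lambda\!\left(\mc{L}_{\bm{v}}(f)\,df\right) = 0 \iff \mc{L}_{\bm{v}}(f) \in J_0.
\]
Thus the inclusion $\mc{L}_{\bm{v}}(f) \in J_0$ is converted into vanishing of a specific quadratic expression in $\lambda$.

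\medskip
\noindent
The heart of the argument is then to show that $\lambda(\mc{L}_{\bm{v}}(f)\,df) = 0$ whenever $f$ belongs to a $\bm{v}_{\mrm{l,s}}$-eigenspace $R_\C^{\nu}$ with $\nu \neq 0$, where $R_\C^{\nu}$ denotes the closed subspace spanned by monomials $x^{\bm{n}}$ with $\langle\bm{n},\bm{\mu}\rangle = \nu$. By Lemma~\Fref{lem: lambda_factors_through_finite_jets} the cocycle $\omega$ (and hence $\lambda$) factors through $n$-jets for some $n$, so $\lambda$ may be regarded as a linear functional on the finite-dimensional space $\Omega^1_{R_n,\C}$. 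The normal-form hypothesis $[\bm{v}_{\mrm{l,s}},\bm{v}_{\mrm{ho}}]=0$, together with the Jordan relation $[\bm{v}_{\mrm{l,s}},\bm{v}_{\mrm{l,n}}]=0$, implies that the perturbation $N := \mc{L}_{\bm{v}_{\mrm{l,n}}} + \mc{L}_{\bm{v}_{\mrm{ho}}}$ preserves each $\bm{v}_{\mrm{l,s}}$-weight space; moreover, $N$ is nilpotent on each finite-dimensional weight subspace of $\Omega^1_{R_n,\C}$ because $\mc{L}_{\bm{v}_{\mrm{l,n}}}$ is Jordan-nilpotent on every fixed degree while $\mc{L}_{\bm{v}_{\mrm{ho}}}$ is strictly degree-raising. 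On the weight-$\nu$ subspace, $\p$-invariance $\lambda\circ\mc{L}_{\bm{v}}=0$ with $\mc{L}_{\bm{v}} = \nu\,\mathrm{id} + N$ gives $\lambda = -\nu^{-1}\lambda\circ N$, which iterated against the nilpotency of $N$ forces $\lambda$ to vanish identically on $\Omega^1_{R_n,\C,\nu}$. Since $\mc{L}_{\bm{v}}(f)\,df$ has $\bm{v}_{\mrm{l,s}}$-weight $2\nu \neq 0$ when $f \in R_\C^{\nu}$, this yields $\mc{L}_{\bm{v}}(f) \in J_0\otimes\C$ for all such $f$.

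\medskip
\noindent
To finish, I would observe that the same commutation structure makes $\mc{L}_{\bm{v}} = \nu\,\mathrm{id} + N$ invertible on $R_\C^{\nu}$ for $\nu \neq 0$: the Neumann series $\sum_{k\geq 0}(-N/\nu)^k$ converges in the $I$-adic topology since $N$ is pro-nilpotent. Consequently $\mc{L}_{\bm{v}}(R_\C^{\nu}) = R_\C^{\nu}$, and combining with the previous step gives $R_\C^{\nu} \subseteq J_0\otimes\C$ for every $\nu \neq 0$. Now $V_{\mrm{c}}^\perp\otimes\C = \bigoplus_{j:\operatorname{Re}(\mu_j)\neq 0}\C\,x_j$ lies inside $\bigoplus_{\nu\neq 0} R_\C^{\nu}$, because $\operatorname{Re}(\mu_j)\neq 0$ forces $\mu_j\neq 0$. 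Intersecting with the real structure yields $V_{\mrm{c}}^\perp \subseteq J_0$, and since $J_0$ is an $R$-ideal, $RV_{\mrm{c}}^\perp \otimes \mfr{k} \subseteq \ker\olpi$. The main obstacle is verifying the weight-nilpotency argument cleanly on $\Omega^1_{R_n,\C}$: one needs to check that $\mc{L}_{\bm{v}_{\mrm{l,s}}}$ indeed induces a semisimple grading on the $\p$-invariant-functional target, and that both $\mc{L}_{\bm{v}_{\mrm{l,n}}}$ and $\mc{L}_{\bm{v}_{\mrm{ho}}}$ commute with this grading so as to be simultaneously block-triangularized into a nilpotent operator on each weight piece.
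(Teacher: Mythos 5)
The overall architecture mirrors the paper's (cocycle in the form $\lambda(\kappa(\xi,d\eta))$, $\p$-invariance of $\lambda$, jet truncation, Jordan decomposition of $\mc{L}_{\bm{v}}$ on $\Omega^1_{R_n,\C}$, a kernel criterion via \Fref{cor: qpe_kernel_special_case}, invertibility of $\mc{L}_{\bm{v}}$ on the hyperbolic part). However, the central deduction contains a genuine error: you claim that $\lambda(\mc{L}_{\bm{v}}(f)\,df)=0$ for every \emph{complex} $f\in R_\C^\nu$ with $\nu\neq 0$ already yields $\mc{L}_{\bm{v}}(f)\in J_0\otimes\C$, and you conclude $R_\C^\nu\subseteq J_0\otimes\C$ for all $\nu\neq 0$. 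This is false when $\nu$ is purely imaginary and nonzero. The point is that \Fref{cor: qpe_kernel_special_case} speaks only of \emph{real} elements $\eta\in\g$. Writing $f=f_1+if_2$ with $f_1,f_2\in R$ and expanding the complex identity $\lambda(\mc{L}_{\bm{v}}(f)\,df)=0$ using symmetry of $\beta_q(f,g):=\lambda(\mc{L}_{\bm{v}}(f)\,dg)$, you only obtain $q(f_1)=q(f_2)$ and $\beta_q(f_1,f_2)=0$; this does \emph{not} give $q(f_1)=q(f_2)=0$. To close that gap one must also use $\lambda(\mc{L}_{\bm{v}}(f)\,d\overline{f})=0$, giving $q(f_1)+q(f_2)=0$, and combine with the positive semi-definiteness of $q$ (itself a consequence of the g.p.e.\ hypothesis via \Fref{cor: qpe_kernel_special_case}, which you never invoke). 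But the weight of $\mc{L}_{\bm{v}}(f)\,d\overline{f}$ is $\nu+\overline{\nu}=2\,\mathrm{Re}(\nu)$, so this extra vanishing only holds when $\mathrm{Re}(\nu)\neq 0$, \emph{not} merely when $\nu\neq 0$.

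That the stronger claim for all $\nu\neq 0$ must fail is visible already in the rotation setting: if $\bm{v}_{\mrm{l}}$ is skew-symmetric with purely imaginary spectrum (e.g.\ the formal version of $D=\partial_\theta$ on a loop algebra), your argument would force $I\otimes\mfr{k}\subseteq\ker\olpi$ for every g.p.e.\ representation, so $\restr{\olpi}{\g}$ would factor through $\mfr{k}$, contradicting the existence of non-trivial positive energy representations of affine Kac-Moody algebras. The theorem's conclusion is only about $V_{\mrm{c}}^\perp$, i.e.\ the eigenvalues with \emph{nonzero real part}, precisely because the pure imaginary weights escape the argument. So you need to redo the kernel step: show $q(\psi)=0$ for \emph{real} $\psi$ lying in $(R_\C^\nu\oplus R_\C^{\overline\nu})\cap R$ with $\mathrm{Re}(\nu)\neq 0$, using the two weight computations above and the semi-definiteness of $q$; your final passage to $RV_{\mrm{c}}^\perp$ then goes through essentially as written.
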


	\begin{remark}
		By acting with formal diffeomorphisms if necessary, one may by \Fref{thm: poincare_dulac} always bring $\bm{v}$ into a normal form, in the sense that $[\bm{v}_{\mrm{l,s}},\bm{v}_{\mrm{ho}}] = 0$ in $\X_I$. Moreover, \Fref{thm: normalformverticaltwistoned} provides sufficient conditions guaranteeing that $\sigma$ is gauge equivalent to some element in $R\otimes \mfr{t}$.
	\end{remark}

	\subsubsection*{Proof of \Fref{thm: general_factorization_result}}
	
	\noindent
	Let $\omega$ be a continuous $2$-cocycle on $\g^\sharp$ that represents the class in $H^{2}_{\ct}(\g^\sharp, \R)$ corresponding to the central $\R$-extension of $\g^\sharp$ obtained from $\olpi$ by pulling back $\mfr{u}(\mcD) \to \pu(\mcD)$ along $\olpi$. In view of \Fref{prop: characterization_of_cts_second_cohom} and \Fref{lem: lambda_p_invariant}, we may and do assume that $\omega$ satisfies $\omega(\xi, \eta) = \lambda(\kappa(\xi, d\eta))$ for any $\xi, \eta \in \g$, where $\lambda : \Omega_R \to \R$ is continuous, $\p$-invariant and closed. We write $fX$ instead of $f \otimes X$ for $f \in R_\C$ and $X \in \mfr{k}_\C$. Let $\Delta\subseteq i\mfr{t}^\ast$ denote the set of roots of $\mfr{k}$. Finally, write $\h := \mfr{t}_\C \subseteq \mfr{k}_\C$. Recall from \Fref{cor: qpe_kernel_special_case} that
	\begin{equation}\label{eq: qpe_kernel_special_case_repeat}
		 [D\eta, \eta] = 0 \implies \bigg(\omega(D\eta, \eta) = 0 \iff \olpi(D\eta) = 0 \bigg), \qquad \forall \eta \in \g.		
	\end{equation}
	Moreover, $\omega(D\eta, \eta) \geq 0$ whenever $[D\eta, \eta] = 0$. In the present setting, this yields:
	
	\begin{proposition}\label{prop: kernel_of_repr_and_quadr_form}
		Fix $f \in R$. Then $\olpi(R\mc{L}_{\bm{v}}f \otimes \mfr{k}) = \{0\} \iff \lambda(fd\mc{L}_{\bm{v}}f) = 0$.
	\end{proposition}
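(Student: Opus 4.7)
My plan is to apply \Fref{cor: qpe_kernel_special_case} with test elements of the form $\eta = fY$, $Y \in \mfr{t}$, and then propagate the resulting kernel condition to all of $R\mc{L}_{\bm{v}}f \otimes \mfr{k}$ through the ideal property of $\ker \olpi$.

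Using $\sigma \in R \otimes \mfr{t}$ and $Y \in \mfr{t}$, one has $[\sigma, Y] = 0$, so $D(fY) = -\mc{L}_{\bm{v}}(f)\,Y$ and $[D(fY), fY] = 0$. A direct computation, combined with $\lambda$ being closed, yields
\[
\omega\bigl(D(fY), fY\bigr) \;=\; -\kappa(Y,Y)\,\lambda(\mc{L}_{\bm{v}}(f)\,df) \;=\; \kappa(Y,Y)\,\lambda(f\,d\mc{L}_{\bm{v}}f).
\]
Since $\bm{d} \in \mfr{C}(\olpi)$ by assumption, \Fref{cor: qpe_kernel_special_case} applies; and because $\kappa$ is negative definite on the compact simple Lie algebra $\mfr{k}$, so that $\kappa(Y,Y)\neq 0$ for $0\neq Y\in\mfr{t}$, we obtain
\[
\lambda(f\,d\mc{L}_{\bm{v}}f) = 0 \iff \olpi(\mc{L}_{\bm{v}}(f)\,Y) = 0 \text{ for every } Y \in \mfr{t}.
\]
The forward implication of the proposition is then immediate, since $\olpi(R\mc{L}_{\bm{v}}f\otimes \mfr{k}) = \{0\}$ trivially yields $\olpi(\mc{L}_{\bm{v}}(f)\,Y) = 0$ for all $Y \in \mfr{t}$.

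For the converse, assume $\lambda(f\,d\mc{L}_{\bm{v}}f) = 0$, so that $\mc{L}_{\bm{v}}(f)\cdot\mfr{t} \subseteq \ker\olpi$. Because $\ker\olpi$ is an ideal of $\g$ and the bracket on $\g = R\otimes\mfr{k}$ satisfies $[hZ, gX] = hg[Z,X]$, two successive brackets produce
\[
[h'Z',\,[hZ,\,\mc{L}_{\bm{v}}(f)\,Y]] \;=\; hh'\,\mc{L}_{\bm{v}}(f)\,[Z',[Z,Y]] \;\in\; \ker\olpi
\]
for arbitrary $h,h'\in R$ and $Z,Z'\in\mfr{k}$. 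The proof concludes with the purely Lie-algebraic identity $[\mfr{k},[\mfr{k},\mfr{t}]] = \mfr{k}$, valid for any compact simple $\mfr{k}$ with maximal torus $\mfr{t}$: upon complexification, $[\mfr{k}_\C,\mfr{t}_\C]$ is the sum of all root spaces, and a second bracket with $\mfr{k}_\C$ recovers both the root spaces (through the action of $\mfr{t}_\C$) and all of $\mfr{t}_\C$ (through $[E_\alpha,E_{-\alpha}] = H_\alpha$, the coroots spanning $\mfr{t}_\C$ when $\mfr{k}$ is simple).

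The main subtlety is that $\ker\olpi$ is only a Lie-algebra ideal and not an $R$-submodule of $\g$, so the $R$-multipliers and the passage from $\mfr{t}$-valued to $\mfr{k}$-valued elements cannot be read off at once from $\mc{L}_{\bm{v}}(f)\cdot\mfr{t} \subseteq \ker\olpi$, but have to be introduced simultaneously through two iterated brackets in conjunction with the structural identity above.
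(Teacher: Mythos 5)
Your proof is correct and takes essentially the same route as the paper: both pick $\eta = fH$ with $H \in \mfr{t}$, note $D(fH) = -\mc{L}_{\bm{v}}(f)H$ since $\sigma \in R\otimes\mfr{t}$, apply \Fref{cor: qpe_kernel_special_case} to get $\lambda(fd\mc{L}_{\bm{v}}f) = 0 \iff \olpi(\mc{L}_{\bm{v}}(f)H) = 0$ for all $H\in\mfr{t}$, and then propagate to $R\mc{L}_{\bm{v}}f\otimes\mfr{k}\subseteq\ker\olpi$ via two iterated brackets. The only cosmetic difference is that you package the propagation as the identity $[\mfr{k},[\mfr{k},\mfr{t}]] = \mfr{k}$, whereas the paper carries it out explicitly by bracketing with $gX_\alpha$ and then with $1\otimes Y_{-\alpha}$ and invoking $\mfr{t}_\C = \sum_\alpha[(\mfr{k}_\C)_\alpha,(\mfr{k}_\C)_{-\alpha}]$ — the same structural fact.
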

	\begin{proof}
		\noindent
		For any $H \in \mfr{t}$, observe that $DfH = -\mc{L}_{\bm{v}}fH$ because $\sigma \in R \otimes \mfr{t}$, so $[DfH, fH] = -[\mc{L}_{\bm{v}}fH, fH] = 0$. Using \eqref{eq: qpe_kernel_special_case_repeat} we obtain that
		\begin{equation}\label{eq: kernel_rerp_1temp}
			\kappa(H,H)\lambda(\mc{L}_{\bm{v}}(f)df) = 0 \iff \olpi(\mc{L}_{\bm{v}}fH) = 0, \qquad \forall H \in \mfr{t}. 
		\end{equation}
		Assume that $\olpi(R\mc{L}_{\bm{v}}f \otimes \mfr{k}) = \{0\}$. Then $\olpi(\mc{L}_{\bm{v}}fH) = 0$ for any $H \in \mfr{t}$, so $\lambda(\mc{L}_{\bm{v}}(f)df) = 0$ by \eqref{eq: kernel_rerp_1temp}. Conversely, suppose that $\lambda(\mc{L}_{\bm{v}}(f)df) = 0$. Then $\olpi(\mc{L}_{\bm{v}}fH) = 0$ for all $H \in \mfr{t}$, by \eqref{eq: kernel_rerp_1temp}. Taking the commutator with $\olpi(gX_\alpha)$, where $g \in R$, $\alpha \in \Delta$ is a root and $X_\alpha \in (\mfr{k}_\C)_\alpha$ is a corresponding root vector, it follows that
		\begin{equation}\label{eq: lem_kernel_intermediate_1}
			\olpi(g\mc{L}_{\bm{v}}f X_\alpha) = 0 \qquad \forall X_\alpha \in (\mfr{k}_{\C})_{\alpha}, \; g \in R.	
		\end{equation}
		Take $X_{\alpha} \in (\mfr{k}_\C)_{\alpha}$ and $Y_{-\alpha} \in (\mfr{k}_\C)_{-\alpha}$. Write $H_\alpha = [X_\alpha, Y_{-\alpha}]$. By taking commutators with $\olpi(1\otimes Y_{-\alpha})$ in \fref{eq: lem_kernel_intermediate_1} we find that $\olpi(g\mc{L}_{\bm{v}}f H_{\alpha}) = 0$. As $\h = \sum_{\alpha} [(\mfr{k}_{\C})_{\alpha}, (\mfr{k}_{\C})_{-\alpha}]$, this shows by linearity together with \fref{eq: lem_kernel_intermediate_1} and the root space decomposition that $\olpi(R\mc{L}_{\bm{v}}f~\otimes~\mfr{k})~=~\{0\}$.
	\end{proof}

	\noindent
	Define the quadratic form $q(f) := \lambda(\mc{L}_{\bm{v}}(f)df) = -\lambda(fd\mc{L}_{\bm{v}}f)$ on $R$. Let $\mc{N} := \ker q$ denote its kernel. By \Fref{prop: kernel_of_repr_and_quadr_form}, $\mcN$ generates an ideal $J \otimes \mfr{k}$ on which $\olpi$ vanishes, where $J := R\mc{L}_{\bm{v}}\mcN$.

	\begin{corollary}\label{cor: vanishing_ideal_from_full_quadratic_form} 
		Set $J := R \mc{L}_{\bm{v}}\mcN$. Then $J\otimes \mfr{k} \subseteq \ker(\olpi)$.
	\end{corollary}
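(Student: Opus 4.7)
The plan is to obtain the corollary as an immediate consequence of \Fref{prop: kernel_of_repr_and_quadr_form}. For every $f \in \mcN$, the defining condition $q(f) = \lambda(\mc{L}_{\bm{v}}(f)df) = 0$ is equivalent, by closedness of $\lambda$ applied to the exact form $d(f\mc{L}_{\bm{v}}f)$, to $\lambda(fd\mc{L}_{\bm{v}}f) = 0$. Then \Fref{prop: kernel_of_repr_and_quadr_form} yields $R\mc{L}_{\bm{v}}(f)\otimes \mfr{k} \subseteq \ker \olpi$ for each such $f$ individually.

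To conclude, I would use that $\ker \olpi$ is a linear subspace of $\g^\sharp$, since $\olpi : \g \rtimes_D \p \to \mfr{pu}(\mc{D})$ is a Lie algebra homomorphism. Taking $R$-linear combinations of the elements $\mc{L}_{\bm{v}}(f) \otimes X$ as $f$ ranges through $\mcN$ and $X$ through $\mfr{k}$, the resulting subspace $(R\mc{L}_{\bm{v}}\mcN)\otimes \mfr{k} = J \otimes \mfr{k}$ lies in $\ker \olpi$, which is the claim.

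No serious obstacle arises: the corollary is a bookkeeping step that repackages the pointwise information of \Fref{prop: kernel_of_repr_and_quadr_form} into an ideal-level statement. All of the analytic content, namely the Cauchy--Schwarz-type bound for g.p.e.\ representations from \Fref{lem: CS-qpe}, has already been absorbed into that proposition.
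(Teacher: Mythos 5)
Your proof is correct and matches the paper's (implicitly given) argument: the paper states the corollary directly after noting $q(f)=\lambda(\mc{L}_{\bm{v}}(f)\,df)=-\lambda(f\,d\mc{L}_{\bm{v}}f)$ and invoking \Fref{prop: kernel_of_repr_and_quadr_form}, treating the linearity of $\ker\olpi$ as obvious. Your explicit derivation of $q(f)=-\lambda(f\,d\mc{L}_{\bm{v}}f)$ via closedness of $\lambda$ on the exact form $d(f\mc{L}_{\bm{v}}f)$ is precisely the identity the paper records when defining $q$, and the passage from the individual ideals $R\mc{L}_{\bm{v}}f\otimes\mfr{k}\subseteq\ker\olpi$ (for $f\in\mcN$) to $J\otimes\mfr{k}\subseteq\ker\olpi$ by linearity of the kernel is exactly the intended bookkeeping.
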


	\noindent
	Together with the fact that $\lambda$ vanishes on exact forms and is $\mc{L}_{\bm{v}}$-invariant, this puts severe restrictions on the representation $\olpi$ and leads to \Fref{thm: general_factorization_result}. Let us also remark the following:
	
	\begin{lemma}
		The bilinear form $\beta_q(f,g) := \lambda(\mc{L}_{\bm{v}}(f)dg)$ on $R$ associated to $q$ is symmetric, the quadratic form $q$ is positive semi-definite and $\mc{N} = \set{f \in R \st \beta_q(f,g) = 0 \;\; \forall g \in R}$.
	\end{lemma}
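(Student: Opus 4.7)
I would organize the proof into three independent pieces, tackling them in the order stated in the lemma.

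For symmetry, the plan is to combine the fact that $\lambda$ is closed with its $\p$-invariance (\Fref{lem: lambda_p_invariant}) and the identity $[\mc{L}_{\bm{v}}, d] = 0$. Concretely, applying $\mc{L}_{\bm{v}}$ to $f\,dg \in \Omega^1_R$ gives $\mc{L}_{\bm{v}}(f\,dg) = \mc{L}_{\bm{v}}(f)\,dg + f\,d(\mc{L}_{\bm{v}}g)$, and $\p$-invariance yields $\lambda$ of the left-hand side equal to zero. Hence $\beta_q(f,g) + \lambda(f\,d\mc{L}_{\bm{v}}g) = 0$. Since $d(f \cdot \mc{L}_{\bm{v}}g) = df \cdot \mc{L}_{\bm{v}}g + f \cdot d(\mc{L}_{\bm{v}}g)$, closedness of $\lambda$ converts $\lambda(f\,d\mc{L}_{\bm{v}}g)$ into $-\lambda(\mc{L}_{\bm{v}}(g)\,df) = -\beta_q(g,f)$. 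Combining the two identities gives $\beta_q(f,g) = \beta_q(g,f)$.

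For positive semi-definiteness, the idea is to apply \Fref{cor: qpe_kernel_special_case} to the element $\eta := fH \in \g$, where $f \in R$ and $0 \neq H \in \mfr{t}$. Using the standing hypothesis $\sigma \in R \otimes \mfr{t}$ together with the fact that $\mfr{t}$ is Abelian, one computes $[\sigma, fH] = 0$ and hence $D(fH) = -\mc{L}_{\bm{v}}(f)\,H \in \g$. Thus $[D(fH), fH] = -[\mc{L}_{\bm{v}}(f)H, fH] = 0$ trivially, so \Fref{cor: qpe_kernel_special_case} gives $\omega(D(fH), fH) \geq 0$. The left-hand side equals $-\kappa(H,H)\,\lambda(\mc{L}_{\bm{v}}(f)\,df) = -\kappa(H,H)\,q(f)$, and since $\mfr{k}$ is a compact simple Lie algebra the Killing form $\kappa$ is negative-definite, so $-\kappa(H,H) > 0$. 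It follows that $q(f) \geq 0$.

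For the characterization of $\mc{N}$, my plan is to use Cauchy--Schwarz for the symmetric positive semi-definite form $\beta_q$: the inequality $\beta_q(f,g)^2 \leq q(f)\,q(g)$ shows that $q(f) = 0$ implies $\beta_q(f, \--) = 0$, while the converse is immediate from $\beta_q(f,f) = q(f)$. I do not anticipate a serious obstacle; the only subtle input is the correct bookkeeping of signs when integrating by parts against $\lambda$ (i.e.\ the careful use of closedness together with $\p$-invariance in the symmetry step), all other ingredients being standard consequences of the facts already established in the excerpt.
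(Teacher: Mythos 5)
Your proof is correct and follows essentially the same route as the paper: closedness and $\mc{L}_{\bm{v}}$-invariance of $\lambda$ for symmetry, \Fref{cor: qpe_kernel_special_case} applied to $\eta = fH$ for positive semi-definiteness, and Cauchy--Schwarz for the description of $\mc{N}$. The only difference is that you spell out the integration-by-parts bookkeeping behind the symmetry claim, which the paper leaves implicit.
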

	\begin{proof}
		As $\lambda$ is closed and $\mc{L}_{\bm{v}}$-invariant, it follows that $\beta$ is symmetric. To see that $q$ is positive semi-definite, let $f \in R$ and $0 \neq H \in \mfr{t}$. Write $\eta := fH$ and notice that $[D\eta, \eta] = 0$. By \Fref{cor: qpe_kernel_special_case} we know that $-\kappa(H,H)\lambda(\mc{L}_{\bm{v}}(f)df) = \omega(D\eta, \eta) \geq 0$. As $\kappa$ is negative definite on $\mfr{k}$ we obtain that $q$ is positive semi-definite. It follows that $|\beta_q(f,g)|^2 \leq q(f)q(g)$, which implies $\mc{N} = \set{f \in R \st \beta_q(f,g) = 0 \qquad \forall g \in R}$.
	\end{proof}

	\noindent
	The following observation is also noteworthy, although it will not be used:
	
	\begin{lemma}\label{lem: ker_qp_subalg}
		$\mc{N} \subseteq R$ is a subalgebra.
	\end{lemma}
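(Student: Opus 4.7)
The plan is to route the proof through the alternative characterization of $\mc{N}$ already established in Proposition~\ref{prop: kernel_of_repr_and_quadr_form}, after which the subalgebra property becomes a one-line consequence of the Leibniz rule for $\mc{L}_{\bm{v}}$. A direct attack via the bilinear form $\beta_q$—trying to verify $\beta_q(fg,h)=0$ by manipulating only closedness and $\p$-invariance of $\lambda$—produces a web of identities that, after some bookkeeping, collapses into tautologies; the representation-theoretic characterization is what actually breaks the symmetry.

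First, I would observe that because $\lambda$ is closed,
\[
\lambda(f\,d\mc{L}_{\bm{v}}f)\;=\;-\lambda(\mc{L}_{\bm{v}}(f)\,df)\;=\;-q(f),
\]
so Proposition~\ref{prop: kernel_of_repr_and_quadr_form} can be rephrased as the equivalence
\[
f \in \mc{N} \;\iff\; \olpi\bigl(R\,\mc{L}_{\bm{v}}(f)\otimes \mfr{k}\bigr)=\{0\}.
\]

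Now let $f,g \in \mc{N}$. The Leibniz rule $\mc{L}_{\bm{v}}(fg)=f\mc{L}_{\bm{v}}(g)+g\mc{L}_{\bm{v}}(f)$ gives, for any $h\in R$,
\[
h\,\mc{L}_{\bm{v}}(fg)\;=\;(hf)\,\mc{L}_{\bm{v}}(g)+(hg)\,\mc{L}_{\bm{v}}(f)\;\in\;R\,\mc{L}_{\bm{v}}(g)+R\,\mc{L}_{\bm{v}}(f),
\]
so $R\,\mc{L}_{\bm{v}}(fg)\otimes \mfr{k}$ is contained in the sum of the two ideals $R\,\mc{L}_{\bm{v}}(f)\otimes \mfr{k}$ and $R\,\mc{L}_{\bm{v}}(g)\otimes \mfr{k}$, both of which lie in $\ker\olpi$ by the characterization above. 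Thus $\olpi\bigl(R\,\mc{L}_{\bm{v}}(fg)\otimes \mfr{k}\bigr)=\{0\}$, and applying the reverse implication of the proposition to $fg$ yields $fg\in\mc{N}$.

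There is no real obstacle: all of the analytic and representation-theoretic content (the g.p.e.\ hypothesis, the root-space decomposition of $\mfr{k}_{\C}$, the shape of the cocycle) has already been absorbed into Proposition~\ref{prop: kernel_of_repr_and_quadr_form}, and multiplicative closure of $\mc{N}$ reduces to the $R$-module structure of $R\,\mc{L}_{\bm{v}}(f)$ combined with the derivation property of $\mc{L}_{\bm{v}}$.
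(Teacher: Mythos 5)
Your proof is correct and follows exactly the same route as the paper's: rephrase membership in $\mc{N}$ via Proposition~\ref{prop: kernel_of_repr_and_quadr_form} as vanishing of $\olpi$ on the ideal $R\mc{L}_{\bm{v}}(f)\otimes\mfr{k}$, apply the Leibniz rule to show $R\mc{L}_{\bm{v}}(fg)\otimes\mfr{k}$ is contained in the sum of the corresponding ideals for $f$ and $g$, and invoke the proposition again. Your write-up is slightly more explicit about the closedness identity $\lambda(f\,d\mc{L}_{\bm{v}}f)=-q(f)$, but otherwise matches the paper.
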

	\begin{proof}
		Let $f,g \in \mc{N}$. Then using the Leibniz rule and \Fref{prop: kernel_of_repr_and_quadr_form} we obtain 
		\[\olpi(R\mc{L}_{\bm{v}}(fg)\otimes \mfr{k}) \subseteq \olpi(fR\mc{L}_{\bm{v}}g\otimes \mfr{k}) +  \olpi(gR\mc{L}_{\bm{v}}f\otimes \mfr{k}) \subseteq \{0\},\]
		Applying \Fref{prop: kernel_of_repr_and_quadr_form} once more, we conclude that $fg \in \mc{N}$.
	\end{proof}
	
	\begin{lemma}\label{lem: lambda_invariant_under_linear_ss_part}
		$\lambda \circ \mc{L}_{\bm{v}_{\mrm{l,s}}} = 0$.
	\end{lemma}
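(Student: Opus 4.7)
My proof plan is to reduce to a finite-dimensional quotient of $\Omega_R^1$ where the Jordan--Chevalley decomposition applies, and then exploit the fact that the semisimple part of a finite-dimensional linear operator is a polynomial in the operator itself without constant term.

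First I would use that $\lambda$ is a continuous linear functional on the Fr\'echet space $\Omega_R^1 \cong \prod_{k \geq 0} P^k(V)\otimes V^\ast$ equipped with the product topology, whose continuous dual is the algebraic direct sum. Hence there exists $n \in \N$ and a functional $\tilde\lambda$ on the finite-dimensional space $\Omega^1_{R_n} = \bigoplus_{k=0}^n P^k(V)\otimes V^\ast$ with $\lambda = \tilde\lambda \circ j^n$. Since $\bm{v}\in \X_I$, the operator $A := \mc{L}_{\bm{v}}$ preserves the filtration by powers of $I$ and descends to a linear operator $A|$ on $\Omega^1_{R_n}$; the identity $\tilde\lambda\circ A| = 0$ then follows from the $\p$-invariance established in \Fref{lem: lambda_p_invariant}. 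Similarly, $S := \mc{L}_{\bm{v}_{\mrm{l,s}}}$ preserves each $P^k(V) \otimes V^\ast$ and hence descends to an operator $S|$ on $\Omega^1_{R_n}$.

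The main step is to identify $S|$ as the semisimple part of $A|$ in the Jordan--Chevalley decomposition. Since $\bm{v}_{\mrm{l,s}}$ acts semisimply on $V^\ast$, the induced derivation $S|$ is semisimple on each homogeneous piece $P^k(V)\otimes V^\ast$, and hence on $\Omega^1_{R_n}$. The difference $N| := A| - S| = \mc{L}_{\bm{v}_{\mrm{l,n}} + \bm{v}_{\mrm{ho}}}|$ commutes with $S|$: indeed $[\bm{v}_{\mrm{l,s}},\bm{v}_{\mrm{l,n}}] = 0$ by the Jordan decomposition on $V$, and $[\bm{v}_{\mrm{l,s}}, \bm{v}_{\mrm{ho}}] = 0$ by hypothesis. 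The remaining (and arguably main) technical point is to show that $N|$ is nilpotent on $\Omega^1_{R_n}$. Here I would exploit the grading: $\mc{L}_{\bm{v}_{\mrm{l,n}}}$ preserves the degree and is nilpotent on each $P^k(V)\otimes V^\ast$ (being a derivation extending the nilpotent action of $\bm{v}_{\mrm{l,n}}$ on $V^\ast$), while $\mc{L}_{\bm{v}_{\mrm{ho}}}$ strictly raises the degree by at least one because $\bm{v}_{\mrm{ho}} \in \X_{I^2}$. A straightforward pigeonhole argument on the grade-preserving factors appearing in a high power $(N|)^M$ then yields $(N|)^M = 0$ for $M$ large enough: in any composition of $M$ operators, at most $n$ factors can strictly raise the degree without annihilating an element of $\Omega^1_{R_n}$, so the remaining $M-n$ grade-preserving factors concentrate in at most $n+1$ consecutive stretches, and for $M$ sufficiently large some stretch exceeds the nilpotency index of $\mc{L}_{\bm{v}_{\mrm{l,n}}}$ on the corresponding homogeneous piece.

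With the Jordan--Chevalley decomposition $A| = S| + N|$ on the finite-dimensional space $\Omega^1_{R_n}$ established, standard theory (via the Chinese Remainder Theorem applied to the minimal polynomial of $A|$) yields a polynomial $P \in \R[x]$ with $P(0) = 0$ such that $S| = P(A|)$; the constant term vanishes because either $0$ is not an eigenvalue of $A|$ (and $P(0)$ can be freely adjusted by a multiple of the minimal polynomial), or it is, in which case the CRT constraint at $\lambda = 0$ reads $P(x) \equiv 0 \mod x^{e_0}$ and forces $P(0) = 0$. Writing $P(x) = \sum_{k \geq 1} c_k x^k$, for any $\xi \in \Omega_R^1$ one then computes
\[
	\lambda(S\xi) \;=\; \tilde\lambda(S|\, j^n\xi) \;=\; \sum_{k\geq 1} c_k\, \tilde\lambda\bigl((A|)^k j^n\xi\bigr) \;=\; \sum_{k \geq 1} c_k\, \lambda\bigl(\mc{L}_{\bm{v}}(A^{k-1}\xi)\bigr) \;=\; 0,
\]
using $\lambda \circ \mc{L}_{\bm{v}} = 0$ at the last step. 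This yields $\lambda \circ \mc{L}_{\bm{v}_{\mrm{l,s}}} = 0$, as desired.
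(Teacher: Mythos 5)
Your proof is correct and follows essentially the same route as the paper: both factor $\lambda$ through the finite-dimensional quotient $\Omega^1_{R_n}$ via continuity, identify $\mc{L}_{\bm{v}_{\mrm{l,s}}} + (\mc{L}_{\bm{v}_{\mrm{l,n}}} + \mc{L}_{\bm{v}_{\mrm{ho}}})$ as the Jordan--Chevalley decomposition of $\mc{L}_{\bm{v}}$ there, and conclude from $\lambda\circ\mc{L}_{\bm{v}}=0$. The only cosmetic difference is the last step: the paper invokes $\mrm{Im}(\mc{L}_{\bm{v}_{\mrm{l,s}}})\subseteq\mrm{Im}(\mc{L}_{\bm{v}})$ on the complexified quotient rather than writing the semisimple part as a constant-free polynomial in $\mc{L}_{\bm{v}}$; these are equivalent consequences of the same decomposition. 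One small point in your favor: you verify explicitly that the candidate nilpotent part $\mc{L}_{\bm{v}_{\mrm{l,n}}}+\mc{L}_{\bm{v}_{\mrm{ho}}}$ is in fact nilpotent (which the paper asserts of the summands but does not check for the sum); your pigeonhole argument is slightly heavier than necessary — since both summands preserve the decreasing filtration $F_i := \bigoplus_{j\geq i}P^j(V)\otimes V^\ast$ and act nilpotently on each graded piece $F_i/F_{i+1}\cong P^i(V)\otimes V^\ast$, nilpotency of the sum is immediate — but the conclusion is the same.
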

	\begin{proof}
		 As $\lambda : \Omega^1_R \to \R$ is continuous, it factors through the finite-dimensional space $\Omega^1_{R_k} = R_k \otimes V^\ast$ for some $k \in \N$. Notice that both $\mc{L}_{\bm{v}_{\mrm{l,n}}}$ and $\mc{L}_{\bm{v}_{\mrm{ho}}}$ are nilpotent on $\Omega^1_{R_k} \otimes_\R \C$, whereas $\mc{L}_{\bm{v}_{\mrm{l,s}}}$ is semisimple on it. Also $[\mc{L}_{\bm{v}_{\mrm{l,s}}}, \mc{L}_{\bm{v}_{\mrm{l,n}}} + \mc{L}_{\bm{v}_{\mrm{ho}}}] = 0$ because $[\bm{v}_{\mrm{l,s}}, \bm{v}_{\mrm{ho}}] = [\bm{v}_{\mrm{l,s}}, \bm{v}_{\mrm{l,n}}] = 0$. Thus $\mc{L}_{\bm{v}} = \mc{L}_{\bm{v}_{\mrm{l,s}}} + \big(\mc{L}_{\bm{v}_{\mrm{l,n}}} + \mc{L}_{\bm{v}_{\mrm{ho}}}\big)$ is the Jordan decomposition of $\mc{L}_{\bm{v}}$ acting on $\Omega^1_{R_k} \otimes_\R \C$. Thus $\mrm{Im}(\mc{L}_{\bm{v}_{\mrm{l,s}}}) \subseteq \mrm{Im}(\mc{L}_{\bm{v}})$ when $\mc{L}_{\bm{v}_{\mrm{l,s}}}$ and $\mc{L}_{\bm{v}}$ are considered as operators on $\Omega^1_{R_k} \otimes_\R \C$. As $\lambda$ is $\mc{L}_{\bm{v}}$-invariant, we know $\lambda \circ \mc{L}_{\bm{v}} = 0$. Thus $\lambda \circ \mc{L}_{\bm{v}_{\mrm{l,s}}} = 0$.
	\end{proof}
	
	\noindent
	In particular, $\lambda$ vanishes on the eigenspaces in $\Omega^1_{R_\C}$ of $\mc{L}_{\bm{v}_{\mrm{l,s}}}$ corresponding to non-zero eigenvalues. We introduce some more notation. Let $E_\C$ denote the span of all eigenspaces in $R_\C$ of $\mc{L}_{\bm{v}_{\mrm{l,s}}}$ corresponding to eigenvalues with non-zero real part. Define $E := E_\C \cap R$ and $E^n := E \cap I^n$. 
	
	\begin{lemma}\label{lem: center_subspace_in_kernel}
		$E \subseteq \mcN$. 
	\end{lemma}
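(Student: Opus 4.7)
I would show $q(f) = \lambda(\mc{L}_{\bm{v}}(f) df) = 0$ for $f \in E$ by combining the eigenspace decomposition of $\mc{L}_{\bm{v}_{\mrm{l,s}}}$ on $R_\C$ with the positivity of $q$ established above.

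The first step is to extend $\lambda$ and $\beta_q$ complex-(bi)linearly to $R_\C$. Since $\lambda$ factors through $\Omega^1_{R_k}$ for some $k$ and $\mc{L}_{\bm{v}_{\mrm{l,s}}}$ is semisimple on $R_{k,\C}$, any $f \in E$ has a decomposition $f = \sum_\mu f_\mu$ with $f_\mu$ in the $\mu$-eigenspace and $\mrm{Re}(\mu) \neq 0$ throughout; reality of $f$ forces $\overline{f_\mu} = f_{\bar\mu}$. Because $\mc{L}_{\bm{v}_{\mrm{l,s}}}$ commutes with $\mc{L}_{\bm{v}}$ (using $[\bm{v}_{\mrm{l,s}}, \bm{v}_{\mrm{ho}}] = 0$ and the Jordan decomposition) and acts as a derivation on $\Omega^1_{R_k, \C}$, the element $\mc{L}_{\bm{v}}(f_\mu)\, df_\nu$ lies in the $(\mu + \nu)$-eigenspace of $\mc{L}_{\bm{v}_{\mrm{l,s}}}$. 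Combined with \Fref{lem: lambda_invariant_under_linear_ss_part}, which tells us $\lambda$ vanishes on every eigenspace with non-zero eigenvalue, this gives
\[ q(f) = \sum_{\mu + \nu = 0} \beta_q(f_\mu, f_\nu), \]
and the diagonal terms $\beta_q(f_\mu, f_\mu)$ vanish since $2\mu \neq 0$.

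It remains to show $\beta_q(f_\mu, f_{-\mu}) = 0$ for each $\mu$ with $\mrm{Re}(\mu) \neq 0$, which is where positivity of $q$ enters. For real $\mu$, the summands $f_\mu, f_{-\mu}$ can be chosen in $R$, and $q(af_\mu + bf_{-\mu}) = 2ab\, \beta_q(f_\mu, f_{-\mu}) \geq 0$ for all $a, b \in \R$ forces $\beta_q(f_\mu, f_{-\mu}) = 0$. For non-real $\mu$, I would test positivity against the real element $h := \alpha f_\mu + \bar\alpha f_{\bar\mu} + \beta f_{-\mu} + \bar\beta f_{-\bar\mu}$ for arbitrary $\alpha, \beta \in \C$; the eigenvalue rule eliminates all cross terms except those contributing $(\mu, -\mu)$ and $(\bar\mu, -\bar\mu)$, yielding $q(h) = 4\, \mrm{Re}(\alpha\beta c)$ with $c := \beta_q(f_\mu, f_{-\mu})$, and $q(h) \geq 0$ for all $\alpha, \beta$ forces $c = 0$.

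The main point that will require care is the bookkeeping around complexification: making sure $\beta_q$ extends consistently to $R_\C$, verifying that $\overline{\beta_q(f_\mu, f_{-\mu})} = \beta_q(f_{\bar\mu}, f_{-\bar\mu})$, and pinning down the eigenspace decomposition cleanly inside $\Omega^1_{R_k,\C}$. I do not anticipate any substantive conceptual obstacle beyond this; the argument essentially converts the pointwise vanishing of $\lambda$ on non-zero $\mc{L}_{\bm{v}_{\mrm{l,s}}}$-eigenspaces into the vanishing of the whole quadratic form on $E$, via the positivity established from $q \geq 0$.
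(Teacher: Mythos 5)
Your proof is correct and rests on the same three ingredients the paper uses: the eigenspace decomposition of $\mc{L}_{\bm{v}_{\mrm{l,s}}}$ (valid because $[\bm{v}_{\mrm{l,s}},\bm{v}_{\mrm{ho}}]=0$ makes $\mc{L}_{\bm{v}_{\mrm{l,s}}}$ commute with $\mc{L}_{\bm{v}}$), \Fref{lem: lambda_invariant_under_linear_ss_part}, and the positive semi-definiteness of $q$. The difference is purely in how the pieces are assembled. The paper never considers a general $f\in E$ at all: it fixes a single eigenvalue $\mu$ with $\mrm{Re}(\mu)\neq 0$, shows that for $\psi$ in the corresponding real slice $E_\mu\cap R$ (or $W=(E_\mu\oplus E_{\bar\mu})\cap R$) the form $\psi\,d\mc{L}_{\bm{v}}\psi$ lands in a non-zero $\mc{L}_{\bm{v}_{\mrm{l,s}}}$-eigenspace of $\Omega^1_{R}$ and hence $q(\psi)=0$, and then simply invokes the fact --- established in the lemma just before this one, again via positivity and Cauchy--Schwarz --- that $\mcN=\ker\beta_q$ is a linear subspace to pass from "each slice lies in $\mcN$" to $E\subseteq\mcN$. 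In your version you explicitly expand a general $f=\sum_\mu f_\mu$, reduce $q(f)$ to the $\mu+\nu=0$ cross terms, and then spend a paragraph killing each $\beta_q(f_\mu,f_{-\mu})$ by a polarization-type positivity argument. That extra step is correct but is exactly what the subspace property of $\mcN$ gives you for free: once each slice is known to lie in $\ker\beta_q$, every cross-pairing $\beta_q(f_\mu,f_{-\mu})$ with another slice is automatically zero. So both routes work, yours is more hands-on and computational, and the paper's is slightly more economical by front-loading the positivity into the characterization of $\mcN$. One small point worth making explicit in your write-up: when $-\mu$ is not itself an eigenvalue of $\mc{L}_{\bm{v}_{\mrm{l,s}}}$, the component $f_{-\mu}$ is zero and the cross term does not arise, so the positivity argument is only needed when $\mu$ and $-\mu$ both occur in $\Spec(\mc{L}_{\bm{v}_{\mrm{l,s}}})$.
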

	\begin{proof}
		Let $\mu \in \Spec(\mc{L}_{\bm{v}_{\mrm{l,s}}})$ with $\mrm{Re}(\mu) \neq 0$. Set $E_\mu := \ker(\mc{L}_{\bm{v}_{\mrm{l,s}}} - \mu I) \subseteq R_\C$. Suppose first that $\mu \in \R$. If $\psi \in E_\mu \cap R$ then because $\mc{L}_{\bm{v}}$ leaves the eigenspaces of $\mc{L}_{\bm{v}_{\mrm{l,s}}}$ invariant, the 1-form $\psi d \mc{L}_{\bm{v}}\psi$ is an eigenvector of $\mc{L}_{\bm{v}_{\mrm{l,s}}}$ with non-zero eigenvalue $2\mu$. By \Fref{lem: lambda_invariant_under_linear_ss_part} it follows that $q(\psi) = 0$ and hence $\psi \in \mc{N}$. Thus $E_\mu \subseteq \mc{N}$. Next, suppose that $\mu$ is not real. Then also $\overline{\mu}$ is an eigenvalue of $\mc{L}_{\bm{v}_{\mrm{l,s}}}$. Write $W_\C := E_\mu \oplus E_{\overline{\mu}}$ and $W := W_\C\cap R$. Take $\psi \in W$ arbitrary. Then $\psi = \eta + \overline{\eta}$ for some $\eta \in E_\mu$ (and hence $\overline{\eta} \in E_{\overline{\mu}}$). As $\mu + \overline{\mu} = 2\mrm{Re}(\mu) \neq 0$ and $\mc{L}_{\bm{v}}$ leaves the eigenspaces of $\mc{L}_{\bm{v}_{\mrm{l,s}}}$ invariant, each of the 1-forms $\eta d\mc{L}_{\bm{v}}\eta,\; \eta d\mc{L}_{\bm{v}}\overline{\eta}, \; \overline{\eta} d\mc{L}_{\bm{v}}\eta$ and $\overline{\eta} d\mc{L}_{\bm{v}}\overline{\eta}$ are eigenvectors of $\mc{L}_{\bm{v}_{\mrm{l,s}}}$ with non-zero eigenvalue. Using \Fref{lem: lambda_invariant_under_linear_ss_part} it follows that $q(\psi) = 0$ and hence $\psi \in \mc{N}$. Thus $W \subseteq \mc{N}$. As $\mc{N}$ is a linear subspace, we have shown $E \subseteq \mcN$. 
	\end{proof}
	
	\begin{lemma}\label{lem: char_of_ideal_when_lin_part_non_deg}
		 $RE \subseteq R\mc{L}_{\bm{v}}E$.
	\end{lemma}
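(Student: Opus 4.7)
The plan is to show that $\mc{L}_{\bm{v}}$ restricts to a bijection $E \to E$; this immediately gives the claim, since for any $\psi \in E$ with preimage $\eta \in E$ and any $f \in R$, one has $f\psi = f\mc{L}_{\bm{v}}\eta \in R\mc{L}_{\bm{v}}E$. I first establish bijectivity over $\C$ on $E_\C$ and then descend to $\R$ using that $E_\C$ is defined over $\R$.

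The first step is to note that $\mc{L}_{\bm{v}}$ preserves $E_\C$. Since $[\bm{v}_{\mrm{l,s}},\bm{v}_{\mrm{l,n}}] = 0$ (Jordan decomposition) and $[\bm{v}_{\mrm{l,s}},\bm{v}_{\mrm{ho}}] = 0$ by hypothesis, $\mc{L}_{\bm{v}}$ commutes with $\mc{L}_{\bm{v}_{\mrm{l,s}}}$ and therefore preserves each eigenspace $R_\C^\mu := \ker(\mc{L}_{\bm{v}_{\mrm{l,s}}}-\mu I) \subseteq R_\C$. Next, on each finite-dimensional truncation $(R_k)_\C$ the operator $\mc{L}_{\bm{v}_{\mrm{l,n}}}+\mc{L}_{\bm{v}_{\mrm{ho}}}$ is nilpotent (the first summand preserves homogeneity and is nilpotent on each $P^n(V_\C)$; the second strictly raises the degree) and commutes with the semisimple operator $\mc{L}_{\bm{v}_{\mrm{l,s}}}$. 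Hence
\[\mc{L}_{\bm{v}} = \mc{L}_{\bm{v}_{\mrm{l,s}}} + (\mc{L}_{\bm{v}_{\mrm{l,n}}}+\mc{L}_{\bm{v}_{\mrm{ho}}})\]
is the Jordan decomposition of $\mc{L}_{\bm{v}}$ on $(R_k)_\C$, so $\Spec(\mc{L}_{\bm{v}}|(R_k)_\C) = \Spec(\mc{L}_{\bm{v}_{\mrm{l,s}}}|(R_k)_\C)$ and $\mc{L}_{\bm{v}}$ is invertible on $(R_k)_\C^\mu$ for every $\mu \neq 0$.

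To promote this to the infinite-jet level, I would fix $\mu \neq 0$ and $f = \sum_n f_n \in R_\C^\mu$ with $f_n \in (P^n(V_\C))^\mu$, and construct the unique $g = \sum_n g_n \in R_\C^\mu$ with $\mc{L}_{\bm{v}}g = f$ by recursion on $n$: the degree-$n$ homogeneous component of $\mc{L}_{\bm{v}}g$ equals $(\mu I+\mc{L}_{\bm{v}_{\mrm{l,n}}})g_n$ plus terms depending only on $g_0,\dots,g_{n-1}$ (contributed by the degree-raising operator $\mc{L}_{\bm{v}_{\mrm{ho}}}$), and $\mu I + \mc{L}_{\bm{v}_{\mrm{l,n}}}$ is invertible on $(P^n(V_\C))^\mu$ since $\mc{L}_{\bm{v}_{\mrm{l,n}}}$ is nilpotent there and $\mu \neq 0$. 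This yields bijectivity of $\mc{L}_{\bm{v}} \colon R_\C^\mu \to R_\C^\mu$ for every $\mu \neq 0$, and hence of $\mc{L}_{\bm{v}} \colon E_\C \to E_\C$. The main point of care is exactly this passage from finite jets to the possibly infinite-dimensional eigenspace $R_\C^\mu$; the degree-wise recursion handles it cleanly.

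Finally, since $\mc{L}_{\bm{v}_{\mrm{l,s}}}$ is a real operator on $R_\C$ (its coefficient vector field $\bm{v}_{\mrm{l,s}}$ is the semisimple part of the real operator $\bm{v}_{\mrm{l}}$ and hence real by uniqueness of the Jordan decomposition), complex conjugation sends $R_\C^\mu$ to $R_\C^{\overline{\mu}}$; the condition $\mrm{Re}\,\mu \neq 0$ is invariant under $\mu \mapsto \overline{\mu}$, so $E_\C$ is conjugation-stable and equals $E \otimes_\R \C$. Because $\mc{L}_{\bm{v}}$ is real and commutes with conjugation, its bijective restriction to $E_\C$ further restricts to a bijection $\mc{L}_{\bm{v}} \colon E \to E$, completing the argument.
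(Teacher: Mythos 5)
Your argument is correct and is essentially the paper's own proof repackaged: in both cases one constructs an $\mc{L}_{\bm{v}}$-preimage of any given $\psi \in E$ degree by degree, inverting the linear part $\mc{L}_{\bm{v}_{\mrm{l}}}$ on the finite-dimensional homogeneous pieces of $E$ and exploiting that $\mc{L}_{\bm{v}_{\mrm{ho}}}$ strictly raises degree to close the recursion. The paper phrases this as the inductive containment $E^n \subseteq R\mc{L}_{\bm{v}}E + E^{n+1}$ followed by an intersection argument rather than separating by eigenvalue of $\mc{L}_{\bm{v}_{\mrm{l,s}}}$ and identifying the Jordan decomposition of $\mc{L}_{\bm{v}}$ on finite jets, but the underlying mechanism and the needed inputs ($[\bm{v}_{\mrm{l,s}},\bm{v}_{\mrm{ho}}]=0$, invertibility of $\mc{L}_{\bm{v}_{\mrm{l}}}$ on each $E\cap P^k(V)$, degree-raising of $\mc{L}_{\bm{v}_{\mrm{ho}}}$) are identical.
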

	\begin{proof}
		Write $J := R\mc{L}_{\bm{v}}E$. As $J$ is an ideal in $R$ it suffices to show $E \subseteq J$. We claim that $E^n \subseteq J + E^{n+1}$ for every $n \in \N_{\geq 0}$. Indeed, take $\psi \in E^n$. As $\mc{L}_{\bm{v}_{\mrm{l}}}$ is invertible on $E^n$ (which is true because $\mc{L}_{\bm{v}_{\mrm{l}}}$ is invertible on every finite-dimensional and $\mc{L}_{\bm{v}_{\mrm{l}}}$-invariant subspace $E \cap P^k(V) \subseteq E$), there exists some $\eta \in E^n$ s.t.\ $\mc{L}_{\bm{v}_{\mrm{l}}} \eta = \psi$. Observe that $\mc{L}_{\bm{v}_{\mrm{ho}}} E \subseteq E$ because $[\mc{L}_{\bm{v}_{\mrm{l,s}}}, \mc{L}_{\bm{v}_{\mrm{ho}}}] = 0$. Also $\mc{L}_{\bm{v}_{\mrm{ho}}} I^n \subseteq I^{n+1}$, since ${\bm{v}_{\mrm{ho}}} \in \X_{I^2}$. Thus $\mc{L}_{\bm{v}_{\mrm{ho}}} E^n \subseteq E^{n+1}$. In particular $\mc{L}_{\bm{v}_{\mrm{ho}}} \eta \in E^{n+1}$. Then $\psi =  \mc{L}_{\bm{v}_{\mrm{l}}} \eta =  \mc{L}_{\bm{v}}\eta - \mc{L}_{\bm{v}_{\mrm{ho}}}\eta \in J + E^{n+1}$, as required. By induction it follows that $E = E^0 \subseteq J + E^{n}$ for every $n \in \N$. As $\bigcap_{n \in \N}(J + E^{n}) = J$, this implies $E \subseteq J$.
	\end{proof}
	
	\begin{proof}[Proof of \Fref{thm: general_factorization_result}:]~\\
		Using \Fref{lem: center_subspace_in_kernel} we obtain $E \subseteq \mcN$. By \Fref{cor: vanishing_ideal_from_full_quadratic_form}, this implies $J\otimes \mfr{k} \subseteq \ker \olpi$, where $J = R\mc{L}_{\bm{v}}E$. By \Fref{lem: char_of_ideal_when_lin_part_non_deg}, we know $RE \subseteq J$. Notice that $E \cap V^\ast = V_{\mrm{c}}^\perp$, so in particular $R V_{\mrm{c}}^\perp \subseteq J$. Thus $RV_{\mrm{c}}^\perp \otimes \mfr{k} \subseteq \ker \olpi$. Notice that $R/(R V_{\mrm{c}}^\perp) \cong \R \llbracket V_{\mrm{c}}^\ast \rrbracket$, because $V_{\mrm{c}}^\ast = V^\ast/V_{\mrm{c}}^\perp$. We conclude that $\olpi$ factors through the quotient $(R \otimes \mfr{k})/(R V_{\mrm{c}}^\perp \otimes \mfr{k}) \cong (\R\llbracket V_{\mrm{c}}^\ast \rrbracket \otimes \mfr{k})$.
	\end{proof}

	\subsection{The Case of General $\p$}\label{sec: restr_mom_gen_case}
	
	\noindent
	Let us return to the case where $P$ is a $1$-connected finite-dimensional Lie group with Lie algebra $\p$. Let us recall some of the notation introduced earlier in \Fref{sec: restr_proj_reps}.\\
	
	\noindent
	Define $\sigma_0 := \ev_0 \circ \sigma : \p \to \mfr{k}$ and let ${\bm{v}_{\mrm{l}}} = j^1 \bm{v} : \p \to \gl(V)$ be the linearization of $\bm{v}$ at $0 \in V$. For $p \in \p$, the vector fields $\bm{v}(p)$ splits as $\bm{v}(p) = \vl{p} + \vho{p}$ for some formal vector field $\vho{p} \in \X_{I^2}$ vanishing up to first order at the origin. Let $\vl{p} = \vls{p} + \vln{p}$ be the Jordan decomposition of $\vl{p}$ over $\C$. Let $V_{\mrm{c}}^\C(p)$ denote the span in $V_\C$ of all generalized eigenspaces of $\vl{p}$ corresponding to eigenvalues with zero real part. Set $V_{\mrm{c}}(p) := V_{\mrm{c}}^\C(p) \cap V$. If $\mfr{C} \subseteq \p$ is a subset, define $V_{\mrm{c}}(\mfr{C}) := \bigcap_{p \in \mfr{C}} V_{\mrm{c}}(p)$. Let $V_{\mrm{c}}(\mfr{C})^\perp \subseteq V^\ast$ denote the annihilator of $V_{\mrm{c}}(\mfr{C})$ in $V^\ast$. Let $\Sigma_p \subseteq \C$ denote the additive subsemigroup of $\C$ generated by $\Spec(\vl{p})$. For any continuous projective unitary representation $\olpi$ of $\g\rtimes_D \p$, the set $\mfr{C}(\olpi)$ consists of all points $p \in \p$ for which $\olpi$ is of generalized positive  energy at $p$.\\
	
	\noindent
	We use \Fref{thm: general_factorization_result} combined with suitable normal form results to prove \Fref{thm: fact_result_spectral_condition} and \Fref{thm: fact_result_nform_condition}.
	
	\begin{lemma}\label{lem: ann_of_intersection}
		Let $W$ be a finite-dimensional real vector space and let $W_i \subseteq W$ be a collection of linear subspaces, where $i \in \mc{I}$ for some indexing set $\mc{I}$. Then $\big(\bigcap_{i \in \mc{I}} W_i\big)^\perp = \Span_{i \in \mc{I}}W_i^\perp$. 
	\end{lemma}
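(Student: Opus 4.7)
The plan is to establish this as a standard linear algebra fact in two stages: reduce the arbitrary-index intersection to a finite one, then prove the finite version using double annihilators.

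First I would dispose of the easy inclusion $\Span_{i \in \mc{I}} W_i^\perp \subseteq \big(\bigcap_{i \in \mc{I}} W_i\big)^\perp$: any $\alpha \in W_{i_0}^\perp$ vanishes on $W_{i_0} \supseteq \bigcap_{i \in \mc{I}} W_i$, so $\alpha$ lies in the annihilator of the intersection; then take spans.

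For the reverse inclusion, I would first reduce to a finite index set. Since $W$ is finite-dimensional, the descending family of intersections $\bigcap_{i \in \mc{I}_0} W_i$ as $\mc{I}_0$ ranges over finite subsets of $\mc{I}$ attains a minimum dimension at some finite $\mc{I}_0 \subseteq \mc{I}$, and then $\bigcap_{i \in \mc{I}_0} W_i = \bigcap_{i \in \mc{I}} W_i$ (if some $W_{i_1}$ did not contain this intersection, adjoining $i_1$ would strictly decrease the dimension). So without loss of generality $\mc{I} = \{1,\dots,n\}$ is finite.

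For the finite case, I would use the double annihilator identity in the finite-dimensional setting. For any finite-dimensional $W$ and subspaces $U, U' \subseteq W^\ast$, the standard facts $(U + U')^\perp = U^\perp \cap U'^\perp$ and $U^{\perp\perp} = U$ give, taking $U = W_i^\perp$ inside $W^\ast$ and using $W_i^{\perp\perp} = W_i$,
\[
\Big(\sum_{i=1}^n W_i^\perp\Big)^\perp = \bigcap_{i=1}^n W_i^{\perp\perp} = \bigcap_{i=1}^n W_i.
\]
Applying $(-)^\perp$ once more and using $U^{\perp\perp} = U$ on the left yields $\big(\bigcap_{i=1}^n W_i\big)^\perp = \sum_{i=1}^n W_i^\perp = \Span_{i=1}^n W_i^\perp$, as required.

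No step is really an obstacle here; the only mildly nontrivial point is the reduction to a finite subfamily, which is handled purely by dimension. The proof will be only a few lines.
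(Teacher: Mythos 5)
Your proof is correct and rests on the same core facts as the paper's: the identity $\big(\Span_{i} U_i\big)^\perp = \bigcap_i U_i^\perp$ together with the double-annihilator $U^{\perp\perp} = U$ in finite dimensions, applied to the subspaces $W_i^\perp \subseteq W^\ast$. The reduction to a finite index set is unnecessary detour, though: the identity $\bigcap_{i \in \mc{I}} U_i^\perp = \big(\Span_{i \in \mc{I}} U_i\big)^\perp$ holds verbatim for arbitrary $\mc{I}$ (a functional annihilates a span iff it annihilates each generator), which is precisely how the paper phrases it, so you could drop that paragraph and go straight to the finite-dimensional double-annihilator argument.
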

	\begin{proof}
		Notice first that $\bigcap_{i \in \mc{I}} W_i^\perp  = \big[\Span_{i \in \mc{I}} W_i\big]^\perp$. Applying this observation to the subspaces $W_i^\perp\subseteq W^\ast$, we obtain that $\bigcap_{i \in \mc{I}} W_i = \bigcap_{i \in \mc{I}} (W_i^\perp)^\perp = \big[\Span_{i \in \mc{I}} W_i^\perp\big]^\perp$, where we also used that $(W_i^{\perp})^{\perp} \cong W_i$ for any $i\in \mc{I}$. Taking annihilators, this implies $\big(\bigcap_{i \in \mc{I}} W_i\big)^\perp = \Span_{i \in \mc{I}}W_i^\perp$.
	\end{proof}
	
	\thmfactresultspectralcondition*
	\begin{proof}
		Let $p \in \mfr{C}$. By \Fref{thm: poincare_dulac}, there is a formal vector field $\who{p} \in \X_{I^2}$ satisfying $[\vls{p}, \who{p}] = 0$ s.t.\ $\bm{v}(p)$ is formally equivalent to $\bm{w}(p) := \vl{p} + \who{p}$. If $h \in \Aut(R)\subseteq \Aut(\g)$ is a formal diffeomorphism s.t.\ $\bm{w}(p) = h.\bm{v}(p)$, then $h$ leaves the constant part $\sigma_0(p)$ of $\sigma(p)$ fixed, so $\ev_0(h.\sigma(p)) = \ev_0\sigma(p) = \sigma_0(p)$. Thus, we may assume that $[\vls{p}, \vho{p}] = 0$ and $\Spec(\ad_{\sigma_0(p)}) \cap \Sigma_p = \emptyset$. By acting with gauge transformations, we may by \Fref{thm: normalformverticaltwistoned} further assume that $\sigma \in R \otimes \mfr{t}$, where $\mfr{t}$ is a maximal torus containing $\sigma_0(p)$. By \Fref{thm: general_factorization_result}, it follows that $RV_{\mrm{c}}(p)^\perp\subseteq \ker \olpi$. The above holds for all $p \in \mfr{C}$, so $\Span_{p \in \mfr{C}}RV_{\mrm{c}}(p)^\perp \subseteq \ker \olpi$. By \Fref{lem: ann_of_intersection} we know $\Span_{p \in \mfr{C}}\big(V_{\mrm{c}}(p)^\perp\big) =  V_{\mrm{c}}(\mfr{C})^\perp$, so that $R/(\Span_{p \in \mfr{C}}RV_{\mrm{c}}(p)^\perp) \cong \R\llbracket V_{\mrm{c}}(\mfr{C})^\ast \rrbracket$.
	\end{proof}

	\thmfactresultnformcondition*
	\begin{proof}
		By \Fref{thm: general_factorization_result} it follows that $\Span_{p \in \mfr{C}}RV_{\mrm{c}}(p)^\perp = RV_{\mrm{c}}(\mfr{C})^\perp \subseteq \ker \olpi$. 
	\end{proof}

	\subsection{The Case Where $\p$ is Simple.}\label{sec: restr_mom_set_semisimple}

	\noindent
	Let us consider the special case where $\p$ is simple. Let $P$ be a $1$-connected Lie group with $\Lie(P) = \p$. In this case, suitable normal form theorems for $\bm{v}$ and $\sigma$ are available (see \Fref{thm: herman_linearization} and \Fref{thm: normalformverticaltwistss}). In particular, $\bm{v} : \p \to \X_I^{\op}$ is always formally equivalent to its linearization $\bm{v}_{\mrm{l}}$ at $0 \in V$. Similarly, by \Fref{thm: normalformverticaltwistss}, the vertical twist $\sigma: \p \to \g$ is gauge-equivalent to some Lie algebra homomorphism $\sigma_0 : \p \to \mfr{k}$. In particular, if $\p$ is not compact then we may and do assume that $\sigma =  0$ by acting with gauge transformations if necessary, for in that case there are no homomorphisms $\p \to \mfr{k}$ (because $\mfr{k}$ is compact, see \Fref{lem: no_homs_noncompact_to_compact}). Combined with \Fref{thm: fact_result_nform_condition} we immediately obtain \Fref{thm: factorization_result_semisimple} below, where $V_{\mrm{c}}(\mfr{C}) := \bigcap_{p \in \mfr{C}}V_{\mrm{c}}(p)$.
	
	\thmfactresultss*
	
	\noindent
	Let $\p = \mfr{k}_0 \oplus \p_0$ be a Cartan decomposition of $\p$, so that $\mfr{k}_0$ and $\p_0$ are the $+1$ and $-1$ eigenspaces of a Cartan-involution $\theta$, respectively \cite[Cor.\ 6.18]{Knapp_lie_beyond}. Let $\mfr{a}_0 \subseteq \p_0$ be a maximal Abelian subalgebra of $\p_0$. According to the Iwasawa decomposition \cite[Prop.\ 6.43]{Knapp_lie_beyond}, $\p$ decomposes as $\p \cong \mfr{k}_0 \oplus \mfr{a}_0 \oplus \n_0$, where $\n_0 \subseteq \p$ is nilpotent. For $p \in \p$ we write $p = p_e + p_h + p_n$ for the corresponding decomposition of $p$, where $p_e \in \mfr{k}_0, p_h \in \mfr{a}_0$ and $p_n \in \n_0$. Then $\Spec(\ad_{p_e}) \subseteq i\R$, $\Spec(\ad_{p_h}) \subseteq \R$ and $\ad_{p_n}$ is nilpotent \cite[Lem.\ 6.45]{Knapp_lie_beyond}. Moreover, $\mfr{a}_0$ is contained in a Cartan subalgebra of $\p_0$ \cite[Cor.\ 6.47]{Knapp_lie_beyond}.

	\begin{proposition}\label{prop: cone_must_not_contain_hyperbolics}
		Suppose that $\p$ is simple and that the $\p$-representation $\bm{v}_{\mrm{l}}$ on $V$ is non-trivial and irreducible. Let $\mfr{C}\subseteq \p$ be an $\Ad_P$-invariant convex cone and let $V_{\mrm{c}}(\mfr{C}) := \bigcap_{p \in \mfr{C}}V_{\mrm{c}}(p)$. Assume that $\mfr{C}$ contains some non-zero $p_h \in \mfr{a}_0$. Then $V_{\mrm{c}}(\mfr{C}) = \{0\}$.
	\end{proposition}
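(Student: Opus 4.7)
The plan is to exploit the $\Ad_P$-invariance of the cone $\mfr{C}$ together with the $\p$-irreducibility of $V$, reducing the problem to a statement about the single element $p_h$. The key observation will be that $V_{\mrm{c}}(p_h)$ is a \emph{proper} subspace of $V$; once this is in hand, the inclusion $V_{\mrm{c}}(\mfr{C}) \subseteq \bigcap_{g\in P} \widetilde{\bm{v}}_{\mrm{l}}(g)\, V_{\mrm{c}}(p_h)$ will exhibit $V_{\mrm{c}}(\mfr{C})$ inside a $P$-invariant subspace of $V$ that must be trivial by irreducibility.

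First I would show $V_{\mrm{c}}(p_h) \subsetneq V$. Since $p_h \in \mfr{a}_0$, the element $\ad_{p_h}$ is $\R$-diagonalizable. Let $\widetilde{\bm{v}}_{\mrm{l}} : P \to \mrm{GL}(V)$ denote the integration of $\bm{v}_{\mrm{l}}$, which exists because $P$ is $1$-connected. The restriction of $\widetilde{\bm{v}}_{\mrm{l}}$ to the split torus $A := \exp_P(\mfr{a}_0)$ decomposes $V$ into characters with real weights, so $\vl{p_h}$ is diagonalizable over $\R$. In particular, the only eigenvalue of $\vl{p_h}$ with zero real part is $0$, and hence
\[V_{\mrm{c}}(p_h) = \ker \vl{p_h}.\]
Since $\p$ is simple and $\bm{v}_{\mrm{l}}$ is non-trivial, $\bm{v}_{\mrm{l}}$ is faithful; combined with $p_h \neq 0$, this yields $\vl{p_h} \neq 0$, so $V_{\mrm{c}}(p_h) \subsetneq V$.

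Next I would use $\Ad_P$-invariance of $\mfr{C}$ to bring in the full orbit of $p_h$. Differentiating the identity $\widetilde{\bm{v}}_{\mrm{l}}(g)\, e^{t \vl{p}}\, \widetilde{\bm{v}}_{\mrm{l}}(g)^{-1} = e^{t\, \vl{\Ad_g p}}$ gives $\vl{\Ad_g p} = \widetilde{\bm{v}}_{\mrm{l}}(g)\, \vl{p}\, \widetilde{\bm{v}}_{\mrm{l}}(g)^{-1}$, and passing to kernels yields $V_{\mrm{c}}(\Ad_g p_h) = \widetilde{\bm{v}}_{\mrm{l}}(g)\, V_{\mrm{c}}(p_h)$ for every $g \in P$. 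As $\mfr{C}$ contains the entire orbit $\Ad_P(p_h)$, this provides
\[V_{\mrm{c}}(\mfr{C}) \;\subseteq\; \bigcap_{g \in P}\widetilde{\bm{v}}_{\mrm{l}}(g)\, V_{\mrm{c}}(p_h) \;=:\; U,\]
and $U$ is by construction $\widetilde{\bm{v}}_{\mrm{l}}(P)$-invariant.

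To conclude, I would invoke irreducibility: since $V$ is a non-trivial irreducible $\p$-module and $P$ is connected, $V$ is also irreducible as a $P$-module, so $U \in \{0, V\}$. But $U \subseteq \widetilde{\bm{v}}_{\mrm{l}}(1)\, V_{\mrm{c}}(p_h) = V_{\mrm{c}}(p_h) \subsetneq V$, forcing $U = \{0\}$ and hence $V_{\mrm{c}}(\mfr{C}) = \{0\}$. The only technical subtlety is the $\R$-diagonalizability of $\vl{p_h}$, which follows from the Iwasawa structure and the fact that a $1$-connected $P$ has the split-torus representation; everything beyond that is a short $\Ad_P$-equivariance argument combined with Schur-type irreducibility.
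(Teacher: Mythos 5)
Your proposal is correct and follows essentially the same route as the paper: use $\Ad_P$-invariance of $\mfr{C}$ and the irreducibility of $\bm{v}_{\mrm{l}}$ to reduce to showing some $\vl{p}$ has a non-zero real eigenvalue, and then invoke the fact that $\mfr{a}_0$ acts with real semisimple operators in any finite-dimensional representation (the paper cites Cartan's unitary trick, \cite[V.\ Prop.\ 5.3]{Knapp_repr_by_examples}, for exactly the $\R$-diagonalizability you flag as the technical subtlety). The only cosmetic difference is that the paper directly observes $V_{\mrm{c}}(\mfr{C})$ is $P$-invariant, while you route through the intermediate $P$-invariant space $U$; both land on the same Schur-type dichotomy.
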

	\begin{proof}
		 Notice first that as $P$ is $1$-connected, the $\p$-action $\bm{v}_{\mrm{l}} : \p \to \gl(V)$ integrates to a continuous representation of $P$ on $V$. As $\mfr{C}$ is $\Ad_P$-invariant, the subspace $V_{\mrm{c}}(\mfr{C})$ is $P$-invariant. Thus either $V_{\mrm{c}}(\mfr{C}) = \{0\}$ or $V_{\mrm{c}}(\mfr{C}) = V$, so it suffices to show $V_{\mrm{c}}(\mfr{C}) \neq V$. By assumption $p_h \neq 0$. In view of Cartan's unitary trick, see e.g.\ \cite[V. Prop.\ 5.3]{Knapp_repr_by_examples}, the image of elements in $\mfr{a}_0$ in any finite-dimensional representation are semisimple and have real spectrum. Thus $\Spec(\vl{p_h}) \subseteq \R$. As $\p$ is simple and $\bm{v}_{\mrm{l}}$ is a non-trivial $\p$-representation by assumption, it follows that $\bm{v}_{\mrm{l}}$ is injective. As $\vl{p_h}\in \gl(V)$ is semisimple, there exists $0\neq v \in V$ s.t.\ $\vl{p_h}v = \mu v$ for some $0 \neq \mu \in \R$. Thus $0\neq v \notin V_{\mrm{c}}(\mfr{C})$. Hence $V_{\mrm{c}}(\mfr{C}) \neq V$ and so $V_{\mrm{c}}(\mfr{C}) = \{0\}$.
	\end{proof}

	\thmsemisimplecone*
	
	\begin{remark}
		Notice that if $\p$ is simple and $\mcC$ is a closed $\Ad_P$-invariant convex cone which is not pointed, then $\mcC \cap -\mcC = \p$ and hence $\mcC = \p$. 
	\end{remark}
	
	\begin{proof}[Proof of \Fref{thm: semisimple_cone_must_be_pointed}:]
		The edge $\mfr{e} := \overline{\mfr{C}} \cap -\overline{\mfr{C}}$ of the closure $\overline{\mfr{C}}$ of $\mfr{C}$ is an ideal in $\p$. Assume that $\mfr{C}$ is not pointed. Then neither is $\overline{\mfr{C}}$. As $\p$ is simple, it follows that $\mfr{e} = \p$ and hence $\overline{\mfr{C}} = \p$. Thus $\mfr{C}$ is a dense convex cone in the finite-dimensional real vector space $\p$, which implies that $\mfr{C} = \p$. As $\p$ is non-compact, it contains some hyperbolic element. Thus, so does $\mfr{C}$. By \Fref{prop: cone_must_not_contain_hyperbolics} it follows that $V_{\mrm{c}}(\mfr{C}) = \{0\}$ and hence \Fref{thm: factorization_result_semisimple} implies that $\olpi$ factors through $\mfr{k}$. 
	\end{proof}	

	\noindent
	Thus if $\mc{C}$ is an $\Ad_P$-invariant convex cone which is not pointed, then $\g$ admits no continuous projective unitary representations which are of g.p.e.\ at $\mcC \subseteq \p$ other than those which factor through $\mfr{k}$. On the other hand, we know by \cite[Cor.\ 2.3]{Paneitz_invariant_cones} that if $\p$ is simple, then a non-trivial pointed closed and $P$-invariant convex cone exists in $\p$ if and only if $\p$ is of \textit{hermitian type}, meaning that $\dim(\z(\mfr{k}_0)) = 1$, where $\p = \mfr{k}_0 \oplus \p_0$ is a Cartan decomposition of $\p$ and where $\mfr{k}_0$ is the Lie algebra of a compact Lie group. \\

	\noindent
	Let us shift our attention to positive energy representations, in which case a different argument is available.

	\begin{lemma}\label{lem: non_compact_norm_continuous_trivial}
		Suppose that $P$ is a non-compact simple connected Lie group. If $(\sigma, \mcH_\sigma)$ is a unitary $P$-representation that is norm-continuous, then $\sigma$ is trivial.
	\end{lemma}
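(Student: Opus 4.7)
The plan is to show that the derived representation $d\sigma : \p \to \mfr{u}(\mcH_\sigma) \cap \B(\mcH_\sigma)$ is identically zero, from which the statement follows because $P$ is connected. Since $\sigma$ is norm-continuous, $d\sigma$ is a well-defined continuous Lie algebra homomorphism into bounded skew-Hermitian operators.

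The core of the argument is a commutator estimate combined with the Iwasawa decomposition. Let $\p = \mfr{k}_0 \oplus \mfr{a}_0 \oplus \n_0$ be an Iwasawa decomposition of $\p$. Since $P$ is non-compact and simple (so in particular $\dim \p > 1$), one has $\mfr{a}_0 \neq 0$, and the restricted root space decomposition $\p_\C = \mfr{g}_0 \oplus \bigoplus_{\alpha \in \Sigma} \p_\alpha$ has $\Sigma \neq \emptyset$. Pick $H \in \mfr{a}_0$ and $0 \neq X \in \p_\alpha$ for some $\alpha \in \Sigma$ with $\lambda := \alpha(H) \neq 0$; then $[H, X] = \lambda X$ with $\lambda \in \R \setminus \{0\}$. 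Setting $A := d\sigma(H)$ and $B := d\sigma(X)$, we obtain bounded operators satisfying $[A, B] = \lambda B$.

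An inductive use of the Leibniz rule gives $[A, B^n] = n\lambda B^n$ for all $n \in \N$. Therefore
\[
n|\lambda|\, \|B^n\| = \|[A, B^n]\| \leq 2\|A\|\,\|B^n\|,
\]
which forces $B^n = 0$ for all sufficiently large $n$; thus $B$ is nilpotent. But $B$ is also skew-Hermitian, hence normal, and a bounded normal operator satisfies $\|B^{2^k}\| = \|B\|^{2^k}$ (equivalently, $\|B\|$ equals the spectral radius). Nilpotency then forces $B = 0$. Consequently $d\sigma$ vanishes on every restricted root space $\p_\alpha$, $\alpha \in \Sigma$.

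To finish, note that $\ker d\sigma$ is an ideal of $\p$ and contains the non-zero subspace $\bigoplus_{\alpha \in \Sigma} \p_\alpha$. Since $\p$ is simple, $\ker d\sigma = \p$, so $d\sigma = 0$ and $\sigma$ is trivial by connectedness of $P$. The only step that requires care is the spectral identity $\|B^n\| = \|B\|^n$ for normal bounded operators, but this is standard and poses no real obstacle; everything else is elementary.
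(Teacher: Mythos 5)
Your proof is correct, but it takes a genuinely different route from the paper's. The paper also starts from the Iwasawa decomposition, but the key step is to observe that since $\p$ is simple, $d\sigma$ is either trivial or injective; if injective, then $z \mapsto \norm{d\sigma(z)}$ is an $\Ad_P$-invariant norm on $\p$, which forces the semisimple element $\ad_x$ ($x \in \mfr{a}_0$) to have purely imaginary spectrum, while it is already known to have real spectrum, hence $\ad_x = 0$, hence $\mfr{a}_0 = 0$, contradicting non-compactness. You instead work directly at the operator level: from $[H,X] = \lambda X$ (with $\lambda \in \R\setminus\{0\}$, $X \in \p_\alpha$) you deduce $[A,B^n] = n\lambda B^n$, force $B$ to be nilpotent by a norm estimate, and then kill $B$ by normality; simplicity of $\p$ finishes. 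The payoffs differ slightly: the paper's argument is shorter once the invariant-norm trick is in place and leans on spectral theory of $\ad$; yours is more hands-on and avoids the invariant-norm construction entirely, at the cost of invoking the restricted root decomposition rather than just the Iwasawa decomposition. One minor notational slip: the restricted root space decomposition $\mfr{g}_0 \oplus \bigoplus_{\alpha \in \Sigma}\p_\alpha$ is a decomposition of the real Lie algebra $\p$, not of $\p_\C$ as you wrote; this does not affect the argument, since you correctly use that $\alpha(H)$ is real and the root vectors lie in $\p$.
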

	\begin{proof}
		As $\p$ is simple, $d\sigma$ is either injective or trivial. Assume that $d\sigma$ is not trivial. Let $\p = \mfr{k}_0 \oplus \mfr{a}_0 \oplus \n_0$ be the Iwasawa decomposition of $\p$. Take $x \in \mfr{a}_0$. Then $\ad_x$ is semisimple and $\Spec(\ad_x) \subseteq \R$. As $\sigma$ is unitary and $d\sigma$ is injective, $z \mapsto \norm{d\sigma(z)}_{\B(\mcH)}$ defines a $P$-invariant norm on $\p$. With respect to this norm, $e^{t \ad_x}$ is an isometry on $\p$ for every $t \in \R$. As $\ad_x$ is semisimple, it follows that $\Spec(\ad_x) \subseteq i\R$. So $\Spec(\ad_x) \subseteq \R \cap i\R = \{0\}$ and hence $\ad_x = 0$. Since $\p$ has trivial center it follows that $x = 0$. So $\mfr{a}_0 = \{0\}$ and hence $\p$ is compact. But $P$ is non-compact by assumption. So $d\sigma$ must be trivial. As $P$ is connected, it follows that $\sigma$ is trivial.
	\end{proof}

	\begin{proposition}\label{prop: non_compact_simple_full_cone_no_pe_reps}
		Suppose that $P$ is a non-compact $1$-connected simple Lie group. Assume that the $P$-action on $V$ is irreducible and non-trivial. Let $\overline{\rho}$ be a continuous projective unitary representation of $G$ which is of positive energy at $\mcC := \p$. Then $\restr{\overline{\rho}}{G}$ factors through $K$. 
	\end{proposition}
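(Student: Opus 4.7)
The plan is to deduce \Fref{prop: non_compact_simple_full_cone_no_pe_reps} as an essentially immediate corollary of \Fref{thm: semisimple_cone_must_be_pointed} combined with \Fref{lem: exp_restricts_to_diffeo}. The key reduction is that positive energy at a cone always entails generalized positive energy at the same cone, and that the cone $\mcC := \p$ is blatantly non-pointed.

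First I would upgrade the p.e.\ hypothesis to a g.p.e.\ hypothesis. Let $\olpi$ denote the derived projective unitary representation of $\g \rtimes_D \p$ on the smooth vectors $\mcH_\rho^\infty$, and let $\pi$ be a lift to the appropriate central $\R$-extension. For each $p \in \p$, the p.e.\ hypothesis provides a lift $\circled{p}$ with $\circled{p} \in \mcC(\pi)$. Since $\mcC(\pi)$ is $\Ad$-invariant (cf.\ \Fref{rem: cone_P-invariant}), every $\Ad_h(\circled{p})$ also lies in $\mcC(\pi)$, so for each $\psi \in \mcH_\rho^\infty$ one has
\[
 \inf_{h}\,\langle \psi, -i\pi(\Ad_h(\circled{p}))\psi\rangle \;\geq\; 0 \;>\; -\infty.
\]
By \Fref{def: qpe} this gives $\circled{p} \in \mfr{C}(\pi)$, hence $p \in \mfr{C}(\olpi)$; thus $\p \subseteq \mfr{C}(\olpi)$.

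Next, take $\mfr{C} := \p$. This is trivially a $P$-invariant convex cone, and it manifestly fails to be pointed since $\mfr{C} \cap -\mfr{C} = \p \neq \{0\}$. As $\p$ is non-compact and simple and $\bm{v}_{\mrm{l}}$ defines a non-trivial irreducible $\p$-representation on $V$ by hypothesis, \Fref{thm: semisimple_cone_must_be_pointed} applies and yields that $\restr{\olpi}{\g}$ factors through $\mfr{k}$. Equivalently, $d\overline{\rho}$ vanishes on the pro-nilpotent ideal $\mfr{h} := \ker\big(\ev_0 : \g \to \mfr{k}\big) = I \otimes \mfr{k}$.

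Finally, I would transfer this to the group level via \Fref{lem: exp_restricts_to_diffeo}: $\exp_G$ restricts to a diffeomorphism from $\mfr{h}$ onto $H := \ker\big(\ev_0 : G \to K\big)$. For any $\xi \in \mfr{h}$ the class $d\overline{\rho}(\xi) \in \pu(\mcH_\rho^\infty)$ is zero, so $\overline{\rho}(\exp_G\xi)$ is the identity in $\PU(\mcH_\rho)$; hence $H \subseteq \ker\overline{\rho}$, and $\restr{\overline{\rho}}{G}$ descends to $G/H \cong K$, as required. Because \Fref{thm: semisimple_cone_must_be_pointed} already packages the substantive geometry (producing a hyperbolic element via the Iwasawa decomposition and invoking \Fref{prop: cone_must_not_contain_hyperbolics} to force $V_{\mrm{c}}(\mfr{C}) = \{0\}$), the only point requiring a modest amount of care is the implication p.e.\ $\Rightarrow$ g.p.e.\ for projective representations, which the $\Ad$-invariance of $\mcC(\pi)$ settles at once.
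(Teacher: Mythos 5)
Your proof is correct but takes a genuinely different route from the paper's. The paper deliberately avoids the generalized positive energy framework here: it reduces to finite jets via \Fref{thm: always_factor_through_fin_jet}, uses Whitehead's Second Lemma and simple connectedness of $P$ to lift $\restr{\overline{\rho}}{P}$ to a genuine unitary representation $\sigma$ of $P$, invokes \Fref{lem: full_cone_then_norm_continuous} (positive energy at \emph{every} element of $\p$ forces norm-continuity) to conclude $\sigma$ is norm-continuous, then \Fref{lem: non_compact_norm_continuous_trivial} to conclude $\sigma$ is trivial, and finishes by observing that since $\p$ acts irreducibly and non-trivially on $V$, the ideal in $\g$ generated by $D(\p)\g$ is all of $I \otimes \mfr{k}$. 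Indeed the paper prefaces the lemma and proposition with ``a different argument is available,'' signaling that this elementary route is being offered intentionally as an alternative to feeding the result through the g.p.e.\ machinery as you do.

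What each approach buys: your route is shorter on the page, and it shows the proposition is really a corollary of the general pointed-cone theorem; it even slightly strengthens the statement, since it only needs the g.p.e.\ hypothesis rather than p.e. Its cost is that it routes through \Fref{thm: semisimple_cone_must_be_pointed}, which rests on \Fref{thm: factorization_result_semisimple}, the normal-form theory, and the whole quadratic-form analysis of \Fref{sec: restr_mom_gen_case} --- considerably heavier machinery than the paper's self-contained argument needs. The paper's route exploits precisely where p.e.\ is stronger than g.p.e.\ (boundedness of all generators gives norm-continuity of $\sigma$, something g.p.e.\ does not provide) and is arguably more illuminating about what the full cone $\mcC = \p$ forces. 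Your p.e.\ $\Rightarrow$ g.p.e.\ reduction is handled correctly via the $\Ad$-invariance of $\mcC(\pi)$, and the descent from $\restr{\olpi}{\g}$ factoring through $\mfr{k}$ to $\restr{\overline{\rho}}{G}$ factoring through $K$ via \Fref{lem: exp_restricts_to_diffeo} is the same device the paper flags in its introductory remark.
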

	\begin{proof}
		 By \Fref{thm: always_factor_through_fin_jet} it suffices to consider the case where $\overline{\rho}$ factors through $G_k$ for some $k\in \N$. From Whitehead's Second Lemma, \cite[III.9.\ Lem.\ 6]{Jacobson}, we know that $H^2(\p, \R) = \{0\}$. Using in addition that $P$ is simply connected, it follows that $\restr{\overline{\rho}}{P}$ lifts to a continuous unitary representation $\sigma : P \to \U(\mcH_\rho)$ of $P$, so that $\overline{\rho}(p) = [\sigma(p)]$ in $\PU(\mcH_\rho)$ for all $p \in P$. By \Fref{lem: full_cone_then_norm_continuous}, the fact that $\restr{\overline{\rho}}{P}$ is of p.e.\ at $\mcC = \p$ implies that $\sigma$ is norm-continuous. It follows from \Fref{lem: non_compact_norm_continuous_trivial} that $\sigma$ is trivial. Thus $\overline{\rho}(\alpha_p(g)) = \overline{\rho}(g)$ for all $g \in G$ and $p \in P$. It follows that $d\overline{\rho}$ vanishes on $D(\p)\g$. As $\p$ acts irreducibly and non-trivially on $V$, it follows that the ideal in $\g$ generated by $D(\p)\g$ is $I\otimes \mfr{k}$. Thus $I \otimes \mfr{k} \subseteq \ker d\overline{\rho}$. This implies that $\restr{\overline{\rho}}{G_k}$ factors through $K$.
	\end{proof}

	\noindent
	The following provides a simple example of a projective p.e.\ representation $\overline{\rho}$ of $G_1 \rtimes P$ s.t.\ $\restr{\overline{\rho}}{G_1}$ does not factor through $K$.

	\begin{example}
	Let $P = \Mp(2, \R)$ be the double-cover of $\SL(2, \R)$. Let $P$ act on $V:= \R^2$ via the defining action of $\SL(2,\R)$. We consider a trivial vertical twist, so that the $\p$-action on $\g = R \otimes \mfr{k}$ is given by $D(p) = -\mc{L}_{\bm{v}(p)}$. In this case the generator $p_0$ of rotations generates the unique (up to a sign) non-trivial closed, pointed, $P$-invariant convex cone $\mcC$ in $\p$. Explicitly, $\bm{v}(p_0) = y\partial_x - x\partial_y$. Let us construct a non-trivial continuous projective unitary representation of $G_1 \rtimes P \cong (V^\ast \otimes \mfr{k}) \rtimes (K \times P)$ that is of p.e.\ at the cone $\mcC \subseteq \p$. Write $W := V^\ast \otimes \mfr{k}$. \\
			
	\noindent
	We begin by specifying a suitable $2$-cocycle on $V^\ast \otimes \mfr{k} \subseteq \g_1$. Notice that $\big(\bigwedge^{2} V\big)^\p \cong \R$ is one-dimensional. Let $0\neq \lambda \in \big(\bigwedge^{2}V\big)^\p$ and consider it as a $\p$-invariant bilinear map $V^\ast \times V^\ast \to \R$. To be consistent with \Fref{prop: characterization_of_cts_second_cohom}, let us write $\lambda(fdg)$ instead of $\lambda(f,g)$ for $f,g \in V^\ast$. Let $x,y \in V^\ast$ be the usual basis of $V^\ast$. Then $\lambda$ is fully specified by the number $\lambda(ydx)$. If $\lambda(ydx) > 0$, then the quadratic form $q(v) := \lambda(\mc{L}_{\bm{v}(p_0)}v dv)$ is positive-definite, because $q(ax+by) = (a^2+b^2)\lambda(ydx)$ for $a,b \in \R$. Let $\omega$ be the unique symplectic form on $W$ satisfying $\omega(vX, wY) := \lambda(vdw)\kappa(X,Y)$ for $X,Y \in \mfr{k}$ and $v,w \in V^\ast$. Then $\omega(D(p_0)\xi, \xi) \geq 0$ for every $\xi \in W$ (recalling that $\kappa$ is \textit{negative} definite). Let $\mrm{H}(W, \omega)$ be the corresponding Heisenberg group. Let $L_\pm$ be the $\pm i$-eigenspaces in $W_\C$ of the complex structure $\mc{J} := D(p_0)$ on $W_\C$, so that $W_\C = L_- \oplus L_+$. The $\p$-invariance of $\lambda$ ensures that $\mc{J}^\ast \omega = \omega$. Indeed, extend $\omega$ $\C$-bilinearly to $W_\C$. As $\lambda$ is $\p$-invariant, it follows that $\omega(\mc{J}\xi, \eta) + \omega(\xi, \mc{J}\eta) = 0$ for all $\xi, \eta \in W_\C$, which implies that $L_\pm \subseteq W_\C$ are $\mc{J}$-invariant Lagrangian subspaces for $\omega$. Then $\mc{J}^\ast \omega = \omega$ follows from $\mc{J}^\ast \omega(w_+, w_-) = \omega(iw_+, -iw_-) = \omega(w_+, w_-)$ for $w_\pm \in L_\pm$. Notice further that $\omega(\mc{J}\xi, \xi) \geq 0$ holds for all $\xi \in W$, by construction. Equip $L_+$ with the positive definite hermitian form defined by $\langle v, w\rangle_{L_+} := -2i \omega(\overline{v}, w)$ for $v,w \in L_+$. For each $n \in \N$, equip the symmetric algebra $S^n(L_+)$ with the inner product satisfying
	\[ \langle v_1\cdots v_n, w_1\cdots w_n\rangle := \sum_{\sigma \in S_n}\prod_{k=1}^n \langle v_{\sigma_k}, w_k\rangle_{L_+}, \qquad v_k, w_k \in L_+.\]
	Let $\F := \overline{S^\bullet(L_+)}$ be the Hilbert space completion, where $S^\bullet(L_+) = \bigoplus_{n=0}^\infty S^n(L_+)$. The metaplectic representation $\rho$ of $\mrm{H}(W, \omega) \rtimes \Mp(W, \omega)$, with $\rho(z) = zI$ on the central $\T$ component, can be realized on the Fock space $\F$, where $\Mp(W, \omega)$ denotes the metaplectic group \cite[Thm\ X.3.3]{Neeb_book_hol_conv}. Notice that $\SL(2,\R) \hookrightarrow \Sp(W, \omega)$ because $\lambda$ is $\p$-invariant. By pulling back the metaplectic representation we obtain a continuous unitary representation of $\mrm{H}(W, \omega) \rtimes P$ which is of p.e.\ at $\mc{C}$ and does not factor through $K$.
	\end{example}

	\part*{Appendix}
	\appendix
	\section{From Germs to Jets}\label{sec: germs_to_jets}
	
	\noindent
	Let $\mc{K} \to M$ be locally trivial bundle of Lie groups with typical fiber a finite-dimensional Lie group $G$ with Lie algebra $\g$. Write $\mfr{K} \to M$ for the corresponding Lie algebra bundle. The following justifies the claim made in \Fref{sec: introduction} that any continuous projective unitary representation of $\Gamma_{\mrm{c}}(\mc{K})$ which factors through the germs at a point $a \in M$ actually factors through the $\infty$-jets $J^\infty_a(\mc{K})$ at $a \in M$. The group $\Gamma_{\mrm{c}}(\mc{K})$ is a locally exponential Lie group modeled on the LF-Lie algebra $\Gamma_{\mrm{c}}(\mfr{K})$ \cite[Prop.\ 2.3]{BasNeeb_cov_ce_gauge}.\\

	\noindent
	Let $U \subseteq \R^d$ be an open neighborhood of the origin. Let $C^\infty_{\mrm{flat}}(U)$ denote the kernel of the $\infty$-jet projection 
	\[j^\infty_0 : C_c^\infty(U) \to J_0^\infty(C^\infty_{\mrm{c}}(U)) \cong \R \llbracket x_1, \cdots, x_d \rrbracket\]
	at $0 \in U$. In the following we show the known fact that the closure $C^\infty_{\mrm{c}}(U\setminus \{0\})$ in $C^\infty(U)$ is $C^\infty_{\mrm{flat}}(U)$. As a consequence, we deduce that if a continuous projective unitary representation of the Lie algebra $\Gamma_{\mrm{c}}(\mfr{K})$ factors through the germs at a point $a \in M$, then it factors through the $\infty$-jets $J^\infty_a(\mfr{K})$ at $a \in M$. In turn, this implies a group level-analogue. \\

	\noindent
	If $K \subset U$ is a compact set, we write $C^\infty_K(U)$ for the subspace of $C^\infty(U)$ consisting of functions on $U$ with support in $K$. Then $C^\infty_K(U)$ is the projective limit $C^\infty_K(U) = \varprojlim_n C_K^n(U)$ of the Banach spaces $C_K^n(U)$, which we equip with the norm $\norm{f}_{C_K^n(U)} := \sup_{|k|\leq n} \norm{D^k f}_{C_K(U)}$, where the supremum runs over all multi-indices $k \in \N_{\geq 0}^d$ with $|k| \leq n$. Then $C^\infty_{\mrm{c}}(U) := \varinjlim C^\infty_K(U)$ is the corresponding locally convex inductive limit. See e.g.\ \cite[Thm.\ 6.5]{Rudin_FA} for a description of this topology. For $r > 0$, write $B_r := \set{x \in \R^d \st \norm{x} \leq r}$ for the closed ball centered at $0 \in \R^d$ with radius $r$.

	\noindent
	\begin{lemma}\label{lem: closure of g_c_is_g_flat}
		The closure of $C^\infty_{\mrm{c}}(U\setminus\{0\})$ in $C_c^\infty(U)$ is $C^\infty_{\mrm{flat}}(U)$. 
	\end{lemma}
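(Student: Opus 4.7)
The proof will establish both inclusions $\overline{C^\infty_{\mrm{c}}(U\setminus\{0\})} \subseteq C^\infty_{\mrm{flat}}(U)$ and $C^\infty_{\mrm{flat}}(U) \subseteq \overline{C^\infty_{\mrm{c}}(U\setminus\{0\})}$. The first is essentially immediate: the continuous jet projection $j^\infty_0 : C^\infty_{\mrm{c}}(U) \to \R\llbracket x_1, \dots, x_d \rrbracket$ has Hausdorff codomain, so $C^\infty_{\mrm{flat}}(U) = \ker j^\infty_0$ is closed; and any $f \in C^\infty_{\mrm{c}}(U \setminus \{0\})$ vanishes identically on a neighborhood of $0$, so $j^\infty_0(f) = 0$.

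For the reverse inclusion, I plan a straightforward cutoff approximation. Fix $\chi \in C^\infty(\R^d)$ with $\chi = 0$ on $B_1$ and $\chi = 1$ outside $B_2$, and set $\chi_\epsilon(x) := \chi(x/\epsilon)$. Given $f \in C^\infty_{\mrm{flat}}(U)$, define $f_\epsilon := \chi_\epsilon f$; for $\epsilon$ small we have $f_\epsilon \in C^\infty_{\mrm{c}}(U \setminus \{0\})$ and $\supp f_\epsilon \subseteq K := \supp f$. Since all $f_\epsilon$ have support in the fixed compact $K$, it suffices to show $f_\epsilon \to f$ in the Fréchet space $C^\infty_K(U)$, i.e.\ $\norm{f_\epsilon - f}_{C^n_K(U)} \to 0$ for every $n$.

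The main ingredient is the flatness estimate: since $f \in C^\infty_{\mrm{flat}}(U)$, Taylor's theorem with remainder gives for every $N, n \in \N$ a constant $C_{N,n}$ with $|D^\gamma f(x)| \leq C_{N,n}\norm{x}^N$ for all $|\gamma| \leq n$ and $x$ in a neighborhood of $0$. Expanding by Leibniz,
\[
D^\alpha(f_\epsilon - f)(x) = \bigl(\chi_\epsilon(x) - 1\bigr) D^\alpha f(x) + \sum_{0 < \beta \leq \alpha} \binom{\alpha}{\beta} \epsilon^{-|\beta|} (D^\beta \chi)(x/\epsilon)\, D^{\alpha-\beta}f(x).
\]
The term with $\beta = 0$ is supported in $B_{2\epsilon}$ and bounded by $C_{N,n}(2\epsilon)^N$. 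Each term with $|\beta| \geq 1$ is supported in the annulus $B_{2\epsilon} \setminus B_\epsilon$, and there $|D^\beta \chi_\epsilon|\cdot |D^{\alpha-\beta}f| \leq C\, \epsilon^{-|\beta|} \cdot (2\epsilon)^N = C'\, \epsilon^{N - |\beta|}$. Choosing $N > n$ forces all contributions to vanish uniformly as $\epsilon \downarrow 0$, giving $\norm{f_\epsilon - f}_{C^n_K(U)} \to 0$.

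I do not anticipate a serious obstacle; the only subtlety is to record that convergence in $C^\infty_K(U)$ for a fixed compact $K$ implies convergence in the LF-topology on $C^\infty_{\mrm{c}}(U)$, which is standard (the inclusion $C^\infty_K(U) \hookrightarrow C^\infty_{\mrm{c}}(U)$ is continuous). Combining both inclusions then yields $\overline{C^\infty_{\mrm{c}}(U\setminus\{0\})} = C^\infty_{\mrm{flat}}(U)$.
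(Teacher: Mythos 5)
Your proof is correct and takes essentially the same route as the paper: the forward inclusion is immediate from continuity of $j^\infty_0$, and for the reverse inclusion you cut $f$ off near the origin with bump functions satisfying $|D^\beta\chi_\epsilon|\lesssim\epsilon^{-|\beta|}$, then apply the Taylor-flatness bound $|D^\gamma f|\lesssim\norm{x}^N$ near $0$ with $N$ chosen larger than the differentiation order. The only difference is cosmetic: you build the cutoffs explicitly by rescaling a single fixed bump, whereas the paper invokes Malgrange for a cutoff family with exactly these scaling bounds.
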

	\begin{proof}
		As $C^\infty_{\mrm{flat}}(U) \subseteq C_c^\infty(U)$ is closed and $C^\infty_{\mrm{c}}(U\setminus\{0\}) \subseteq C^\infty_{\mrm{flat}}(U)$, it follows that $\overline{C^\infty_{\mrm{c}}(U\setminus\{0\})} \subseteq C^\infty_{\mrm{flat}}(U)$. It remains to show the reverse inclusion. Let $f \in C^\infty_{\mrm{flat}}(U)\subseteq C_c^\infty(U)$. We show $f \in \overline{C^\infty_{\mrm{c}}(U\setminus\{0\})}$. Let $K_0 \subseteq M$ be a relatively compact open subset such that $\supp f \subseteq K_0$. Set $K := \overline{K_0}$. We may assume that $0 \in K_0$, for otherwise $f \in C^\infty_{\mrm{c}}(U\setminus\{0\})$ and we are done. By \cite[Lem.\ I.4.2]{Malgrange_ideals_of_diff_functions}, we can find constants $C_k > 0$ for $k \in \N^d_{\geq 0}$, depending only on $k$, such that for any $0 < r < 1$ with $B_{2r} \subseteq K_0$, there exists a smooth function $\psi_r \in C^\infty(\R^d)$ s.t.\ $\psi_r \geq 0$, $\restr{\psi_r}{B_r} = 0$, $\restr{\psi_r}{(\R^d\setminus B_{2r})} = 1$ and $\sup_{x \in \R^d}|D^k\psi_r(x)| \leq C_k r^{-|k|}$ for every $k \in \N^{d}_{\geq 0}$. In particular $f \psi_r \in C^\infty_{\mrm{c}}(U\setminus\{0\})$ and $\supp f \psi_r \subseteq K$. Moreover, observe that $\supp (1-\psi_r) \subseteq B_{2r}$ and $\norm{(1-\psi_r)}_{C^n_{B_{2r}}(\R^d)} \lesssim r^{-n}$ for some constant depending on $n \in \N_{\geq 0}$, where we used that $0<r<1$. On the other hand, suppose that $\alpha \in \N_{\geq 0}^d$ is a multi-index. Since $j^\infty_0(D^\alpha f) = 0$, it follows from Taylor's Theorem that $\norm{D^\alpha f}_{C(B_{2r})} \lesssim r^{l}$ for arbitrary $l \in \N_{\geq 0}$, with a constant depending on $f$, $\alpha$ and $l$ but not on $r$. Thus $\norm{f}_{C^n(B_{2r})} \lesssim r^{l}$ for arbitrary $n,l \in \N_{\geq 0}$. In particular $\norm{f}_{C^n(B_{2r})} \lesssim r^{n+1}$. Combining the previous observations, we obtain that
		\[\norm{f - f \psi_r}_{C^n(K)} = \norm{f(1-\psi_r)}_{C^n(K)} = \norm{f(1-\psi_r)}_{C^n(B_{2r})} \lesssim \norm{f}_{C^n(B_{2r})} \norm{(1-\psi_r)}_{C^n(B_{2r})} \lesssim r,\]
		the constants depending only on $f$ and $n$ but not on $r$. This shows that $f \psi_r \to f$ in $C^\infty_K(U)$ as $r\to 0$. Thus $f \psi_r \to f$ in $C^\infty_{\mrm{c}}(U)$. Since $\psi_r f \in C^\infty_{\mrm{c}}(U\setminus\{0\})$ for every $r$, we conclude that $f \in \overline{C^\infty_{\mrm{c}}(U\setminus\{0\})}$.
	\end{proof}

	\noindent
	If $a \in M$, define the spaces of smooth section of $\mc{K}$ and $\mfr{K}$ which are flat at $a \in M$:
	\begin{align*}
		\Gamma_{\mrm{flat}(a)}(\mc{K}) := \ker\bigg(j^\infty_a : \Gamma_{\mrm{c}}(\mc{K}) \to J^\infty_a(\mc{K})\bigg),\qquad
		\Gamma_{\mrm{flat}(a)}(\mfr{K}) := \ker\bigg(j^\infty_a : \Gamma_{\mrm{c}}(\mfr{K}) \to J^\infty_a(\mfr{K})\bigg).
	\end{align*}

	\noindent
	\Fref{prop: two_tops_on_inf_jets_agree} below clarifies the apparent ambiguity in the topology on $J^\infty_a(\mfr{K})$, for which two candidates are available.
	\begin{proposition}\label{prop: two_tops_on_inf_jets_agree}
		Let $a \in M$. The projective limit topology on $J_a^\infty(\mfr{K}):= \varprojlim_kJ^k(\mfr{K})$ coincides with the quotient topology obtained from $J_a^\infty(\mfr{K}) \cong \Gamma_{\mrm{c}}(\mfr{K})/\Gamma_{\mrm{flat}(a)}(\mfr{K})$.
	\end{proposition}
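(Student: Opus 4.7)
The plan is to show that $j^\infty_a : \Gamma_{\mrm{c}}(\mfr{K}) \to J^\infty_a(\mfr{K})$ is a continuous open surjection onto $J^\infty_a(\mfr{K})$ equipped with the projective limit topology; it will then automatically descend to a topological isomorphism from $\Gamma_{\mrm{c}}(\mfr{K})/\Gamma_{\mrm{flat}(a)}(\mfr{K})$ (with the quotient topology) onto $J^\infty_a(\mfr{K})_{\mrm{proj}}$, which is the claim.

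Continuity of $j^\infty_a$ into the projective limit is immediate from the universal property, since each finite-jet projection $j^k_a : \Gamma_{\mrm{c}}(\mfr{K}) \to J^k_a(\mfr{K})$ is continuous (after restriction to a chart around $a$ with a trivialization of $\mfr{K}$, it amounts to evaluating finitely many partial derivatives at $a$, which is a continuous operation on each Fréchet piece $\Gamma_K(\mfr{K})$ and hence on the LF-space $\Gamma_{\mrm{c}}(\mfr{K})$). For openness, fix once and for all a compact neighborhood $K$ of $a$ over which $\mfr{K}$ trivializes. Both $\Gamma_K(\mfr{K})$ and $J^\infty_a(\mfr{K})_{\mrm{proj}} = \varprojlim_k J^k_a(\mfr{K})$ are Fréchet spaces (the latter being a countable projective limit of finite-dimensional spaces). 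The restriction $j^\infty_a|_{\Gamma_K(\mfr{K})} : \Gamma_K(\mfr{K}) \to J^\infty_a(\mfr{K})_{\mrm{proj}}$ is continuous by the previous paragraph, and it is surjective by Borel's lemma (applied componentwise in the fixed local trivialization: any prescribed sequence of Taylor coefficients at $a$ is realized by some smooth section supported in $K$, since $K$ is a \emph{neighborhood} of $a$). Hence by the open mapping theorem for Fréchet spaces, $j^\infty_a|_{\Gamma_K(\mfr{K})}$ is open.

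To extend openness to the LF-space $\Gamma_{\mrm{c}}(\mfr{K})$, let $V \subseteq \Gamma_{\mrm{c}}(\mfr{K})$ be any $0$-neighborhood. Since the inclusion $\Gamma_K(\mfr{K}) \hookrightarrow \Gamma_{\mrm{c}}(\mfr{K})$ is continuous by definition of the LF-topology, $V \cap \Gamma_K(\mfr{K})$ is a $0$-neighborhood in $\Gamma_K(\mfr{K})$, and therefore $j^\infty_a(V) \supseteq j^\infty_a\bigl(V \cap \Gamma_K(\mfr{K})\bigr)$ contains a $0$-neighborhood in $J^\infty_a(\mfr{K})_{\mrm{proj}}$ by the previous paragraph. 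Thus $j^\infty_a$ is open, and the induced continuous linear bijection $\Gamma_{\mrm{c}}(\mfr{K})/\Gamma_{\mrm{flat}(a)}(\mfr{K}) \to J^\infty_a(\mfr{K})_{\mrm{proj}}$ is a topological isomorphism. The principal technical input is the applicability of the open mapping theorem, whose hypotheses are secured by the Fréchet property of both $\Gamma_K(\mfr{K})$ and the countable projective limit $J^\infty_a(\mfr{K})_{\mrm{proj}}$, together with Borel's lemma for surjectivity; no genuine obstacle arises beyond keeping track of the fact that the LF-topology on $\Gamma_{\mrm{c}}(\mfr{K})$ restricts to the usual Fréchet topology on each $\Gamma_K(\mfr{K})$.
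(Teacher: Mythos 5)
Your proposal is correct and uses the same three ingredients as the paper's proof (continuity of the jet maps via the universal property of the projective limit, surjectivity via Borel's lemma, openness via the open mapping theorem), but you apply the open mapping theorem to a different map. The paper descends $j^\infty_a$ to a continuous bijection $\Phi : \Gamma_{\mrm{c}}(\mfr{K})/\Gamma_{\mrm{flat}(a)}(\mfr{K}) \to J^\infty_a(\mfr{K})$ and invokes the open mapping theorem there, which requires knowing that the quotient of the LF-space $\Gamma_{\mrm{c}}(\mfr{K})$ by the closed subspace $\Gamma_{\mrm{flat}(a)}(\mfr{K})$ is Fr\'echet — a point that the paper asserts but does not justify (quotients of LF-spaces by closed subspaces are not automatically Fr\'echet). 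You instead restrict $j^\infty_a$ to a single Fr\'echet piece $\Gamma_K(\mfr{K})$ with $K$ a compact neighborhood of $a$, where surjectivity onto the Fr\'echet space $\varprojlim_k J^k_a(\mfr{K})$ still holds by Borel's lemma, apply the open mapping theorem between genuine Fr\'echet spaces, and then lift openness to $\Gamma_{\mrm{c}}(\mfr{K})$ using the fact that the LF-topology restricts to the Fr\'echet topology on $\Gamma_K(\mfr{K})$. This sidesteps the question of whether the quotient is Fr\'echet entirely and makes the LF-space bookkeeping explicit, at the cost of a slightly longer argument. Both routes are sound; yours is the more self-contained of the two.
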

	\begin{proof}
		The continuous $k$-jet projections $j_a^k : \Gamma_{\mrm{c}}(\mfr{K}) \to J_a^k(\mfr{K})$ at $a \in M$ all descend to continuous maps $\Gamma_{\mrm{c}}(\mfr{K})/\Gamma_{\mrm{flat}(a)}(\mfr{K}) \to J_a^k(\mfr{K})$. By the universal property of the projective limit, they induce a continuous map $\Phi: \Gamma_{\mrm{c}}(\mfr{K})/\Gamma_{\mrm{flat}(a)}(\mfr{K}) \to J_a^\infty(\mfr{K})$. Using Borel's Lemma \cite[Thm.\ 1.2.6]{Hormander_I}, it is not hard to check that this map is bijective. It remains to show it is an open map, which follows immediately from the Open Mapping Theorem \cite[Cor.\ 2.12]{Rudin_FA} because $\Gamma_{\mrm{c}}(\mfr{K})/\Gamma_{\mrm{flat}(a)}(\mfr{K})$ and $J^\infty_a(\mfr{K})$ are both Fr\'echet spaces and $\Phi$ is bijective and continuous.
	\end{proof}
	
	\begin{proposition}\label{prop: closure_flat}
		Let $a \in M$. 
		\begin{itemize}
			\item The closure of $\Gamma_{\mrm{c}}(M\setminus\{a\}; \mfr{K})$ in $\Gamma_{\mrm{c}}(M; \mfr{K})$ is $\Gamma_{\mrm{flat}(a)}(\mfr{K})$.
			\item The closure of $\Gamma_{\mrm{c}}(M\setminus\{a\}; \mc{K})$ in $\Gamma_{\mrm{c}}(M; \mc{K})$ is $\Gamma_{\mrm{flat}(a)}(\mc{K})$.
		\end{itemize}
	\end{proposition}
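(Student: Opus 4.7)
\textit{Lie algebra part.} First I would establish the first statement. The inclusion $\overline{\Gamma_{\mrm{c}}(M\setminus\{a\}; \mfr{K})} \subseteq \Gamma_{\mrm{flat}(a)}(\mfr{K})$ is immediate because $\Gamma_{\mrm{flat}(a)}(\mfr{K})$ is closed as the kernel of the continuous jet projection $j_a^\infty : \Gamma_{\mrm{c}}(\mfr{K}) \to J_a^\infty(\mfr{K})$ (continuous by \Fref{prop: two_tops_on_inf_jets_agree}), and it plainly contains $\Gamma_{\mrm{c}}(M\setminus\{a\}; \mfr{K})$. For the reverse inclusion, fix $f \in \Gamma_{\mrm{flat}(a)}(\mfr{K})$, choose a chart $\phi : U \to \R^d$ around $a$ with $\phi(a) = 0$ over which $\mfr{K}$ trivializes as $\mfr{K}|_U \cong U \times \mfr{g}$, and pick $\chi \in C_{\mrm{c}}^\infty(U)$ with $\chi \equiv 1$ on a neighborhood of $a$. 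Decompose $f = \chi f + (1-\chi)f$, where the second summand already lies in $\Gamma_{\mrm{c}}(M\setminus\{a\}; \mfr{K})$. In the trivialization $\chi f$ is represented as $\sum_i h_i \otimes X_i$ for a basis $(X_i)$ of $\mfr{g}$ and functions $h_i := \chi f_i \in C^\infty_{\mrm{flat}}(U)$; applying \Fref{lem: closure of g_c_is_g_flat} to each $h_i$ produces approximating sequences in $C_{\mrm{c}}^\infty(U \setminus \{0\})$, which assemble into an approximation of $\chi f$ in $\Gamma_{\mrm{c}}(\mfr{K})$ by sections supported in $M \setminus \{a\}$.

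\textit{Lie group part.} I would derive the second statement from the first by transporting along the exponential map. The easy inclusion follows as before from closedness of $\Gamma_{\mrm{flat}(a)}(\mc{K})$. For the reverse, let $\sigma \in \Gamma_{\mrm{flat}(a)}(\mc{K})$. In a trivialization $\mc{K}|_U \cong U \times G$, $\sigma|_U$ corresponds to a smooth map $s : U \to G$ with $s(a) = e$ and flat $\infty$-jet at $a$. Shrinking $U$ if necessary, there is a neighborhood $W$ of $a$ on which $s = \exp \circ f$ for some smooth $f : W \to \mfr{g}$ that is flat at $a$. Pick $\chi \in C_{\mrm{c}}^\infty(W)$ equal to $1$ on a neighborhood $W_0$ of $a$. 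Define $\sigma_{\mrm{near}} \in \Gamma_{\mrm{c}}(\mc{K})$ by setting $\sigma_{\mrm{near}} := \exp(\chi f)$ on $U$ (via the trivialization) and extending by the identity outside $\supp \chi$, and set $\sigma_{\mrm{far}} := \sigma_{\mrm{near}}^{-1}\sigma$. On $W_0$ one has $\sigma_{\mrm{near}} = \exp(f) = \sigma$, so $\sigma_{\mrm{far}}$ is identically the unit there, and therefore $\sigma_{\mrm{far}} \in \Gamma_{\mrm{c}}(M \setminus \{a\}; \mc{K})$. Since $\chi f \in \Gamma_{\mrm{flat}(a)}(\mfr{K})$, the Lie algebra part yields $g_n \in \Gamma_{\mrm{c}}(M \setminus \{a\}; \mfr{K})$ with $g_n \to \chi f$ in $\Gamma_{\mrm{c}}(\mfr{K})$. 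By continuity of $\exp : \Gamma_{\mrm{c}}(\mfr{K}) \to \Gamma_{\mrm{c}}(\mc{K})$ (cf.\ \cite[Prop.\ 2.3]{BasNeeb_cov_ce_gauge}) one gets $\exp(g_n) \to \sigma_{\mrm{near}}$, and because $\exp(0) = e$ the sections $\exp(g_n)$ all lie in $\Gamma_{\mrm{c}}(M \setminus \{a\}; \mc{K})$. Continuity of the group multiplication then gives $\exp(g_n) \cdot \sigma_{\mrm{far}} \to \sigma_{\mrm{near}} \cdot \sigma_{\mrm{far}} = \sigma$, with every approximating term in $\Gamma_{\mrm{c}}(M \setminus \{a\}; \mc{K})$.

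\textit{Main obstacle.} The only delicate step is the local-to-global decomposition $\sigma = \sigma_{\mrm{near}} \cdot \sigma_{\mrm{far}}$: one must justify that truncating $f$ by $\chi$ inside the exponential chart and re-exponentiating yields an honest element of $\Gamma_{\mrm{c}}(\mc{K})$, that it agrees with $\sigma$ near $a$ (so that $\sigma_{\mrm{far}}$ becomes supported away from $a$), and that the flatness of $f$ is preserved by $\chi f$ so that the Lie algebra case is applicable. Once this cut-and-exponentiate procedure is executed cleanly, the argument reduces to the scalar density result \Fref{lem: closure of g_c_is_g_flat} together with the continuity of $\exp$ and of multiplication in the locally exponential Lie group $\Gamma_{\mrm{c}}(\mc{K})$.
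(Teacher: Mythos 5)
Your proof is correct, and the first half (Lie algebra part) is essentially the same as the paper's: localize near $a$, trivialize $\mfr{K}$, and invoke \Fref{lem: closure of g_c_is_g_flat} componentwise. (The paper states this as ``by a partition of unity argument, we may assume the bundle is trivial''; your $\chi f + (1-\chi)f$ decomposition is just making that partition explicit.)

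For the Lie group part your argument is more detailed than the paper's, and usefully so. The paper's proof is a single sentence: $\Gamma_{\mrm{flat}(a)}(\mc{K})$ is a closed embedded Lie subgroup of the locally exponential group $\Gamma_{\mrm{c}}(\mc{K})$, hence itself locally exponential, ``so the result is immediate from the previous point.'' Taken literally, local exponentiality plus the Lie-algebra density result only show that the closure of $\Gamma_{\mrm{c}}(M\setminus\{a\}; \mc{K})$ is an open (hence closed) subgroup of $\Gamma_{\mrm{flat}(a)}(\mc{K})$; to conclude one must still account for elements of $\Gamma_{\mrm{flat}(a)}(\mc{K})$ that are not in the image of $\exp$, i.e.\ for possible failure of connectedness. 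Your cut-and-exponentiate factorization $\sigma = \sigma_{\mrm{near}}\cdot\sigma_{\mrm{far}}$ handles precisely this point: $\sigma_{\mrm{near}} = \exp(\chi f)$ lies in the image of $\exp$ where the Lie-algebra approximation applies, while $\sigma_{\mrm{far}}$ is already identity near $a$ and compactly supported, hence lies in $\Gamma_{\mrm{c}}(M\setminus\{a\}; \mc{K})$ outright, regardless of any connectedness considerations. Thus your ``main obstacle'' remark correctly identifies the step that the paper leaves implicit, and your resolution of it is the right one. One small thing worth noting explicitly in a final write-up: the existence of a local logarithm $f = \log \circ\, s$ near $a$ uses that $s(a) = e$ and that $\exp_G$ is a local diffeomorphism on the finite-dimensional fibre group $G$, and flatness of $f$ at $a$ follows from flatness of $s$ because $\log$ has $d\log_e = \id$; these are routine but should be mentioned.
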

	\begin{proof}
		By a partition of unity argument, we may assume that the bundle $\mfr{K} \to M$ is trivial, that $M \subseteq \R^d$ is open neighborhood of $0 \in \R^d$ and that $a = 0$. Then $\Gamma_{\mrm{c}}(M; \mfr{K}) \cong C^\infty_{\mrm{c}}(M; \mfr{k})$. The claim now follows from \Fref{lem: closure of g_c_is_g_flat}. Notice for the second assertion that $\Gamma_{\mrm{flat}(a)}(M; \mc{K})$ is a locally exponential, being an embedded closed Lie subgroup of the locally exponential Lie group $\Gamma_{\mrm{c}}(M; \mc{K})$. The result is then immediate from the previous point.
	\end{proof}
	
	\begin{restatable}{proposition}{germstojets}\label{prop: germs_to_jets}
		Let $a \in M$.
		\begin{enumerate}
			\item Let $\olpi : \Gamma_{\mrm{c}}(M; \mfr{K}) \to \mc{L}^\dagger(\mcD)$ be a continuous projective unitary representation on the pre-Hilbert space $\mcD$.  Assume that $\olpi$ vanishes on $\Gamma_{\mrm{c}}(M \setminus\{a\}; \mfr{K})$. Then $\olpi$ factors continuously through $J^\infty_a(\mfr{K})$. 
			\item Let $\overline{\rho} : \Gamma_{\mrm{c}}(M; \mc{K}) \to \PU(\mcH)$ be a continuous projective unitary representation of $\Gamma_{\mrm{c}}(M; \mc{K})$. Assume that $\overline{\rho}$ vanishes on $\Gamma_{\mrm{c}}(M \setminus \{a\}; \mc{K})$. Then $\overline{\rho}$ factors through $\Gamma_{\mrm{flat}(a)}(M; \mc{K})$.
		\end{enumerate}
	\end{restatable}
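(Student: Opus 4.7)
The plan is to reduce both statements to a single, essentially topological observation: the kernels of continuous (projective) representations are closed, so that once $\olpi$ (respectively $\overline{\rho}$) is known to vanish on the \emph{subset} $\Gamma_{\mrm{c}}(M\setminus\{a\}; \--)$, it must vanish on its closure, which \Fref{prop: closure_flat} identifies with $\Gamma_{\mrm{flat}(a)}(\--)$. The factorization through $J^\infty_a(\mfr{K})$ then comes from \Fref{prop: two_tops_on_inf_jets_agree}, which identifies the quotient $\Gamma_{\mrm{c}}(M; \mfr{K})/\Gamma_{\mrm{flat}(a)}(\mfr{K})$ with $J^\infty_a(\mfr{K})$ topologically.

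For the first part, I would first argue that $\ker \olpi$ is a closed linear subspace of $\Gamma_{\mrm{c}}(M; \mfr{K})$. Fix a lift $\pi : \circled{\g} \to \mfr{u}(\mcD)$ of $\olpi$ and a continuous linear section $s : \g \to \circled{\g}$ of the central $\R$-extension (which exists locally and extends by linearity). Then $\xi \in \ker\olpi$ if and only if $\pi(s(\xi))$ lies in $i\R I$, equivalently, $\pi(s(\xi))\psi \in \C\psi$ for every $\psi \in \mcD$. For each fixed $\psi$, the map $\xi \mapsto \pi(s(\xi))\psi - \langle \psi, \pi(s(\xi))\psi\rangle \psi / \norm{\psi}^2$ is continuous with values in $\mcD$, so its zero-set is closed; intersecting over $\psi$ shows $\ker \olpi$ is closed. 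Since $\Gamma_{\mrm{c}}(M\setminus\{a\}; \mfr{K}) \subseteq \ker \olpi$ by hypothesis, \Fref{prop: closure_flat} gives $\Gamma_{\mrm{flat}(a)}(\mfr{K}) \subseteq \ker \olpi$. Consequently $\olpi$ descends to a Lie-algebra homomorphism on the quotient $\Gamma_{\mrm{c}}(M; \mfr{K})/\Gamma_{\mrm{flat}(a)}(\mfr{K})$, which by \Fref{prop: two_tops_on_inf_jets_agree} is isomorphic as a topological Lie algebra to $J^\infty_a(\mfr{K})$; the descended map is continuous by the universal property of the quotient topology.

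The second part is analogous but even cleaner at the group level. The group $\PU(\mcH)$, endowed with the quotient of the strong operator topology on $\U(\mcH)$, is Hausdorff (because $\T \cdot I$ is closed in $\U(\mcH)$), so the kernel of the continuous homomorphism $\overline{\rho}$ is automatically a closed normal subgroup of the Lie group $\Gamma_{\mrm{c}}(M; \mc{K})$. The hypothesis says $\Gamma_{\mrm{c}}(M\setminus\{a\}; \mc{K}) \subseteq \ker\overline{\rho}$, and \Fref{prop: closure_flat} identifies the closure of this set with $\Gamma_{\mrm{flat}(a)}(\mc{K})$; hence $\Gamma_{\mrm{flat}(a)}(\mc{K}) \subseteq \ker\overline{\rho}$, so $\overline{\rho}$ factors through the quotient $\Gamma_{\mrm{c}}(M; \mc{K})/\Gamma_{\mrm{flat}(a)}(\mc{K}) \cong J^\infty_a(\mc{K})$.

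The only point that requires any care is the closedness of $\ker \olpi$ in the Lie-algebra case, since continuity there is phrased only pointwise on each ray of $\mrm{P}(\mcD)$; the trick above of writing $\ker\olpi$ as the intersection of $\psi$-indexed closed conditions on a lift handles this cleanly. Everything else is formal once \Fref{prop: closure_flat} and \Fref{prop: two_tops_on_inf_jets_agree} are in hand.
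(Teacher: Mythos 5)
Your proposal is correct and takes essentially the same route as the paper: use continuity to pass from vanishing on $\Gamma_{\mrm{c}}(M\setminus\{a\};\--)$ to vanishing on its closure $\Gamma_{\mrm{flat}(a)}(\--)$ via \Fref{prop: closure_flat}, then factor through the quotient, identified with $J^\infty_a(\mfr{K})$ by \Fref{prop: two_tops_on_inf_jets_agree}. The only difference is that you spell out explicitly why $\ker\olpi$ is closed (via a continuous linear section of the central extension and a pointwise-in-$\psi$ argument), a step the paper compresses into the phrase \dquotes{by continuity}; this elaboration is sound and welcome but does not change the structure of the argument.
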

	\begin{proof}
		For the first point, notice by continuity that $\olpi$ must also vanish on the closure of $\Gamma_{\mrm{c}}(M \setminus\{a\}; \mfr{K})$ in $\Gamma_{\mrm{c}}(M;\mfr{K})$, which by \Fref{prop: closure_flat} equals $\Gamma_{\mrm{flat}(a)}(M;\mfr{K})$. Thus $\Gamma_{\mrm{c}}(M;\mfr{K})$ factors continuously through the quotient space $J^\infty_a(\mfr{K}) \cong \Gamma_{\mrm{c}}(M;\mfr{K})/\Gamma_{\mrm{flat}(a)}(M;\mfr{K})$, where \Fref{prop: two_tops_on_inf_jets_agree} was used. The second point is proven similarly using \Fref{prop: closure_flat}.
	\end{proof}

	\newcommand{\etalchar}[1]{$^{#1}$}

%

\end{document}